\DeclareMathAlphabet{\mathpzc}{OT1}{pzc}{m}{it}
\newtheorem{theorem}{Theorem}[section]
\newtheorem*{claim*}{Claim}
\newtheorem{lemma}[theorem]{Lemma}
\newtheorem{lem}[theorem]{Lemma}
\newtheorem{Cor}[theorem]{Corollary}
\newtheorem{cor}[theorem]{Corollary} 
\newtheorem{proposition}[theorem]{Proposition}
\newtheorem{prop}[theorem]{Proposition}
\theoremstyle{definition}
\newtheorem{definition}[theorem]{Definition}
\newtheorem{example}[theorem]{Example}
\theoremstyle{remark}
\newtheorem{remark}[theorem]{Remark}
\newtheorem{rmk}[theorem]{Remark}
\numberwithin{equation}{section}
\newcommand{\abs}[1]{\lvert#1\rvert}
\newcommand{\norm}[1]{\lVert#1\rVert}
\newcommand{\op}{\operatorname}
\newcommand{\be}{\begin{equation}}
\newcommand{\ee}{\end{equation}}
\newcommand{\Ga}{\Gamma}
\newcommand{\R}{\mathbb R}
\renewcommand{\H}{\mathbb H}
\newcommand{\Z}{\mathbb Z}
\newcommand{\N}{\mathbb N}
\newcommand{\ga}{\gamma}
\newcommand{\la}{\lambda}
\newcommand{\La}{\Lambda}
\newcommand{\ba}{\backslash}
\newcommand{\ov}{\overline}
\newcommand{\Isom}{\op{Isom}}
\newcommand{\PSL}{\op{PSL}}
\newcommand{\diam}{\op{Diam}}
\newcommand{\T}{\mathcal{T}}
\renewcommand{\S}{\mathbb S}
\newcommand{\id}{\op{id}}
\newcommand{\ess}{\mathsf{E}}
\newcommand{\Hor}{\mathcal{H}}
\newcommand{\Leb}{\op{Leb}}
\newcommand{\PMF}{\mathcal{PMF}}
\newcommand{\PML}{\mathcal{PML}}
\newcommand{\MF}{\mathcal{MF}}
\newcommand{\ML}{\mathcal{ML}}
\newcommand{\UE}{\mathcal{UE}}
\newcommand{\Mod}{\operatorname{Mod}}
\newcommand{\Ext}{\operatorname{Ext}}
\newcommand{\HM}{\operatorname{HM}}
\newcommand{\A}{\mathsf{A}}
\newcommand{\Lip}{\operatorname{Lip}}
\newcommand{\CAT}{\operatorname{CAT}}
\newcommand{\Spec}{\operatorname{Spec}_{sq}}
\newcommand{\SL}{\operatorname{SL}}
\begin{document}

\title[Invariant Radon measures on $\ML$]{Invariant measures\\on the space of measured laminations\\for subgroups of mapping class group}

\author{Inhyeok Choi}
\address{School of Mathematics, KIAS, Hoegi-ro 85, Dongdaemun-gu, Seoul 02455, South Korea}
\email{inhyeokchoi48@gmail.com}

\author{Dongryul M. Kim}
\address{Department of Mathematics, Yale University, New Haven, CT 06511}
\email{dongryul.kim@yale.edu}
 
\begin{abstract}

For a non-elementary subgroup of the mapping class group of a surface, we study its invariant Radon measures on the space of measured laminations, by classifying them on the recurrent measured laminations. In particular, given a divergence-type subgroup, we show the uniquely ergodic by explicitly constructing the ergodic measure. This generalizes Lindenstrauss--Mirzakhani's result and Hamenst{\"a}dt's result for the full mapping class group, in which case the ergodic measure is the Thurston measure. As a special case, we deduce that for a convex cocompact subgroup, every invariant ergodic Radon measure on the space of all measured laminations is either the unique measure on recurrent measured laminations, or  a counting measure on the orbit of a non-recurrent measured lamination.

Our method is geometric and does not rely on continuous or homogeneous flows on the ambient space or a dynamical system associated with a finite measure space. This leads to a unifying approach for various metric spaces, including Teichm\"uller spaces and partially $\operatorname{CAT}(-1)$ spaces.

\end{abstract}

\maketitle

\vspace{-3em}
\tableofcontents
\vspace{-3em}

\section{Introduction}

Let $S$ be a connected orientable surface of genus $g$ and with $p$ punctures, where $3g - 3+p \ge 1$, i.e., $S$ is a finite-type surface which is not a sphere with at most 3 punctures or a torus. The Teichm{\"u}ller space $\T = \mathcal{T}(S)$ is the space of all marked Riemann surface structures on $S$, or equivalently, the space of all marked hyperbolic structures on $S$. The mapping class group $\Mod(S)$ is the group of isotopy classes of orientation-preserving self-diffeomorphisms of $S$. The Teichm\"uller space $\T$ is equipped with a natural metric called the Teichm\"uller metric, under which the natural action of $\Mod(S)$ is isometric. The $\Mod(S)$-action is properly discontinuous, and its quotient $\mathcal{M} := \Mod(S) \ba \T$ is the moduli space of Riemann surfaces.

In Thurston's theory on surfaces (\cite{1979travaux}, \cite{thurston0the-geometry}), the mapping class group and the Teichm\"uller space of $S$ are closely related to \emph{measured laminations} on $S$. Fixing a complete hyperbolic structure on $S$ of finite volume, a geodesic lamination on $S$ is a compact subset of $S$ foliated with simple geodesics. A \emph{measured lamination} is a geodesic lamination equipped with a transverse measure. This notion generalizes simple closed curves.

The space of measured laminations on $S$ is denoted by $\ML = \ML(S)$. Endowing $\ML$ with the weak*-topology, it turns out that $\ML$ is homeomorphic to $ \S^{6g - 7 + 2p} \times (0, + \infty) \simeq \R^{6g - 6 + 2p} \smallsetminus \{0\}$, where the $(0, +\infty)$-component corresponds to the scaling of transverse measures on each geodesic lamination. Here, the unit sphere $\S^{6g - 7 + 2p}$ is identified with the space $\PML = \PML(S)$ of \emph{projective measured laminations} on $S$. The space $\PML$ can be regarded as the boundary of $\T$.

Another interpretation of $\ML$ is given by the celebrated theorem of Hubbard and Masur \cite{hubbard1979quadratic}. Fixing a Riemann surface structure $x \in \T$, the Hubbard--Masur theorem asserts that the space $\ML$ is homeomorphic to the space of holomorphic quadratic differentials $\mathcal{Q}(S, x)$ on the Riemann surface $(S, x)$. Each quadratic differential $q \in \mathcal{Q}(S, x)$ determines a Teichm\"uller geodesic ray emanating from  $x \in \T$ and 
the projective class $[\xi] \in \PML$ of the measured lamination $\xi \in \ML$ corresponding to $q$, given by the Hubbard--Masur theorem, is accumulated by this Teichm\"uller geodesic ray.

The space $\ML$  admits a natural $\Mod(S)$-action, which encodes the global geometry and dynamics of the Teichm\"uller space. Indeed, the induced $\Mod(S)$-action on $\PML$ is a continuous extension of the $\Mod(S)$-action on $\T$. Using the train track coordinates on $\ML$, or a natural symplectic structure on $\ML$, Thurston defined the $\Mod(S)$-invariant measure
$$
\mu_{\rm Th} \quad \text{on} \quad \ML
$$
which belongs to the Lebesgue measure class \cite{thurston0the-geometry}. The measure $\mu_{\rm Th}$ is now called \emph{Thurston measure}, and the $\Mod(S)$-action on $(\ML, \mu_{\rm Th})$ is ergodic as shown by Masur \cite[Theorem 2]{masur1985ergodic}. The Thurston measure has been a central object in the study of the geometry and dynamics of $\Mod(S)$ and $\T$. For example,  Mirzakhani \cite{mirzakhani2008growth},  Athreya--Bufetov--Eskin--Mirzakhani \cite{athreya2012lattice}, and Erlandsson--Souto \cite{erlandsson2016counting} studied counting and equidistribution of $\Mod(\Sigma)$-orbits using $\mu_{\rm Th}$. It also played a major role in Mirzakhani's ergodic theory of Thurston's earthquake flow \cite{mirzakhani2008ergodic}.

\subsection{Measure classification for full $\Mod(S)$: finite covolume}
It is natural to ask whether the Thurston measure is the only measure on $\ML$ that is suited to the study of the dynamics of $\Mod(S)$. Lindenstrauss and  Mirzakhani \cite{lindenstrauss2008ergodic}, and Hamenst\"adt \cite{hamenstadt2009invariant}, independently classified all $\Mod(S)$-invariant ergodic Radon measures on $\ML$. While the $\Mod(S)$-action on $\ML$ is not uniquely ergodic due to the cuspidal feature of $\Mod(S)$ or of $\mathcal{M} = \Mod(S) \ba \T$, the Thurston measure turns out to be the unique $\Mod(S)$-invariant Radon measure on the region where the Teichm\"uller geodesic flow shows interesting dynamics.

More precisely, using the identification $\ML = \mathcal{Q}(S, x)$ provided by the Hubbard--Masur theorem, we define the \emph{recurrence locus} for $\Mod(S)$ as the set  $\mathcal{R}_{\Mod(S)} \subset \ML$ of all measured laminations $\xi \in \ML$ such that  the Teichm\"uller geodesic ray determined by the corresponding quadratic differential $q_{\xi} \in \mathcal{Q}(S, x)$ recurs to a compact subset in the quotient $\mathcal{M} = \Mod(S) \ba \T$. One of the main steps in the measure classification by both Lindenstrauss--Mirzakhani and Hamenst\"adt is as follows.

\begin{theorem}[{\cite{lindenstrauss2008ergodic}, \cite{hamenstadt2009invariant}}] \label{thm:mcgcase}
Let $\mu$ be a $\Mod(S)$-invariant Radon measure on $\mathcal{R}_{\Mod(S)}$. Then $\mu$ is a constant multiple of $\mu_{\rm Th}$.
\end{theorem}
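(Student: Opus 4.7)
The plan is to transport the measure $\mu$ from $\ML$ to the moduli space of quadratic differentials via the Hubbard--Masur identification, and then exploit the unique ergodicity of the horocycle flow at uniquely ergodic foliations (Masur's criterion) to pin down $\mu$.

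To begin, I would fix a basepoint $x_0 \in \T$ and lift $\mu$ along the Hubbard--Masur homeomorphism to a Radon measure $\mu_{x_0}$ on $\mathcal{Q}(S, x_0)$. Using the Teichm\"uller geodesic flow $\{g_t\}$, which rescales the transverse measure on vertical foliations, I would extend $\mu_{x_0}$ to a measure $\widehat\mu$ on the total space $\mathcal{Q} \to \T$ of quadratic differentials. The key geometric observation is that the horocycle flow $\{u_s\}$ preserves vertical measured foliations pointwise, so after organizing the extension carefully, $\widehat\mu$ should be $u_s$-invariant. Combined with the $\Mod(S)$-invariance of $\mu$, pushing forward to the moduli space $\mathcal{Q}\mathcal{M} := \Mod(S) \ba \mathcal{Q}$ gives a locally finite measure $\overline\mu$ invariant under both $\{g_t\}$ and $\{u_s\}$, hence under the full upper-triangular subgroup of $\SL(2,\R)$.

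Next, the recurrence hypothesis enters through Masur's criterion: for $\mu$-a.e.\ $\xi \in \mathcal{R}_{\Mod(S)}$, the Teichm\"uller ray from $x_0$ in the direction of $\xi$ returns to a compact subset of $\mathcal{M}$, which forces $\xi$ to be uniquely ergodic as a measured lamination. Hence $\overline\mu$ is concentrated on quadratic differentials with uniquely ergodic vertical foliations. At such points, Masur's theorem on horocycle orbits says they are generic for the Masur--Veech measure under $\{u_s\}$; consequently, any locally finite $u_s$-invariant Radon measure supported on these points must be proportional to the Masur--Veech measure. Pulling this conclusion back through the Hubbard--Masur identification, the Masur--Veech measure corresponds to $\mu_{\rm Th}$, yielding $\mu = c\,\mu_{\rm Th}$ for some constant $c \ge 0$.

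The main obstacle I expect is verifying the horocycle invariance of $\widehat\mu$. Although $u_s$ preserves vertical foliations, the Hubbard--Masur identifications at different basepoints in $\T$ do not literally commute with the ambient geometry, so establishing the invariance requires defining $\widehat\mu$ via a transverse cross-section to $\{g_t\}$ (or via disintegration along the Teichm\"uller flow) and then checking that $u_s$ preserves the cross-sectional measure. A secondary technical point is arguing local finiteness of $\overline\mu$ on the non-wandering set of $\{g_t\}$ in $\mathcal{Q}\mathcal{M}$, which should follow from the recurrence hypothesis together with the properness of the $\Mod(S)$-action on the relevant stratum, ensuring that Masur's unique ergodicity result applies at the level of Radon measures rather than probability measures.
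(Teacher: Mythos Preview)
The paper does not give its own proof of Theorem~\ref{thm:mcgcase}; the result is cited from \cite{lindenstrauss2008ergodic} and \cite{hamenstadt2009invariant} as prior work. What the paper does prove is the generalization Theorem~\ref{thm:main1Conv}, which specializes to Theorem~\ref{thm:mcgcase} once one knows $\Mod(S)$ is of divergence type. That proof avoids horocycle flows entirely: it shows any invariant Radon measure on $\mathcal{R}_\Gamma$ is concentrated on the squeezed limit set (Theorem~\ref{thm:radonCharge}), then establishes quasi-invariance under the translations $T_{\tau_\varphi}$ for each pseudo-Anosov $\varphi$ (Theorem~\ref{thm:trbytrlengthqi}), and uses non-arithmeticity of translation lengths to upgrade this to full $\R$-quasi-invariance, forcing $\mu$ to have the form $e^{\delta t}\,d\nu\,dt$ for a conformal measure $\nu$. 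Your plan is therefore a genuinely different route, closer in spirit to the original Lindenstrauss--Mirzakhani argument.

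However, your proposal has a real gap at the decisive step. You write that ``Masur's theorem on horocycle orbits says they are generic for the Masur--Veech measure under $\{u_s\}$; consequently, any locally finite $u_s$-invariant Radon measure supported on these points must be proportional to the Masur--Veech measure.'' No such theorem of Masur exists. Equidistribution of individual horocycle orbits toward Masur--Veech is a much later and harder result (in the direction of Eskin--Mirzakhani--Mohammadi), and in any case genericity of orbits does \emph{not} by itself classify locally finite invariant Radon measures---that is precisely the content of the Lindenstrauss--Mirzakhani and Hamenst\"adt theorems you are trying to prove. In other words, your final step is circular: it assumes the measure classification it sets out to establish. The actual proofs require substantial additional input (Minsky--Weiss non-divergence in \cite{lindenstrauss2008ergodic}, train-track combinatorics in \cite{hamenstadt2009invariant}) which your outline does not supply.

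A secondary issue is the construction of $\widehat\mu$. Extending $\mu_{x_0}$ from the single fiber $\mathcal{Q}(S,x_0)$ to all of $\mathcal{Q}$ ``using the Teichm\"uller geodesic flow'' is not well-defined as stated: the geodesic flow orbit of $\mathcal{Q}(S,x_0)$ does not sweep out $\mathcal{Q}$, and the flow just reparametrizes the $\R_{>0}$-scaling on $\ML$ that you already have. To get an honest $u_s$-invariant measure on $\mathcal{Q}\mathcal{M}$ you would need to specify a transverse measure along the $\T$-direction and verify compatibility, which you flag as an obstacle but do not resolve.
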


In the exceptional case when $S$ is a once-punctured torus or a 4-punctured sphere (i.e., $3g - 3 + p = 1$), homogeneous dynamics comes into play. In this case, the Teichm\"uller space is equal to the hyperbolic plane $\mathbb{H}^{2}$ and $\Mod(S)$ is equal to $\operatorname{SL}(2, \Z)$. Then the $\Mod(S)$-action on $\ML$ corresponds to the $\SL(2, \Z)$-action on the \emph{horospherical foliation} of the unit tangent bundle $\operatorname{T}^1 \H^2$, or the \emph{unipotent flow (or horocyclic flow)} on $\PSL(2, \Z) \ba \PSL(2, \R)$. 

Furstenberg first proved that the unipotent flow on $\Ga \ba \PSL(2, \R)$ is uniquely ergodic with respect to the Haar measure when $\Ga < \PSL(2, \R)$ is a uniform lattice \cite{furstenberg1973the-unique}. This was extended by Veech to uniform lattices in semi-simple Lie groups \cite{veech1977unique}. Generalizing these results, Dani proved for lattices in reductive groups that the Haar measure is the unique invariant ergodic Radon measure on the recurrence locus (\cite{dani1978invariant}, \cite{dani1981invariant}).

\subsection{Measure classification for subgroups: infinite covolume}

Given Theorem \ref{thm:mcgcase}, one can seek an analogous measure classification on $\ML$ for general subgroups of $\Mod(S)$, as remarked by Lindenstrauss and Mirzakhani \cite[Remark 1.4(2)]{lindenstrauss2008ergodic}. We aim to study this question in this paper.
 
Before we proceed, let us remark two important ingredients for both Lindenstrauss--Mirzakhani's and Hamenst{\"a}dt's complete measure classification for the full $\Mod(S)$-action. First, the $\Mod(S)$-action has finite covolume with respect to the Masur--Veech volume form on the fiber bundle $\mathcal{Q}^1\T$ of unit-area holomorphic quadratic differentials over $\T$ (\cite{masur1982interval}, \cite{veech1982gauss}). The bundle $\mathcal{Q}^1 \T$ can be regarded as the unit cotangent bundle over $\T$, and the Masur--Veech volume is also induced from the Thurston measure. 
Second, the Teichm{\"u}ller horocyclic flow on the quotient bundle $\Mod(S) \ba \mathcal{Q}^1 \T$ exhibits some non-divergence, as proved by Minsky and  Weiss \cite{minsky2002nondivergence}.

On the other hand,  the quotient $\Ga \ba \mathcal{Q}^1 \T$ has infinite volume for a general subgroup $\Ga < \Mod(S)$, and it is hard to expect the non-divergence of horocyclic flows on $\Ga \ba \mathcal{Q}^1 \T$. Even for the case that $S$ is a once-punctured torus or a 4-punctured sphere, when $\T$ is the hyperbolic plane, neither holds true. They serve as obstructions to applying the arguments of \cite{lindenstrauss2008ergodic} or \cite{hamenstadt2009invariant} to subgroups of $\Mod(S)$.

On the hyperbolic plane $\H^2$, the first infinite-covolume examples studied for this problem is due to Burger \cite{Burger_horoc}. Burger considered a \emph{convex cocompact} subgroup $\Gamma < \PSL(2, \mathbb{R})$ whose critical exponent is strictly bigger than $1/2$ and showed that there exists a unique $\Gamma$-invariant ergodic Radon measure on the recurrent horospherical foliation of $\mathbb{H}^{2}$. 
This was later generalized by Roblin \cite{Roblin2003ergodicite} for discrete groups of isometries on $\CAT(-1)$ spaces under assumptions on finite Bowen--Margulis--Sullivan measures and the existence of certain coverings.

Let us get back to the Teichm\"uller space $\T$, where tools from homogeneous dynamics or negatively curved geometry do not apply immediately.
Before presenting our precise statements that require the construction of a specific Radon measure, we summarize our measure classification results as follows:
\begin{enumerate}
    \item For non-elementary $\Ga < \Mod(S)$, there exists at most one $\Ga$-invariant Radon measure on $\ML$ supported on the recurrence locus for $\Ga$.
    \item For non-elementary $\Ga < \Mod(S)$ of divergence type, the $\Ga$-action on the recurrence locus is uniquely ergodic.\footnote{By unique ergodicity, we mean that there exists a unique ergodic invariant Radon measure up to a constant multiple.}
    \item For non-elementary convex cocompact subgroups $\Ga < \Mod(S)$, we classify all $\Ga$-invariant Radon measures on $\ML$.
\end{enumerate}

As we will see later, we in fact develop machineries for a general metric space with a partial hyperbolicity, without any assumption on its global geometry. We use them to deduce versions of (1) and (2) in that setting.

\subsection{Main statements}

We mainly consider a \emph{non-elementary} subgroup $\Ga < \Mod(S)$, i.e.,  $\Ga$ is not virtually cyclic and contains a pseudo-Anosov mapping class. There exists $0 < \delta_{\Ga} < + \infty$, called the critical exponent of $\Ga$, so that $\delta_{\Ga}$ is the abscissa of convergence of the Poincar\'e series $s \mapsto \sum_{g \in \Ga} e^{-s d(x, g x)}$, for (any) $x \in \T$. The finiteness of the critical exponent is due to Kaimanovich and Masur \cite{kaimanovich1996poisson}, and its positivity is by McCarthy \cite{mccarthy1985a-tits-alternative}. We say that $\Ga$ is of \emph{divergence type} if the Poincar\'e series diverges at $s = \delta_{\Ga}$. Otherwise, $\Ga$ is said to be of convergence type.

Generalizing the ergodicity of the $\Mod(S)$-action on $(\ML, \mu_{\rm Th})$ due to  Masur \cite{masur1985ergodic}, we construct a Radon measure for non-elementary subgroups of $\Mod(S)$ and show its ergodicity.

\begin{theorem}[Ergodicity] \label{thm:main1}
Let $\Ga < \Mod(S)$ be a non-elementary subgroup of divergence type. Then there exists a nonzero, $\Ga$-invariant Radon measure $\mu_{\Ga}$ on $\ML$ such that
$$
\text{the $\Ga$-action on $(\ML,  \mu_{\Ga})$ is ergodic.}
$$
\end{theorem}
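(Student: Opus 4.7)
The plan is to run a Patterson--Sullivan construction adapted to the $\Ga$-action on Teichm\"uller space $\T$ with Thurston boundary $\PML$, promote the resulting boundary measure to a $\Ga$-invariant Radon measure on $\ML \cong \PML \times (0, \infty)$ via a cone construction, and derive ergodicity from a Hopf--Tsuji--Sullivan style argument driven by the divergence hypothesis.

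For the boundary measure, I would fix a basepoint $x \in \T$ and, for each $s > \delta_\Ga$, form the normalized Dirichlet-type sum
\[
\nu_x^s := \frac{1}{P_\Ga(s)} \sum_{g \in \Ga} e^{-s \, d(x, gx)} \, \delta_{gx}
\]
on $\T \cup \PML$. Divergence of $P_\Ga$ at $s = \delta_\Ga$ forces any weak-$\ast$ subsequential limit $\nu_x$ to be supported on $\PML$, and results of Kaimanovich--Masur \cite{kaimanovich1996poisson} confine its support to the uniquely ergodic part of the limit set $\La_\Ga \subset \PML$. I then show that $\nu_x$ is a $\delta_\Ga$-conformal density: for a Busemann-type cocycle $\beta \colon \Ga \times \PML \to \R$ adapted to Teichm\"uller geometry,
\[
\frac{d g_* \nu_x}{d \nu_x}([\xi]) = e^{-\delta_\Ga \, \beta_{[\xi]}(x, g^{-1}x)}.
\]
The quasi-invariance follows formally from the definition, but upgrading to this Radon--Nikodym identity requires a Sullivan-type shadow lemma on $\T$.

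Next, I would lift $\nu_x$ to a Radon measure on $\ML$. Using the polar identification $\ML \cong \PML \times (0, \infty)$ coming from a fixed continuous positive-homogeneous function $\|\cdot\|$ (e.g.\ intersection with a filling current), set
\[
\mu_\Ga := d\nu_x([\xi]) \otimes \frac{dr}{r^{1 + \delta_\Ga}}.
\]
The Jacobian of $g \in \Ga$ on the $\PML$-factor is the $\delta_\Ga$-conformal factor above, while its Jacobian on the $r$-factor is a power of the scale cocycle $\log(\|g\xi\|/\|\xi\|)$. Identifying the scale cocycle with the Busemann cocycle $\beta_{[\xi]}(x, g^{-1}x)$ up to a bounded error, through intersection-number estimates together with the Hubbard--Masur identification $\ML \cong \mathcal Q(S, x)$, yields the desired $\Ga$-invariance of $\mu_\Ga$.

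For ergodicity, a $\Ga$-invariant measurable function on $(\ML, \mu_\Ga)$ descends through the cone structure to a $\Ga$-invariant function on $(\PML, \nu_x)$, and the divergence-type hypothesis must force such a function to be $\nu_x$-a.e.\ constant. The classical route invokes ergodicity of the geodesic flow on a Bowen--Margulis--Sullivan measure on the quotient unit tangent bundle, but since the paper's method avoids flow arguments I would instead run a direct shadow-density-point proof: divergence of $P_\Ga$ should guarantee that at $\nu_x$-a.e.\ $[\xi]$, the shadows of $\Ga$-orbits accumulate at $[\xi]$ with full density, forcing any $\Ga$-invariant Borel set to have full or null $\nu_x$-measure. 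The principal obstacles will be the shadow lemma underlying $\delta_\Ga$-conformality and the density-point argument underlying ergodicity: $\T$ has neither global negative curvature nor a homogeneous flow, so both must be carried out using only the partial hyperbolicity of Teichm\"uller geometry (thin parts, contracting subsets, pseudo-Anosov axes), while carefully avoiding the non-uniquely-ergodic part of $\PML$ where the cocycle machinery degenerates.
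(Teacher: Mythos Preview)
Your construction of the conformal density and the cone lift is broadly in line with the paper, though two technical points deserve flagging. First, the exponent in your cone measure has the wrong sign: with the paper's convention $\frac{dg_*\nu}{d\nu}([\xi])=(\Ext_x(\xi)/\Ext_{gx}(\xi))^{\delta_\Ga/2}$, the invariant measure on $\ML\cong\PML\times(0,\infty)$ is $r^{\delta_\Ga-1}\,d\nu\,dr$, not $r^{-1-\delta_\Ga}\,d\nu\,dr$. Second, you propose a homogeneous norm coming from intersection with a filling current and assert the scale cocycle matches the Busemann cocycle ``up to a bounded error''; a bounded error would only give quasi-invariance, not invariance. The paper uses the extremal length norm $r=\sqrt{\Ext_x(\xi)}$, for which Miyachi and Walsh show the identity $\beta_\xi(x,y)=\tfrac12\log(\Ext_x(\xi)/\Ext_y(\xi))$ holds \emph{exactly} on uniquely ergodic points, which is what makes $\mu_\Ga$ genuinely $\Ga$-invariant.

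The serious gap is in the ergodicity step. You claim that a $\Ga$-invariant function on $(\ML,\mu_\Ga)$ ``descends through the cone structure to a $\Ga$-invariant function on $(\PML,\nu_x)$''. This is false: the $\Ga$-action on $\PML\times(0,\infty)$ is a skew product $g\cdot([\xi],r)=(g[\xi],c(g,\xi)r)$, and $\Ga$-invariant functions need not be constant in $r$. Ergodicity of $(\PML,\nu_x)$ is necessary but not sufficient; one must also control the fiber direction. The paper handles this via the Schmidt--Roblin \emph{essential subgroup} criterion (Proposition~\ref{prop.essanderg}): ergodicity on $\mathcal H=\partial^h X\times\R$ holds if and only if the boundary action is ergodic \emph{and} the essential subgroup $\ess_\nu(\Ga)\subset\R$ equals all of $\R$. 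The key Lemma~\ref{lem:PSEssential} shows $\Spec(\Ga)\subset\ess_\nu(\Ga)$, so non-arithmeticity of pseudo-Anosov translation lengths (Theorem~\ref{thm:nonarith}) forces $\ess_\nu(\Ga)=\R$. Your proposal nowhere mentions non-arithmeticity, and without it the argument cannot close: for instance, on a regular tree the analogous action is not ergodic precisely because the length spectrum is arithmetic.
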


Theorem \ref{thm:main1} also applies to certain normal subgroups of a divergence-type group, which are not necessarily of divergence type. See Theorem \ref{thm:ergodicnormal} and Section \ref{subsec:theoremformcg}. 

The measure $\mu_{\Ga}$ is very explicit. Delaying its construction, we first discuss unique ergodicity.
Similar to $\mathcal{R}_{\Mod(S)}$, we define the \emph{recurrence locus} $\mathcal{R}_{\Ga} \subset~\ML$ for a subgroup $\Ga < \Mod(S)$ as follows: fixing $x \in \T$,
$$
\mathcal{R}_{\Ga} := \left\{ \xi \in \ML :
\begin{matrix}
    \text{Teichm\"uller geodesic ray given by } q_{\xi} \in \mathcal{Q}(S, x) \\
    \text{recurs to a compact subset in } \Ga \ba \T
\end{matrix} \right\}
$$
where $q_{\xi} \in \mathcal{Q}(S, x)$ is the quadratic differential on the Riemann surface $(S, x)$ corresponding to $\xi \in \ML$, given by the Hubbard--Masur theorem.
This set is $\Ga$-invariant and does not depend on the choice of $x \in \T$. We construct $\mu_{\Ga}$ so that it is $\Ga$-invariant and is supported on the recurrence locus $\mathcal{R}_{\Ga}$. Moreover, we show that  $\mu_{\Gamma}$ possesses unique ergodicity.

\begin{theorem}[Unique ergodicity] \label{thm:main1Conv} 
Let $\Ga < \Mod(S)$ be a non-elementary subgroup. Suppose that there exists a nonzero, $\Ga$-invariant Radon measure $\mu$  on $\ML$ that is supported on $\mathcal{R}_{\Ga}$.  Then $\Ga$ is of divergence type and $$\mu \text{ is a constant multiple of } \mu_{\Ga}.$$
In other words,
\begin{enumerate}
\item if $\Ga$ is of convergence type, then there does not exist nonzero, $\Ga$-invariant Radon measure on $\mathcal{R}_{\Ga}$.
\item if $\Ga$ is of divergence type, then
$$
\text{the $\Ga$-action on $(\mathcal{R}_{\Ga}, \mu_{\Ga})$ is uniquely ergodic.}
$$
\end{enumerate}
\end{theorem}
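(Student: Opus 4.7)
The plan is to adapt the Patterson--Sullivan and Hopf ratio-ergodic framework to the Teichm\"uller setting, in the spirit of Roblin's treatment of $\CAT(-1)$ spaces but without relying on a smooth geodesic flow on $\Ga\ba\T$. One first constructs the candidate measure $\mu_\Ga$. Let $\{\nu_x^\Ga\}_{x\in\T}$ be a $\delta_\Ga$-conformal density on $\PML$ obtained as a weak-* subsequential limit of the normalized Poincar\'e series
\[
\frac{1}{\sum_{g\in\Ga}e^{-s\,d(x,gx)}}\sum_{g\in\Ga}e^{-s\,d(x,gx)}\delta_{gx}\qquad(s\searrow\delta_\Ga),
\]
with Patterson's trick used in the convergence case. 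Via Hubbard--Masur, each $\xi\in\ML$ determines a Teichm\"uller geodesic whose two endpoints in the Thurston compactification are $[\xi]\in\PML$ (the projective class of $\xi$) and the projective class of the horizontal foliation of $q_\xi$. This yields a partial Hopf parametrization $\ML\to\PML\times\PML\times(0,\infty)$, and $\mu_\Ga$ is defined as the pushforward of $\nu_x^\Ga\otimes\nu_x^\Ga\otimes e^{\delta_\Ga t}\,dt$ under this parametrization, corrected by the appropriate Busemann/Gromov cocycle associated to extremal length.

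For uniqueness, let $\mu$ be a nonzero $\Ga$-invariant Radon measure on $\mathcal{R}_\Ga$. I would proceed in four steps. First, I would establish conservativity of the $\Ga$-action on $(\mathcal{R}_\Ga,\mu)$: the defining recurrence of the Teichm\"uller ray generated by $\mu$-a.e.\ $\xi$ to a compact set in $\Ga\ba\T$, combined with a shadow/Borel--Cantelli argument using the partial hyperbolicity of $\T$, forces infinitely many $\Ga$-translates of any positive-measure set to overlap. Second, using the Hopf parametrization above, I would disintegrate $\mu$ over the endpoint map $\xi\mapsto[\xi]$ to obtain a Radon measure $\bar\mu$ on $\PML$ together with conditional measures on the transverse scaling rays.

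Third, by iterating a pseudo-Anosov element $\phi\in\Ga$ to emulate the Teichm\"uller geodesic flow, I would apply a Hopf ratio-ergodic argument along $\phi$-orbits and then spread to $\Ga$-orbits. This identifies the conditional measures of $\mu$ with those of $\mu_\Ga$ up to a common boundary cocycle, forcing $\bar\mu$ to be a $\delta$-conformal density on $\PML$ of some exponent $\delta$. A growth-rate estimate, using that $\mu$ charges the recurrence locus, pins down $\delta=\delta_\Ga$. The existence of a $\delta_\Ga$-conformal density supported on the conical/radial set is then equivalent to $\Ga$ being of divergence type, which simultaneously proves claim (1) (no such $\mu$ if $\Ga$ is of convergence type) and, in the divergence case, allows the standard Patterson--Sullivan uniqueness of $\nu_x^\Ga$ to conclude $\bar\mu\propto\nu_x^\Ga$ and hence $\mu\propto\mu_\Ga$.

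The main obstacle will be the third step. The Teichm\"uller space lacks global $\CAT(-1)$ geometry and a smooth geodesic flow on $\Ga\ba\T$, and the Thurston compactification is only coarsely compatible with the Teichm\"uller metric. The Busemann functions and shadow lemmas that underlie Roblin's and Burger's proofs must therefore be replaced by coarse analogs built from extremal length together with the partial hyperbolicity machinery that the paper develops in a general metric-space framework. Making the ratio-ergodic estimates sufficiently uniform to recover the Radon--Nikodym cocycle on $\PML$, in the absence of a flow on the ambient space, is the crux of the argument.
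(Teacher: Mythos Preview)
Your construction of $\mu_\Ga$ is off. The space $\ML$ is not parametrized by $\PML\times\PML\times(0,\infty)$; via Hubbard--Masur and extremal length it is $\PML\times\R$ (one boundary factor, one scaling factor), and the paper's $\mu_\Ga$ is simply $d\nu_{x_0}([\xi])\,e^{\delta_\Ga t}\,dt$ on that product, with a \emph{single} copy of the conformal density. The horizontal foliation of $q_\xi$ is determined by $\xi$ and the basepoint $x_0$, so it is not an independent coordinate. What you wrote down is the template for the Bowen--Margulis--Sullivan measure on the geodesic flow space $\mathcal{Q}^1\T$, not for the Burger--Roblin type measure on $\ML\simeq\mathcal{H}=\partial^h\T\times\R$.

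More seriously, the strategy you outline for uniqueness is not the paper's, and it has a genuine gap beyond the one you flag. The paper does \emph{not} run a Hopf ratio-ergodic argument, does not establish conservativity, and does not rely on any finite BMS measure or flow dynamics. Instead it proves directly that any $\Ga$-invariant Radon measure $\mu$ on $\Lambda_c(\Ga)\times\R$ is quasi-invariant under the translation $T_{\tau_\varphi}:(\xi,t)\mapsto(\xi,t+\tau_\varphi)$ for every squeezing (here: pseudo-Anosov) element $\varphi\in\Ga$, with $dT_{\tau_\varphi}^*\mu/d\mu$ constant. This is done by a purely geometric covering argument using the squeezing property of the axis of $\varphi$ (Theorem~\ref{thm:trbytrlengthqi}). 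The crucial ingredient you are missing is the \emph{non-arithmeticity} of the pseudo-Anosov length spectrum of $\Ga$ (Theorem~\ref{thm:nonarith}): this is what promotes quasi-invariance under the discrete set $\{\tau_\varphi\}$ to full $\R$-quasi-invariance, forcing $d\mu(\xi,t)=e^{\delta t}\,d\nu_0(\xi)\,dt$ for some $\delta\ge0$ and some finite measure $\nu_0$ on $\partial^h\T$. Only then does one read off that $\{\nu_x\}$ is a $\delta$-conformal density supported on $\Lambda_c(\Ga)$, and the Coulon--Yang version of Hopf--Tsuji--Sullivan gives $\delta=\delta_\Ga$ and divergence type. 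Your ratio-ergodic route, by contrast, would need a substitute for Roblin's finite BMS measure (which is simply unavailable for a general divergence-type subgroup) and gives no mechanism to isolate the conditional measures on the $\R$-fibers without first knowing scaling quasi-invariance.
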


The full mapping class group $\Mod(S)$ is of divergence type thanks to \cite{masur1982interval} and \cite{veech1982gauss}. Thus, Theorem \ref{thm:main1Conv} extends Theorem \ref{thm:mcgcase}.

Our approach to Theorem \ref{thm:main1Conv} is to show that $\mu$ is quasi-invariant under the sclaing of the measured laminations. It is now standard that once we have this quasi-invariance, $\mu$ must be of the desired form. In a general abstract setting, this implication from the quasi-invariance was studied under the name of Maharam measures by Aaronson, Nakada, Sarig, and Solomyak \cite[0.1 Basic Lemma]{aaronson2002invariant}. See also \cite{sarig2004invariant} for the more explicit case of abelian covers of a closed hyperbolic surface.

\begin{example}[Divergence-type subgroups]
Examples of divergence-type subgroups of $\Mod(S)$ include (1) the full  group $\Mod(S)$ and (2) convex cocompact subgroups \cite{gekhtman2013dynamics}, as discussed in more detail below.

(3)
Stabilizers of many Teichm{\"u}ller disks are other sources of subgroups of divergence type, which are also referred to as Veech groups. To give explicit examples of pseudo-Anosovs, Thurston considered in \cite{thurston1988classification} the subgroup $\Gamma < \Mod(S)$ generated by multitwists about a filling pair of multicurves on $S$. Then he showed that $\Gamma$ stabilizes a Teichm{\"u}ller disk, which is an isometrically embedded copy of $\mathbb{H}^{2}$ in $\T$. Namely, $\Gamma$ is a geometrically finite Fuchsian group and is of divergence type with respect to the Teichm{\"u}ller metric \cite{patterson1976the-limit}. Later, Veech constructed many other families of lattice stabilizers of Teichm{\"u}ller disks \cite{veech1989teichmuller}. Note that these subgroups contain multitwists and are not convex cocompact, and also that any finitely generated Veech group is of divergence type. As  Teichm{\"u}ller disks are much smaller than the ambient Teichm{\"u}ller space, they are ``infinite-covolume subgroups". 

(4) Given a collection $\mathcal{C} \subset \Mod(S)$ of independent pseudo-Anosovs $\{f_{1}, f_{2}, \ldots\}$, a ping-pong argument using the contracting properties of $f_{i}$'s guarantees the following: for suitably large $n_{i} \in \N$ for each $i$, $\langle f_{i}^{n_{i}} : f_i \in \mathcal{C} \rangle$ becomes a free subgroup of $\Mod(S)$ (cf. \cite{farb2002convex}, \cite{Mosher_Schottky}). When $\mathcal{C}$ is finite, the resulting subgroup is convex cocompact. When $\mathcal{C}$ is infinite, one can choose $n_{i}$'s carefully so that the resulting subgroup is of divergence type, which is not convex cocompact since it is not finitely generated.

(5) Moreover, many divergence-type subgroups can be constructed using covering between surfaces \cite{BS_covering}.

(6) Being divergence type is also related to the growh of orbits. Concretely, when a non-elementary subgroup $\Ga < \Mod(S)$ has \emph{purely exponential growth}, i.e., $\# \{ g \in \Ga : d(x, g x) \le R \} \asymp e^{\delta_{\Ga} R}$ as $R \to + \infty$, for (any) $x \in \T$, it is easy to see that $\Ga$ is of \emph{divergence type}. This is indeed the case for $\Mod(S)$  \cite[Theorem 1.2]{athreya2012lattice} and for convex cocompact subgroups \cite[Theorem 1.1]{gekhtman2013dynamics}.
 
This motivated the studies of Schapira and Tapie \cite{schapira2021regularity}, and of Yang \cite{yang2019statistically}, on strongly positively recurrent (SPR) groups, also known as statistically convex cocompact (SCC) groups. These groups are known to have purely exponential growth \cite[Theorem B]{yang2019statistically}  (cf. \cite[Theorem 7.26]{schapira2021regularity}), and therefore they are of \emph{divergence type}. Yang also constructed many examples of SCC subgroups in \cite{yang2019statistically}. We will not define the notion of SPR=SCC groups, but let us mention that there are non-convex-cocompact examples which are SCC \cite[Proposition 6.6]{yang2019statistically}. Our theory applies to these examples as well.

\end{example}

We now come back to our discussion of measure classifications. Farb and Mosher \cite{farb2002convex} introduced another important  family of  subgroups of divergence type. A finitely generated $\Ga < \Mod(S)$ is called \emph{convex cocompact} if it has a quasi-convex orbit in $\T$. 
We establish a measure classification for $\Gamma$ on the \emph{entire} space of measured laminations.

\begin{theorem}[Convex cocompact subgroups] \label{thm:mainCC}
    Let $\Ga < \Mod(S)$ be a non-elementary convex cocompact subgroup. Then every $\Ga$-invariant ergodic Radon measure on $\ML$ is either
    $$
    \mu_{\Ga} \quad \text{or} \quad \sum_{g \in \Ga} D_{g \cdot \xi} \text{ for some } \xi \in \ML \smallsetminus \mathcal{R}_{\Ga}
    $$
    up to a constant multiple, where $D_{g \cdot \xi}$ is the Dirac measure at $g \cdot \xi \in \ML  \smallsetminus \mathcal{R}_{\Ga}$.
\end{theorem}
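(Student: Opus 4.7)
The plan is, for any $\Ga$-invariant ergodic Radon measure $\mu$ on $\ML$, to dichotomize via ergodicity along the $\Ga$-invariant decomposition $\ML=\mathcal{R}_\Ga\sqcup(\ML\smallsetminus\mathcal{R}_\Ga)$, treating the first summand using Theorem \ref{thm:main1Conv} and the second using the proper discontinuity of the convex cocompact action on $\PML$ away from the limit set. Since a convex cocompact $\Ga<\Mod(S)$ is of divergence type, Theorem \ref{thm:main1Conv} is available. A defining feature of convex cocompactness (Farb--Mosher \cite{farb2002convex}, and Kent--Leininger / Hamenst\"adt) is that the $\Ga$-action on $\PML$ is a convergence group action whose limit set $\La_\Ga$ consists of uniquely ergodic filling projective measured laminations; in particular, $\Ga$ acts properly discontinuously on $\PML\smallsetminus\La_\Ga$, and the Teichm\"uller ray from $x\in\T$ towards $[\xi]\in\PML$ recurs to a compact subset of $\Ga\ba\T$ precisely when $[\xi]\in\La_\Ga$. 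Writing $\pi:\ML\smallsetminus\{0\}\to\PML$ for the projection, one therefore has $\mathcal{R}_\Ga=\pi^{-1}(\La_\Ga)$; and since $\pi$ is $\Ga$-equivariant and sends compact subsets of $\ML\smallsetminus\mathcal{R}_\Ga$ to compact subsets of $\PML\smallsetminus\La_\Ga$, the proper discontinuity of $\Ga$ on $\PML\smallsetminus\La_\Ga$ lifts to a proper discontinuity of $\Ga$ on $\ML\smallsetminus\mathcal{R}_\Ga$.

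Now by ergodicity of $\mu$, either $\mu$ is supported on $\mathcal{R}_\Ga$ or on $\ML\smallsetminus\mathcal{R}_\Ga$. In the first case, Theorem \ref{thm:main1Conv}(2) gives $\mu=c\,\mu_\Ga$ for some constant $c>0$. In the second case, $\mu$ is a $\Ga$-invariant Radon measure on the locally compact Hausdorff space $\ML\smallsetminus\mathcal{R}_\Ga$ carrying a properly discontinuous $\Ga$-action; it descends to a Radon measure $\bar\mu$ on the Hausdorff quotient $Y:=\Ga\ba(\ML\smallsetminus\mathcal{R}_\Ga)$, and the $\Ga$-ergodicity of $\mu$ translates into the statement that every Borel subset of $Y$ has either $\bar\mu$-measure zero or complement of $\bar\mu$-measure zero. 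A standard support argument (the union $V$ of all open subsets of $Y$ of $\bar\mu$-measure zero satisfies $\bar\mu(V)=0$ by inner regularity, and Hausdorffness combined with the zero-or-full measure property prevents $Y\smallsetminus V$ from containing two distinct points) then forces $\bar\mu$ to be a Dirac measure at a single point of $Y$. Lifting back, $\mu$ is concentrated on a single $\Ga$-orbit $\Ga\cdot\xi$ for some $\xi\in\ML\smallsetminus\mathcal{R}_\Ga$, and $\Ga$-invariance together with local finiteness of $\mu$ force $\mu=c\sum_{g\in\Ga}D_{g\cdot\xi}$ for some $c>0$.

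The main obstacle is the rigorous identification $\mathcal{R}_\Ga=\pi^{-1}(\La_\Ga)$ and the proper discontinuity of $\Ga\curvearrowright\PML\smallsetminus\La_\Ga$ for convex cocompact $\Ga$. These are essentially known consequences of the convergence-group structure of convex cocompact subgroups on $\PML$ combined with Masur's recurrence criterion for Teichm\"uller rays, but care is required since for a general non-elementary subgroup the limit set and the set of recurrent directions need not coincide; their coincidence under convex cocompactness is exactly what drives the reduction to Theorem \ref{thm:main1Conv} in the first case and to the orbit-counting description in the second.
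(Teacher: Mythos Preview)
Your proposal is correct and follows essentially the same route as the paper. The paper splits along the $\Ga$-invariant decomposition $\ML=\mathcal{R}_\Ga\sqcup(\ML\smallsetminus\mathcal{R}_\Ga)$, invokes Theorem~\ref{thm:main1Conv} on $\mathcal{R}_\Ga$ after noting that convex cocompact subgroups are of divergence type, identifies $\mathcal{R}_\Ga=\pi^{-1}(\La(\Ga))$ via the Farb--Mosher cocompactness on the weak hull, and uses proper discontinuity of $\Ga$ on $\PML\smallsetminus\La(\Ga)$ (citing McCarthy--Papadopoulos \cite{mccarthy1989dynamics} rather than the convergence-group formulation you mention) to conclude that any ergodic invariant Radon measure off $\mathcal{R}_\Ga$ is a counting measure on a single orbit. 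Your descent-to-the-quotient argument for the second case is more detailed than the paper's one-line assertion, but the logic is the same.
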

In Theorem \ref{thm:orbitclosure}, we also obtain the classification of $\Ga$-orbit closures in $\ML$. This is equivalent to considering the quotient $\Ga \ba \mathcal{Q}^1 \T$ and classifying stable manifolds under the Teichm\"uller geodesic flow.

Our complete measure classification for convex cocompact $\Gamma$ is based on explicit understanding of the $\Gamma$-action on $\ML  \smallsetminus \mathcal{R}_{\Ga}$. It would be of independent interest to understand such an action for a general subgroup, which is also relevant to the structure of limit sets in the Thurston boundary $\PML$.

\begin{remark}
We note that Theorem \ref{thm:mainCC} can also be studied using the finiteness of the Bowen--Margulis--Sullivan (BMS) measures for convex cocompact subgroups due to Gekhtman \cite{gekhtman2013dynamics}, and their strong geometric properties deduced from quasi-convexity of orbits in $\T$. Indeed, Roblin provided in \cite{Roblin2003ergodicite} a strategy based on finite BMS measures and certain geometric conditions. The authors appreciate Gekhtman and Oh for sharing this viewpoint with the authors. It would be interesting to do this alternative approach and see any byproducts.
\end{remark}

\subsection{Explicit construction of invariant Radon measures} \label{subsec:constructmeasureintro}

We now explain the construction of the measure $\mu_{\Ga}$ for non-elementary $\Ga < \Mod(S)$.

Fixing a basepoint $x \in \T$, a Borel probability measure $\nu$ on $\PML$ is called a \emph{$\delta_{\Ga}$-dimensional conformal measure} of $\Ga$ if for every $g \in \Ga$,
$$
\frac{d g_* \nu}{d \nu}([\xi]) = \left(\sqrt{ \frac{\Ext_{x}(\xi)}{\Ext_{g x}(\xi)} }\right)^{\delta_{\Ga}} \quad \text{for a.e. }[\xi] \in \PML
$$
where $\Ext_{x}(\xi)$ denotes the extremal length of $\xi \in \ML$ on the Riemann surface $(S, x)$ (see Equation \eqref{eqn:extlength} for the precise definition). Such a measure was constructed by Athreya--Bufetov--Eskin--Mirzakahni \cite{athreya2012lattice} for $\Ga = \Mod(S)$, by Gekhtman \cite{gekhtman2013dynamics} for $\Ga$ convex cocompact, and by Coulon \cite{coulon2024patterson-sullivan} and by Yang \cite{yang2024conformal} in general. 

As we fix a basepoint $x \in \T$, we can explicitly write the homeomorphism $\ML \to \PML \times (0, + \infty)$ using the Hubbard--Masur theorem as follows:
$$
\xi \mapsto \left([\xi], \sqrt{\Ext_{x}(\xi)} \right).
$$
For a non-elementary subgroup $\Ga < \Mod(S)$ and a $\delta_{\Ga}$-dimensional conformal measure $\nu$ of $\Ga$ on $\PML$, we define the Radon measure $\mu_{\nu}$ on $\ML$ by 
$$
d\mu_{\nu}(\xi) := \left(\sqrt{\Ext_{x}(\xi)} \right)^{\delta_{\Ga} - 1} \cdot d \nu([\xi]) \, d\Leb_{\R} \left( {\textstyle \sqrt{\Ext_{x}(\xi)}} \right).
$$
It follows from the conformality of $\nu$ that $\mu_{\nu}$ is $\Ga$-invariant. Moreover, while the conformal measure $\nu$ depends on the choice of $x \in \T$, the measure $\mu_{\nu}$ is independent of $x$ since the above map $\ML \to \PML \times (0, + \infty)$ also depends on $x$.

When $\Ga < \Mod(S)$ is a non-elementary subgroup of divergence type, Coulon \cite{coulon2024patterson-sullivan} and Yang \cite{yang2024conformal} showed that the $\delta_{\Ga}$-dimensional conformal measure $\nu$ of $\Ga$ is unique. Hence, we write
$$
\mu_{\Ga} := \mu_{\nu} \quad \text{on} \quad \ML
$$
in this case. Furthermore,  $\mu_{\Ga}$ is supported on the recurrence locus $\mathcal{R}_{\Ga}$.

\subsection{Horospherical foliations of $\CAT(-1)$ spaces}

Before discussing the proof, we present an application to $\CAT(-1)$ spaces. 

Let $X$ be a proper geodesic $\CAT(-1)$ space. Following \cite{Roblin2003ergodicite},  the space $\op{S}X$ of isometries $\R \to X$ serves as the role of unit tangent bundle over $X$. It admits a geodesic flow $\op{S}X \curvearrowleft \R$, which we denote by $a_t : \op{S}X \to \op{S}X$ for $t \in \R$. For $u \in \op{S}X$, its stable horosphere (or stable manifold) is the set $H^-(u)$ of $v \in \op{S}X$ such that the distance between $ua_t$ and $va_t$ tends to $0$ as $t \to + \infty$. Stable horospheres form a foliation on $\op{S}X$, called the \emph{horospherical foliation} of $X$. The space of its leaves is denoted by $\mathcal{H}$, which admits a natural action of the isometry group $\Isom(X)$.

Roblin studied measure classification for discrete subgroups of $\Isom(X)$, acting on $\mathcal{H}$. The notions of non-elementary and divergence-type subgroups are defined analogously. For a discrete subgroup $\Ga < \Isom(X)$, its recurrence locus $\mathcal{R}_{\Ga} \subset \mathcal{H}$ is also defined similarly:
$$
\mathcal{R}_{\Ga} := \{ H^-(u) \in \mathcal{H} : u a_{t} \text{ recurs to a compact subset in } \Ga \ba X \text{ as }t \to + \infty \}.
$$

We say that $\Ga$ has \emph{non-arithmetic} length spectrum if the stable translation lengths of elements in $\Ga$ generate a dense additive subgroup of $\R$.

\begin{theorem}[$\CAT(-1)$ spaces] \label{thm:maincat-1}
Let $X$ be a proper geodesic $\CAT(-1)$ space and $\Ga < \Isom(X)$ a non-elementary discrete subgroup. Suppose that 
\begin{enumerate}
    \item $\Ga$ is of divergence type, and 
    \item $\Ga$ has non-arithmetic length spectrum.
\end{enumerate}
Then
$$
\text{the $\Ga$-action on $\mathcal{R}_{\Ga}$ is uniquely ergodic.}
$$
\end{theorem}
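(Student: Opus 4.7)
The plan is to adapt the strategy the paper develops for non-elementary subgroups of $\Mod(S)$. After fixing a basepoint $o\in X$, I would identify the leaf space $\mathcal{H}$ with $\partial X\times\R$ by sending the stable leaf of $u\in\op{S}X$ to $(u(+\infty),\,\beta_{u(+\infty)}(o,u(0)))$, so that the geodesic flow acts on $\mathcal{H}$ by $a_t\colon(\xi,s)\mapsto(\xi,s+t)$ and the $\Gamma$-action reads $g\cdot(\xi,s)=(g\xi,\,s-\beta_{g\xi}(go,o))$. Divergence type furnishes a unique (up to scalar) $\delta_\Gamma$-dimensional Patterson--Sullivan conformal density $\{\nu_x\}_{x\in X}$ on $\partial X$, and I would define the candidate invariant measure
\[
    d\mu_\Gamma(\xi,s) := e^{\delta_\Gamma s}\, d\nu_o(\xi)\, ds.
\]
The conformal identity $\frac{dg_*\nu_o}{d\nu_o}(\xi) = e^{-\delta_\Gamma \beta_\xi(go,o)}$, coupled with the displayed $\Gamma$-action, gives the $\Gamma$-invariance of $\mu_\Gamma$, while the Hopf--Tsuji--Sullivan dichotomy in the divergence-type case places $\nu_o$ on the conical limit set, so that $\supp\mu_\Gamma\subset\mathcal{R}_\Gamma$.

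The heart of the argument is to show that every non-zero $\Gamma$-invariant Radon measure $\mu$ on $\mathcal{R}_\Gamma$ is quasi-invariant under the geodesic flow $a_t$, with constant Radon--Nikodym derivative $e^{-\delta_\Gamma t}$. By definition, each leaf in $\supp\mu$ is represented by a geodesic ray that recurs to a compact set $K\subset\Gamma\ba X$, producing sequences $t_n\to+\infty$ and $g_n\in\Gamma$ for which $g_n^{-1}a_{t_n}(\xi,s)$ stays in a fixed compact subset of $\mathcal{H}$. Using the $\CAT(-1)$ exponential contraction of stable leaves under $a_t$ together with the Busemann estimate $\beta_\xi(g_n o, o) \approx -t_n$ along the recurrent ray, the $\Gamma$-invariance of $\mu$ transfers into approximate identities $(a_{t_n})_*\mu \approx e^{-\delta_\Gamma t_n}\mu$ on arbitrary compact subsets of $\mathcal{H}$. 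The non-arithmetic length spectrum hypothesis is then invoked to densify the residues of the admissible times $t_n$ modulo any given $\varepsilon>0$ in $\R$; combined with the continuity of Radon measures, this upgrades the along-a-sequence identity to genuine quasi-invariance under the full one-parameter flow $\{a_t\}_{t\in\R}$.

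Once quasi-invariance is established, I would invoke the Maharam structure theorem, as the paper already does for $\ML$ (following \cite{aaronson2002invariant}), to disintegrate $\mu$ as $d\mu(\xi,s) = e^{\delta_\Gamma s}\,d\overline{\mu}(\xi)\,ds$ for some Radon measure $\overline{\mu}$ on $\partial X$. Substituting this back into the $\Gamma$-invariance of $\mu$ forces $\overline{\mu}$ to be a $\delta_\Gamma$-dimensional conformal density, and uniqueness for divergence-type groups gives $\overline{\mu}=c\,\nu_o$, hence $\mu = c\,\mu_\Gamma$. I expect the main obstacle to be the quasi-invariance step: absent the algebraic commutation between geodesic and unipotent flows available in homogeneous dynamics, the approximate identity $(a_{t_n})_*\mu\approx e^{-\delta_\Gamma t_n}\mu$ must be engineered from purely metric input --- geodesic recurrence to $K$, the contraction of stable horospheres in $\CAT(-1)$ geometry, and the non-arithmetic length spectrum to densify the approximating times. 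Making the Busemann-cocycle error estimates uniform over a neighborhood of a generic $(\xi,s)\in\mathcal{R}_\Gamma$ is where I expect the real technical work to lie.
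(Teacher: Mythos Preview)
Your overall architecture --- establish quasi-invariance of $\mu$ under the $\R$-translation, disintegrate via Maharam, identify the base as a conformal density, then invoke uniqueness --- is exactly the paper's. But there are two issues with your quasi-invariance step.

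First, a minor point: the exponent $\delta_\Gamma$ has no business appearing in the identity $(a_{t_n})_*\mu \approx e^{-\delta_\Gamma t_n}\mu$. At this stage you only know $\Gamma$-invariance of $\mu$; the critical exponent enters only at the very end, once you have $d\mu = e^{\delta s}\,d\bar\mu\,ds$ with $\bar\mu$ a $\delta$-conformal density supported on the conical limit set, and then the Hopf--Tsuji--Sullivan dichotomy (in the Coulon--Yang form, \cite[Corollary 4.25]{coulon2024patterson-sullivan}, \cite[Theorem 1.14]{yang2024conformal}) forces $\delta=\delta_\Gamma$ and divergence type. What you should be proving is $(T_{\tau_\varphi})^*\mu = e^{\lambda(\varphi)}\mu$ for each loxodromic $\varphi$, with an \emph{a priori unknown} $\lambda(\varphi)$.

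Second, and more seriously, your mechanism for quasi-invariance is underspecified in a way that hides the real difficulty. Recurrence gives you, for a single leaf $(\xi,s)$, elements $g_n$ with $g_n^{-1}a_{t_n}(\xi,s)$ bounded. But $g_n$ moves the $\partial X$-coordinate (it sends $\xi$ to $g_n^{-1}\xi$), so $g_n$ does not approximate $a_{t_n}$ on any open set, and $\Gamma$-invariance of $\mu$ does not directly yield $(a_{t_n})_*\mu\approx\mu$. You acknowledge this uniformity problem in your last sentence, but it is the entire content of the argument, not a technicality. The paper handles it quite differently: it first proves (Theorem~\ref{thm:radonCharge}) that $\mu$ is concentrated on the \emph{squeezed} limit set $\Lambda_{\varphi,C}(\Gamma)\times\R$, a set strictly smaller than the conical locus, on which every point admits arbitrarily long fellow-traveling along $\Gamma$-translates of the axis of $\varphi$. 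It then builds explicit neighborhoods $U_C(g;\varphi,n)$ (Lemma~\ref{lem:nbdBasis}) on which the conjugate $g\varphi g^{-1}\in\Gamma$ acts, up to $\epsilon$, as the pure shift $T_{\tau_\varphi}$ in the $\R$-coordinate while mapping the neighborhood into a smaller one on the $\partial X$-coordinate. A Vitali-type covering by such neighborhoods then gives $(T_{\tau_\varphi})^*\mu\ge\mu$ exactly, and ergodicity upgrades this to $(T_{\tau_\varphi})^*\mu=e^{\lambda}\mu$ (Theorem~\ref{thm:trbytrlengthqi}). In the $\CAT(-1)$ case every loxodromic is squeezing, so this machinery applies directly and Theorem~\ref{thm:maincat-1} is a special case of Theorem~\ref{thm:main2Conv}.
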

 
An analogous problem was extensively studied for rank-one symmetric space; see Burger \cite{Burger_horoc}, Roblin \cite{Roblin2003ergodicite}, Sarig \cite{Sarig_invariant}, Ledrappier--Sarig \cite{LedrappierSarig_horo}, Landesberg--Lindenstrauss \cite{LL_Radon}, Landesberg--Lee--Lindenstrauss--Oh \cite{LLLO_Horospherical} etc. For a general $\CAT(-1)$ space, this was proved by Roblin \cite{Roblin2003ergodicite} under a stronger assumption that $\Ga$ has a finite Bowen--Margulis--Sullivan measure and guarantees existence of a certain covering. The unique ergodic measures are called the Burger--Roblin measures in these settings.

Our novelty in this $\CAT(-1)$-setting is removing those finiteness and covering assumptions, which was possible because we do not rely on mixing of continuous flows or certain dynamical systems with finite measures. 

In \cite{CK_product}, we extend our measure classification to products of $\CAT(-1)$ spaces, by considering so-called transverse subgroups. Such a product space is $\CAT(0)$ and the plethora of higher dimensional flats forbids the space from possessing some negatively curved feature, even partially. Still, our techniques developed in this paper can be extended further in that setting by employing more geometric aspects of transverse subgroups.

\subsection{More examples}
Let us mention one interesting families of examples, in addition to the mapping class groups and $\CAT(-1)$ groups. In \cite{genevois2019partially}, Genevois and  Stocker considered a geodesically complete proper $\CAT(0)$ space $X$ with a point with a $\CAT(-1)$ neighborhood, and a non-elementary subgroup $\Gamma < \Isom(X)$ acting geometrically on $X$. They proved that $\Gamma$ contains a contracting isometry whose quasi-axis passes through the aforementioned $\CAT(-1)$ neighborhood several times. In fact, this contracting isometry is squeezing. 
Meanwhile, since $\Gamma$ acts non-elementarily and geometrically on $X$, it is necessarily of divergence type \cite[Theorem B]{yang2019statistically}.

\subsection{Metric spaces with squeezing isometries}

Our proofs of 
Theorem \ref{thm:main1}, Theorem \ref{thm:main1Conv}, and Theorem \ref{thm:maincat-1} are based on the special geometric feature of the axis of  a pseudo-Anosov mapping class and the axis of a loxodromic isometry on a $\CAT(-1)$ space, which we call \emph{squeezing property} in this paper.

In the rest of the introduction, let $(X, d)$ be a proper geodesic metric space. For a geodeic $\ga \subset X$, we denote by $\pi_{\ga} (\cdot)$ the \emph{nearest-point projection}  map onto $\gamma$. We consider two notions of hyperbolicity for $\gamma$. 
\begin{itemize}
    \item We say that a geodesic $\gamma\subset X$ is \emph{contracting} if there exists $K>0$ such that, for every geodesic $\eta \subset X$, points that are $K$-deep in the convex hull of $\pi_{\gamma}(\eta)$ is $K$-close to $\eta$.
\item We say that a geodesic $\gamma \subset X$ is \emph{squeezing} if for each $\epsilon>0$ there exists $K = K(\epsilon) > 0$ such that,  for every geodesic $\eta \subset X$, points that are $K$-deep in the convex hull of $\pi_{\gamma}(\eta)$ is $\epsilon$-close to $\eta$.
\end{itemize}
We say that an isometry $g \in \Isom(X)$ is contracting (squeezing, resp.) if it admits a contracting (squeezing, resp.) axis. The squeezing property can be regarded as a quantitative version of the contracting property. See Figure \ref{figure.contractingsqueezing} for a rough sketch, and Section \ref{section:prelim} for precise definitions.

\begin{figure}[h]
\begin{tikzpicture}[scale=0.45]
    \draw[thick] (-4, 0) -- (4, 0);
    \filldraw (-4, 0) circle(2pt);
    \filldraw (4, 0) circle(2pt);

    \draw (2, 2) node {\color{red}  \tiny $\eta$};

    \draw[dashed, teal, thick] (-6, 2) .. controls (-5, 2) and (-3.5, 1) .. (-3.5, 0);
    \draw[dashed, teal, thick] (6.5, 3) .. controls (5, 3) and (3.5, 1) .. (3.5, 0);

    \filldraw[teal] (-3.5, 0) circle(2pt);
    \filldraw[teal] (3.5, 0) circle(2pt);

    \filldraw[teal] (-1, 0) circle(2pt);
    \filldraw[teal] (1, 0) circle(2pt);

    \draw[red, thick] (-6, 2) .. controls (-5, 2)  and (-3.5, 1.5) .. (-3.3, 0.9) .. controls (-3.1, 0.3) and (-2.8, 0.9) .. (-2.5, 0.9) .. controls (-2.4, 0.9) and (-2.2, 0.7) .. (-2.1, 0.6) .. controls (-2, 0.5) and (-1.8, 0.5) .. (-1.7, 0.8) .. controls (-1.6, 1.1) and (-1.5, 1.1) .. (-1.4, 0.8) .. controls (-1.3, 0.5) and (-1, 0.5) .. (-0.9, 0.8) .. controls (-0.8, 1.0) and (-0.4, 1.0) .. (-0.1, 0.7) .. controls (0.1, 0.6) and (0.6, 0.6) .. (1, 0.8) .. controls (1.4, 1) and (2, 1) .. (2.4, 0.6) .. controls (2.6, 0.4) and (3, 0.4) .. (3.2, 0.8) .. controls (4.3, 3) and (6, 3) .. (6.5, 3);

    \filldraw (-6, 2) circle(2pt);
    \filldraw (6.5, 3) circle(2pt);

    \filldraw[color=blue!80, opacity=0.3] (-1, 1) arc (90:270:1) -- (1, -1) arc(-90:90:1) -- (-1, 1);

    \draw[<->, teal, thick] (-3.5, -0.25) -- (-1, -0.25);
    \draw[<->, teal, thick] (3.5, -0.25) -- (1, -0.25);

    \draw (2.55, -0.1) node[below] {\color{teal} \tiny $\ge K$};

    \draw (4, 0) node[right] {\tiny $\ga$};

\end{tikzpicture}
\qquad
\begin{tikzpicture}[scale=0.45]
    \draw[thick] (-4, 0) -- (4, 0);
    \filldraw (-4, 0) circle(2pt);
    \filldraw (4, 0) circle(2pt);

    \draw (2, 2) node {\color{red} \tiny$\eta$};

    \draw[dashed, teal, thick] (-6, 2) .. controls (-5, 2) and (-3.5, 1) .. (-3.5, 0);
    \draw[dashed, teal, thick] (6.5, 3) .. controls (5, 3) and (3.5, 1) .. (3.5, 0);

    \filldraw[teal] (-3.5, 0) circle(2pt);
    \filldraw[teal] (3.5, 0) circle(2pt);

    \draw[red, thick] (-6, 2) .. controls (-3.5, 2)  and (-3.3, 0.3) .. (-2.7, 0.3) .. controls (-2.5, 0.2) and (2.5, 0.2) .. (2.8, 0.5) .. controls (3.3, 0.7) and (3.5, 3) .. (6.5, 3);

    \filldraw (-6, 2) circle(2pt);
    \filldraw (6.5, 3) circle(2pt); 

    \filldraw (0, 0) circle(2pt);
    \filldraw[color=blue!80, opacity=0.3] (0, 0) circle(0.5);

    \draw[<->, teal, thick] (-3.5, -0.25) -- (0, -0.25);
        \draw[<->, teal, thick] (3.5, -0.25) -- (0, -0.25);

    \draw (2, -0.1) node[below] {\color{teal} \tiny $\ge K$};

    \draw (4, 0) node[right] {\tiny $\ga$}; 

\end{tikzpicture}
\caption{A contracting geodesic (left) and a squeezing geodesic (right)} \label{figure.contractingsqueezing}
\end{figure}
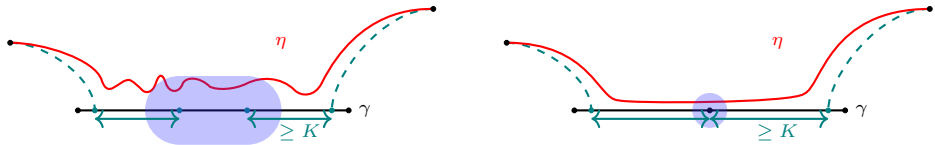

It is well known that every geodesic in a $\CAT(-1)$ space is squeezing. Hence, every loxodromic isometry is squeezing there. Furthermore, in the mapping class group viewed as the isometry group of Teichm\"uller space, every pseudo-Anosov mapping class is contracting  due to Minsky \cite{minsky1996quasi-projections}, and moreover, it is squeezing. See Section \ref{subsection:squeezepA} for further context.

We prove the aforementioned results for Teichm\"uller spaces and CAT($-1$) spaces by studying ergodic theory under the presence of squeezing isometries. We mainly consider a \emph{non-elementary} subgroup $\Ga < \Isom(X)$, i.e., a non-virtually-cyclic subgroup with a contracting isometry acting properly on $X$. The notion of divergence-type is defined analogously to the one for subgroups of $\Mod(S)$. We say that the \emph{squeezing spectrum} of $\Ga$
$$
\Spec(\Ga) := \{ \text{translation length of } g : g \in \Ga \text{ is squeezing}\} \subset \R
$$
is \emph{non-arithmetic} if $\Spec(\Ga)$ generates a dense additive subgroup of $\R$.

It remains to define an object corresponding to the space of measured laminations on a surface and to the horospherical foliation of a $\CAT(-1)$ space. We consider the product space \footnote{We use the same notation as in the $\CAT(-1)$ case, because this is precisely the horospherical foliation when $X$ is $\CAT(-1)$.}
$$
\mathcal{H} := \partial^{h} X \times \R
$$
where $\partial^{h} X$ is the horofunction boundary of $X$. Each point in $\partial^{h} X$ is identified with a 1-Lipschitz cocycle on $X$, and this induces a natural $\Isom(X)$-action on the space $\mathcal{H}$: the action on the $\R$-component is given by the cocycle determined by the $\partial^{h}X$-component. See Section \ref{subsection:horo} for details.

Theorem \ref{thm:main1} will follow from the following more general statement.

\begin{theorem}[Ergodicity] \label{thm:main2}
Let $(X, d)$ be a proper geodesic metric space and let $\Ga < \Isom(X)$ be a non-elementary subgroup. Suppose that
\begin{enumerate}
    \item $\Ga$ is of divergence type, and 
    \item the squeezing spectrum $\Spec(\Ga)$ is non-arithmetic.
\end{enumerate}
Then there exists a nonzero, $\Ga$-invariant Radon measure $\mu_{\Ga}$ on $\mathcal{H}$ such that
$$
\text{the $\Ga$-action on $(\mathcal{H}, \mu_{\Ga})$ is ergodic.}
$$
\end{theorem}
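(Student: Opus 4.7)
The plan is to construct $\mu_{\Gamma}$ explicitly, in direct analogy with Section \ref{subsec:constructmeasureintro}, by pairing a Patterson--Sullivan conformal density on $\partial^{h}X$ with a Lebesgue factor on $\mathbb{R}$, and then to prove ergodicity in two stages.

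For the construction, I would invoke the Patterson--Sullivan theory for proper geodesic metric spaces with a non-elementary contracting subgroup (Coulon \cite{coulon2024patterson-sullivan}, Yang \cite{yang2024conformal}) to produce a $\delta_{\Gamma}$-dimensional conformal density $\nu$ on $\partial^{h}X$; by divergence-type, such $\nu$ is unique up to scale and supported on the limit set. Fixing a basepoint $o \in X$, I would then define
$$
d\mu_{\Gamma}(\xi, s) := e^{\delta_{\Gamma} s}\, d\nu(\xi)\, ds \quad \text{on } \mathcal{H} = \partial^{h}X \times \mathbb{R}.
$$
Since the $\Isom(X)$-action on the $\mathbb{R}$-factor of $\mathcal{H}$ shifts by precisely the Busemann-type cocycle whose exponential drives the transformation law of $\nu$, a routine computation shows that $\mu_{\Gamma}$ is a nonzero $\Gamma$-invariant Radon measure on $\mathcal{H}$, and the Maharam structure makes it independent of the choice of $o$.

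For ergodicity, let $A \subseteq \mathcal{H}$ be $\Gamma$-invariant Borel and set $A_\xi := \{s \in \mathbb{R} : (\xi, s) \in A\}$. The two-step scheme is: \emph{(i)} for $\nu$-a.e.\ $\xi$, show $A_\xi$ is either Lebesgue-null or co-null in $\mathbb{R}$; \emph{(ii)} deduce that $A$ is $\mu_\Gamma$-null or co-null. For (i), take a squeezing $g \in \Gamma$ with attracting fixed point $\xi_g^+$. The squeezing property gives uniform control of the Busemann shift of $g^n$: for $\nu$-a.e.\ $\xi$ lying in a shadow of $\xi_g^+$, this shift differs from $n\,\ell(g)$ by an error tending to $0$ as $n \to \infty$, where $\ell(g)$ is the stable translation length. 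Combined with the $\Gamma$-invariance of $A$, this forces $A_\xi$ to be invariant modulo null sets under translation by $\ell(g) \in \Spec(\Gamma)$. Running this over a countable family of squeezing $g$'s whose attractors cover $\nu$-a.e.\ of the limit set, and using the non-arithmeticity of $\Spec(\Gamma)$, we conclude that $A_\xi$ is essentially invariant under a dense additive subgroup of $\mathbb{R}$, giving the dichotomy. For (ii), the set $B := \{\xi : A_\xi \text{ is co-null}\}$ is $\Gamma$-invariant, so by $\Gamma$-ergodicity of $(\partial^{h}X, \nu)$ for divergence-type groups (shadow lemma plus Borel--Cantelli in the Coulon--Yang framework), $\nu(B)$ is $0$ or full, and the claim follows.

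The hard part is step (i). In $\CAT(-1)$ or homogeneous settings, the analogous statement is obtained from mixing of a continuous flow via the Hopf argument, or from unipotent-flow rigidity; here we have neither, and $\mu_\Gamma$ need not be finite on any natural quotient. The substitute is the squeezing property, which is quantitative enough to pass from approximate $\mathbb{R}$-invariance of $A_\xi$ along iterates of $g$ to honest translation invariance in the limit, \emph{uniformly} over a positive-$\nu$-measure shadow rather than just at $\xi = \xi_g^+$. Making this comparison precise -- controlling how the error depends on $\xi$ and propagating null sets consistently -- is the technical heart of the argument and is exactly why squeezing, rather than merely contracting, is required.
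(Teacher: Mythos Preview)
Your construction of $\mu_{\Gamma}$ is exactly the paper's (Definition \ref{def:candidateergodicmeasure}), and your invocation of Coulon--Yang for the existence, uniqueness, and boundary-ergodicity of $\nu$ is correct. The overall shape of step (ii) is also fine and matches the paper.

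The gap is in step (i). Applying $g^{n}$ to $(\xi,t)$ yields $\bigl(g^{n}\xi,\ t+\beta_{\xi}(g^{-n}x_{0},x_{0})\bigr)$, so $\Gamma$-invariance of $A$ gives a relation between the \emph{different} fibres $A_{\xi}$ and $A_{g^{n}\xi}$, not an invariance of the fixed fibre $A_{\xi}$. Your statement ``this forces $A_{\xi}$ to be invariant modulo null sets under translation by $\ell(g)$'' is therefore unjustified: it holds literally only at $\xi=\xi_{g}^{+}$, a single point of $\nu$-measure zero. For a generic $\xi$ in a shadow of $\xi_{g}^{+}$, the forward orbit $g^{n}\xi$ escapes to $\xi_{g}^{+}$ and never returns near $\xi$, so no amount of squeezing turns this into an approximate self-map of the fibre over $\xi$. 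Covering the boundary by attractors of countably many squeezing elements does not help, since each attractor is still a $\nu$-null point.

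What is missing is a mechanism producing, for a set of $\xi$ of positive $\nu$-measure, group elements that nearly \emph{fix} $\xi$ while shifting the $\mathbb{R}$-coordinate by approximately $\tau_{\varphi}$. The paper supplies this via Schmidt--Roblin essential subgroups: by Proposition \ref{prop.essanderg}, ergodicity of $(\mathcal{H},\mu_{\Gamma})$ is equivalent to ergodicity of $(\partial^{h}X,\nu)$ together with $\ess_{\nu}(\Gamma)=\mathbb{R}$. Lemma \ref{lem:PSEssential} then shows $\Spec(\Gamma)\subset\ess_{\nu}(\Gamma)$ by using \emph{conjugates} $g\varphi g^{-1}$ with $g$ chosen (via the squeezed limit set and the covering machinery from the proof of Theorem \ref{thm:trbytrlengthqi}) so that $g\varphi g^{-1}$ sends a positive-measure piece of $E$ back into $E$ with Busemann shift $\epsilon$-close to $\tau_{\varphi}$. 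Non-arithmeticity of $\Spec(\Gamma)$ and closedness of $\ess_{\nu}(\Gamma)$ finish the proof. The conjugation step is precisely the ``return'' your argument lacks.
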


\begin{remark}

As for Theorem \ref{thm:main1},  our ergodicity result applies to normal subgroups of a divergence-type group, which are not necessarily of divergence type (Theorem \ref{thm:ergodicnormal}). 
\end{remark}

As in the setting of Teichm\"uller spaces, the measure $\mu_{\Ga}$ is explicitly constructed, and it turns out that $\mu_{\Ga}$ is supported on the region corresponding to the recurrence locus. This region is given by
$$
\La_{c}(\Ga) \times \R \subset \mathcal{H}
$$
where $\La_{c}(\Ga) \subset \partial^{h} X$ is the \emph{conical limit set} of $\Ga$ (Definition \ref{def:conical}). Delaying the construction of $\mu_{\Ga}$, we state the unique ergodicity theorem, from which Theorem \ref{thm:main1Conv} and Theorem \ref{thm:maincat-1} follow. 

\begin{theorem}[Unique ergodicity] \label{thm:main2Conv}
Let $(X, d)$ be a proper geodesic metric space and let $\Ga < \Isom(X)$ be a non-elementary subgroup such that $\Spec(\Ga)$ is non-arithmetic. Suppose that there exists a nonzero, $\Ga$-invariant Radon measure $\mu$ on $\mathcal{H}$  supported on $\La_{c}(\Ga) \times \R$. Then $\Ga$ is of divergence type and 
$$
\text{$\mu$ is a constant multiple of $\mu_{\Ga}$.}
$$
In other words, \begin{enumerate}
\item if $\Ga$ is of convergence type, then there does not exist a nonzero, $\Ga$-invariant Radon measure on $\La_{c}(\Ga) \times \R$.
\item if $\Ga$ is of divergence type, then
$$
\text{the $\Ga$-action on $(\La_{c}(\Ga) \times \R, \mu_{\Ga})$ is uniquely ergodic.}
$$
\end{enumerate}
\end{theorem}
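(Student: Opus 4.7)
My approach follows the Maharam-measures strategy highlighted by the authors right after the statement of Theorem \ref{thm:main1Conv}: the plan is to reduce the classification of $\mu$ to the classification of its conditional measures on the $\mathbb{R}$-fibers of $\mathcal{H}=\partial^h X\times \mathbb{R}$. Concretely, I would show that any such $\mu$ is quasi-invariant under the $\mathbb{R}$-translation flow $T_s:(\xi,t)\mapsto(\xi,t+s)$ with Radon--Nikodym cocycle $dT_{s*}\mu/d\mu=e^{-\delta_\Gamma s}$. Once this is in hand, the basic lemma of Aaronson--Nakada--Sarig--Solomyak \cite[0.1 Basic Lemma]{aaronson2002invariant} forces
\[
d\mu(\xi,t)\;=\;e^{-\delta_\Gamma t}\,d\nu(\xi)\,dt
\]
for some Radon measure $\nu$ on $\partial^h X$, and $\Gamma$-invariance of $\mu$ combined with the fact that the $\Gamma$-action on $\mathcal{H}$ translates the $\mathbb{R}$-factor by the horofunction cocycle translates exactly into $\nu$ being a $\delta_\Gamma$-dimensional conformal measure supported on $\Lambda_c(\Gamma)$. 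A Hopf--Tsuji--Sullivan-type dichotomy then rules out such a measure when $\Gamma$ is of convergence type, giving part (1); in the divergence case, the uniqueness of the $\delta_\Gamma$-conformal measure due to Coulon \cite{coulon2024patterson-sullivan} and Yang \cite{yang2024conformal} identifies $\nu$ with a scalar multiple of the measure used to build $\mu_\Gamma$, giving part (2).

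The technical core is the $\mathbb{R}$-quasi-invariance, which I would obtain from squeezing isometries. Fix a squeezing $g\in\Gamma$ with attracting fixed point $g^+\in\partial^h X$ and stable translation length $\ell(g)$. The squeezing property says that for any $\epsilon>0$ and large $n$, the iterate $g^n$ sends every point of $\partial^h X$ outside a small neighborhood of $g^-$ into an $\epsilon$-neighborhood of $g^+$ with arbitrarily small distortion, and acts on the $\mathbb{R}$-fiber by translation close to $n\ell(g)$. Locally disintegrating $\mu$ over the $\partial^h X$-factor as $\mu=\int m_\xi\,d\eta(\xi)$ and applying $\Gamma$-invariance together with a Lebesgue-type differentiation argument at conical limit points approximated by the $\Gamma$-orbit of $g^+$, I would extract the identity
\[
m_\xi(E+\ell(g))\;=\;e^{-\delta_\Gamma \ell(g)}\,m_\xi(E)
\]
for $\eta$-a.e.\ $\xi$ and every Borel $E\subset\mathbb{R}$. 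The support assumption $\operatorname{supp}\mu\subset\Lambda_c(\Gamma)\times\mathbb{R}$ is essential here: it guarantees that $\mu$-typical $\xi$ is indeed well-approximated by the dynamics of squeezing elements.

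Letting $g$ range over all squeezing elements gives quasi-invariance of $\mu$ under every translation in $\Spec(\Gamma)$ with the stated exponential cocycle. Non-arithmeticity then promotes this to quasi-invariance under a dense subgroup of $\mathbb{R}$; since the flow $T_s$ acts continuously on Radon measures and $\mu$ is Radon, the quasi-invariance extends to all $s\in\mathbb{R}$. The Maharam lemma and the conformal-measure inputs described above then close the argument.

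The main obstacle will be the disintegration and Lebesgue-differentiation step in the second paragraph. The $\Gamma$-action on $\partial^h X$ in our generality has no smooth or even metric-regular structure, so one cannot appeal to standard differentiation-of-measures on a manifold. A purely combinatorial-geometric substitute is needed, using only the quantifier $K(\epsilon)$ in the squeezing definition together with a shadow-type lemma that controls how a conformal density on $\partial^h X$ relates to conditional masses on $\mathbb{R}$-fibers. A secondary subtlety is ensuring that the quasi-invariance relations obtained from different squeezing elements are mutually consistent on a full-measure set, so that the non-arithmeticity-driven density argument in the third paragraph goes through cleanly.
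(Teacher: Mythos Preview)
Your high-level strategy matches the paper's: establish $\mathbb{R}$-quasi-invariance of $\mu$ with exponential Radon--Nikodym derivative, disintegrate as $e^{\delta t}\,d\nu(\xi)\,dt$, read off conformality of $\nu$ from $\Gamma$-invariance, and invoke the Hopf--Tsuji--Sullivan dichotomy of Coulon and Yang. Two points need correction.

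First, you cannot assert a priori that the exponent is $\delta_\Gamma$. The paper's argument (Theorem~\ref{thm:trbytrlengthqi}) only produces, for each squeezing $\varphi$, some $\lambda\ge 0$ with $dT_\varphi^*\mu/d\mu=e^\lambda$. One then shows these $\lambda$'s assemble into a linear map $a\mapsto \delta a$ on the dense subgroup generated by $\Spec(\Gamma)$, extends to all of $\mathbb{R}$ by continuity of the flow, and only \emph{afterwards} identifies $\delta=\delta_\Gamma$ by observing that the resulting $\nu$ is a $\delta$-conformal density charging $\Lambda_c(\Gamma)$, which forces $\delta=\delta_\Gamma$ and divergence type via Hopf--Tsuji--Sullivan.

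Second, and more seriously, your route to quasi-invariance via disintegration and ``Lebesgue-type differentiation at conical limit points'' does not work as stated, and the paper takes a genuinely different path. The conical condition only says some $\Gamma$-orbit sequence comes close to a geodesic ray toward $\xi$; it does \emph{not} say this sequence fellow-travels a translate of the axis of a fixed squeezing $\varphi$, which is what you need to make the $\mathbb{R}$-fiber displacement close to $\ell(\varphi)$. The paper inserts a substantial intermediate step (Theorem~\ref{thm:radonCharge}): any $\Gamma$-invariant ergodic Radon measure supported on $\Lambda_c(\Gamma)\times\mathbb{R}$ is in fact supported on the smaller $(\varphi,C)$-squeezed limit set $\Lambda_{\varphi,C}(\Gamma)\times\mathbb{R}$ for every contracting $\varphi$. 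Only on this smaller set does the squeezing geometry give enough control. The quasi-invariance is then proved (Theorem~\ref{thm:trbytrlengthqi}) without any disintegration: the sets $U_C(g;\varphi,n)$ form a neighborhood basis for $\Lambda_{\varphi,C}(\Gamma)$ (Lemma~\ref{lem:nbdBasis}), and one builds explicit disjoint covers from these, applies conjugates $g\varphi g^{-1}$ piecewise, and compares measures directly using $\Gamma$-invariance and the squeezing estimate to control the $\mathbb{R}$-displacement. This replaces your differentiation argument with a purely combinatorial covering argument that needs no metric structure on $\partial^h X$.
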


We note that non-arithmeticity of the squeezing spectrum is essential. In general, the horospherical foliation is not uniquely ergodic  without non-arithmeticity: there are uncountably many mutually singular invariant measures on the horospherical foliation of a standard Cayley graph of a free group.
For non-elementary subgroups of mapping class groups, the non-arithmeticity was proved by Gekhtman \cite{gekhtman2013dynamics} and  is elaborated in the work of Gekhtman and Ma \cite{gekhtman2023dynamics}. In this sense, we use not only the ``partial hyperbolicity" of $\Mod(S)$ but also certain features of $\Mod(S)$ shared with Zariski dense subgroups of Lie groups.

\subsection{On the proof}
A key point of our argument is to relate $\Gamma$-invariant measures on the horospherical foliation with $\Gamma$-conformal measures on the horoboundary. As explained earlier, we establish this connection by showing the quasi-invariance of a given measure under the scaling of measured laminations.
The general theory that such an extra quasi-invariance gives a specific from of the measure was given in \cite{aaronson2002invariant} for skew-product dynamical systems, and an explicit negatively-curved setting was studied in \cite{sarig2004invariant}. This strategy for the Teichm{\"u}ller space was first studied by Hamenst{\"a}dt \cite{hamenstadt2009invariant} using train track theory.

Our contribution is to execute this strategy for subgroups of mapping class groups, using coarse geometry of the Teichm{\"u}ller space instead of train track theory. This is thanks to the recent advance of the Patterson--Sullivan theory on general metric spaces. Initially formulated by Patterson \cite{patterson1976the-limit} and Sullivan \cite{Sullivan1979density}, the Patterson--Sullivan theory provides a powerful tool to relate growth, orbit counting, and conformal densities for  groups acting on hyperbolic spaces. The general framework for $\CAT(-1)$ spaces is due to Roblin \cite{Roblin2003ergodicite}. Coornaert studied the Patterson--Sullivan theory for Gromov hyperbolic metrics that may not be $\CAT(-1)$ \cite{coornaert1993mesures}. Recently, Roblin's and Coornaert's theories were generalized to metric spaces with contracting isometries, independently by Coulon \cite{coulon2024patterson-sullivan} and  Yang \cite{yang2024conformal}.

An important component of the proof is to relate conformal measures with ergodic invariant measures on $\ML$. In contrast to previous literatures which heavily use dynamics of continuous flows, we do not make use of dynamical properties (ergodic theorems, mixing, equidistribution, etc.) of the geodesic/unipotent flows on the Teichm\"uller space, and the theory of train tracks and the curve complex. Instead, 
we employ a geometric apporach based on the contracting property of axes of pseudo-Anosov mapping classes due to Minsky \cite{minsky1996quasi-projections}, and our observation that the contracting property can be enhanced to the \emph{squeezing} property (Proposition \ref{prop.pAsqueezing}).

To elaborate further, we first show that every invariant  Radon measure on the recurrent measured laminations is moreover supported on a smaller subset of measured laminations, whose projective classes are accumulated by orbits fellow traveling the translates of a chosen pseudo-Anosov axis in the Thurston compactification (Theorem \ref{thm:radonCharge}). This step reflects complicated nature of the shape of the Teichm{\"u}ller space compared to that of Gromov hyperbolic spaces and homogeneous spaces. Indeed, this step does not appear in previous works on $\CAT(-1)$ or homogeneous settings that rely on dynamics; note that the recurrent measured laminations correspond to conical limit points, and the subset with ``fellow traveling accumulation'' is much smaller.

The first step above uses the contracting property of pseudo-Anosov axes. Next, using a stronger \emph{squeezing} property, we conduct a finer investigation on ``fellow traveling accumulations'' along the translates of a chosen pseudo-Anosov axis. Together with the concentration of the measure on fellow traveling regions in the first step, we show the quasi-invariance of the measure under the scaling by the stretch factor of each pseudo-Anosov mapping class (Theorem \ref{thm:trbytrlengthqi}).

Since we do not rely on dynamics of geodesic/unipotent flows, we can deal with \emph{general non-elementary subgroups} even under the non-homogeneous and non-negatively curved geometric feature of the Teichm\"uller space.

Finally, we emphasize that the current techniques can be extended further to yield horospherical measure classification results for certain infinite-covolume higher-rank discrete subgroups. See \cite{CK_product} for details.

\subsection{Non-geodesic spaces}

There is an important class of  metric spaces that we will not discuss in detail. Let $\Gamma$ be a non-elementary relatively hyperbolic group. Let $m$ be a finitely supported, admissible, symmetric probability measure on $\Gamma$. There is a left-invariant metric $d_{m}$ on $\Gamma$ associated with $m$, called the \emph{Green metric}. This metric is quasi-isometric to the word metric and is Gromov hyperbolic, but is not geodesic in general. Nonetheless, it is \emph{roughly geodesic}. In a sense, every loxodromic element of $\Gamma$ is a \emph{squeezing} isometry with respect to $d_{m}$, even if it does not possess an axis. Hence, under the non-arithmeticity assumption, we expect that there exists a unique nonzero, $\Gamma$-invariant ergodic Radon measure on the $d_{m}$-horospherical foliation. This unique measure is in the class of $(\emph{$m$-stationary measure}) \times \Leb_{\R}$.

In order to apply our theory to Green metrics, one needs to replace geodesics with rough geodesics. We leave it for future studies.

\subsection{Organization} 

In Section \ref{sec:Teichmueller}, we present a brief overview of the Teichm\"uller space, especially about its boundary. We define the Hubbard--Masur coordinates for the space of measured laminations in Section  \ref{sec:HubbardMasur}. In Section \ref{sec:pAsqueezing}, we discuss some aspects of pseudo-Anosov mapping classes, including contracting and squeezing properties of their axes. We discuss the geometry of a general metric space with contracting and squeezing isometries in  Section \ref{section:prelim}.
Section \ref{section:ps} is devoted to the Patterson--Sullivan theory with squeezing isometries.
In Section \ref{sec:UE}, we prove the unique ergodicity (Theorem \ref{thm:main2Conv}). The ergodicity (Theorem \ref{thm:main2}) is proved in 
Section \ref{sec:ergodicity}.
In Section \ref{sec:mcgsubgroup}, we deduce our measure classifications on the space of measured laminations (Theorem \ref{thm:main1}, Theorem \ref{thm:main1Conv}, and Theorem \ref{thm:mainCC}).
Our orbit closure classification is presented in  Section \ref{sec:orbitclosure}.

\subsection*{Acknowledgements} The authors would like to thank Yair Minsky and Hee Oh for helpful conversations and for many  useful comments on the earlier
version of this paper. Kim extends his special gratitude to his Ph.D. advisor Hee Oh for her encouragement and guidance.

 This work was initiated when Choi was working at Cornell University. 
Kim thanks Cornell University for the hospitality during the visit in April 2025, where we had many interesting discussions.

Choi was supported by the Mid-Career Researcher Program (RS-2023-00278510) through the National Research Foundation funded by the government of Korea, and by the KIAS individual grant (MG091901) at KIAS.

\section{Teichm\"uller theory} \label{sec:Teichmueller}

In this section, we review basic Teichm\"uller theory. Our explanation is minimal and we refer interested readers to \cite{masur2009geometry}, \cite{farb2012primer} and \cite{gekhtman2023dynamics}.

In the rest of this paper, let $S$ be a connected orientable surface of genus $g$ and with $p$ punctures with $3g - 3+ p \ge 1$.
 Recall from the introduction that the Teichm\"uller spcae $\T = \T(S)$ is the space of all marked Riemann surface structures on $S$. More precisely, 
$$
\T := \left\{(X, f) : 
\begin{matrix}
X \text{ is a Riemann surface and} \\
 f : S \to X \text{ is a diffeomorphism (marking)} 
\end{matrix} \middle\} \right/ \sim
$$
where $(X, f) \sim (Y, h)$  if $h \circ f^{-1} : X \to Y$ is isotopic to a bi-holomorphic diffeomorphism. We equip $\T$ with the \emph{Teichm{\"u}ller metric} $d_{\T}$, defined as
$$
d_{\T}((X, f), (Y, h)) := \frac{1}{2} \log \inf \left\{ K(\phi) : 
\begin{matrix}
\text{quasiconformal } \phi : X \to Y \\
\text{isotopic to } h \circ f^{-1}
\end{matrix} \right\}
$$
where $K(\phi) \ge 1$ denotes the quasiconformal dilatation of $\phi$.
The metric $d_{\T}$ is Finsler, proper, uniquely geodesic but not Riemannian.
The Teichm\"uller space $\T$ is homeomorphic to $\R^{6g-6+2p}$.

The mapping class group $\Mod(S)$ of $S$ is the group of isotopy classes of orientation-preserving diffeomorphisms of $S$. It acts on $(\T, d_{\T})$ by
$$
g \cdot (X, f) := (X, f \circ g^{-1}) \quad \text{for } g \in \Mod(S) \text{ and } (X, f) \in \T.
$$
We note that $\Mod(S)$ is more or less the full isometry group of $(\T, d_{\T})$, as shown by Royden \cite[Theorem 2]{royden1971automorphisms} and by  Earle and  Kra (\cite{earle1974on-holomorphic}, \cite{earle1974on-isometries}). See also Ivanov's work \cite{MR1819186}. The quotient
$$
\mathcal{M} = \mathcal{M}(S) := \Mod(S) \ba \T
$$
is the moduli space of Riemann surface structures on $S$.

Note that in the exceptional case when $3g - 3 + p = 1$ (i.e., $S$ is once-punctured torus or 4-punctured sphere), $\T = \H^2$. Hence, the major case of our interest is when $3g - 3 + p \ge 2$.

 \subsection{Thurston boundary} Recall the space $\ML = \ML(S)$ of measured laminations on $S$, which is homeomorphic to $\R^{6g - 6 + 2p} \smallsetminus \{0\}$. We denote by $\PML = \PML(S)$ the space of projective measured laminations on $S$, where the projectivization is given by the scaling of transverse measures. The space $\PML$ can also be identified with the unit sphere in $\ML \simeq \R^{6g - 7 + 2p} \smallsetminus \{0\}$, and hence $\PML \simeq \S^{6g- 7 + 2p}$.  Thurston compactified the Teichm\"uller space using $\PML$ and showed that the $\Mod(S)$-action on $\T$ continuously extends to the compactification $\T \cup \PML$. In this regard, $\PML$ is also referred to as the \emph{Thuston boundary} of $\T$ (\cite{thurston1988classification}, \cite{MR1435975}).

There is another notion called \emph{measured foliation} on surfaces. The space of measured foliations (up to equivalence) on $S$ is denoted by $\MF = \MF(S)$, and we also denote its projectivization by $\PMF = \PMF(S)$. In fact, $\PML$ and $\PMF$ give rise to the same compactification of $\T$. By this we mean that not only the two spaces are homeomorphic via a map $\mathcal{PML} \to \mathcal{PMF}$, but also that the convergence in $\T \cup \PML$ is equivalent to the convergence in $\T \cup \PMF$. 

While we stick to measured laminations throughout, they are interchangeable with measured foliations depending on readers' preference.

 \subsection{Gardiner--Masur boundary}
 
 Gardiner and  Masur proposed  another compactification of the Teichm\"uller space using the so-called \emph{Gardiner--Masur boundary} $\partial^{GM} \T$ in \cite{gardiner1991extremal}. This compactification $\T \cup \partial^{GM} \T$ is now called the Gardiner--Masur compactification. The $\Mod(S)$-action on $\T$ continuously extends to $\T \cup \partial^{GM} \T$ as well.

Both the Thurston compactification and the Gardiner--Masur compactification are obtained by embedding $\T$ into the space $\R_{\ge 0}^{\mathcal{S}}$ of functions on the set $\mathcal{S}$ of isotopy classes of essential simple closed curves on $S$, and then taking the closure in the projective space $\mathbb{P} (\R_{\ge 0}^{\mathcal{S}} )$. Recall that  $\T$ can also be viewed as the space of all marked hyperbolic structures on $S$, Thurston embedded $\T$ using the hyperbolic length, and Gardiner--Masure embedded $\T$ using the extremal length (see Equation \eqref{eqn:extlength}). In this regard, Gardiner and Masur proved that the Thurston boundary $\PML$ sits in the Gardiner--Masur boundary $\partial^{GM} \T$ as a proper subset \cite[Theorem 7.1]{gardiner1991extremal}.

 \subsection{Uniquley ergodic laminations and boundary comparison}

 A measured lamination is called \emph{uniquely ergodic} if the underlying geodesic lamination admits a unique transverse measure up to scaling. We denote by $\UE = \UE(S) \subset \PML$ the subset of projective classes of uniquely ergodic measured laminations on $S$.

 On the subset $\UE \subset \PML$, the accumulation of points in $\T$ is well-behaved: if $[\xi] \in \UE$, then  for each $x \in \T$, there exists a \emph{unique} Teichm{\"u}ller (geodesic) ray $\gamma_{x}: [0, +\infty) \rightarrow \T$ based at $x$ such that $\lim_{t \rightarrow +\infty} \gamma_{x}(t) = [\xi]$ in the Thurston compactfication (\cite{hubbard1979quadratic}, \cite{masur1980uniquely}, \cite{masur1982boundaries}). 
Furthermore, for every $x, y \in \T$, two rays $\ga_x$ and $\ga_y$ are asymptotic, i.e., there exists $T \in \R$ such that $\lim_{t \rightarrow +\infty} d_{\T}(\ga_x (t), \ga_y(t + T)) = 0$ \cite{masur1980uniquely}. 

Moreover, while the identity map $\T \to \T$ does not continuously extends to an embedding of the Thurston compactification into the Gardiner--Masur compactification, it extends to a homeomorphism
\begin{equation} \label{eqn:UEextension}
    \begin{tikzcd}
    \T \cup \UE \arrow[d, phantom, sloped, "\subset"] \arrow[r] & \T \cup \UE  \arrow[d, phantom, sloped, "\subset"] \\
    \T \cup \PML & \T \cup \partial^{GM}\T
\end{tikzcd}
\end{equation}
as proved by Masur \cite{masur1982boundaries} and recoverd by Miyachi \cite{miyachi2013teichmuller}. 

Thanks to these facts, we can regard $\UE$ as a topological subspace of $\PML$ and as a topological subspace of $\partial^{GM} \T$ at the same time. As $\partial^{GM} \T$ is identified with the horofunction boundary of $(\T, d_{\T})$, this enables us to employ the theory of horofunctions in studying  Radon measures on $\mathcal{ML}$ that are supported on uniquely ergodic ones.

A sufficient condition for a measured lamination to be uniquely ergodic is the recurrence of the associated Teichm\"uller geodesic ray to a compact subset in the moduli space $\mathcal{M} = \Mod(S) \ba \T$. Namely, the \emph{Masur criterion} due to  Masur  \cite[Theorem 1.1]{masur1992hausdorff} asserts that for a Teichm\"uller geodesic ray  $\ga :~[0, + \infty) \to \T$, if there exist a compact subset $K \subset \mathcal{M}$ and a sequence $t_n \to + \infty$ such that $\ga(t_n) \in \T$ projects into $K \subset \mathcal{M}$ for all $n \in \N$, then $\gamma$ converges to a uniquely ergodic lamination in both the Thurston compactification and the Gardiner--Masur compactification.

 \subsection{Busemann cocycles}

For $x, y \in \T$, the function $d_{\T}(x, \cdot) - d_{\T}(y, \cdot)$ on $\T$ may not continuously extend to the Thurston boundary $\PML$. Nevertheless,  Miyachi showed that if $\xi \in \ML$ is uniquely ergodic, then for every sequence $\{z_n\}_{n \in \N} \subset \T$ converging to $[\xi] \in \UE$ the limit
 $$
\beta_{\xi} (x, y) := \lim_{n \to + \infty} d_{\T}(x, z_n) - d_{\T}(y, z_n)
 $$
exists and is independent of the choice of $\{z_{n}\}_{n \in \N}$ \cite[Corollary 1]{miyachi2013teichmuller}. The function $\beta$ is called the \emph{Busemann cocycle}. Indeed, it satisfies the cocycle relation: for $w, x, y \in \T$,
 $$
 \beta_{\xi}(w, y) = \beta_{\xi}(w, x) + \beta_{\xi}(x, y).
 $$
 Moreover, for $g \in \Mod(S)$, it is easy to see
 $$
\beta_{g \xi}(g x, g y) = \beta_{\xi}(x, y).
 $$

Although $\beta_{\xi}$ only depends on the projective class $[\xi]$, not $\xi$, we use the notation $\beta_{\xi}$ for a later purpose.

\section{Hubbard--Masur coordinates for measured laminations} \label{sec:HubbardMasur}

In \cite{hubbard1979quadratic},  Hubbard and Masur proved that given a point $x_0 \in \T$, the space $\mathcal{Q}(S, x_0)$ of holomorphic quadratic differentials on  Riemann surface $(S, x_0)$ is homeomorphic to $\ML$, where the homeomorphism is given by the vertical measured foliation of a given holomorphic quadratic differential on $x_0$.

This enables us to consider certain coordinate systems on $\ML$, which we call Hubbard--Masur coordinates, by specifying a homeomorphism
$$
\ML \simeq \PML \times \R.
$$
For this, we employ the notion of extremal lengths.

\subsection{Extremal lengths}

Given a point $x \in \T$ and the isotopy class $\alpha$ of a simple closed curve on $ S$, the \emph{extremal length} of $\alpha$ on Riemann surface $(S, x)$ is defined as
\begin{equation} \label{eqn:extlength}
\Ext_x(\alpha) := \sup_{\sigma} \frac{\ell_{\alpha}(\sigma)^2}{\operatorname{Area}(\sigma)}
\end{equation}
where the supremum is over all metrics $\sigma$ conformally equivalent to $x$, and $\ell_{\alpha}(\sigma)$ is the length of $\alpha$ in the metric $\sigma$. The extremal length continuously extends to the function 
$$\Ext_x : \ML \to \R$$
 such that $\Ext_x(t \xi) = t^2 \xi$ for all $\xi \in \ML$ and $t > 0$, by Kerckhoff \cite[Proposition 3]{kerckhoff1980asymptotic}. 

Miyachi \cite[Corollary 2]{miyachi2013teichmuller} and  Walsh \cite[Section 6]{walsh2019the-asymptotic} showed that for $[\xi] \in \UE$ and $x, y \in \T$, the following holds:
\begin{equation} \label{eqn:buseext}
\beta_{\xi}(x, y) = \frac{1}{2} \log \frac{\Ext_{x}(\xi)}{\Ext_{y} (\xi)}.
\end{equation}

 \subsection{Hubbard--Masur coordinates} \label{subsec:HM}
 Fixing a basepoint $x_0 \in \T$, we now define the \emph{Hubbard--Masur coordinates} for $\ML$ (with respect to $x_0$) as follows:
$$
\begin{aligned}
\HM : \ML  & \to \PML \times \R \\
\xi & \mapsto \left([\xi], \,\,\frac{1}{2} \log \Ext_{x_0}(\xi)\right)
\end{aligned}
$$
which is a homeomorphism. We define the $\Mod(S)$-action on $\PML \times \R$ by
$$
g \cdot ([\xi], t) := \left(g [\xi], \,\,t + \frac{1}{2} \log \frac{\Ext_{g^{-1} x_0}(\xi)}{\Ext_{x_0} (\xi)} \right)
$$
for $g \in \Mod(S)$, $\xi \in \ML$,  and $t \in \R$. Note that this is well-defined independent of the choice of $\xi \in \ML$. 

By Equation \eqref{eqn:buseext}, this can be rephrased in terms of the Busemann function. For a uniquely ergodic lamination $\xi$ and a mapping class $g$,
$$
g \cdot ([\xi], t) = (g [\xi], \,t + \beta_{\xi}(g^{-1} x_0, x_0)).
$$
We now show the equivariance of the action.

\begin{proposition}
    The map $\HM : \ML \to \PML \times \R$ is $\Mod(S)$-equivariant.
\end{proposition}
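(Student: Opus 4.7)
The plan is to unwind both sides of the desired equality $\HM(g\cdot\xi)=g\cdot\HM(\xi)$ and reduce the equivariance to the well-known $\Mod(S)$-equivariance of extremal length. The $\PML$-component is immediate since the projectivization $\ML\to\PML$ is $\Mod(S)$-equivariant by construction, so the entire content of the proposition lies in matching the $\R$-components.

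By definition,
\[
\HM(g\cdot\xi)=\left([g\xi],\,\tfrac{1}{2}\log\Ext_{x_{0}}(g\cdot\xi)\right),
\]
while
\[
g\cdot\HM(\xi)=\left(g[\xi],\,\tfrac{1}{2}\log\Ext_{x_{0}}(\xi)+\tfrac{1}{2}\log\frac{\Ext_{g^{-1}x_{0}}(\xi)}{\Ext_{x_{0}}(\xi)}\right)=\left(g[\xi],\,\tfrac{1}{2}\log\Ext_{g^{-1}x_{0}}(\xi)\right).
\]
Thus the proposition reduces to the identity
\[
\Ext_{x_{0}}(g\cdot\xi)=\Ext_{g^{-1}x_{0}}(\xi)\qquad\text{for every }g\in\Mod(S),\ \xi\in\ML.
\]

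The key step is therefore to verify the general equivariance relation $\Ext_{h\cdot x}(h\cdot\alpha)=\Ext_{x}(\alpha)$ for all $h\in\Mod(S)$, $x\in\T$, $\alpha\in\ML$. I would first check this on isotopy classes of simple closed curves directly from the definition \eqref{eqn:extlength}: if $\sigma$ is a conformal metric on the Riemann surface $(S,x)$, then the pullback under a diffeomorphism representing $h$ gives a conformal metric $\sigma'$ on $(S,h\cdot x)$ with the same area and with $\ell_{h\cdot\alpha}(\sigma')=\ell_{\alpha}(\sigma)$, so the suprema defining $\Ext_{x}(\alpha)$ and $\Ext_{h\cdot x}(h\cdot\alpha)$ coincide. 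Continuity of $\Ext_{x}$ on $\ML$ (Kerckhoff \cite{kerckhoff1980asymptotic}) together with density of weighted simple closed curves in $\ML$ extends the identity from $\mathcal{S}$ to all of $\ML$. Applying this with $h=g$ and $x=g^{-1}x_{0}$ yields exactly the required identity.

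None of these steps is difficult; the only mild subtlety is being careful about the conventions for the $\Mod(S)$-action on markings versus on Riemann surfaces, so that extremal length transforms as stated and not with an inverse. Once that bookkeeping is settled, combining the computation with the extremal-length equivariance gives $\HM(g\cdot\xi)=g\cdot\HM(\xi)$, as required.
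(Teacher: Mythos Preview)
Your proposal is correct and follows essentially the same approach as the paper: both compute $\HM(g\xi)$ directly and reduce the equivariance to the identity $\Ext_{x_0}(g\xi)=\Ext_{g^{-1}x_0}(\xi)$. The paper simply invokes this identity without justification, whereas you supply a proof via simple closed curves and Kerckhoff's continuity; this extra detail is fine but not required.
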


\begin{proof}
    Let $g \in \Mod(S)$ and $\xi \in \ML$.
It follows from the definition that
$$
\HM(g \xi) = \left( g [\xi], \,\,\frac{1}{2} \log \Ext_{x_0} (g \cdot \xi) \right).
$$
Since $\Ext_{x_0} (g  \xi) = \Ext_{g^{-1} x_0} (\xi)$, we have
$$
\HM(g \xi) = \left( g  [\xi], \,\,\frac{1}{2} \log \Ext_{x_0} (\xi) + \frac{1}{2} \log \frac{\Ext_{g^{-1} x_0}(\xi)}{\Ext_{x_0} (\xi)} \right). \qedhere
$$
\end{proof}

\subsection{Liu--Su and Walsh's coordinates on the horofunction boundary}\label{subsection:liuSuWalsh}

Liu and Su proved in \cite{liu2014the-horofunction} and Walsh proved in \cite{walsh2019the-asymptotic} that the horofunction boundary for $(\T, d_{\T})$ is equal to the Gardiner--Masur boundary. In both works, fixing a basepoint $x_0 \in \T$, the authors constructed a continuous injection \[
 \T \cup \partial^{GM} \T \rightarrow \{\textrm{1-Lipschitz functions on $\T$ vanishing at $x_0$}\},
\]
and its restriction to the Gardiner--Masur boundary \[
 \partial^{GM} \T \rightarrow \{\textrm{horofunctions on $\T$ vanishing at $x_0$}\}
\]
is a homeomorphism. See Section \ref{subsection:horo} for more discussion on horofunctions.

 \section{Squeezing property of pseudo-Anosov axes} \label{sec:pAsqueezing}

We now discuss elements of $\Mod(S)$, the mapping classes of $S$. The celebrated Nielsen--Thurston classification (\cite{thurston1988classification}, \cite{1979travaux}) asserts that a mapping class $\varphi \in \Mod(S)$ is either
\begin{itemize}
    \item periodic, i.e., $\varphi^n = \id$ for some $n \in \N$,
    \item reducible, i.e., there exists a multicurve on $S$ invariant under $\varphi$, or
    \item pseudo-Anosov, i.e., there exists a pair of transverse measured laminations $\xi^+, \xi^- \in \ML$ and $\la  > 1$ such that 
    \begin{equation} \label{eqn:pAinv}
        \varphi(\xi^+) = \la \cdot \xi^+ \quad \text{and} \quad \varphi(\xi^-) = \frac{1}{\la} \cdot \xi^-.
    \end{equation}
    The measured laminations $\xi^+$ and $\xi^-$ are called unstable and stable measured laminations respectively, and the constant $\la > 1$ is called the stretch factor of $\varphi$.
\end{itemize}

Among the three categories we are particularly interested in pseudo-Anosov mapping classes. Let $\varphi \in \Mod(S)$ be a pseudo-Anosov mapping class. We summarize some standard facts: 

\begin{enumerate}
\item It follows from Equation \eqref{eqn:pAinv} that the unstable and stable measured laminations of $\varphi$ give two fixed points $[\xi^+], [\xi^-] \in \PML$ in the Thurston boundary. Moreover, on the Thurston compactification $\T \cup \PML$, $\varphi$ exhibits the north-south dynamics.

More precisely, for each compact subset $K \subset (\T \cup \PML) \smallsetminus \{ [\xi^-] \}$, we have $\varphi^{n} K \to [\xi^+]$ as $n \rightarrow +\infty$. Similarly, for each compact subset $K \subset (\T \cup \PML) \smallsetminus \{ [\xi^+] \}$, we have $\varphi^{n} K \to [\xi^-]$ as $n \rightarrow -\infty$. 

\item The projective measured laminations $[\xi^+], [\xi^-] \in \PML$ are called attracting and repelling fixed points of $\varphi$ repsecitvely, and they are in fact uniquely ergodic, i.e., $[\xi^+], [\xi^-] \in \UE$. 

\item 
There exists a unique bi-infinite Teichm{\"u}ller geodesic $\gamma \subset \T$ whose endpoints are $[\xi^{\pm}] \in \PML$ in the Thurston compactification. Moreover, $\ga$ is invariant under $\varphi$, and the action of $\varphi$ on $\ga$ is the translation by $\log \la$, where $\la$ is the stretch factor of $\varphi$. 

The invariant geodesic $\ga$ is called the \emph{axis} of pseudo-Anosov $\varphi$.
\end{enumerate} 

The first two items are part of Thurston's proof of the Nielsen--Thurston classification using the Thurston compactification. We refer the readers to Thurston's exposition \cite{thurston1988classification} and  textbooks (\cite{1979travaux}, \cite{farb2012primer}). The last item is proved in \cite{1979travaux}, \cite[Theorem 3.1]{gardiner1991extremal}, and \cite[Theorem 9.2]{marden1993a-characterization}.

\subsection{Contracting property and squeezing property} \label{subsection:squeezepA}

The two central dynamical notions in this paper are contracting and squeezing properties of isometries. In \cite{minsky1996quasi-projections},  Minsky proved that Teichm\"uller geodesics \emph{precompact} in the moduli space $\mathcal{M} = \Mod(S) \ba \T$ are contracting. More precisely, for each compact subset $K \subset \mathcal{M}$, there exists $D = D(K) > 0$ such that every geodesic in $\T$ that projects into $K \subset \mathcal{M}$ is  $D$-contracting.  

Recall that a pseudo-Anosov mapping class has the invariant Teichm{\"u}ller geodesic, called \emph{axis}, which descends to a closed loop in $\mathcal{M}$. Therefore, axes of pseudo-Anosov mapping classes are contracting. They in fact enjoy a stronger property, 
\emph{squeezing} property, thanks to their peridocity.
This essentially follows from Minsky's contraction theorem in \cite{minsky1996quasi-projections} and Masur's stability theorem in \cite{masur1980uniquely}. We give a proof for completeness.

\begin{proposition} \label{prop.pAsqueezing}
    The axis of a pseudo-Anosov mapping class is squeezing.
\end{proposition}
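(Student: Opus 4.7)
The plan is to argue by contradiction, reducing failure of the squeezing property to a violation of Masur's stability theorem along the axis. Concretely, suppose the axis $\gamma$ of a pseudo-Anosov $\varphi$, with stretch factor $\lambda>1$ and endpoints $[\xi^{\pm}]\in\UE$, is not squeezing. Then there exist $\epsilon_{0}>0$, a sequence of constants $K_{n}\to\infty$, geodesics $\eta_{n}\subset\T$, and points $x_{n}\in\gamma$ that are $K_{n}$-deep in the convex hull of $\pi_{\gamma}(\eta_{n})$, yet $d_{\T}(x_{n},\eta_{n})\geq \epsilon_{0}$. My first step is to use periodicity: since $\varphi$ acts on $\gamma$ by translation of length $\log\lambda$, I can replace each $(x_{n},\eta_{n})$ by $(\varphi^{k_{n}}x_{n},\varphi^{k_{n}}\eta_{n})$ for a suitable integer $k_{n}$, so that each $x_{n}$ lies in a fixed fundamental domain of $\langle\varphi\rangle$ on $\gamma$. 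Passing to a subsequence, I arrange $x_{n}\to x_{\infty}\in\gamma$.

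Next I will exploit the contracting property of $\gamma$ (Minsky's theorem) to upgrade the ``$K_{n}$-deep projection'' condition into the existence of points on $\eta_{n}$ going far along $\gamma$ in both directions. Namely, I pick $y_{n}^{\pm}\in\pi_{\gamma}(\eta_{n})$ on the two sides of $x_{n}$ with $d_{\T}(x_{n},y_{n}^{\pm})\geq K_{n}/2$; by the contracting property there exist $p_{n}^{\pm}\in\eta_{n}$ with $d_{\T}(p_{n}^{\pm},y_{n}^{\pm})\leq D$ for a uniform constant $D$. Since $y_{n}^{\pm}$ marches off along $\gamma$ toward $[\xi^{\pm}]\in\UE$, and since $[\xi^{\pm}]$ are uniquely ergodic, I will invoke the fact that the Thurston/Gardiner--Masur topology near a uniquely ergodic point is controlled by a $1$-Lipschitz horofunction (Section \ref{subsection:liuSuWalsh}); bounded perturbations preserve the limit, so $p_{n}^{\pm}\to[\xi^{\pm}]$ in the Thurston compactification.

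Now consider the sub-geodesic $\eta_{n}'\subset\eta_{n}$ from $p_{n}^{-}$ to $p_{n}^{+}$. Its endpoints converge to the distinct uniquely ergodic classes $[\xi^{-}]$ and $[\xi^{+}]$. By Masur's uniqueness and stability statements for geodesics with uniquely ergodic vertical and horizontal foliations (\cite{masur1980uniquely}, \cite{masur1982boundaries}) together with the boundary comparison in \eqref{eqn:UEextension}, the geodesics $\eta_{n}'$ converge, uniformly on compact subsets of the parameter interval after suitable reparametrization, to the unique bi-infinite geodesic joining $[\xi^{-}]$ to $[\xi^{+}]$, which is $\gamma$ itself. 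Since $x_{\infty}\in\gamma$, this produces points on $\eta_{n}'$ eventually within $\epsilon_{0}/2$ of $x_{\infty}$, and hence within $\epsilon_{0}$ of $x_{n}$ for $n$ large, contradicting $d_{\T}(x_{n},\eta_{n})\geq\epsilon_{0}$.

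The main technical obstacle is the step ``endpoints converge to $[\xi^{\pm}]$ in $\PML$ implies the geodesics converge to $\gamma$.'' This is where unique ergodicity of the pseudo-Anosov endpoints is crucial: without it, sequences of geodesics with Thurston-boundary convergent endpoints need not converge to a single geodesic. I will therefore cite (or briefly record) the combination of Masur's theorem on uniquely ergodic limits and the identification of $\T\cup\UE$ inside both the Thurston and Gardiner--Masur compactifications, so that convergence of endpoints in either boundary implies convergence of the Teichm\"uller geodesics on compact sets. Secondary care is needed to ensure $x_{\infty}$ indeed lies between the limit positions of the endpoints of $\eta_{n}'$, which follows because $y_{n}^{\pm}$ straddle $x_{n}$ along $\gamma$ with $d_{\T}(x_{n},y_{n}^{\pm})\to\infty$.
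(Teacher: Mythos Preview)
Your proposal is correct and follows essentially the same strategy as the paper: contradiction, normalization by the $\langle\varphi\rangle$-periodicity of the axis, Minsky's contraction to extract a subsegment of $\eta_n$ fellow-traveling $\gamma$, and unique ergodicity of the endpoints to force the limit to be $\gamma$ itself.

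The one place where the paper's execution is slightly tighter than yours is the ``convergence of geodesics'' step you flag as the main technical obstacle. Rather than arguing that $p_n^{\pm}\to[\xi^{\pm}]$ in $\PML$ and then invoking a general ``endpoints converge $\Rightarrow$ geodesics converge'' principle for uniquely ergodic endpoints, the paper uses the contracting property (via \cite[Lemma~2.2]{chawla2023genericity}, which is Lemma~\ref{lem:BGIPFellow} here) to conclude directly that the subsegment $[w_n,z_n]\subset[x_n,y_n]$ lies in the $C$-neighborhood of $\gamma$. Any subsequential limit is then a bi-infinite geodesic in the $2C$-neighborhood of $\gamma$, and \cite[Lemma~1.4.2]{kaimanovich1996poisson} identifies it with $\gamma$. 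Your route works too---since your $p_n^{\pm}$ are already $D$-close to $\gamma$, the contracting property forces $[p_n^-,p_n^+]$ into a uniform neighborhood of $\gamma$ as well---but you would do well to make that observation explicit rather than resting on a statement about convergence of geodesics from boundary data alone, which in Teichm\"uller space is more delicate than in $\CAT(-1)$ geometry.
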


\begin{proof}
    Let $\varphi \in \Mod(S)$ be pseudo-Anosov and denote by $\ga \subset \T$ its axis. We fix a unit-speed parametrization $\ga : \R \to \T$. Suppose to the contrary that $\ga$ is not squeezing. Then there exists $\epsilon > 0$ such that for each $n \in \N$, there exist sequences $\{x_n\}_{n \in \N}, \{y_n\}_{n \in \N} \subset\T$ and $\{t_n\}_{n \in \N} \subset \R$ such that $\ga(t_n - a_n) \in \pi_{\ga}(x_n)$ and $\ga(t_n + b_n) \in \pi_{\ga}(y_n)$ for some $a_n, b_n > n$ while $d_{\T}( \ga(t_n), [x_n, y_n]) \ge \epsilon$. Since $\varphi$ acts on $\ga$ by a translation, we may assume that $t_n$ is bounded.

    By \cite[Contraction Theorem]{minsky1996quasi-projections}, $\ga$ is contracting. Hence by applying \cite[Lemma 2.2]{chawla2023genericity} (see Lemma \ref{lem:BGIPFellow}), there exists $C > 0$ such that for each $n \in \N$, there exist $w_n, z_n \in [x_n, y_n]$ satisfying
    \begin{itemize}
        \item $d_{\T}(w_n, \ga(t_n - a_n)) \le C$,
        \item $d_{\T}(z_n, \ga(t_n  + b_n)) \le C$, and
        \item $[w_n, z_n]$ is contained in the $C$-neighborhood of $\ga$.
    \end{itemize}
 Since $a_n, b_n \to + \infty$ and $t_n$ is bounded, after passing to a subsequence, $[w_n, z_n]$ converges to a bi-infinite geodesic contained in the $2C$-neighborhood of $\ga$. Since both endpoints of $\ga$ are contained in $\UE$, it follows from \cite[Lemma 1.4.2]{kaimanovich1996poisson} that the limit of $[w_n, z_n]$ is the bi-infinite geodesic between endpoints of $\ga$, which must be $\ga$.
 
 On the other hand, $t_n$ is bounded and $d_{\T}(\ga(t_n), [w_n, z_n]) \ge \epsilon$ for all $n \in \N$, which is a contradiction. This finishes the proof.
\end{proof}

The squeezing property can be considered as a version of $\CAT(-1)$ property along special directions. It is sensible to generalize some dynamical phenomena in $\CAT(-1)$ spaces to metric spaces with squeezing isometries. For example, in \cite{choi2022random5} the first author studied continuity and differentiability of the drift of a random walk on the Teichm{\"u}ller metric using the squeezing property of pseudo-Anosov mapping classes.

\section{Contracting and squeezing isometries}\label{section:prelim}

The previous sections tell us that Teichm{\"u}ller space is a metric space with squeezing isometries. In this section, we further develop this perspective.

Throughout this section, let $(X, d)$ be a proper geodesic metric space and fix a basepoint $x_{0} \in X$. For $x, y \in X$, we denote by $[x, y] \subset X$ an arbitrarily chosen geodesic connecting $x$ to $y$. For $w, z \in [x, y]$, we intrinsically assume $[w, z]$ to be a segment of $[x, y]$. Every parametrization of a geodesic is of unit speed. We denote the isometry group of $X$ by $\Isom(X)$.

We say that two geodesics $\gamma_1, \gamma_2 \subset X$ are $C$-equivalent if their Hausdorff distance is at most $C$ and if their beginning/ending points are pairwise $C$-close.
When two reals $a, b \in \R$ differ by at most $C$, we write $a =_{C} b$.

\subsection{Contracting subsets} \label{subsection:contracting}
For a closed subset $A \subset X$, we denote by $\pi_{A}(\cdot) : X \rightarrow 2^{A}$ the \emph{nearest point projection}. That means, we define \[
\pi_{A}(x) := \left\{ a \in A : d(x, a) = \inf_{z \in A} d(x, z)\right\}.
\]

\begin{definition}\label{dfn:contracting}
Let $C \ge 0$. We say that a closed subset $A\subset X$ is \emph{$C$-contracting} if for every geodesic $\gamma \subset X$ with $d(\gamma, A) \ge C$ we have $\diam \pi_{A}(\gamma)  \le C$. We say that $A$ is \emph{(strongly) contracting} if it is $C$-contracting for some $C\ge0$.
\end{definition}

By definition, if $A \subset X$ is $C$-contracting, then $\diam \pi_A(x) \le 2C$ for all $x \in X$. It is also easy to see that if $A \subset X$ is $C$-contracting and $x, y \in X$ are in the $C$-neighborhood of $A$, then $[x, y]$ is contained in the $2.5C$-neighborhood of $A$.

As an example, every geodesic in a $\delta$-hyperbolic space is $C(\delta)$-contracting for some $C(\delta) \ge 0$ depending only on $\delta$. 
In particular, geodesics in a simplicial tree or $\mathbb{H}^{2}$ are contracting with a uniform contracting constant. More generally, every $(K, B)$-quasigeodesic in a $\delta$-hyperbolic space is $C(\delta, K, B)$-contracting for a constant $C(\delta, K, B) \ge 0$.

The following is a reminiscent of the Morse lemma in a Gromov hyperbolic space. We use the version in \cite{chawla2023genericity} due to its conciseness, but we note that similar results were already observed in (\cite[Proposition 10.2.1]{coornaert1990geometrie}, \cite[Lemma 2.4, 2.5]{sisto2013projections}, \cite[Proposition 2.9, Lemma 2.10, Lemma 2.11]{arzhantseva2015growth}, \cite[Proposition 3.1]{yang2014growth}).

\begin{lem}[{\cite[Lemma 2.2]{chawla2023genericity}}]\label{lem:BGIPFellow}
Let $\ga \subset X$ be a $C$-contracting geodesic and $x, y \in X$. Suppose that $\diam \pi_{\ga}([x, y]) > C$. Then there exist points $p, q \in [x, y]$, with $p$ closer to $x$ than $q$, such that
\begin{itemize}
\item  $\pi_{\ga}([x, y])$ and $[p, q]$ are $4C$-equivalent,
\item $\diam(\pi_{\ga}([x, p]) \cup \{p\}) \le 2C$,
\item $\diam(\pi_{\ga}([q, y]) \cup \{q\}) \le 2C$, and
\item for all $x' \in \pi_{\ga}(x)$ and $y' \in \pi_{\ga}(y)$, $[x', y']$  and $[p, q]$ are $10C$-equivalent. 
\end{itemize} 
\end{lem}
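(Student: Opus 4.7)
The plan is to locate the subsegment of $[x,y]$ that ``shadows'' $\gamma$, namely the portion that enters and exits the $C$-neighborhood of $\gamma$, and take $p$ and $q$ to be its endpoints. Concretely, I would let $p \in [x,y]$ be the point closest to $x$ (with respect to the parametrization of $[x,y]$) satisfying $d(p,\gamma)\le C$, and $q \in [x,y]$ the point closest to $y$ with $d(q,\gamma)\le C$. Both exist: if every point of $[x,y]$ had distance $>C$ from $\gamma$, then the $C$-contracting property of $\gamma$ applied to $[x,y]$ would give $\diam \pi_\gamma([x,y])\le C$, contradicting the hypothesis.

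For the second bullet, the definition of $p$ forces every point of the half-open subsegment $[x,p)$ to lie at distance $>C$ from $\gamma$. Applying the contracting property to the closed subsegments $[x,p'] \subset [x,p)$ (for $p'$ arbitrarily close to $p$) and taking limits yields $\diam \pi_\gamma([x,p)) \le C$. Since $d(p,\gamma)\le C$, any point in $\pi_\gamma(p)$ lies within $C$ of $p$, and a short argument (choosing a sequence $z_n \to p$ in $[x,p)$, whose projections $\pi_\gamma(z_n)$ accumulate on $\pi_\gamma(p)$) shows that $\pi_\gamma([x,p])$ sits within distance $C$ of $p$ as a whole. The bound $\diam(\pi_\gamma([x,p])\cup \{p\})\le 2C$ then follows. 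The symmetric argument gives the third bullet for $[q,y]$.

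For the first and fourth bullets, observe that both $p$ and $q$ lie in the $C$-neighborhood of $\gamma$, so by the standard consequence of contraction recorded after Definition \ref{dfn:contracting}, the geodesic $[p,q]$ is contained in the $2.5C$-neighborhood of $\gamma$. Combined with bullets 2 and 3, every point of $\pi_\gamma([x,y])=\pi_\gamma([x,p])\cup\pi_\gamma([p,q])\cup\pi_\gamma([q,y])$ lies within a bounded distance of $[p,q]$, and the endpoints $\pi_\gamma(x)$, $\pi_\gamma(y)$ are coarsely identified with $p, q$ via the $2C$-bound from the previous step; assembling these gives the $4C$-equivalence. For any $x' \in \pi_\gamma(x)$ and $y'\in \pi_\gamma(y)$, bullet 2 applied to the set $\pi_\gamma([x,p])\cup\{p\}$ (which contains both $x'$ and $p$) gives $d(x',p)\le 2C$, and similarly $d(y',q)\le 2C$. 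Since $[x',y']\subset \gamma$ is a geodesic with endpoints within $2C$ of those of $[p,q]$, and $[p,q]$ lies in the $2.5C$-neighborhood of $\gamma$, a routine stability argument for geodesics that shadow a contracting geodesic yields the Hausdorff bound needed for $10C$-equivalence.

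The main technical obstacle will be the step in the second paragraph where $\pi_\gamma(p)$ must be related to $\pi_\gamma([x,p))$: because nearest-point projection is multi-valued and only upper semi-continuous, one has to argue carefully with a limiting sequence of projections, rather than by a naive continuity argument. Once that point is handled, the rest is essentially a bookkeeping exercise in repeatedly invoking the $C$-contracting property of $\gamma$ together with the observation that a geodesic with both endpoints near a contracting set remains in a bounded neighborhood of that set.
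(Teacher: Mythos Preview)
The paper does not give a proof of this lemma; it is quoted verbatim from \cite[Lemma 2.2]{chawla2023genericity} and used as a black box throughout. So there is nothing in the paper to compare your argument against.

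That said, your outline is the standard argument and is correct. One simplification worth noting: the ``main technical obstacle'' you flag in the last paragraph is not actually present. If $p\neq x$, then by continuity of $z\mapsto d(z,\gamma)$ your choice of $p$ forces $d(p,\gamma)=C$ exactly, so the \emph{closed} segment $[x,p]$ satisfies $d([x,p],\gamma)=C\ge C$, and the contracting hypothesis applies to it directly: $\diam\pi_\gamma([x,p])\le C$. Since $d(p,\pi_\gamma(p))=C$, the second bullet follows immediately by the triangle inequality, with no need to pass through half-open segments or worry about semi-continuity of $\pi_\gamma$. (If $p=x$ the bullet is trivial.)

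The place where a little genuine work hides is the fourth bullet, which you dismiss as ``a routine stability argument''. You have $d(x',p),\,d(y',q)\le 2C$ from bullet~2, and $[p,q]\subset\mathcal N_{2.5C}(\gamma)$; what remains is to check both directions of the Hausdorff bound between $[p,q]$ and the $\gamma$-subsegment $[x',y']$. This amounts to controlling how far $\pi_\gamma(z)$, for $z\in[p,q]$, can overshoot the endpoints of $[x',y']$ along $\gamma$, which is a short triangle-inequality computation; the resulting constants do fit inside $10C$. It is routine, but it is the step where the specific constant is actually determined, so in a written proof it should be spelled out rather than asserted.
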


The following is an immediate corollary of Lemma \ref{lem:BGIPFellow}.

\begin{cor}\label{cor:BGIPFellow}
For a $C$-contracting geodesic $\gamma \subset X$, the following holds.
\begin{enumerate}
\item The map $\pi_{\gamma}(\cdot)$ is $(1, 4C)$-Lipschitz: for each $x, y \in X$,
\[
\diam \pi_{\gamma} (\{x, y\}) \le d(x, y) + 4C.
\]
\item Let $x \in X$ and $\gamma(t) \in \pi_{\gamma}(x)$. Then for every $s \in \R$, we have 
\begin{equation}\label{eqn:approxDist}
d(x, \gamma(s)) =_{4C} d(x, \gamma(t)) + |t-s|.
\end{equation}
\end{enumerate}
\end{cor}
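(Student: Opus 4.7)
The plan is to derive both parts as direct consequences of Lemma~\ref{lem:BGIPFellow}, splitting each into the trivial case where the relevant projection already has diameter $\le C$ (where the bound follows at once) and the interesting case where the lemma applies and produces fellow-traveling points $p, q$.

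For (1), I will fix arbitrary $x' \in \pi_{\gamma}(x)$ and $y' \in \pi_{\gamma}(y)$ and aim at $d(x', y') \le d(x, y) + 4C$; combined with the already-noted bound $\diam \pi_{\gamma}(x) \le 2C$, this controls the diameter of $\pi_{\gamma}(\{x, y\})$. In the case $\diam \pi_{\gamma}([x, y]) > C$, Lemma~\ref{lem:BGIPFellow} produces $p, q \in [x, y]$ such that bullets (2) and (3) of that lemma force $d(x', p) \le 2C$ and $d(y', q) \le 2C$ (taking $x$, respectively $y$, as the relevant endpoint). Since $p, q$ lie on the geodesic $[x, y]$, we have $d(p, q) \le d(x, y)$, and the triangle inequality
\[
d(x', y') \le d(x', p) + d(p, q) + d(q, y') \le d(x, y) + 4C
\]
closes the argument with the sharp constant. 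When $\diam \pi_{\gamma}([x, y]) \le C$, both $x'$ and $y'$ already lie in this set and the bound is immediate.

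For (2), the upper bound $d(x, \gamma(s)) \le d(x, \gamma(t)) + |t - s|$ is just the triangle inequality through $\gamma(t)$. For the matching lower bound in the main case $\diam \pi_{\gamma}([x, \gamma(s)]) > C$, I will apply Lemma~\ref{lem:BGIPFellow} to the pair $(x, \gamma(s))$; bullet (2), applied to the projection $\gamma(t) \in \pi_\gamma(x)$, yields a point $p \in [x, \gamma(s)]$ with $d(p, \gamma(t)) \le 2C$. Since $p$ lies on the geodesic realizing $d(x, \gamma(s))$, one has
\[
d(x, \gamma(s)) = d(x, p) + d(p, \gamma(s)),
\]
and two triangle inequalities routed through $\gamma(t)$ give $d(x, p) \ge d(x, \gamma(t)) - 2C$ and $d(p, \gamma(s)) \ge d(\gamma(t), \gamma(s)) - 2C = |t - s| - 2C$; summing produces the claimed inequality with total loss $4C$. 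The edge case $\diam \pi_{\gamma}([x, \gamma(s)]) \le C$ forces $|t - s| \le C$ (as $\gamma(t)$ and $\gamma(s)$ both lie in that projection set), and then the lower bound reduces to $d(x, \gamma(s)) \ge d(x, \gamma(t))$, which holds because $\gamma(t)$ is a nearest point.

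I do not anticipate any real obstacle here: the statement is advertised as an immediate corollary, and the work is purely bookkeeping of constants to ensure the promised $4C$ error term is realized in both parts rather than a larger multiple of $C$.
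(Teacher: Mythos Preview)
Your proposal is correct and is exactly the intended derivation: the paper offers no explicit proof beyond declaring the statement an immediate corollary of Lemma~\ref{lem:BGIPFellow}, and your case split (small projection versus applying the lemma to obtain the fellow-traveling points $p, q$) is the natural way to spell this out. The constant-tracking is accurate in both parts, including the edge case in (2) where $|t-s| \le C$ makes the lower bound trivial.
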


\subsection{Contracting isometries}

\begin{definition}\label{dfn:axial}
We say that an isometry $g \in \Isom(X)$ is \emph{axial} if there exists a bi-infinite geodesic $\gamma : \R \to X$ invariant under $g$ such that $$g \cdot \ga(t) = \ga (t + \tau_g) \quad \text{for all } t \in \R$$
for some $\tau_g > 0$. We call $\gamma$ an \emph{axis} of $g$ and $\tau_g$ the \emph{translation length} of $g$.

An axial isometry $g \in \Isom(X)$ is called \emph{$C$-contracting} for $C \ge 0$ if it has a $C$-contracting axis.
\end{definition}

Given an axial isometry $g \in \Isom(X)$, note that 
$$
\tau_g = \lim_{n \to +\infty} \frac{d(x, g^n x)}{n} > 0 \quad \text{for each } x \in X.
$$
Then we can observe the following:
$$
\tau_g  = \inf_{x \in X} d(x, g x) \quad \text{and} \quad \tau_{g^k} = |k| \tau_g \quad \text{for each } k \in \Z.
$$
For each $h \in \Isom(X)$, $hgh^{-1}$ is also axial and $\tau_{hgh^{-1}} = \tau_g$.

\subsection{Squeezing isometries}

\begin{definition}\label{dfn:squeezing}
We say that a bi-infinite geodesic $\gamma : \mathbb{R} \rightarrow X$ is \emph{squeezing} if for each $\epsilon>0$ there exists $L = L(\epsilon)>0$ such that for each $x, y \in X$ and $t \in \R$ with $\ga(t - a) \in \pi_{\ga}(x)$ and $\ga(t + b) \in \pi_{\ga}(y)$ for some $a, b \ge L$, we have
$$
d\left([x, y], \ga(t)\right) \le \epsilon.
$$
We call an axial isometry $g \in \Isom(X)$ \emph{squeezing} if it has a squeezing axis.
\end{definition}

By definition, squeezing isometries are contracting. Note that both squeezing and contracting properties are invariant under conjugations. If an isometry $g \in \Isom(X)$ is squeezing, then it has a unique axis (up to reparametrization). We denote it by $\A_g$. For $h \in \Isom(X)$, $\A_{hgh^{-1}} = h \A_g$.

As we will see later, squeezing geodesics are well-suited for studying horofunctions due to the following lemma.

\begin{lem}\label{lem:squeezing}
Let $\gamma : \R \to X$ be a squeezing geodesic. Fix $\epsilon >0$ and let $L = L(\epsilon) > 0$ as in Definition \ref{dfn:squeezing}. Let $x_{1}, x_{2}, y_{1}, y_{2} \in X$ and suppose that for some $t \in \R$, we have  \[
\pi_{\gamma}(x_{i}) \cap \gamma \left((-\infty, t-L]\right) \neq \emptyset \quad \text{and} \quad \pi_{\gamma}(y_{i}) \cap \gamma\left( [t+L, +\infty)\right) \neq \emptyset \quad \text{for }i = 1, 2.
\]
Then we have
 \[
d(x_{1}, y_{1}) - d(x_{1}, y_{2}) =_{8\epsilon} d(x_{2}, y_{1}) - d(x_{2}, y_{2}).
\]
\end{lem}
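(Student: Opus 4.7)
The plan is to exploit the squeezing property to show that each geodesic $[x_i, y_j]$ passes within $\epsilon$ of the single reference point $\gamma(t)$, which then lets us decompose the four distances in terms of quantities that cancel.

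First, fix $i, j \in \{1, 2\}$. By hypothesis, we can pick $\gamma(t - a_{i}) \in \pi_{\gamma}(x_{i})$ with $a_{i} \ge L$ and $\gamma(t + b_{j}) \in \pi_{\gamma}(y_{j})$ with $b_{j} \ge L$. Applying the squeezing property with the chosen $\epsilon$ and $L = L(\epsilon)$, we obtain a point $p_{ij} \in [x_{i}, y_{j}]$ with $d(p_{ij}, \gamma(t)) \le \epsilon$. Since $p_{ij}$ lies on the geodesic $[x_{i}, y_{j}]$, we have $d(x_{i}, y_{j}) = d(x_{i}, p_{ij}) + d(p_{ij}, y_{j})$, and by the triangle inequality
\[
d(x_{i}, p_{ij}) =_{\epsilon} d(x_{i}, \gamma(t)), \qquad d(p_{ij}, y_{j}) =_{\epsilon} d(\gamma(t), y_{j}).
\]
Combining these gives the key estimate
\[
d(x_{i}, y_{j}) =_{2\epsilon} d(x_{i}, \gamma(t)) + d(\gamma(t), y_{j}).
\]

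Applying this estimate for the four pairs $(i,j) \in \{1,2\}^{2}$ and subtracting, the terms $d(x_{i}, \gamma(t))$ cancel within each line:
\[
d(x_{1}, y_{1}) - d(x_{1}, y_{2}) =_{4\epsilon} d(\gamma(t), y_{1}) - d(\gamma(t), y_{2}) =_{4\epsilon} d(x_{2}, y_{1}) - d(x_{2}, y_{2}),
\]
and adding the two $4\epsilon$ errors yields the desired bound of $8\epsilon$.

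There is no real obstacle here: once the squeezing property is invoked to produce a common nearby point $\gamma(t)$ on each of the four geodesics, the conclusion is a formal manipulation of triangle inequalities. The only thing to double-check is that the constant $L = L(\epsilon)$ from Definition \ref{dfn:squeezing} indeed applies uniformly to all four pairs $(x_{i}, y_{j})$, which it does because $a_{i}, b_{j} \ge L$ for both $i$ and both $j$.
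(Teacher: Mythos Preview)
Your proof is correct and follows essentially the same approach as the paper: both use the squeezing property to find on each geodesic $[x_i, y_j]$ a point $\epsilon$-close to $\gamma(t)$, then decompose each distance through this reference point so that the $x_i$-contributions cancel in the difference. The bookkeeping of constants is identical, yielding the same $8\epsilon$ bound.
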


\begin{proof}
Let $i \in \{1, 2\}$. By the squeezing property, there exists $p \in [x_{i}, y_{1}]$ and $q \in [x_{i}, y_{2}]$ that are $\epsilon$-close to $\gamma(t)$. By the triangle inequality, we have \[
d(x_{i}, y_{1}) - d(x_{i}, y_{2}) =_{4\epsilon} d\left(\gamma(0), y_{1}\right) - d\left(\gamma(0), y_{2}\right).
\]
This gives the desired estimate.
\end{proof}

\subsection{Alignment}
We denote the closed $K$-neighborhoods  by $\mathcal{N}_{K}(\cdot)$.

\begin{definition}[Alignment]
Let $w, x, y, z \in X$. For a geodesic $[x, y] \subset X$ and $K \ge 0$, we say that the sequence $(w, [x, y])$ is \emph{$K$-aligned} if \[
\pi_{[x, y]}(w) \subset \mathcal{N}_{K}(x).
\]
Similarly, we call that the sequence $([x, y], z)$ is \emph{$K$-aligned} if $(z, [y, x])$ is $K$-aligned.

Finally, we say that the sequence $(w, [x, y], z)$ is \emph{$K$-aligned} if both sequences $(w, [x, y])$ and $([x, y], z)$ are $K$-aligned. See Figure \ref{fig:alignment}.

\end{definition}

\begin{figure}[h]

\begin{tikzpicture}[scale=0.8]

\draw[very thick] (-2.6, 0) -- (2.6, 0);

\draw[dashed, thick] (-3.5, 3) -- (-1.9, 0.2);
\draw[dashed, thick] (-3.5, 3) -- (-2.4, 0.2);
\draw[thick] (-2.55, 0.3) -- (-2.15, 0) -- (-1.75, 0.3);

\begin{scope}[xscale=-1]

\draw[dashed, thick] (-3.5, 3) -- (-1.9, 0.2);
\draw[dashed, thick] (-3.5, 3) -- (-2.4, 0.2);
\draw[thick] (-2.55, 0.3) -- (-2.15, 0) -- (-1.75, 0.3);
\end{scope}

\draw(-2.9, 0) node {$x$};
\draw(2.9, 0) node {$y$};
\draw (-3.5, 3.3) node {$w$};
\draw (3.5, 3.3) node {$z$};

\fill[opacity=0.1] (-2.6, 0) circle (1);
\fill[opacity=0.1] (2.6, 0) circle (1);
\draw[<->] (-2.6, -1.1) -- (-1.6, -1.1);
\draw (-2.1, -1.33) node {$K$};

\begin{scope}[xscale=-1]
\draw[<->] (-2.6, -1.1) -- (-1.6, -1.1);
\draw (-2.1, -1.33) node {$K$};
\end{scope}

\end{tikzpicture}
\caption{Alignment of geodesics and points.}
\label{fig:alignment}
\end{figure}
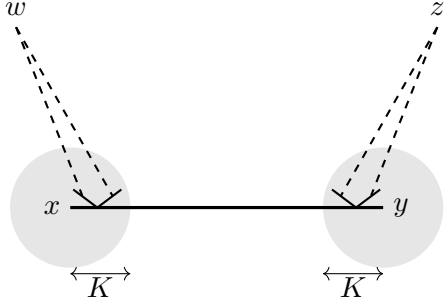

The following is immediate.

\begin{lem}\label{lem:alignDich}
    Let $\ga \subset X$ be a geodesic of length $L \ge 0$, let $0 \le D \le L$ and let $x \in X$. Then $(\gamma, x)$ is not $D$-aligned or $(x, \gamma)$ is not $(L-D)$-aligned.
\end{lem}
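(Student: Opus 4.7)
The plan is to argue by contradiction using nothing more than the definition of alignment together with the additivity of distances along the geodesic $\gamma$.

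First I would fix a unit-speed parametrization $\gamma : [0, L] \to X$ with endpoints $\gamma(0)$ and $\gamma(L)$, and view $\gamma$ as the geodesic $[\gamma(0), \gamma(L)]$. The key observation I will exploit is that since $\gamma$ is geodesic, every point $z = \gamma(t)$ on $\gamma$ satisfies
$$
d(z, \gamma(0)) = t, \qquad d(z, \gamma(L)) = L - t.
$$

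Next I would suppose for contradiction that both $(\gamma, x)$ is $D$-aligned and $(x, \gamma)$ is $(L-D)$-aligned. Unpacking the orientation convention in the definition — $(\gamma, x)$ places $x$ immediately after $\gamma$, forcing $\pi_\gamma(x)$ to lie near the trailing endpoint $\gamma(L)$, while $(x, \gamma)$ places $x$ immediately before $\gamma$, forcing $\pi_\gamma(x)$ to lie near the leading endpoint $\gamma(0)$ — these two statements combine to
$$
\pi_{\gamma}(x) \subset \mathcal{N}_{D}(\gamma(L)) \cap \mathcal{N}_{L-D}(\gamma(0)).
$$
Since $\gamma$ is compact and $X$ is proper, $\pi_\gamma(x)$ is nonempty, so I pick any $z = \gamma(t) \in \pi_\gamma(x)$. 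The two containments translate into $L - t \le D$ and $t \le L - D$, i.e.\ simultaneously $t \ge L - D$ and $t \le L - D$. Both alignment bounds are therefore forced to be tight and together exhaust the full length of $\gamma$, leaving no room for the two alignments to coexist nontrivially; this gives the desired dichotomy.

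I do not anticipate any substantive obstacle here — the lemma is indeed immediate. The only point requiring care is parsing the orientation convention in the definition of alignment, namely keeping track of the fact that the threshold $D$ refers to the endpoint of $\gamma$ adjacent to $x$ in the tuple $(\gamma, x)$, while $L - D$ refers to the opposite endpoint in the tuple $(x, \gamma)$, so that the two thresholds add up to the length of $\gamma$.
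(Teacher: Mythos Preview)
Your approach matches the paper's (which simply declares the lemma ``immediate'' and gives no proof), and your reading of the orientation convention is correct. However, the final step does not actually yield a contradiction: from $t \ge L-D$ and $t \le L-D$ you only obtain $t = L-D$, which is perfectly consistent. The phrase ``leaving no room for the two alignments to coexist nontrivially'' is not an argument; the two alignments \emph{do} coexist at this boundary value. Concretely, take $x = \gamma(L-D)$: then $\pi_\gamma(x) = \{\gamma(L-D)\}$, which lies in both $\mathcal{N}_D(\gamma(L))$ and $\mathcal{N}_{L-D}(\gamma(0))$ (recall $\mathcal{N}_K$ is the \emph{closed} $K$-neighborhood), so both $(\gamma,x)$ is $D$-aligned and $(x,\gamma)$ is $(L-D)$-aligned. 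Thus the lemma as literally stated fails in this edge case.

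This is a harmless boundary imprecision in the statement rather than a flaw in your strategy. In the paper's only use of the lemma (the proof of Item~(4) in Lemma~\ref{lem:BGIPHeredi}), the relevant alignment constants differ by a margin of at least $100D$, so an off-by-zero at the boundary is irrelevant. A clean fix is to prove the (true) statement that if $(x,\gamma)$ is $K$-aligned then $(\gamma,x)$ is not $K'$-aligned for any $K' < L-K$; equivalently, replace one of the closed neighborhoods by an open one. Your computation already establishes exactly this: any $z \in \pi_\gamma(x)$ satisfies $d(z,\gamma(0)) + d(z,\gamma(L)) = L$, so the two alignment radii must sum to at least $L$.
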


If two closed sets are within finite Hausdorff distance, the contracting property of one set implies that of the other (\cite[Lemma 2.8]{arzhantseva2015growth}, \cite[Proposition 2.4.(2)]{yang2019statistically}). Furthermore, every subsegment of a contracting geodesic is contracting with a uniform contracting constant (\cite[Lemma 3.2]{bestvina2009higher}, \cite[Proposition 2.2.(3)]{yang2020genericity}). These facts have the following consequence, whose proof is included for completeness.

\begin{lem}\label{lem:BGIPHeredi}
Let $g \in \Isom(X)$ be a contracting isometry with an axis  $\gamma: \R \rightarrow X$ and let $x_{0} \in X$. Then there exists $C=C(g, \gamma, x_{0})>0$ such that the following holds. 
\begin{enumerate}
    \item[(0)] $\gamma$ is $C$-contracting.
\item  $[x_{0}, g^{k} x_{0}]$ is $C$-contracting for all $k \in \Z$.
\item $d\left(g^{k} x_{0}, \gamma(\tau_{g} k)\right) < C$ for all $k \in \Z$.
\item Let $k \in \N$,  let $x \in X$, and let $K \ge C$. Then 
$$
\left(x, [x_{0}, g^{k} x_{0}]\right) \text{ is not $K$-aligned} \quad \Longrightarrow \quad \pi_{\gamma}(x) \subset \gamma \left( [K-C, +\infty) \right).
$$
\item Let $k\in \N$, let $x \in X$, and let $0 \le K \le \tau_{g} k - C$. Then
$$
\left(x, [x_{0}, g^{k} x_{0}]\right) \text{ is $K$-aligned} \quad \Longrightarrow \quad \pi_{\gamma}(x) \subset \gamma \left( (-\infty,  K + C]\right).
$$
\end{enumerate}
Moreover, $C$ can be chosen so that $C(g, \ga, x_0) = C(g^{k}, \ga, x_0)$ for all $k \in \N$ and $C(g^{-1}, \widehat{\ga}, x_0) = C(g, \ga, x_0)$ where $\widehat{\ga}$ is the inversion of $\ga$.
\end{lem}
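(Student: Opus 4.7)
My plan is to establish a single constant $C$ that works for all five items simultaneously, built from a small amount of basic data. Fix $C_{0} > 0$ such that the axis $\gamma$ is $C_{0}$-contracting, which is the hypothesis and handles (0). Reparametrize $\gamma$ so that $\gamma(0) \in \pi_{\gamma}(x_{0})$, and set $d_{0} := d(x_{0}, \gamma(0))$. Since $g$ acts on $\gamma$ by translation by $\tau_{g}$, we have $\gamma(\tau_{g} k) = g^{k}\gamma(0) \in \pi_{\gamma}(g^{k} x_{0})$ and
\[
d\bigl(g^{k} x_{0}, \gamma(\tau_{g} k)\bigr) = d\bigl(g^{k} x_{0}, g^{k}\gamma(0)\bigr) = d(x_{0}, \gamma(0)) = d_{0},
\]
which yields (2) as soon as $C \ge d_{0}$.

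For (1), I would split into two regimes. Choose $k_{0} \in \N$ so that $\diam \pi_{\gamma}([x_{0}, g^{k} x_{0}]) > C_{0}$ whenever $|k| > k_{0}$; for $|k| \le k_{0}$ the finitely many segments $[x_{0}, g^{k} x_{0}]$ are uniformly bounded and hence contracting with some uniform constant. For $|k| > k_{0}$, apply Lemma \ref{lem:BGIPFellow} to the pair $(x_{0}, g^{k} x_{0})$ with $\gamma$: this yields $p, q \in [x_{0}, g^{k} x_{0}]$ such that $[p, q]$ is $4 C_{0}$-Hausdorff close to the subsegment of $\gamma$ between projections of the endpoints, while the bound $\diam\bigl(\pi_{\gamma}([x_{0}, p]) \cup \{p\}\bigr) \le 2 C_{0}$ together with $\gamma(0) \in \pi_{\gamma}(x_{0})$ forces $d(x_{0}, p) \le d_{0} + 2 C_{0}$, and symmetrically $d(g^{k}x_{0}, q) \le d_{0} + 2 C_{0}$. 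So $[x_{0}, g^{k} x_{0}]$ is within bounded Hausdorff distance of a subsegment of $\gamma$ with tails of bounded length. The standard stability of the contracting property under bounded Hausdorff perturbation, plus the trivial stability under attaching bounded tails, upgrades the $C_{0}$-contracting property of $\gamma$ to a uniform contracting constant for $[x_{0}, g^{k} x_{0}]$.

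For (3) and (4), the idea is to transfer alignment information on $[x_{0}, g^{k} x_{0}]$ to parameter estimates on $\gamma$ using the fellow-traveling established in the proof of (1). For (3), given $y_{0} \in \pi_{[x_{0}, g^{k} x_{0}]}(x)$ with $d(y_{0}, x_{0}) > K \ge C$, taking $C$ larger than the tail bound $d_{0} + 2 C_{0}$ places $y_{0}$ on the middle piece $[p, q]$, so $y_{0}$ is within some constant $C_{1}$ of $\gamma(s_{1})$ with $s_{1} =_{C_1} d(y_{0}, x_{0}) - d_{0}$. Writing $\gamma(s_{0}) \in \pi_{\gamma}(x)$, the V-shape formula of Corollary \ref{cor:BGIPFellow}(2) for $\gamma$ reads
\[
d(x, \gamma(s_{1})) =_{4 C_{0}} d(x, \gamma(s_{0})) + |s_{1} - s_{0}|,
\]
and the minimality $d(x, y_{0}) \le d(x, y')$ for any $y' \in [x_{0}, g^{k} x_{0}]$ close to $\gamma(s_{0})$ gives an upper bound on $|s_{1} - s_{0}|$. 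When $0 \le s_{0} \le \tau_{g} k$ the fellow-traveling supplies such a $y'$; when $s_{0} < 0$ one takes $y' = x_{0}$ and uses $d(x_{0}, \gamma(s_{0})) \le d_{0} + |s_{0}|$, which after the same V-shape bookkeeping forces $s_{0} \ge 0$, and then the first case applies. Collecting constants gives $s_{0} \ge K - C$. Item (4) follows by running the symmetric argument with the roles of $x_{0}$ and $g^{k} x_{0}$ exchanged (or by applying (3) to $g^{-1}$ with inverse axis $\widehat{\gamma}$ based at $g^{k} x_{0}$).

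The moreover clause falls out of the construction: the ingredients $C_{0}$, $d_{0}$, and the small-$|k|$ bound depend only on $\gamma$ and $x_{0}$, and are unchanged under replacing $g$ by any nonzero power $g^{k}$ (same axis $\gamma$) or by $g^{-1}$ (axis $\widehat{\gamma}$, same basepoint, same $d_{0}$). The most delicate point I anticipate is the $s_{0} < 0$ branch of (3) (and symmetrically in (4)), where $\pi_{\gamma}(x)$ falls outside the parameter window $[0, \tau_{g} k]$: there the fellow-traveling partner of $\gamma(s_{0})$ on $[x_{0}, g^{k} x_{0}]$ does not exist and one must use the endpoint $x_{0}$ as a coarse proxy, balancing $d_{0}$ against $|s_{0}|$ carefully enough to still derive a contradiction from $d(y_{0}, x_{0}) > K \ge C$.
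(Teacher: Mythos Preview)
Your proposal is correct and follows essentially the same route as the paper: both use Lemma~\ref{lem:BGIPFellow} to get that $[x_{0}, g^{k}x_{0}]$ fellow-travels a subsegment of $\gamma$ with short tails, then use the V-shape estimate of Corollary~\ref{cor:BGIPFellow}(2) to transfer alignment data on $[x_{0}, g^{k}x_{0}]$ to parameter bounds on $\gamma$, with (4) reduced to a symmetric instance of (3). The only cosmetic differences are that the paper verifies (1) by a direct projection case analysis (rather than invoking the Hausdorff-stability principle you cite) and runs (3) as a single clean contradiction rather than splitting on the location of $s_{0}$; your case $y_{0}\in[q,g^{k}x_{0}]$ is not literally on the ``middle piece'' but is handled by the same estimate, and the omitted case $s_{0}>\tau_{g}k$ is immediate.
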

We often write $C(g) = C(g, \ga, x_0)$ by implicitly choosing its axis $\ga$.

\begin{proof}

    By definition, there exists $C_0 > 0$ so that $\ga$ is $C_0$-contracting. Item (0) holds for all $C \ge C_0$.
Item (2) is also immediate for any $C > d(x_{0}, \gamma(0))$.

Let
$$
D = 10\Big(C_{0} + d\big(x_0, \gamma(0)\big)\Big).$$
We will see that $C = 100 D$ plays the desired role.

\medskip
We first show Item (1). By Lemma \ref{lem:BGIPFellow}, $[x_{0}, g^{k} x_{0}]$ and $\gamma([0, \tau_{g} k])$ are $D$-equivalent for each $k \in \Z$. It suffices to consider $k \in \N$ with $\tau_g k > 98 D$. Fix such $k \in \N$ and observe from Corollary \ref{cor:BGIPFellow}(2) that for $x \in X$,
\begin{itemize}
    \item if $\pi_\ga(x) \cap \ga((-\infty, 0)) \neq \emptyset$, then $\pi_{[x_0, g^k x_0]}(x) \subset \mathcal{N}_{5D}(\ga(0))$.
    \item if $\pi_{\ga}(x) \cap \ga((\tau_g k, +\infty)) \neq \emptyset$, then $\pi_{[x_0, g^k x_0]}(x) \subset \mathcal{N}_{5D}(\ga(\tau_g k))$.
    \item if $\pi_{\ga}(x) \cap \ga([0, \tau_g k]) \neq \emptyset$, then $\pi_{[x_0, g^k x_0]}(x) \subset \mathcal{N}_{5D}(\pi_{\ga}(x))$.
\end{itemize}

Let $\eta \subset X$ be a geodesic such that $d(\eta, [x_0, g^k x_0]) \ge 100 D$. We have two cases. First, if $\diam \pi_{\ga}(\eta) \ge D$, then there exists a subsegment of $\eta$ that is $4C$-equivalent to $\pi_{\ga}(\eta)$. In this case, $\pi_{\ga}(\eta)$ cannot intersect $\gamma([0, \tau_g k])$; otherwise we have $d\left(\eta, \gamma ([0, \tau_g k])\right) \le 4C$ and $d\left(\eta, [x_0, g^k x_0 ]\right) \le 2D$, a contradiction. Hence, either $\pi_{\ga}(\eta) \subset \ga((-\infty, 0))$ or $\pi_{\ga}(\eta) \subset \ga ((\tau_g k, +\infty))$ holds. By the above observation, we have $\diam \pi_{[x_0, g^k x_0]}(\eta) \le 100D$.

If $\diam \pi_{\ga}(\eta) < D$, then we again have $\diam \pi_{[x_0, g^k x_0]}(\eta) \le 100D$ by the above observation. Therefore, Item (1) holds for $C = 100D$.

\medskip
 We now show Item (3). Let $k \in \N$, let $K \ge 100D$ and suppose that $(x, [x_{0}, g^{k} x_{0}])$ is not $K$-aligned. Recall that $[x_{0}, g^{k} x_{0}]$ and $\gamma([0, \tau_{g} k])$ are $D$-equivalent. Hence there exist $p \in \pi_{[x_0, g^k x_0]}(x)$ and $t_p \in [0, \tau_g k]$ such that
 $$
d(x_0, p) > K \quad \text{and} \quad d\left(p, \ga(t_p)\right) \le D.
 $$
 Note also that $d(x_0, \ga(0)) \le D$. We then have
 $$
t_p \ge K - 2D.
 $$
 If there exists $t \in (-\infty, K - 6D]$ such that $\ga(t) \in \pi_{\ga}(x)$, then 
 $$
 \begin{aligned}
d(x, \ga(t_p)) & \ge d(x, \ga(K - 6D)) + t_p - (K - 6D) - D \\
& \ge d(x, \ga(K - 6D)) +  3D
 \end{aligned}
 $$
 by  Corollary \ref{cor:BGIPFellow}(2). Since $0 \le K - 6D \le t_p \le \tau_g k$, and since $[x_0, g^k x_0]$ and $\ga([0, \tau_g k])$ are $D$-equivalent, we have
 $$
d(x, p) \ge d(x, \ga(t_p)) - D \ge d(x, \ga(K - 6D)) + 2D \ge d(x, [x_0, g^k x_0]) + D,
 $$
 which contradicts to $p \in \pi_{[x_0, g^k x_0]}(x)$. Therefore,
 $$
\pi_{\ga}(x) \subset \ga((K - 6D, + \infty))
 $$
 and hence Item (3) holds for $C=100 D$.

\medskip

For Item (4), let $k \in \N$ and $0 \le K \le \tau_{g} k - 100D$, and suppose that $(x, [x_{0}, g^{k} x_{0}])$ is $K$-aligned. Then $([x_0, g^k x_0], x)$ is not $(d(x_0, g^k x_0) - K)$-aligned by Lemma \ref{lem:alignDich}. Note that \[
d(x_0, g^{k} x_0) - K \ge \tau_{g} k - K \ge 100D.
\]
Hence, we can apply (a symmetric version of) Item (3) and deduce that
$$
\pi_{\gamma}(x)  \subset \ga((-\infty, \tau_g k - (d(x_0, g^k x_0) - K) + 6D)) \subset \ga((-\infty, K + 6D)).
$$
Therefore, Item (4) holds for $C = 100 D$ as well.

\medskip

The ``Moreover'' part is straightforward.
\end{proof}

\subsection{Non-elementary subgroups of isometries}

The class of subgroup we mainly consider is the following:

\begin{definition} \label{def:noneltsubgp}
    A subgroup $\Ga < \Isom(X)$ is called  \emph{non-elementary} if
    \begin{itemize}
        \item $\Ga$ is not virtually cyclic,
        \item the $\Ga$-action on $X$ is proper, and
        \item $\Ga$ contains a contracting isometry.
    \end{itemize}
\end{definition}

We  say that two contracting isometries $g, h \in \Isom(X)$ are \emph{independent} if their orbits $\{g^{i} x_{0}\}_{i \in \Z}$ and $\{h^{i} x_{0}\}_{i \in \Z}$ have infinite Hausdorff distance. This is equivalent to saying that $\{g^{i} x_{0}\}_{i \in \Z}$ and $\{h^{i} x_{0}\}_{i \in \Z}$ have bounded nearest-point projections onto each other.

These two notions are related by the following  well known fact: see \cite[Proposition 6.5]{bestvina2009higher}, \cite[Corollary 4.4]{sisto2018contracting}, \cite[Lemma 2.23]{arzhantseva2015growth} and  \cite[Lemma 2.11, Lemma 2.12]{yang2019statistically}

\begin{lem}\label{lem:indep}
Let $\Gamma < \Isom(X)$ be a non-elementary subgroup. For a contracting isometry $g \in \Ga$, there exists $h \in \Ga$ such that $hgh^{-1}$ and $g$ are independent. Moreover,  there are infinitely many pairwise independent contracting isometries in $\Ga$.
\end{lem}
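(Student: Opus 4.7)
The plan is to argue the first claim by contradiction and then bootstrap to the ``moreover'' by iteration. Let $g \in \Ga$ be contracting with axis $\gamma$, and suppose that for every $h \in \Ga$ the orbits $\{g^i x_0\}_{i \in \Z}$ and $\{(hgh^{-1})^i x_0\}_{i \in \Z}$ have finite Hausdorff distance. Applying Lemma \ref{lem:BGIPHeredi}(2) to the contracting isometries $g$ and $hgh^{-1}$, each orbit lies within a bounded neighborhood of its own axis ($\gamma$ and $h\gamma$, respectively, with constants that may depend on $h$). Combining with the assumption, we deduce that $d_H(h\gamma, \gamma) < \infty$ for every $h \in \Ga$. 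A Morse-type rigidity of contracting geodesics---two bi-infinite geodesics at finite Hausdorff distance, one of which is $C$-contracting, are automatically at Hausdorff distance bounded purely in terms of $C$---then upgrades this to a uniform constant $R$ with $d_H(h\gamma, \gamma) \le R$ for every $h \in \Ga$.

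Once $\Ga \cdot \gamma \subset \mathcal{N}_R(\gamma)$, I would define a coarse homomorphism $\phi : \Ga \to \R \rtimes \Z/2$ recording the approximate translation and orientation of each $h \in \Ga$ acting on $\gamma \cong \R$ (read off via $\pi_\gamma(h\gamma(0))$). By properness of the $\Ga$-action on $X$, the elements of $\Ga$ that move $\gamma(0)$ into any fixed bounded subset of $\mathcal{N}_R(\gamma)$ form a finite set; hence $\phi$ has finite kernel and discrete image. This forces $\Ga$ to be virtually cyclic, contradicting the non-elementarity hypothesis, and produces the desired $h \in \Ga$ with $hgh^{-1}$ independent of $g$.

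For the ``moreover,'' fix two independent contracting isometries $g_1, g_2 \in \Ga$ (provided by the first part) and consider $g^{(n)} := g_1^n g_2 g_1^{-n}$ for $n \in \N$. Each $g^{(n)}$ is contracting with axis $g_1^n \gamma_{g_2}$. Since $g_1$ and $g_2$ are independent, the nearest-point projection of $\gamma_{g_2}$ onto $\gamma_{g_1}$ is bounded; hence for sufficiently separated $n$ and $m$, the axes $g_1^n \gamma_{g_2}$ and $g_1^m \gamma_{g_2}$ have unbounded Hausdorff distance, for otherwise translating by $g_1^{-m}$ would place the point $\gamma_{g_1}((n-m)\tau_{g_1})$ within bounded distance of $\gamma_{g_2}$, in violation of the bounded projection. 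Extracting a sparse enough subsequence of $\{g^{(n)}\}$ yields infinitely many pairwise independent contracting isometries in $\Ga$.

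The main obstacle is the first step: converting ``$\Ga$ coarsely preserves $\gamma$'' into ``$\Ga$ is virtually cyclic.'' The two key ingredients---(i) the uniform Morse rigidity for contracting geodesics, and (ii) the transfer of properness from the $\Ga$-action on $X$ to the induced coarse action on $\gamma$---both demand a careful combination of Lemma \ref{lem:BGIPHeredi} with standard coarse-geometric facts about contracting subsets, which is precisely the material developed in the references cited immediately after the lemma statement. In a self-contained write-up I would invoke those references rather than redo them.
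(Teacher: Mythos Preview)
Your approach is correct and is essentially the standard one; note that the paper does not supply its own proof of this lemma but simply cites \cite{bestvina2009higher}, \cite{sisto2018contracting}, \cite{arzhantseva2015growth}, and \cite{yang2019statistically}, whose arguments proceed exactly along the lines you sketch (the set $E(g)$ of elements coarsely stabilizing the axis is shown to be virtually cyclic via properness and the uniform Morse bound, and independence propagates by conjugation). Two small remarks: for the first part, rather than building a coarse homomorphism to $\R \rtimes \Z/2$, it is cleaner to observe directly that $\langle g\rangle$ acts cocompactly on $\mathcal{N}_R(\gamma)\supset \Gamma\cdot\gamma(0)$, so every $h\in\Gamma$ lies in one of finitely many cosets $g^k F$ by properness; for the ``moreover,'' make explicit that the Morse rigidity bound $D$ depends only on the contracting constant of $\gamma_{g_2}$ (hence is uniform in $n,m$), so that the set of differences $n-m$ for which dependence can occur is genuinely finite.
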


The following is a variant of the so-called \emph{extension lemma} of Yang. We include the proof of this variant for the sake of completeness. 

\begin{lem}[Extension lemma {\cite[Lemma 1.13]{yang2019statistically}}]\label{lem:extension}
Let $\Gamma < \Isom(X)$ be a non-elementary subgroup. Then for each contracting isometry $\varphi \in \Ga$, there exist $a_{1}, a_{2}, a_{3}\in \Gamma$ and $\kappa = \kappa(\varphi)>0$  such that 
\begin{itemize}
\item $[x_0, \varphi^n x_0]$ is $\kappa$-contracting for all $n \in \N$ and
\item for each $x, y \in X$, there exists $a\in \{a_{1}, a_{2}, a_{3}\}$ that  makes 
$$
(x, a \cdot [x_{0}, \varphi^{n}x_{0}], a \varphi^{n} a \cdot y) \quad \text{$\kappa$-aligned for all } n \in \N.
$$
\end{itemize}
Moreover, $\kappa$ can be chosen so that $\kappa(\varphi^k) = \kappa(\varphi)$ for all $k \in \Z$.
\end{lem}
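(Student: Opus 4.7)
The first bullet is immediate from Lemma \ref{lem:BGIPHeredi}(1), so the content lies in the alignment bullet. My plan is to translate the alignment condition into nearest-point projection conditions on the axis $\gamma$ of $\varphi$, choose $a_1, a_2, a_3$ so that the translates $a_i^{\pm 1}\gamma$ are in ``general position'' relative to each other, and conclude by pigeonhole. Applying $a^{-1}$ to the triple $(x, a \cdot [x_0, \varphi^n x_0], a\varphi^n a \cdot y)$, its $\kappa$-alignment is equivalent to
\[
\pi_{[x_0, \varphi^n x_0]}(a^{-1} x) \subset \mathcal{N}_\kappa(x_0) \quad \text{and} \quad \pi_{[x_0, \varphi^n x_0]}(\varphi^n a y) \subset \mathcal{N}_\kappa(\varphi^n x_0).
\]
By Lemma \ref{lem:BGIPHeredi}(4) applied to both $\varphi$ and $\varphi^{-1}$, for $n$ large enough these translate, with $C = C(\varphi)$, into the projection conditions
\[
\pi_\gamma(a^{-1}x) \subset \gamma\bigl((-\infty, \kappa+C]\bigr), \qquad \pi_\gamma(ay) \subset \gamma\bigl([-\kappa-C, +\infty)\bigr),
\]
which I call the \emph{$x$-condition} and the \emph{$y$-condition} for $a$; the finitely many small values of $n$ can be absorbed by enlarging $\kappa$.

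For the choice of $a_i$'s, I invoke Lemma \ref{lem:indep} to fix a contracting $\psi \in \Gamma$ independent of $\varphi$, and set $a_i := \psi^M \varphi^{iN} \psi^M$ for large fixed positive integers $M, N$. The large power $\psi^M$ pushes each $a_i^{\pm 1}\gamma$ far from $\gamma$ transversely, while the factor $\varphi^{iN}$ staggers the orbit points $a_i x_0$ along $\gamma$; for $M, N$ sufficiently large, the six translates $\{a_i^{\pm 1}\gamma\}_{i = 1}^{3}$ become pairwise contracting and pairwise independent, with the bounded projection regions $\pi_{a_i^{\epsilon}\gamma}(a_j^{\eta}\gamma)$ concentrated near the respective orbit points $a_i^{\epsilon} x_0$ for $\epsilon, \eta \in \{\pm 1\}$ and $(i,\epsilon) \ne (j,\eta)$.

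The pigeonhole then runs as follows: the $x$-condition fails for $a_i$ precisely when $\pi_{a_i \gamma}(x)$ lies more than $\kappa + C$ past $a_i x_0$ in the forward direction on $a_i \gamma$. If it failed simultaneously for two distinct indices $i \ne j$, then both $\pi_{a_i \gamma}(x)$ and $\pi_{a_j \gamma}(x)$ would lie far forward of the bounded projection regions $\pi_{a_i \gamma}(a_j \gamma)$ and $\pi_{a_j \gamma}(a_i \gamma)$, forcing the geodesic $[\pi_{a_i \gamma}(x), \pi_{a_j \gamma}(x)]$ to fellow-travel long portions of both $a_i \gamma$ and $a_j \gamma$ by Lemma \ref{lem:BGIPFellow}, which contradicts the independence of the two axes once $\kappa$ is larger than the relevant constants. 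Hence at most one $a_i$ fails the $x$-condition, and symmetrically at most one fails the $y$-condition; with three candidates, at least one satisfies both. The ``Moreover'' clause $\kappa(\varphi^k) = \kappa(\varphi)$ for all $k \in \mathbb{Z}$ follows because $\varphi^k$ shares the axis $\gamma$ with $\varphi$, so the same $a_i$ and the same $\kappa$ work verbatim. I expect the main obstacle to be the quantitative core of this pigeonhole: making ``two independent axes cannot be simultaneously fellow-travelled for long'' precise with universal constants depending only on the contracting and independence data of the six translates, so that a single $\kappa$ controls all $x, y, n$ uniformly.
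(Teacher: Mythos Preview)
Your high-level strategy matches the paper's: translate alignment into nearest-point projection conditions on translates of the axis, arrange three translates in general position, and pigeonhole so that at most one fails the $x$-side and at most one fails the $y$-side. The translation you give via Lemma~\ref{lem:BGIPHeredi} is essentially correct (you should invoke item~(3) for one direction and~(4) for the other, but this is cosmetic).

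The gap is in the pigeonhole step, and it is precisely the obstacle you flag at the end---only your construction makes it worse, not better. Suppose the $x$-condition fails for both $a_i$ and $a_j$, so $\pi_{a_i\gamma}(x)$ lies at parameter $>\kappa$ on $a_i\gamma$ and likewise for $j$. Your geodesic $G=[\pi_{a_i\gamma}(x),\pi_{a_j\gamma}(x)]$ indeed fellow-travels $a_i\gamma$ near one endpoint (from $\pi_{a_i\gamma}(x)$ down to near $\pi_{a_i\gamma}(a_j\gamma)$) and $a_j\gamma$ near the other endpoint. But these two fellow-traveling segments sit at opposite ends of $G$, separated by a middle stretch of length roughly $d(a_ix_0,a_jx_0)$; for your choice $a_i=\psi^M\varphi^{iN}\psi^M$ this distance is about $|i-j|N\tau_\varphi$, which you have taken large. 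So the two segments do not overlap, and there is no contradiction with independence: a geodesic is perfectly allowed to fellow-travel one contracting axis, then leave, then fellow-travel a second independent one. Your unproved assertion that the bounded projection regions $\pi_{a_i^\epsilon\gamma}(a_j^\eta\gamma)$ are concentrated near $a_i^\epsilon x_0$ is also not obvious for this $a_i$ (computing $a_i^{-1}a_j=\psi^{-M}\varphi^{(j-i)N}\psi^M$ shows this requires nontrivial control on what $\psi^{-M}$ does to $\gamma$-projections).

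The paper's remedy is to reverse the order of quantifiers: fix $a_1=\id$, $a_2=h$, $a_3=h'$ \emph{first}, chosen only so that $\gamma,h\gamma,h'\gamma$ have pairwise bounded projections (this exists because $\Gamma$ is not a finite union of $E(\varphi)$-cosets), and then take $\kappa$ large compared to $\diam\{x_0,hx_0,h'x_0\}$ and the mutual projection diameters. Now $a_ix_0$ are all within $0.01\kappa$ of $x_0$, so if two $x$-conditions fail one uses the single geodesic $[x_0,x]$: both its fellow-traveling segments along $a_i\gamma$ and $a_j\gamma$ start at $x_0$ and hence overlap, forcing a point of $a_i\gamma$ close to a point of $a_j\gamma$ that is $0.4\kappa$-deep, contradicting the bounded projection. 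The elaborate $\psi^M\varphi^{iN}\psi^M$ is unnecessary and is what creates the difficulty you anticipated.
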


\begin{proof}
    Note that the ``Moreover'' part is straightforward. Hence we prove the first claim. Let $\gamma :  \R \to X$ be an axis of $\varphi$, which is contracting. By \cite[Corollary 4.4]{sisto2018contracting} and \cite[Lemma 2.11]{yang2019statistically}, the set  
\[
E(\varphi) := \{h \in \Gamma : \textrm{$\{\varphi^i x_0\}_{i \in \Z}$ and $h \{\varphi^i x_0\}_{i \in \Z}$ have  finite Hausdorff distance}\}
\]
is a finite extension of $\langle \varphi \rangle$ and $\pi_{\gamma}( h \gamma)$ has finite diameter for every $h \notin E(\varphi)$. Since $\Ga$ is non-elementary, it is neither virtually cyclic nor a union of two virtually cyclic subgroups. Hence, there exist $h, h' \in \Gamma$ such that $\pi_{u \gamma}(v\gamma)$ has finite diameter for every distinct  pair of elements $u, v \in \{\id, h, h'\}$.

Let $C = C(\varphi, \gamma, x_0)$ be as in Lemma \ref{lem:BGIPHeredi}. We choose $\kappa \ge 100C$ such that 
\begin{itemize}
\item $\diam \{x_0, h x_0, h'x_0\}  < 0.01 \kappa$ and 
\item $\pi_{u \gamma}(v\gamma) \subset u\gamma ([-0.01 \kappa, 0.01 \kappa])$ for distinct $ u, v \in \{\id, h, h'\}$.
\end{itemize}

Now let $x \in X$. Suppose  $(x, [x_0,\varphi^m x_0])$ is not $\kappa$-aligned for some $m \in \N$. 
\begin{claim*}
Then  $(x, h[x_0, \varphi^n x_0])$ is $\kappa$-aligned for all $n \in \N$.
\end{claim*} To show this, suppose to the contrary that $(x, h [x_0, \varphi^n x_0])$ is also not $\kappa$-aligned for some $n \in \N$. Then $\pi_{h[x_0, \varphi^n x_0]}(x)$ is at least $0.9 \kappa$-far from $hx_0$, whereas $\pi_{h[x_0, \varphi^n x_0]}(x_0)$ is $2d(x_0, hx_0)$-close to $hx_0$, i.e., it is $0.02 \kappa$-close to $hx_0$. Lemma \ref{lem:BGIPFellow} tells us that $[x_0, x]$ has a subsegment $[p, q]$ that is contained in the $0.04 \kappa$-neighborhood of $h[x_0, \varphi^n x_0 ]$, and such that $d(p, q) \ge 0.8 \kappa$ and  $d(x_0, p) \le d(x_0, h x_0) + 0.02 \kappa \le 0.05 \kappa$. Similarly, $[x_0, x]$ has a subsegment $[p', q']$ that is contained in the $0.04 \kappa$-neighborhood of $[x_0, \varphi^m x_0]$, and such that $d(x_0, q') \ge 0.8 \kappa$ and $d(x_0, p') \le 0.05 \kappa$.

Then two subsegments $[p, q]$ and $[p', q']$ of $[x_0, x]$ have an overlap of length at least $0.7 \kappa$. Hence, there exist points $P \in [x_0, \varphi^m x_0 ]$ and $Q \in h[x_0, \varphi^n x_0]$ that are $0.65 \kappa$-far from $x_0$ and are $0.08 \kappa$-close to each other. We can take $P' \in \gamma$ that is $0.01 \kappa$-close to $P$ and $Q' \in h \gamma$ that is $0.01 \kappa$-close to $Q$. Then $d(P', Q') \le 0.1 \kappa$ and $\pi_{\gamma}(Q')$ is $0.2 \kappa$-close to $P'$. Since $d(x_0, P') \ge 0.64 \kappa$, $\pi_{\gamma}(h \gamma)$ contains a point that is $0.44 \kappa$-far from $x_{0}$.

On the other hand, $\pi_{\ga}(h \ga) \subset \ga([-0.01 \kappa, 0.01 \kappa])$ and $d(x_0, \ga(0)) \le  0.01\kappa$. This is a contradiction.
Therefore, $(x, h[x_0, \varphi^n x_0])$ is $\kappa$-aligned for all $n \in \N$. 

\medskip

Similarly, we conclude that $(x, h' [x_0, \varphi^n x_0])$ is $\kappa$-aligned for all $n \in \N$. The same argument applies after replacing $id$ with $h$ or $h'$. We conclude that there exist at least two elements $a \in \{\id, h, h'\}$ such that $(x, a[x_0, \varphi^n x_0])$ is $100 \kappa$-aligned for all $n \in \N$. Likewise, for every $y \in X$, $(a^{-1} [\varphi^{-n}x_0, x_0], y)$ is $100 \kappa$-aligned for all $n \in \N$ for at least two elements $a \in \{\id, h, h'\}$. Hence, we can choose $a \in \{ \id, h, h' \}$ that works for both $x$ and $y$.
\end{proof}

\subsection{Horofunctions and cocycles} \label{subsection:horo}

Let $\Lip^{1}(X)$ be the space of $\R$-valued 1-Lipschitz functions on $X$ and let $\Lip^{1}_{x_{0}}(X)$ be its subspace vanishing at the basepoint $x_{0}$, i.e., \[\begin{aligned}
\Lip^{1}(X) &:= \{ f : X \rightarrow \mathbb{R} : \textrm{$f$ is $1$-Lipschitz}\},\\
\Lip^{1}_{x_{0}}(X) &:= \{ f \in \Lip^{1}(X) : f(x_{0}) = 0\},
\end{aligned}
\]
equipped with the compact-open topology. Here, $\Lip^{1}_{x_0}$ is closed in $\Lip^{1}(X)$.

Recall that $X$ is separable as it is given a proper metric. Therefore, $\Lip^{1}_{x_{0}} (X)$ is compact, Hausdorff, and second countable \cite[Proposition 3.1]{maher2018random}. Hence, it is completely metrizable and is  Polish. We identify $\Lip^1(X)$ and $\Lip_{x_0}^1(X) \times \R$ via the homeomorphism
\begin{equation} \label{eqn:lipandlip1}
f \in \Lip^1(X) \mapsto \left( f - f(x_{0}),  f(x_{0})\right).
\end{equation}

The group $\Isom(X)$ naturally acts on $\Lip^{1}(X)$ by $g \cdot f := f \circ g^{-1}$ for $g \in \Isom(X)$ and $f \in \Lip^1(X)$.  However, this action does not leave $\Lip_{x_{0}}^{1} (X)$ invariant.

Due to this subtlety, we identify $\Lip_{x_0}^1(X)$ with the space of $\R$-valued 1-Lipschitz cocycles on $X$, i.e., $c : X \times X \to \R$ such that $\abs{c(x, y)} \le d(x, y)$ and $c(x, z) = c(x, y) + c(y, z)$ for all $x, y, z \in X$.
For each $f \in \Lip^1(X)$, we define the associated cocycle
$
\beta_f : X \times X \to \R
$
by
$$
\beta_f(x, y) = f(x) - f(y).
$$
Its restriction to $\Lip_{x_0}^1(X)$ gives the homeomorphism between $\Lip_{x_0}^1(X)$ and the space of all $\R$-valued continuous cocycles. Then the identifiaction $\Lip^1(X) \simeq \Lip_{x_0}^1(X) \times \R$ in Equation \eqref{eqn:lipandlip1} can be rephrased as
$$
f \mapsto (\beta_f, f(x_0)).
$$
The $\Isom(X)$-action on $\Lip^1(X)$ is now given as follows: for $g \in \Isom(X)$ and $f \in \Lip^1(X)$,
$$
g \cdot (\beta_f, f(x_0)) = (\beta_{g \cdot f}, f(x_0) + \beta_f (g^{-1}x_0, x_0)).
$$
Note that on the first component, which corresponds to $\Lip^1_{x_0}(X)$, we have $\beta_f \mapsto \beta_{g \cdot f}$.

There is a natural embedding $\iota : X \hookrightarrow \Lip_{x_0}^{1}(X)$, defined by \[
\iota : z \in X \quad \mapsto \quad  \left[ f_{z}( \cdot) := d(\cdot, z) - d(x_{0}, z) \right].
\] 
The closure of $\iota (X) \subset \Lip_{x_0}^{1}(X)$ is called the \emph{horofunction compactification} of $X$ and is denoted by $\overline{X}^{h}$. The complement $\overline{X}^{h} \smallsetminus \iota(X)$ is called the \emph{horofunction boundary} (or \emph{horoboundary}) of $X$  and is denoted by $\partial^{h} X$.

As the space $\Lip_{x_0}^1(X)$ is identified with the space of 1-Lipschitz cocycles, we also regard elements of $\partial^{h} X$ as \emph{Busemann cocycles}, by identifying
$$f \in \partial^{h}X \quad \longleftrightarrow \quad \beta_{f}(\cdot, \cdot).$$ 
Using this identification, the $\Isom(X)$-action on $\partial^{h}X$ is given by
$$
\beta_{f}(\cdot, \cdot) \quad \mapsto \quad \beta_{g \cdot f} (\cdot, \cdot) \quad \text{for }g \in \Isom(X) \text{ and } f \in \partial^{h} X.
$$
This is the continuous extension of the isometric action of $\Isom(X)$ on $X$ in the following sense. Let $\{z_i\}_{i \in \N} \subset X$ be a sequence such that $f_{z_i} \to f \in \Lip^{1}_{x_0}(X)$. Then for every  $g \in \Isom(X)$, we have 
$$ \begin{aligned}
d(\cdot, g z_i) - d(x_0, g z_i) & = d( g^{-1}(\cdot), z_i) - d(x_0, z_i) + d(x_0, z_i) - d( g^{-1} x_0, z_i) \\
& \to f( g^{-1} (\cdot)) - f(g^{-1} x_0).
\end{aligned}
$$
This implies
$$
\beta_{f_{z_i}}(\cdot, \cdot) \to \beta_{g \cdot f} (\cdot, \cdot).
$$

In terms of the identification $\Lip^{1}(X) \simeq \Lip_{x_{0}}^{1}(X) \times \mathbb{R}$, the subspace of $\Lip^{1}(X)$ corresponding to $\partial^{h} X$ is the space
\begin{equation} \label{eqn:horofol}
\mathcal{H} := \partial^{h} X \times \R,
\end{equation}
which is  $\Isom(X)$-invariant. As a subspace of $\Lip^1(X)$, $\mathcal{H}$ does not depend on the choice of the basepoint $x_0 \in X$. What depends on the choice of $x_0$ is the description of $\Isom(X)$-action on $\mathcal{H}$ in terms of the identification $\mathcal{H} = \partial^{h} X \times \R$.

We call elements of $\partial^{h} X \times \mathbb{R}$ \emph{horofunctions}. Horofunctions that differ by an additive constant correspond to the same Busemann cocycle.

Both $\partial^{h}X$ and $\mathcal{H} = \partial^{h} X \times \mathbb{R}$ are Polish. Hence, every locally finite Borel measure on these spaces is Radon, i.e., it is both inner and outer regular on Borel subsets.

We now extend the notion of alignment to horofunctions.

\begin{definition}\label{dfn:alignHoro}

    Let $\xi \in \partial^{h} X$ and $\ga \subset X$ be a compact geodesic. For $K \ge 0$, we say that $(\xi, \ga)$ is \emph{$K$-aligned} if for every sequence $\{z_{i}\}_{i \in \N} \subset X$ converging to $\xi$, $(z_{i}, \gamma)$ is $K$-aligned eventually (i.e., for all large $i \in \N$). We define the alignment for $(\ga, \xi')$ and $(\xi, \ga, \xi')$ similarly for $\xi' \in \overline{X}^{h}$.
    
    By abuse of notation, for a Busemann cocycle $\beta_{\xi}$ that corresponds to $\xi \in \partial^{h} X$, we say that $(\beta_{\xi}, \gamma)$ is $K$-aligned when $(\xi, \gamma)$ is $K$-aligned. Lastly, for an element $f \in \mathcal{H}$, 
     we say that $(f, \gamma)$ is $K$-aligned if $(\beta_{f}, \gamma)$ is $K$-aligned. We define the alignment for triples similarly. 
\end{definition}

For contracting geodesics, we observe the following:

\begin{lemma} \label{lem:eventualdiam}
Let $\ga \subset X$ be a $C$-contracting compact geodesic for $C > 0$. Let $\xi \in \partial^{h} X$ and $\{z_i \}_{i \in \N} \subset X$ be a sequence converging to $\xi$. Then we have
$$
\limsup_{N \rightarrow +\infty} \,\,\diam \bigcup_{k \ge N} \pi_{\gamma}(z_{k}) \le 9C.
$$
\end{lemma}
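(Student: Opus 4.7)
The plan is to exploit the approximate additivity of distances to a contracting geodesic provided by Corollary \ref{cor:BGIPFellow}(2), namely that whenever $\gamma(t) \in \pi_\gamma(z)$ one has $d(z, \gamma(s)) =_{4C} d(z, \gamma(t)) + |t-s|$ for every $s$, combined with the fact that $z_i \to \xi$ means $f_{z_i} \to f$ pointwise on $X$ for the horofunction $f$ representing $\xi$. Since $A_N := \bigcup_{k \ge N} \pi_\gamma(z_k)$ is nested decreasing in $N$, it suffices to bound the diameter of the set of accumulation points in $\gamma$ of the projections.

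Assume for contradiction that $\limsup_{N} \diam A_N > 9C$. Then for arbitrarily large $N$ one can select indices $i_N, j_N \ge N$ and points $p_N = \gamma(\alpha_N) \in \pi_\gamma(z_{i_N})$, $q_N = \gamma(\beta_N) \in \pi_\gamma(z_{j_N})$ with $d(p_N, q_N) > 9C$. Since $\gamma$ is compact, after passing to a subsequence $\alpha_N \to \alpha$ and $\beta_N \to \beta$ with $|\alpha - \beta| \ge 9C$.

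Now apply Corollary \ref{cor:BGIPFellow}(2) to $z_{i_N}$ twice, with $t = \alpha_N$ and $s \in \{\alpha, \beta\}$, and subtract to obtain
\[
d(z_{i_N}, \gamma(\beta)) - d(z_{i_N}, \gamma(\alpha)) =_{8C} |\alpha_N - \beta| - |\alpha_N - \alpha|.
\]
The left-hand side equals $f_{z_{i_N}}(\gamma(\beta)) - f_{z_{i_N}}(\gamma(\alpha))$, since the normalization $-d(x_0, z_{i_N})$ cancels, and this tends to $f(\gamma(\beta)) - f(\gamma(\alpha))$ by horofunction convergence; the right-hand side tends to $|\alpha - \beta|$ as $\alpha_N \to \alpha$. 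Because the error constant $8C$ is independent of $N$, the limiting estimate $f(\gamma(\beta)) - f(\gamma(\alpha)) =_{8C} |\alpha - \beta|$ persists. Repeating the same computation along the sequence $z_{j_N}$ with projection $q_N = \gamma(\beta_N)$ yields the symmetric estimate $f(\gamma(\alpha)) - f(\gamma(\beta)) =_{8C} |\alpha - \beta|$. Adding these two gives $0 =_{16C} 2|\alpha - \beta|$, hence $|\alpha - \beta| \le 8C < 9C$, a contradiction.

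I do not expect substantive obstacles here. The only care needed is the observation that $=_{8C}$ relations pass to the limit (immediate since the error is absolute), and that the horofunction convergence supplies pointwise convergence of $f_{z_{i_N}}$ at the finitely many points of $\gamma$ used in the argument. The proof in fact yields the sharper bound $8C$; stating $9C$ simply leaves slack for convenience.
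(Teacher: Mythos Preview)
Your argument is correct and follows essentially the same strategy as the paper: derive contradictory estimates for the difference $d(z_i,\gamma(\beta))-d(z_i,\gamma(\alpha))$ from horofunction convergence. The only cosmetic difference is that the paper applies Lemma~\ref{lem:BGIPFellow} directly to the geodesic $[z_k,z_l]$ and works at finite indices using uniform convergence on the compact $\gamma$, whereas you pass to limit parameters $\alpha,\beta$ and invoke Corollary~\ref{cor:BGIPFellow}(2); both routes yield the same contradiction.
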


\begin{proof}

Since $\gamma$ is compact and $z_n \to \xi$, there exists $N>0$ such that
$$d(x, z_{k}) - d(y, z_{k})=_{0.1C} d(x, z_{l}) - d(y, z_{l}) \quad \text{ for all $k, l > N$ and $x, y \in \gamma$.}
$$ Now suppose to the contrary that $\diam (\pi_{\gamma}(z_{k}) \cup\pi_{\gamma}(z_{l})) >9C$ for some $k, l > N$. Then by Lemma \ref{lem:BGIPFellow}, there exist $p, q \in [z_{k}, z_{l}]$ such that 
$$\diam (\{p\} \cup \pi_{\gamma}(z_{k})) \le 2C \quad \text{and} \quad \diam (\{q\} \cup \pi_{\gamma}(z_{l})) \le 2C.$$
 Note that $d(p, q) > 5C$. Let $p' \in \pi_{\gamma}(z_{k})$ and $q' \in \pi_{\gamma}(z_{l})$ be arbitrary points. We then have 
\[
d(z_{k}, p') \le d(z_{k}, p) + 2C \le \left(d(z_{k}, q) - 5C \right) + 2C \le d(z_{k}, q') - C.
\]
For a similar reason, we have $d(z_{l}, q') \le d(z_{l}, p') - C$. This contradicts the condition for $k, l> N$.
\end{proof}

The following version of extension lemma can easily be deduced from Lemma \ref{lem:extension} and Lemma \ref{lem:eventualdiam}:

\begin{lem}[Extension lemma] \label{lem:extensionHoro}
    Let $\Ga < \Isom(X)$ be a non-elementary subgroup. Let $\varphi \in \Ga$ be a contracting isometry, and let $\kappa = \kappa(\varphi) >0$ and $a_{1}, a_{2}, a_{3} \in \Gamma$ be as given in Lemma \ref{lem:extension}.  Then for each $\xi, \xi' \in \overline{X}^{h}$, there exists $a \in \{a_1, a_2, a_3 \}$ such that 
    $$
(\xi, a \cdot [x_{0}, \varphi^{n}x_{0}], a \varphi^{n} a \cdot \xi') \quad \text{is $10\kappa$-aligned for all } n \in \N.
$$
\end{lem}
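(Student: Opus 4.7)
The plan is to reduce Lemma \ref{lem:extensionHoro} to Lemma \ref{lem:extension} by approximating $\xi, \xi' \in \overline{X}^h$ by sequences in $X$, using pigeonhole to extract a single $a \in \{a_1, a_2, a_3\}$ that works simultaneously for all $n$, and then invoking Lemma \ref{lem:eventualdiam} to transfer pointwise alignment into alignment with horofunctions.

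First, choose sequences $\{z_i\}_{i \in \N}, \{z'_i\}_{i \in \N} \subset X$ with $z_i \to \xi$ and $z'_i \to \xi'$ in $\overline{X}^h$ (taking the sequence to be constant whenever $\xi$ or $\xi'$ already lies in $X$). By Lemma \ref{lem:extension}, for each $i$ there exists $a(i) \in \{a_1, a_2, a_3\}$ such that the triple $(z_i, a(i) \cdot [x_0, \varphi^n x_0], a(i)\varphi^n a(i) \cdot z'_i)$ is $\kappa$-aligned for \emph{every} $n \in \N$ simultaneously. Since $a(i)$ takes only three values, passing to a subsequence yields a single $a \in \{a_1, a_2, a_3\}$ with $a(i) = a$. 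The key feature of Lemma \ref{lem:extension} being exploited here is exactly that the same $a$ handles all $n$ at once.

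Next, fix $n \in \N$ and promote this pointwise alignment to horofunction alignment. The compact geodesic $a[x_0, \varphi^n x_0]$ is $\kappa$-contracting because $a$ is an isometry. To verify that $(\xi, a[x_0, \varphi^n x_0])$ is $10\kappa$-aligned in the sense of Definition \ref{dfn:alignHoro}, let $\{w_j\}_{j \in \N} \subset X$ be an arbitrary sequence with $w_j \to \xi$ and interleave it with the subsequence $\{z_i\}$ into a single sequence $z_1, w_1, z_2, w_2, \ldots$, still converging to $\xi$. Lemma \ref{lem:eventualdiam} bounds the eventual diameter of the union of projections of this interleaved sequence onto $a[x_0, \varphi^n x_0]$ by (essentially) $9\kappa$; since the $z_i$-tail projections already sit in $\mathcal{N}_\kappa(ax_0)$, the entire tail, and in particular $\pi_{a[x_0, \varphi^n x_0]}(w_j)$ for all large $j$, is forced to lie in $\mathcal{N}_{10\kappa}(ax_0)$. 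The same argument applied to $\{a\varphi^n a \cdot z'_i\} \to a\varphi^n a \cdot \xi'$ (using the continuous extension of the isometry $a\varphi^n a$ to $\overline{X}^h$) yields the right-hand alignment of $a[x_0, \varphi^n x_0]$ with $a\varphi^n a \cdot \xi'$. Since $n \in \N$ was arbitrary, the lemma follows.

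The only technical nuisance is that Lemma \ref{lem:eventualdiam} controls a $\limsup$, giving eventual diameter at most $9\kappa + \varepsilon$ rather than exactly $9\kappa$, and hence alignment constant $10\kappa + \varepsilon$; this is absorbed at no cost by slightly inflating $\kappa$ at the outset, so the stated constant $10\kappa$ suffices. No substantive obstacle arises, and the proof is in essence approximation $+$ pigeonhole $+$ Lemma \ref{lem:eventualdiam}.
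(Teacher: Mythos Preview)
Your proof is correct and follows exactly the approach the paper indicates: the paper states that Lemma \ref{lem:extensionHoro} ``can easily be deduced from Lemma \ref{lem:extension} and Lemma \ref{lem:eventualdiam}'' without spelling out the details, and your approximation $+$ pigeonhole $+$ interleaving argument is precisely how to carry this out. Your concern about the $\varepsilon$ in the $\limsup$ is actually unnecessary, since the proof of Lemma \ref{lem:eventualdiam} establishes the bound $9C$ exactly (not just asymptotically) for all sufficiently large indices.
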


\begin{rmk}
There is a way to extend the the nearest-point projection $\pi_{\gamma}(\cdot)$ to  $\overline{X}^{h}$ (cf. \cite[Definition 3.8]{coulon2024patterson-sullivan}). For $\xi \in \partial^{h} X$, we can define \[
\pi_{\gamma}(h) := \left\{x \in \gamma : \beta_{\xi}(x, x_0) = \inf_{y \in \gamma} \beta_{\xi}(y, x_0) \right\}.
\]
Then one can show that if $\{z_i\}_{i \in \N} \subset X$ is a sequence converging to $\xi \in \partial^{h} X$, then $\pi_{\gamma}(z_{i}) \rightarrow \pi_{\gamma}(\xi)$ up to a uniform error. One can define alignment in terms of this extended nearest-point projection as well.
\end{rmk}

\subsection{Conical limit sets}

We finish this section by defining conical limit sets, which are also called radial limit sets.

\begin{definition} \label{def:conical}
    Let $\Ga < \Isom(X)$ be a subgroup acting properly on $X$. A point $\xi \in \partial^{h} X$ is called a \emph{conical limit point} of $\Ga$ if there exist $K > 0$ and an infinite sequence $ \{g_n \}_{n \in \N} \subset \Ga $ such that
    $$
    \beta_{\xi}(x_0, g_n x_0) \ge d(x_0, g_n x_0) - K  \quad \text{for all } n \in \N.
    $$
    We denote the \emph{conical limit set} by $\La_c(\Ga) \subset \partial^{h} X$.
\end{definition}

For example, given a geodesic ray $\gamma \subset X$ whose $K$-neighborhood contains infinitely many poitns in a $\Gamma$-orbit, the horofunction made as a limit point of $\gamma$ is conical. One can see that $\La_c(\Ga)$ is $\Ga$-invariant.

\section{Patterson--Sullivan theory with squeezing isometries}\label{section:ps}

In this section, we review Coulon's and Yang's extensions of the Patterson--Sullivan theory about conformal densities (\cite{coulon2024patterson-sullivan}, \cite{yang2024conformal}). Continuing the setting of Section \ref{section:prelim}, we let $(X, d)$ be a proper geodesic metric space. Given a non-elementary subgroup $\Ga < \Isom(X)$ with a squeezing isometry, we focus on a certain subset of the conical limit set of $\Ga$ and study its properties from the viewpoint of theories of Coulon and Yang.

In the rest of this section, we fix a basepoint $x_0 \in X$. The notion of conforaml density plays an important role.

\begin{definition}
    For $\Ga < \Isom(X)$ and $\delta \ge 0$, a family of Borel measures $\{ \nu_x \}_{x \in X}$ on $\partial^{h} X$ is called a \emph{$\delta$-dimensional conformal density} of $\Ga$ if 
    \begin{itemize}
\item {\rm ($\Gamma$-invariance)} for every $g \in \Ga$ and $x \in X$,
$$g_{*} \nu_{x} = \nu_{gx},$$
\item {\rm ($\delta$-conformality)} for every $x, y \in X$, two measures $\nu_x$ and $\nu_y$ are in the same class and \[
\frac{d\nu_{x}}{d\nu_{y}}(\xi) = e^{-\delta \cdot \beta_{\xi}(x, y)} \quad \textrm{a.e., and}
\]
\item {\rm (normalization)} $\nu_{x_0}(\partial^{h}X) = 1$.
\end{itemize}
\end{definition}

Note that the normalization is only for convenience, in order to have uniqueness of a conformal density in a certain case.

 In our setting, Coulon and Yang constructed conformal densities, extending the construction of  Patterson \cite{patterson1976the-limit} and Sullivan \cite{Sullivan1979density} for the case that $X$ is a real hyperbolic space. 
For a subgroup $\Ga < \Isom(X)$ that acts properly on $X$, the \emph{critical exponent} $\delta_{\Ga} \ge 0$ of $\Ga$ is defined as the abscissa of convergence of the Poincar\'e desires $s \mapsto \sum_{g \in \Ga} e^{-s d(x, g x)}$, $x \in X$.

\begin{prop}[{\cite[Proposition 4.3]{coulon2024patterson-sullivan}}, {\cite[Lemma 6.3]{yang2024conformal}}]\label{prop:patterson}
    Let $\Ga < \Isom(X)$ be a non-elementary subgroup such that $\delta_{\Ga} < + \infty$. Then there exists a $\delta_{\Ga}$-dimensional conformal density of $\Ga$.
\end{prop}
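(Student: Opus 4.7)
The plan is to follow the classical Patterson--Sullivan construction, adapted to the present setting of a proper geodesic metric space with a horofunction compactification. The key input is that, by properness of the $\Gamma$-action, each orbit $\Gamma x_0$ is a discrete set in $X$, so Dirac masses $D_{gx_0}$ are well defined and the Poincar\'e series $P_\Gamma(s) := \sum_{g \in \Gamma} e^{-s \, d(x_0, g x_0)}$ makes sense, converging precisely for $s > \delta_\Gamma$. A second key ingredient is that, by Lemma \ref{lem:eventualdiam}-style reasoning (or directly by the definition of the horofunction compactification), whenever $g_n x_0 \to \xi \in \partial^h X$, one has $d(x, g_n x_0) - d(y, g_n x_0) \to \beta_\xi(x, y)$.

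First, I would define, for $s > \delta_\Gamma$ and $x \in X$, the finite Borel measures on $X \subset \overline{X}^h$ given by
\[
\nu_x^s := \frac{1}{P_\Gamma(s)} \sum_{g \in \Gamma} e^{-s \, d(x, g x_0)} \, D_{g x_0}.
\]
For $x = x_0$, this is a probability measure. Since $\overline{X}^h$ is compact and metrizable, one can extract a weak$^*$ limit $\nu_{x_0}$ along a sequence $s_n \downarrow \delta_\Gamma$; setting $\nu_x$ to be the analogous limit yields a normalized family. Two points must be checked: (a) $\nu_{x_0}$ charges only $\partial^h X$, i.e.\ no mass remains on the discrete set $X$; and (b) the resulting family satisfies the $\delta_\Gamma$-conformality and $\Gamma$-equivariance.

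For (b), $\Gamma$-equivariance follows from reindexing the sum by $\gamma g$, using that $d(\gamma x, \gamma g x_0) = d(x, g x_0)$. The conformality identity reduces, before passing to the limit, to
\[
\frac{d \nu_x^s}{d \nu_y^s}(g x_0) = e^{-s \, (d(x, g x_0) - d(y, g x_0))},
\]
and the right-hand side converges, for any $g_n x_0 \to \xi \in \partial^h X$ and $s_n \to \delta_\Gamma$, to $e^{-\delta_\Gamma \beta_\xi(x, y)}$ by the horofunction convergence recalled above. A standard argument (uniform continuity of $g \mapsto d(x, g x_0) - d(y, g x_0)$ on neighborhoods of boundary points) then upgrades this pointwise convergence to a Radon--Nikodym identity for the limit.

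The main obstacle is (a), which is immediate only in the divergence-type case: when $P_\Gamma(\delta_\Gamma) = +\infty$, the weights $e^{-s_n d(x_0, g x_0)} / P_\Gamma(s_n)$ vanish pointwise as $s_n \downarrow \delta_\Gamma$, so any weak$^*$ limit is supported in $\partial^h X$. In the convergence-type case one needs Patterson's trick: choose a non-decreasing function $h : \mathbb{R}_{\ge 0} \to \mathbb{R}_{> 0}$ such that $h(r+c)/h(r) \to 1$ as $r \to +\infty$ for every fixed $c$, and such that the modified series $\widetilde P_\Gamma(s) := \sum_{g \in \Gamma} h(d(x_0, g x_0)) e^{-s d(x_0, g x_0)}$ still has abscissa of convergence $\delta_\Gamma$ but diverges at $s = \delta_\Gamma$. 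Replacing the weights $e^{-s d(x, g x_0)}$ by $h(d(x, g x_0)) e^{-s d(x, g x_0)}$ in the definition of $\nu_x^s$ yields, after the limit, a measure supported on $\partial^h X$; the slow-variation property of $h$ ensures that the conformality identity from step (b) survives the modification, since the extra ratio $h(d(x, g_n x_0))/h(d(y, g_n x_0))$ tends to $1$ as $g_n x_0 \to \xi$. Normalizing so that $\nu_{x_0}(\partial^h X) = 1$ gives the required $\delta_\Gamma$-dimensional conformal density of $\Gamma$.
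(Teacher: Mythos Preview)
The paper does not give its own proof of this proposition; it is stated as a result from the literature, with citations to Coulon \cite{coulon2024patterson-sullivan} and Yang \cite{yang2024conformal}. Your proposal is the classical Patterson--Sullivan construction (originating with \cite{patterson1976the-limit} and \cite{Sullivan1979density}), adapted to the horofunction compactification, and this is indeed the approach taken in those references. The argument you outline is correct and standard: the weak$^*$ compactness of $\overline{X}^h$, Patterson's slowly-varying modification in the convergence-type case, and the passage of the Radon--Nikodym identity to the limit via the defining property of horofunctions are exactly the ingredients needed.
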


\subsection{Guided limit sets}

The following notion is a variant of Coulon's contracting limit sets \cite{coulon2024patterson-sullivan} and Yang's $(L, \mathcal{F})$-limit sets \cite{yang2024conformal}.

\begin{definition}
    Let $\Ga < \Isom(X)$ be a non-elementary subgroup. Let $\varphi \in \Ga$ be  a contracting isometry and let $C(\varphi) > 0$ be as in Lemma \ref{lem:BGIPHeredi} and fix $K\ge C(\varphi)$. We say that $\xi \in \partial^{h} X$ is a \emph{$(\varphi, K)$-guided limit point} of $\Ga$ 
if for each sufficiently large $n \in \N$, there exists $h \in \Gamma$ such that 
$$
(x_{0}, h[x_{0}, \varphi^{n} x_{0}], \xi) \quad \text{is $K$-aligned.}
$$
The collection of $(\varphi, K)$-guided limit points of $\Ga$ called the \emph{$(\varphi, K)$-guided limit set} of $\Gamma$. We denote it by $\La_{\varphi, K}(\Ga)$.

\end{definition}

Later, $\varphi \in \Ga$ is often assumed to be squeezing. In such cases, we give  special names to $(\varphi,K)$-guided limit points and the  $(\varphi,K)$-guided  limit set,  $(\varphi,K)$-squeezed limit points and the $(\varphi,K)$-squeezed limit set, respectively. 

We first discuss some properties of guided limit sets.

\begin{lem}\label{lem:squeezedInv}
    Let $\Ga < \Isom(X)$ be a non-elementary subgroup. Let $\varphi \in \Ga$ be  a contracting isometry and let $C = C(\varphi) > 0$ be as in Lemma \ref{lem:BGIPHeredi}.  Then for each $K>C$, 
    $$
    \La_{\varphi, K}(\Ga) = \La_{\varphi, C}(\Ga).
    $$
     Moreover, $\La_{\varphi, C}(\Ga)$ is $\Gamma$-invariant.
\end{lem}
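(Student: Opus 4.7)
The inclusion $\La_{\varphi,C}(\Ga)\subseteq \La_{\varphi,K}(\Ga)$ for $K\ge C$ is immediate from the definition, since any $C$-aligned triple is automatically $K$-aligned. The substantive content is the reverse inclusion $\La_{\varphi,K}(\Ga)\subseteq\La_{\varphi,C}(\Ga)$, together with the $\Gamma$-invariance of $\La_{\varphi,C}(\Ga)$.

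For the reverse inclusion, the plan is a truncation argument. Fix $\xi\in\La_{\varphi,K}(\Ga)$ and a target large $m\in\N$; the goal is to produce $h'\in\Ga$ making $(x_{0},h'[x_{0},\varphi^{m}x_{0}],\xi)$ $C$-aligned. Choose $N\in\N$ with $\tau_{\varphi}N$ larger than $K+O(C)$, and set $n:=m+2N$. Since $n$ is also large, the hypothesis yields $h\in\Ga$ with $(x_{0},h[x_{0},\varphi^{n}x_{0}],\xi)$ being $K$-aligned. Conjugating Lemma \ref{lem:BGIPHeredi}(4) by $h$ converts the $K$-alignment on the $x_{0}$-side into $\pi_{h\gamma}(x_{0})\subseteq h\gamma((-\infty,K+C])$; dually, applying the symmetric version of that lemma to sequences $z_{i}\to\xi$ (using Lemma \ref{lem:eventualdiam} to make sense of their eventual projections) gives $\pi_{h\gamma}(z_{i})\subseteq h\gamma([\tau_{\varphi}n-K-C,+\infty))$ eventually. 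Now set $h':=h\varphi^{N}$: the truncated segment $h'[x_{0},\varphi^{m}x_{0}]$ fellow-travels the axis arc $h\gamma([\tau_{\varphi}N,\tau_{\varphi}(N+m)])$, whose left endpoint $\tau_\varphi N$ lies strictly past the projection of $x_{0}$ and whose right endpoint $\tau_{\varphi}(N+m)=\tau_{\varphi}n-\tau_{\varphi}N$ lies strictly before the eventual projection of the $z_{i}$. By Corollary \ref{cor:BGIPFellow}(2), the projections of $x_{0}$ and of $z_{i}$ onto this truncated segment therefore collapse to its two endpoints within distance $C$, giving the desired $C$-alignment of $(x_{0},h'[x_{0},\varphi^{m}x_{0}],\xi)$.

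For the $\Gamma$-invariance of $\La_{\varphi,C}(\Ga)$, let $\xi\in\La_{\varphi,C}(\Ga)$ and $g\in\Ga$. For each large $n$, choose $h\in\Ga$ so that $(x_{0},h[x_{0},\varphi^{n}x_{0}],\xi)$ is $C$-aligned; applying the isometry $g$ preserves the relevant projections and the convergence of approximating sequences, so $(gx_{0},gh[x_{0},\varphi^{n}x_{0}],g\xi)$ is again $C$-aligned. Shifting the basepoint from $gx_{0}$ to $x_{0}$ in the $(1,4C)$-Lipschitz nearest-point projection of Corollary \ref{cor:BGIPFellow}(1) costs at most $d(x_{0},gx_{0})+4C$, so the triple $(x_{0},gh[x_{0},\varphi^{n}x_{0}],g\xi)$ is $K_{g}$-aligned with $K_{g}:=d(x_{0},gx_{0})+5C$. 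Hence $g\xi\in\La_{\varphi,K_{g}}(\Ga)$, which by the equality just established equals $\La_{\varphi,C}(\Ga)$.

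The main technical nuisance I anticipate is bookkeeping the constants in the truncation step so that the output alignment is with \emph{exactly} the constant $C=C(\varphi)$ of Lemma \ref{lem:BGIPHeredi}, rather than some fixed multiple of it. This is not a conceptual obstacle: the constant in that lemma is chosen generously (the proof takes $C=100D$), and the $O(C)$ slack coming from Lemma \ref{lem:BGIPFellow}-style fellow-travelling between the truncated segment and the axis is easily absorbed into it by enlarging the threshold on $\tau_{\varphi}N$; still, it is the step where the most careful accounting is required.
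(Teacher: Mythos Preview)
Your proposal is correct and follows essentially the same truncation argument as the paper: both choose $N$ with $\tau_{\varphi}N$ exceeding $K+O(C)$, apply the guided property at level $n=m+2N$, and replace $h$ by $h\varphi^{N}$; the $\Gamma$-invariance argument is also identical. The only difference is that the paper resolves your ``technical nuisance'' by invoking Lemma~\ref{lem:BGIPHeredi}(3) contrapositively (if $(x_{0},h\varphi^{N}[x_{0},\varphi^{m}x_{0}])$ were \emph{not} $C$-aligned, the axis projection of $x_{0}$ would lie in $h\gamma([\tau_{\varphi}N,\infty))$, contradicting $\pi_{h\gamma}(x_{0})\subset h\gamma((-\infty,K+C])$), which yields alignment with the exact constant $C$ directly, whereas your appeal to Corollary~\ref{cor:BGIPFellow}(2) would a priori give only $O(C)$.
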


\begin{proof}
Let $\gamma : \R \rightarrow X$ be an axis of $\varphi$ chosen for the constant $C = C(\varphi)$ in Lemma \ref{lem:BGIPHeredi}. Fix $K > C$. We then set \[
N = \lceil (K + 100C)/\tau_{\varphi} \rceil.
\]
Now pick an arbitrary $\xi \in \La_{\varphi, K}(\Ga)$ and let $\{ z_i \}_{i \in \N} \subset X$ be a sequence converging to $\xi$. Since $\xi$ is $(\varphi, K)$-guided, for each large enough $n \in \N$ there exists $h \in \Gamma$ such that $(x_0, h[x_{0}, \varphi^{n+2N} x_{0}], z_{i})$ is $K$-aligned for all large $i \in \N$
Since $d(x_0, \varphi^{n + 2N} x_0) \ge (n+2N) \tau_\varphi > K +C$, Lemma \ref{lem:BGIPHeredi}(4) tells us that \[\begin{aligned}
\pi_{h \gamma}(x_{0}) 
&\subset h \gamma \big( (-\infty, K+C] \big) \quad \text{and} \\
\pi_{h \gamma}(z_{i}) &\subset h \gamma \Big(\big[ (n+2N)\tau_\varphi - K - C, +\infty \big)\Big) \quad \textrm{for all large $i \in \N$}.
\end{aligned}
\]

We now show that
$$
(x_0, h \varphi^N [ x_0, \varphi^n x_0], \xi) \quad \text{is } C\text{-aligned.}
$$
Suppose to the contrary that $(x_{0}, h \varphi^{N} [x_{0}, \varphi^{n} x_{0}])$ is not $C$-aligned. Then by Lemma \ref{lem:BGIPHeredi}(3), we have $\pi_{h \gamma}(x_{0}) \subset h \gamma \left( [\tau_\varphi N , +\infty) \right)$. Since $K+C < \tau_\varphi N$, this is a contradiction. Therefore, $(x_{0}, h \varphi^{N} [x_{0}, \varphi^{n} x_{0}])$ is $C$-aligned. 

Similarly, using $ (n+2N)\tau_{\varphi} - K - C > (n  + N) \tau_{\varphi}$, we deduce that
$\left( h\varphi^{N} [x_{0}, \varphi^n x_0], z_{i} \right)$ is $C$-aligned for all large $i \in \N$.
Since this is the case for arbitrary sequence $\{z_{i}\}_{i\in \N} \subset X$ convering to $\xi$, $(x_{0}, h\varphi^N [x_{0}, \varphi^{n} x_{0}] , \xi)$ is $C$-aligned. We conclude $\xi \in \La_{\varphi, C}(\Ga)$, proving the first statement.

\medskip
We now show that $\La_{\varphi, C}(\Ga)$ is  $\Ga$-invariant. Fix  $\xi \in \La_{\varphi, C}(\Ga)$  and $g \in \Ga$. Then for each sufficiently large $n \in \N$, there exists $h \in \Ga$ such that $(x_0, h [x_0, \varphi^n x_0], \xi)$ is $C$-aligned. It is clear that $(g x_0, g h [x_0, \varphi^n x_0], g \xi)$ is $C$-aligned. By Corollary \ref{cor:BGIPFellow}(1), this implies that $(x_0, g h [x_0, \varphi^n x_0], g \xi)$ is $(5C + d(x_0, g x_0))$-aligned. Hence, $g \xi$ is a $(\varphi, 5C + d(x_0, g x_0))$-guided limit point of $\Ga$, and therefore $g \xi \in \La_{\varphi, C}(\Ga)$ by the first statement. This shows the desired $\Ga$-invariance.
\end{proof}

\begin{definition}
    We say that a non-elementary subgroup $\Ga < \Isom(X)$ is of \emph{divergence type} if $\delta_{\Ga} < + \infty$ and its Poincar\'e series diverges at $\delta_{\Ga}$, i.e., we have $\sum_{g \in \Ga} e^{-\delta_{\Ga} d(x_0 , g x_0 )} = + \infty$. 
\end{definition}

As a part of their generalizations of Hopf--Tsuji--Sullivan dichotomy, Coulon and Yang proved the following:

\begin{prop}[{\cite[Theorem 1.14]{yang2024conformal}, \cite[Theorem 1.4]{coulon2024patterson-sullivan}}]\label{prop:pattersonSqueeze}

Let $\Ga < \Isom(X)$ be a non-elementary subgroup of divergence type. Let $\varphi \in \Ga$ be a  contracting isometry and let $C = C(\varphi) > 0$ be as in Lemma \ref{lem:BGIPHeredi}. Then for every $\delta_{\Ga}$-dimensional conformal density $\{ \nu_x \}_{x \in X}$  of $\Ga$ and for every $x \in X$, $\La_{\varphi, C}(\Ga)$ is  $\nu_x$-conull.
\end{prop}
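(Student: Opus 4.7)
The plan is to adapt Patterson and Sullivan's shadow-lemma / Borel--Cantelli strategy to the coarse-geometric framework of Section \ref{section:prelim}, as carried out in broader generality by Coulon and Yang. For $n \in \N$, $g \in \Ga$, and $K \ge C$, introduce the guided shadow
$$
\mathcal{O}_K(g, n) := \left\{\xi \in \partial^h X : \left(x_0,\; g[x_0, \varphi^n x_0],\; \xi\right) \text{ is } K\text{-aligned}\right\}.
$$
By Lemma \ref{lem:squeezedInv} it suffices to show that for every sufficiently large $n$ the set $\bigcup_{g \in \Ga} \mathcal{O}_K(g, n)$ is $\nu_{x_0}$-conull; taking a countable intersection over $n$ then gives that $\La_{\varphi, C}(\Ga)$ is $\nu_{x_0}$-conull.

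The first step is a shadow lemma: for $K$ large enough, there exists $D = D(K) > 0$ such that for every $g \in \Ga$ and $n \in \N$,
$$
D^{-1}\, e^{-\delta_\Ga d(x_0,\, g\varphi^n x_0)} \;\le\; \nu_{x_0}\bigl(\mathcal{O}_K(g, n)\bigr) \;\le\; D\, e^{-\delta_\Ga d(x_0,\, g\varphi^n x_0)}.
$$
The upper bound follows from $\delta_\Ga$-conformality and the alignment estimate $\beta_\xi(x_0, g\varphi^n x_0) = d(x_0, g\varphi^n x_0) + O(K)$ on the shadow, which is a consequence of Corollary \ref{cor:BGIPFellow}(2) promoted to the horoboundary via Lemma \ref{lem:eventualdiam}. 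The lower bound relies on the extension lemma (Lemma \ref{lem:extensionHoro}): for any $\xi \in \partial^h X$, one of the three elements $a_1, a_2, a_3$ produces an alignment $(x_0, a_i[x_0, \varphi^n x_0], a_i\varphi^n a_i\xi)$, so finitely many translates of guided shadows cover $\partial^h X$; a point of positive $\nu_{x_0}$-mass in one of these translates then propagates, by $\Ga$-equivariance and $\delta_\Ga$-conformality, to the desired uniform lower bound.

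The second step is the Borel--Cantelli argument. Since $\varphi \in \Ga$, the change of variables $g \mapsto g\varphi^n$ is a bijection of $\Ga$, so divergence-type gives
$$
\sum_{g \in \Ga} \nu_{x_0}\bigl(\mathcal{O}_K(g, n)\bigr) \;\asymp\; \sum_{g \in \Ga} e^{-\delta_\Ga d(x_0,\, g\varphi^n x_0)} \;=\; \sum_{g' \in \Ga} e^{-\delta_\Ga d(x_0,\, g' x_0)} \;=\; +\infty.
$$
Combining this with a quasi-independence estimate for the family $\{\mathcal{O}_K(g, n)\}_{g \in \Ga}$ — deduced from the $C$-contracting property of $g[x_0, \varphi^n x_0]$ via Lemma \ref{lem:BGIPHeredi} — a Borel--Cantelli argument yields that $\nu_{x_0}$-a.e.\ $\xi$ belongs to $\mathcal{O}_K(g, n)$ for infinitely many $g \in \Ga$, which is much more than the desired statement that $\bigcup_{g \in \Ga}\mathcal{O}_K(g, n)$ is conull.

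The main obstacle is the proof of the shadow lemma itself — especially the lower bound, which requires the full strength of the extension lemma in the horofunction boundary — together with the verification of quasi-independence needed for Borel--Cantelli. Both steps are standard in Gromov hyperbolic or $\CAT(-1)$ settings, but in our generality they must be executed using only the coarse alignment geometry of Section \ref{section:prelim}; this is precisely the technical content of the Patterson--Sullivan theory developed in \cite{coulon2024patterson-sullivan} and \cite{yang2024conformal}.
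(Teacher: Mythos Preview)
The paper does not prove this proposition at all: it is stated as a citation of results from \cite{coulon2024patterson-sullivan} and \cite{yang2024conformal}, with no argument given. So there is no ``paper's own proof'' to compare your proposal against.

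Your outline is a reasonable summary of the strategy those references actually use (shadow lemma plus a divergence/Borel--Cantelli argument carried out with coarse alignment in place of Gromov hyperbolicity). As a sketch it is fine, and you correctly identify that the technical weight lies in the shadow lower bound and the quasi-independence estimate. If you intend this as a self-contained proof rather than a pointer to the literature, those two steps would need to be written out in full; as it stands your text is closer to a proof plan than a proof, which matches the paper's own stance of deferring entirely to \cite{coulon2024patterson-sullivan} and \cite{yang2024conformal}.
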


\subsection{Squeezed limit sets and ergodic properties}

As mentioned above, we call the guided limit sets for squeezing isometries the squeezed limit sets.

\begin{definition}\label{lem:squeezingHoro}
    Let $\Ga < \Isom(X)$ be a non-elementary subgroup containing a squeezing isometry $\varphi \in \Ga$. Let $C(\varphi) > 0$ be as in Lemma \ref{lem:BGIPHeredi} and fix $K\ge C(\varphi)$. We say that $\xi \in \partial^{h} X$ is a \emph{$(\varphi, K)$-squeezed limit point} of $\Ga$ 
if for each sufficiently large $n \in \N$, there exists $h \in \Gamma$ such that 
$$
(x_{0}, h[x_{0}, \varphi^{n} x_{0}], \xi) \quad \text{is $K$-aligned.}
$$
The collection of $(\varphi, K)$-squeezed limit points of $\Ga$ called the \emph{$(\varphi, K)$-squeezed limit set} of $\Gamma$. We use the same notation $\La_{\varphi, K}(\Ga)$ for this.
\end{definition}

This notion of squeezed limit sets play a key role in this paper.  Importantly, the following observation leads us to have ergodicity of conformal densities on $\partial^{h} X$, as we will see. In the following, we regard points in $\partial^{h}X$ as horofunctions $X \to \R$ vanishing at $x_0$.

\begin{lem}\label{lem:squeezingHoroVal}
Let $\Ga < \Isom(X)$ be a non-elementary subgroup containing a squeezing isometry $\varphi \in \Ga$. Let $ C = C(\varphi) > 0$ be as in Lemma \ref{lem:BGIPHeredi}.
For $\xi, \zeta \in \partial^{h}X$, if $\xi \in \La_{\varphi, C}(\Ga)$ and $\norm{\xi - \zeta}_{\infty} < + \infty$, then  $\xi = \zeta$.
\end{lem}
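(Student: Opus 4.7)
The plan is to prove the pointwise equality $\xi(y) = \zeta(y)$ for every $y \in X$. The key idea is that along any squeezing axis $\gamma_n = h_n\gamma$ aligned with $\xi$, the squeezing property forces $\xi$ to attain its minimal possible value (up to error $O(\epsilon)$), so the bounded perturbation $\zeta$ with $M := \|\xi - \zeta\|_\infty < +\infty$ keeps any sequence $\{w_i\} \to \zeta$ aligned with $\gamma_n$ in the same asymptotic direction; Lemma \ref{lem:squeezing} will then identify $\xi(y)$ and $\zeta(y)$ up to arbitrarily small error.

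I will fix $y \in X$ and $\epsilon > 0$. Let $\gamma : \R \to X$ be the squeezing axis of $\varphi$ and let $L = L(\epsilon) > 0$ be the squeezing constant from Definition \ref{dfn:squeezing}. Since $\xi \in \La_{\varphi, C}(\Ga)$, for all sufficiently large $n$ I pick $h_n \in \Ga$ making $(x_0, h_n[x_0, \varphi^n x_0], \xi)$ $C$-aligned, and set $\gamma_n := h_n\gamma$. By Lemma \ref{lem:BGIPHeredi}(3)(4), for any $\{z_i\} \to \xi$,
\[
\pi_{\gamma_n}(x_0) \subset \gamma_n((-\infty, 2C]) \text{ and } \pi_{\gamma_n}(z_i) \subset \gamma_n([n\tau_\varphi - 2C, +\infty)) \text{ eventually.}
\]
For $t \in [2C+L, n\tau_\varphi - 2C - L]$, the squeezing property forces $[x_0, z_i]$ to pass within $\epsilon$ of $\gamma_n(t)$ eventually; passing to the limit together with the triangle inequality yields
\[
\xi(\gamma_n(t)) \in \bigl[-d(x_0, \gamma_n(t)),\, -d(x_0, \gamma_n(t)) + 2\epsilon\bigr].
\]

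The main step is to bootstrap this into an alignment statement for $\zeta$. With $D_0 := M + 2\epsilon + 1$, the bound $\|\xi - \zeta\|_\infty \le M$ gives $\zeta(\gamma_n(t)) \le -d(x_0, \gamma_n(t)) + D_0$, so eventually $d(x_0, \gamma_n(t)) + d(\gamma_n(t), w_i) - d(x_0, w_i) \le D_0$. Let $t_i$ denote the position of $P_i \in \pi_{\gamma_n}(w_i)$ and $t_0 \in [-2C, 2C]$ the position of $\pi_{\gamma_n}(x_0)$. Corollary \ref{cor:BGIPFellow}(2) applied to both $x_0$ and $w_i$ on the $C$-contracting $\gamma_n$ rewrites the above defect bound as
\[
|t_0 - t| + |t_i - t| - |t_i - t_0| \le D_0 + 8C.
\]
A short case analysis on the ordering of $t_0, t_i, t$ shows that for $t > 6C + D_0/2$ one has $t_i \ge t - \tfrac{1}{2}(D_0 + 8C)$ eventually. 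Taking $t = n\tau_\varphi - 2C - L$ and $L^* := L + 6C + D_0/2$, I conclude that $\pi_{\gamma_n}(w_i) \subset \gamma_n([n\tau_\varphi - L^*, +\infty))$ eventually.

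For the final step, Corollary \ref{cor:BGIPFellow}(1) bounds the position of $\pi_{\gamma_n}(y)$ by $d(x_0, y) + O(C)$, independent of $n$. Choosing $n$ so large that a value $t^*$ lies between $d(x_0, y) + L + O(C)$ and $n\tau_\varphi - L^* - L$, Lemma \ref{lem:squeezing} applied to $(x_1, x_2, y_1, y_2) = (x_0, y, z_i, w_i)$ gives
\[
d(x_0, z_i) - d(x_0, w_i) =_{8\epsilon} d(y, z_i) - d(y, w_i) \text{ eventually,}
\]
which rearranges and passes to the limit as $\xi(y) - \zeta(y) =_{8\epsilon} 0$. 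As $\epsilon > 0$ was arbitrary, $\xi(y) = \zeta(y)$; as $y \in X$ was arbitrary, $\xi = \zeta$. I expect the middle step to be the main obstacle: the bounded-difference hypothesis alone only produces a defect bound in the triangle inequality, and it is the contracting property of $\gamma_n$ (via Corollary \ref{cor:BGIPFellow}(2)) that converts this defect bound into a quantitative lower bound on $t_i$, hence into the genuine alignment needed to invoke Lemma \ref{lem:squeezing}.
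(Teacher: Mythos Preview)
Your proof is correct and follows essentially the same strategy as the paper: use the $(\varphi,C)$-squeezed property to place $\pi_{\gamma_n}(z_i)$ far out along $\gamma_n$, use the bound $\|\xi-\zeta\|_\infty<\infty$ together with Corollary~\ref{cor:BGIPFellow}(2) to force $\pi_{\gamma_n}(w_i)$ far out as well, and finish with Lemma~\ref{lem:squeezing}. The only real difference is in the middle step: the paper argues by contradiction (comparing $\xi$ and $\zeta$ at two specific points $h\gamma(n\tau_\varphi-2C)$ and $h\gamma(n\tau_\varphi-19C-2B)$), whereas you argue directly via the Gromov-product-style defect inequality $|t_0-t|+|t_i-t|-|t_i-t_0|\le D_0+8C$ and a case split. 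Both arguments use the same ingredients and yield equivalent projection bounds.

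One minor imprecision: you assert $t_0\in[-2C,2C]$, but Lemma~\ref{lem:BGIPHeredi}(4) only gives the upper bound $t_0\le 2C$; there is no stated lower bound on the position of $\pi_{\gamma_n}(x_0)$ along the bi-infinite axis. This does not affect your argument, since your case analysis only uses $t_0\le 2C$ (Case~3 is excluded once $t>t_0+(D_0+8C)/2$, and $t_0\le 2C$ makes $t>6C+D_0/2$ sufficient).
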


\begin{proof}
Let   $\gamma : \R \rightarrow X$ be the unique axis of $\varphi$. 
Let $\xi, \zeta \in \partial^{h} X$ given as in the statement, regarding them as horofunctions vanishing at $x_{0}$. Let $B := \|\xi - \zeta\|_{\infty} < +\infty$. This implies that for every $x \in X$ we have \begin{equation}\label{eqn:xiZetaB}
\xi(x) =_{B} \zeta(x).
\end{equation}

We fix an arbitrary $x \in X$ and an arbitrary $\epsilon >0$. Our goal is to prove that $\xi(x)=_{\epsilon}\zeta(x)$. Let $L= L(0.01\epsilon) > 0$ be as in Definition \ref{dfn:squeezing} for $\ga$, and let $n \in \N$ be a sufficiently large integer such that \[
n > \frac{100(L + C + B+ d(x_0, x))}{\tau_{\varphi}}.
\]

Now, let $\{z_i\}_{i \in \N} \subset X$ be a sequence converging to $\xi$. Since $\xi \in \La_{\varphi, C}(\Ga)$,  there exists $h \in \Gamma$ such that $(x_0, h[x_0, \varphi^{n} x_0], z_{i})$ is  $C$-aligned for all large $i \in \N$.
Since $n \tau_{\varphi} - C > C$, Lemma \ref{lem:BGIPHeredi}(4) tells us that \begin{align}
\begin{aligned}
\pi_{h \gamma} (x_0) &\subset h \gamma \left( (-\infty, 2C] \right) \quad \text{and} \\ \label{eqn:ziApprox}
\pi_{h \gamma} (z_i) &\subset h \gamma \left( [n\tau_\varphi - 2C, +\infty) \right) \quad \textrm{for all large $i \in \N$}.
\end{aligned}
\end{align}
Since $\pi_{\gamma}(\cdot)$ is $(1, 4C)$-Lipschitz by Corollary \ref{cor:BGIPFellow}(1), we have 
$$\pi_{h \gamma}(x) \subset h \gamma \left( (-\infty, 6C + d(x_0, x) ] \right).$$
By Corollary \ref{cor:BGIPFellow}(2), Equation \eqref{eqn:ziApprox} implies that \begin{equation}\label{eqn:ziResult}
d\left(z_i, h\gamma(n\tau_{\varphi} - 2C)\right) - 
d\left(z_i, h\gamma(n\tau_{\varphi} - 19C-2B)\right) =_{8C} -(2B + 17C).
\end{equation}

Now let $\{z'_i\}_{i \in \N} \subset X$ be a sequence converging to $\zeta$. 
\begin{claim*} We have 
    \begin{equation}\label{eqn:ziPrimeCorrectApp}
\pi_{h \gamma}(z'_i) \subset h\gamma \left( [ n\tau_{\varphi} - 20C - 2B, +\infty) \right) \quad \textrm{for all large $i \in \N$}.
\end{equation}
\end{claim*}
Suppose to the contrary that, passing to a subsequence, we have 
\begin{equation}\label{eqn:ziPrimeApprox}
\pi_{h \gamma}(z'_i) \subset h\gamma \left( (-\infty, n\tau_{\varphi} - 18C-2B] \right) \quad \text{for all } i \in \N.
\end{equation}
Here, the fact that $\pi_{h\gamma}(z_i')$ is $2C$-small is used. By Corollary \ref{cor:BGIPFellow}(2), Equation \eqref{eqn:ziPrimeApprox} implies that \[
d\left(z'_i, h\gamma(n\tau_{\varphi} - 2C)\right) - 
d\left(z'_i, h\gamma(n\tau_{\varphi} - 18C-2B)\right) =_{8C} 2B + 16C.
\]
Comparing this with Equation \eqref{eqn:ziResult}, we observe that $\xi(a) - \xi(b)$ and $\zeta(a) - \zeta(b)$ differ by more than $4B$ for two points $a := h\gamma(n\tau_{\varphi}-2C)$ and $b :=h\gamma(n\tau_{\varphi} - 19C - 2B)$. This contradicts Equation \eqref{eqn:xiZetaB}. The claim follows.

\medskip
Hence, Equation \eqref{eqn:ziPrimeCorrectApp} holds. Therefore, for each large enough $i$ we have
$$\begin{aligned}
\pi_{h\gamma}(x_{0}), \pi_{h \gamma}(x) & \subset h\gamma \left( (-\infty, 6C + d(x_{0}, x)]\right) \quad \text{and} \\
\pi_{h\gamma}(z_{i}), \pi_{h\gamma}(z'_{i}) & \subset h\gamma \left( [n\tau_{\varphi} - 20C-2B, +\infty) \right).
\end{aligned}
$$
Since $n\tau_{\varphi} - 20C - 2B \ge 6C + d(x_0, x) + 2L$, we conclude from Lemma \ref{lem:squeezing} that $d(x_{0}, z_{i}) - d(x, z_{i}) =_{\epsilon} d(x_{0}, z_{i}') - d(x, z_{i}')$. Since both $\xi$ and $\zeta$ are horofunctions vanishing at $x_0$, taking the limit $i \to + \infty$ yields $\xi(x) =_{\epsilon} \zeta(x)$.
\end{proof}

To discuss ergodicity of conformal densities, Coulon \cite{coulon2024patterson-sullivan} considered conformal densities restricted to the \emph{reduced horoboundary} instead of the usual horoboundary. Yang \cite{yang2024conformal} considered another notion encompassing the reduced horoboundary, namely, the \emph{reduced convergence boundary}. 

Instead of defining these objects precisely, let us point out that Coulon's and Yang's ergodicity results are formulated in terms of saturated Borel subsets of $\partial^{h} X$, that is, $E \subset \partial^{h} X$ with a property that if $\xi \in E$ and $\zeta \in \partial^{h} X$ satisfy $\norm{\xi - \zeta}_{\infty} < + \infty$, then $\zeta \in E$. Hence, by Proposition \ref{prop:pattersonSqueeze} and Lemma \ref{lem:squeezingHoroVal}, their ergodicity results can be stated as follows:

\begin{prop}[{\cite[Theorem 1.16]{yang2024conformal}, \cite[Theorem 1.5]{coulon2024patterson-sullivan}}]\label{prop:pattersonSqueeze2}

Let $\Ga < \Isom(X)$ be a non-elementary subgroup containing a squeezing isometry, and let $\{ \nu_x \}_{x \in X}$ be a $\delta_{\Ga}$-dimensional confomal density of $\Ga$. If $\Ga$ is of divergence type, then the $\Ga$-action on $(\partial^{h}X, \nu_x)$ is ergodic  for all $x \in X$.
\end{prop}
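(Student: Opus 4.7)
The plan is to reduce ordinary $\Ga$-ergodicity on $(\partial^{h}X, \nu_{x})$ to the ergodicity of the $\Ga$-action on \emph{saturated} Borel subsets, which is precisely the form in which Coulon's and Yang's theorems are phrased. Recall that $E \subset \partial^{h}X$ is saturated if whenever $\xi \in E$ and $\norm{\xi - \zeta}_{\infty} < + \infty$, then $\zeta \in E$. The first step is to verify that the equivalence relation $\xi \sim \zeta \iff \norm{\xi - \zeta}_{\infty} < + \infty$ is $\Isom(X)$-invariant; indeed, using the description of the $\Isom(X)$-action from Section \ref{subsection:horo}, for $g \in \Isom(X)$ and horofunctions $f_{1}, f_{2}$ vanishing at $x_{0}$, one checks directly that $(g \cdot f_{1})(x) - (g \cdot f_{2})(x) = f_{1}(g^{-1}x) - f_{2}(g^{-1}x) - (f_{1}(g^{-1}x_{0}) - f_{2}(g^{-1}x_{0}))$ is uniformly bounded in $x$. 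Consequently, the $\sim$-saturation of a $\Ga$-invariant subset is again $\Ga$-invariant.

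Fix a squeezing isometry $\varphi \in \Ga$ (which exists by hypothesis) and let $C = C(\varphi) > 0$ be as in Lemma \ref{lem:BGIPHeredi}. Given an arbitrary $\Ga$-invariant Borel subset $A \subset \partial^{h}X$, set $A_{0} := A \cap \La_{\varphi, C}(\Ga)$. By Lemma \ref{lem:squeezedInv}, $\La_{\varphi, C}(\Ga)$ is $\Ga$-invariant, so $A_{0}$ is also $\Ga$-invariant; by Proposition \ref{prop:pattersonSqueeze}, $\La_{\varphi, C}(\Ga)$ is $\nu_{x}$-conull (here the divergence-type assumption enters), and hence $\nu_{x}(A_{0}) = \nu_{x}(A)$. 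Let $\tilde{A}$ denote the $\sim$-saturation of $A_{0}$ in $\partial^{h}X$. Then $\tilde{A}$ is saturated and $\Ga$-invariant by the previous paragraph.

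The crucial point is that $\nu_{x}(\tilde{A}) = \nu_{x}(A)$, which is where Lemma \ref{lem:squeezingHoroVal} enters: any $\zeta \in \partial^{h}X$ with $\norm{\zeta - \xi}_{\infty} < + \infty$ for some $\xi \in \La_{\varphi, C}(\Ga)$ must satisfy $\zeta = \xi$. In particular, $\tilde{A} \cap \La_{\varphi, C}(\Ga) = A_{0}$, and since the complement of $\La_{\varphi, C}(\Ga)$ is $\nu_{x}$-null, we obtain $\nu_{x}(\tilde{A}) = \nu_{x}(A_{0}) = \nu_{x}(A)$.

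Finally, invoking the ergodicity of Coulon \cite{coulon2024patterson-sullivan} and Yang \cite{yang2024conformal} on the reduced horoboundary (resp.\ reduced convergence boundary)---which, in terms of $\partial^{h}X$, asserts that any $\Ga$-invariant \emph{saturated} Borel subset is $\nu_{x}$-null or $\nu_{x}$-conull under the divergence-type hypothesis---the set $\tilde{A}$, and therefore $A$, must be null or conull. Since the measures $\{\nu_{x}\}_{x \in X}$ lie in the same measure class by $\delta_{\Ga}$-conformality, the conclusion holds for every $x \in X$. No substantial obstacle is expected: the argument is essentially the implementation of the closing remark in the excerpt, and the only delicate checks are the $\Isom(X)$-invariance of $\sim$ and the measure-preservation of the saturation step, both supplied by Lemma \ref{lem:squeezingHoroVal} combined with Proposition \ref{prop:pattersonSqueeze}.
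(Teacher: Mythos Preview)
Your proposal is correct and follows exactly the approach the paper indicates: the paper does not give a formal proof of this proposition but merely remarks that Coulon's and Yang's ergodicity results are phrased for saturated Borel sets, and that Proposition~\ref{prop:pattersonSqueeze} together with Lemma~\ref{lem:squeezingHoroVal} allow one to restate them in the unsaturated form. You have simply spelled out this reduction; in fact, since every point of $\La_{\varphi,C}(\Ga)$ has a singleton $\sim$-class by Lemma~\ref{lem:squeezingHoroVal}, your saturation $\tilde{A}$ equals $A_{0}$ on the nose, so the argument is even a bit simpler than you present it.
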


The above ergodicity indeed implies the uniqueness of $\delta_{\Ga}$-dimensional conformal density when $\Ga$ is as in Proposition \ref{prop:pattersonSqueeze2}. Indeed, if $\{\nu_x \}_{x \in X}$ and $\{\nu_x'\}_{x \in X}$ are $\delta_{\Ga}$-dimensional conformal densities of $\Ga$, then $\{ (\nu_x + \nu_x')/2 \}_{x \in X}$ is also a $\delta_{\Ga}$-dimensional conformal density of $\Ga$. Since $\nu_x$ is absolutely continuous with respect to $(\nu_x + \nu_x')/2$ for each $x \in X$, the ergodicity applies to their Radon--Nikodym derivatives which are $\Ga$-invariant due to the conformality. Therefore, $\nu_x = \nu_x'$ for all $x \in X$.

\section{Rigidity of ergodic invariant Radon measures} \label{sec:UE}

Let $(X, d)$ be a proper geodesic metric space.
In this section, we prove the rigidity of ergodic invariant Radon measures (Theorem \ref{thm:main2Conv}).

\subsection{Candidates for measures} 

We first define a Radon measure which will  be the unique measure with desired properties in our rigidity theorem. To do this, we fix a basepoint $x_0 \in X$ and identify $\mathcal{H} = \partial^{h} X \times \R$ as in Equation \eqref{eqn:horofol}. Recall that  the $\Isom(X)$-action on $\mathcal{H}$ is written as follows: for $g \in \Isom(X)$ and $(\xi, t) \in~\mathcal{H}$, we have
$$
g (\xi, t) = (g \xi, t + \beta_{\xi}(g^{-1} x_0, x_0)).
$$
Via this identification, we define a Radon measure on $\mathcal{H}$ as follows:

\begin{definition} \label{def:candidateergodicmeasure}
    Let $\Ga < \Isom(X)$ be a non-elementary subgroup containing a squeezing isometry and $\nu := \{ \nu_x \}_{x \in X}$ be a $\delta_{\Ga}$-dimensional conformal density of $\Ga$. We define a Radon measure $\mu_{\nu}$ on $\mathcal{H} = \partial^{h}X \times \R$ by setting
    $$
    d\mu_{\nu}(\xi, t) := e^{\delta_{\Ga} \cdot t} \cdot d \nu_{x_0} (\xi) \, dt.
    $$
    When $\Ga$ is of divergence type in addition, we write
    $$
    \mu_{\Ga} := \mu_{\nu}.
    $$
\end{definition}

\begin{remark}
It follows from the conformality of $\nu$ that $\mu_{\nu}$ is $\Ga$-invariant. Moreover, considering $\mathcal{H}$ as a subspace of $\Lip^1(X)$, the measure $\mu_{\Ga}$ does not depend on the choice of $x_0 \in X$. When $\Ga$ is of divergence type, there exists a unique $\delta_{\Ga}$-dimensional conformal density $\{ \nu_{x} \}_{x \in X}$ of $\Ga$ by Proposition \ref{prop:patterson} and Proposition \ref{prop:pattersonSqueeze2}. This is a reason for writing $\mu_{\Ga} = \mu_{\nu}$ in this case.
\end{remark}

To present a precise statement, we also consider the following notion for the distribution of translation lengths of squeezing isometries.

\begin{definition}
    For a subgroup $\Ga < \Isom(X)$, its \emph{squeezing spectrum} is     $$
    \Spec(\Ga) := \{ \tau_g \in \R : g \in \Ga \text{ is a squeezing isometry} \}.
    $$
    We call that the squeezing spectrum is \emph{non-arithmetic} if it generates a dense additive subgroup of $\R$.
\end{definition}

\subsection{Rigidity of measures}
Let us restate Theorem \ref{thm:main2Conv}, our main rigidity theorem. Recall the notion of conical limit set from Definition \ref{def:conical}.

\begin{theorem} \label{thm:uniqueRadon}
Let $\Ga < \Isom(X)$ be a non-elementary subgroup with non-arithmetic squeezing spectrum. 
Suppose that there exists a $\Ga$-invariant Radon measure $\mu$ on $\mathcal{H}$ supported on $\La_c(\Ga) \times \R$. Then $\Ga$ is of divergence type, and
    $$
    \mu \text{ is a constant multiple of $\mu_{\Ga}$.}
    $$

\end{theorem}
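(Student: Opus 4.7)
The plan is to upgrade the $\Ga$-invariance of $\mu$ into quasi-invariance under the $\R$-translation on the fiber $\mathcal{H} \to \partial^h X$, then invoke a Maharam-type rigidity to force the product form. Fix a squeezing $\varphi \in \Ga$ (which exists in abundance by Lemma \ref{lem:indep}) with $C = C(\varphi)$ from Lemma \ref{lem:BGIPHeredi}; the two geometric inputs are the extension lemma (Lemma \ref{lem:extensionHoro}) and the squeezing estimate (Lemma \ref{lem:squeezing}).

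\emph{Step 1 (concentration on the squeezed limit set).} I would first show that $\mu$ is supported on $\La_{\varphi, C}(\Ga) \times \R$, not merely on $\La_c(\Ga) \times \R$. For a generic $\xi$ in the $\partial^h X$-projection of $\supp(\mu)$, conicality furnishes a sequence $\{g_n\} \subset \Ga$ with $\beta_\xi(x_0, g_n x_0) \ge d(x_0, g_n x_0) - K$. Applying Lemma \ref{lem:extensionHoro} to each pair $(g_n^{-1}\xi, x_0)$, one of the three elements $a_1, a_2, a_3$ produces an alignment $(x_0, a_i[x_0, \varphi^m x_0], g_n^{-1}\xi)$ for arbitrarily large $m$. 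A covering/averaging argument over the three choices, combined with $\Ga$-invariance and local finiteness of $\mu$, transfers the alignment back to $\xi$ and forces $\xi \in \La_{\varphi, C}(\Ga)$. This is the analogue of Theorem \ref{thm:radonCharge} alluded to in the introduction.

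\emph{Step 2 (quasi-invariance under scaling).} On the squeezed limit set, the conjugate action of $h\varphi h^{-1}$ on a horofunction $\xi$ aligned with $h[x_0, \varphi^n x_0]$ shifts the fiber coordinate by $\tau_\varphi$ up to an error that decays with the depth of alignment, thanks to Lemma \ref{lem:squeezing} applied to the squeezing axis $h \gamma$. Averaging $\Ga$-invariance of $\mu$ over such conjugates at increasing alignment scales $n \to + \infty$ (with errors controlled on compact subsets of $\mathcal{H}$) upgrades the approximate $\tau_\varphi$-shift into an exact quasi-invariance of $\mu$ under the $\R$-translation $T_{\tau_\varphi} : (\xi, t) \mapsto (\xi, t - \tau_\varphi)$. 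Varying $\varphi$ over all squeezing elements of $\Ga$ and invoking non-arithmeticity of $\Spec(\Ga)$, we obtain quasi-invariance under $T_s$ for a dense additive subgroup of $s \in \R$, and hence for all $s \in \R$ by outer regularity of Radon measures. This is the analogue of Theorem \ref{thm:trbytrlengthqi}.

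\emph{Step 3 (Maharam and identification).} By the Basic Lemma of Aaronson--Nakada--Sarig--Solomyak \cite{aaronson2002invariant} applied to the $\R$-action on $\mathcal{H}$, this quasi-invariance forces a splitting $d\mu(\xi, t) = e^{\delta t} \, d\nu(\xi) \, dt$ for some $\delta \in \R$ and a Borel measure $\nu$ on $\partial^h X$ supported in $\La_c(\Ga)$. Matching this against the $\Ga$-action formula $g \cdot (\xi, t) = (g \xi, t + \beta_\xi(g^{-1} x_0, x_0))$ shows $\delta \ge 0$ and that $\{g_* \nu\}_{g \in \Ga}$ is the orbit of a $\delta$-dimensional conformal density supported on $\La_c(\Ga)$. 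The Hopf--Tsuji--Sullivan dichotomy underlying Proposition \ref{prop:pattersonSqueeze} and Proposition \ref{prop:pattersonSqueeze2} (see \cite{coulon2024patterson-sullivan}, \cite{yang2024conformal}) precludes such a density unless $\delta = \delta_\Ga$ and $\Ga$ is of divergence type, and uniqueness of the $\delta_\Ga$-conformal density then identifies $\nu$ with a multiple of $\nu_{x_0}$, yielding $\mu = c \cdot \mu_\Ga$. The main obstacle is Step 2: the shift $\tau_\varphi$ is only approximate, and absorbing the bounded error into a genuine Radon--Nikodym derivative in the limit requires disintegrating $\mu$ along a transversal, transporting the fiber measures through many alignments at increasing scales, and extracting a quasi-invariant limit using tightness of $\mu$ on compact subsets of $\mathcal{H}$.
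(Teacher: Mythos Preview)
Your outline is correct and matches the paper's strategy essentially step for step: concentration on the squeezed limit set (Theorem~\ref{thm:radonCharge}), then quasi-invariance under $T_{\tau_\varphi}$ for each squeezing $\varphi$ (Theorem~\ref{thm:trbytrlengthqi}), then non-arithmeticity plus a Maharam-type argument to force the product form and identify the conformal density via the Hopf--Tsuji--Sullivan dichotomy. Two small points worth flagging: the paper first passes to an \emph{ergodic} component (needed so that the $\Ga$-invariant Radon--Nikodym derivative $dT_\varphi^*\mu/d\mu$ is constant), and it carries out Step~3 by hand---proving the homomorphism $\lambda : A \to \R$ is linear and extending from a dense subgroup via regularity---rather than invoking \cite{aaronson2002invariant} as a black box.
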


The rest of this section is devoted to the proof of Theorem \ref{thm:uniqueRadon}. 
We prove the theorem by establishing a robust relation between invariant Radon measures and squeezed limit sets. Note that due to ergodic decompositions, it suffices to consider invariant \emph{ergodic} measures.

\subsection{Concentration on squeezed limit sets}
We first show that invariant ergodic Radon measures on $\mathcal{H}$ are charged on squeezed limit sets. We emphasize that in the following, we consider an arbitrary Radon measure on $\mathcal{H}$, not necessarily induced from a conformal density.

\begin{theorem}\label{thm:radonCharge}

Let $\Ga < \Isom(X)$ be a non-elementary subgroup, let $\varphi$ be a contracting isometry in $\Gamma$ and let $C = C(\varphi)$ be as in Lemma \ref{lem:BGIPHeredi}.
Let $\mu$ be a $\Ga$-invariant ergodic Radon measure on $\mathcal{H}$ supported on $\La_{c}(\Ga) \times~\R$. Then the measure $\mu$ is supported on
$$
 \La_{\varphi, C}(\Ga) \times \R \subset \mathcal{H}.
$$

\end{theorem}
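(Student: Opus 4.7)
The plan hinges on two facts: by Lemma \ref{lem:squeezedInv} the set $\La_{\varphi,C}(\Ga)\times\R$ is $\Ga$-invariant, so by ergodicity of $\mu$ the task reduces to showing $\mu(\La_{\varphi,C}(\Ga)\times\R)>0$; and by the extension lemma (Lemma \ref{lem:extensionHoro}) every pair of boundary points can be aligned through translated $\varphi$-axis segments. I would argue by contradiction, assuming $\mu$ is concentrated on $(\La_c(\Ga)\setminus \La_{\varphi,C}(\Ga))\times\R$.

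\textbf{Step 1 (Recurrence).} First I would establish conservativity of the $\Ga$-action on $(\mathcal{H},\mu)$; the dissipative case, in which $\mu$ concentrates on a single wandering $\Ga$-orbit, would contradict $\supp\mu\subset\La_c(\Ga)\times\R$ via properness of the $\Ga$-action on $X$ together with non-elementarity. Conservativity then gives, for any compact $K\subset\mathcal{H}$ with $\mu(K)>0$, that for $\mu$-a.e.\ $(\xi,t)$ there exist infinitely many distinct $g_n\in\Ga$ with $g_n(\xi,t)\in K$. Unpacking, $g_n\xi$ stays in a compact subset of $\partial^h X$ and the cocycle values $\beta_\xi(g_n^{-1}x_0,x_0)$ are uniformly bounded — a \emph{horospherical} recurrence of the orbit toward $\xi$.

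\textbf{Step 2 (Coupling with conicality).} The hypothesis $\xi\in\La_c(\Ga)$ simultaneously provides a conical approach $h_mx_0\to\xi$ with bounded Busemann defect. Coupling the two sources, the compositions $g_n h_m$ yield, for arbitrarily large parameters, long geodesic segments in $X$ that (i) start near $x_0$ (from the recurrence in $K$) and (ii) head toward $\xi$ (from conicality), so their nearest-point projections onto any $C$-contracting geodesic can be controlled by Lemma \ref{lem:BGIPHeredi} and Corollary \ref{cor:BGIPFellow}.

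\textbf{Step 3 (Axis insertion).} For each such segment I would apply Lemma \ref{lem:extensionHoro}: there exists $a\in\{a_1,a_2,a_3\}$ such that the triple $(x_0,\,a[x_0,\varphi^k x_0],\,a\varphi^k a\cdot y)$ is $10\kappa$-aligned for \emph{all} $k\in\N$, where $y$ ranges over appropriate group translates of $\xi$ or the approach points. By pigeonhole on the finite set $\{a_1,a_2,a_3\}$, some $a_{\ast}$ works along a subsequence. Using the $g_n$'s from Step 1 to conjugate the alignment into the correct position and then passing to the limit with Lemma \ref{lem:eventualdiam} (continuity of nearest-point projections at contracting-boundary limits), I would exhibit, for every large $k$, an element $h_k^{\ast}\in\Ga$ making $(x_0, h_k^{\ast}[x_0,\varphi^k x_0], \xi)$ be $\kappa$-aligned. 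By Lemma \ref{lem:squeezedInv}, $\La_{\varphi,\kappa}(\Ga)=\La_{\varphi,C}(\Ga)$, so $\xi\in\La_{\varphi,C}(\Ga)$, contradicting our assumption.

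\textbf{Main obstacle.} The principal difficulty lies in Step 3: the extension lemma naturally provides alignment whose right-endpoint is the $\Ga$-translate $a\varphi^k a\cdot y$, not $\xi$ itself. Converting this into alignment \emph{ending at $\xi$} requires using the horospherical recurrence data to conjugate the alignment via a well-chosen $g_n$, while preserving the alignment constant uniformly as $k\to\infty$. The subtlety is that the axis segments have unbounded length whereas recurrence provides only bounded Busemann data; bridging these two quantitatively — essentially promoting horospherical recurrence into geodesic alignment — is the heart of the argument, and explains the authors' remark that this step has no counterpart in the CAT($-1$) or homogeneous settings.
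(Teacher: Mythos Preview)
Your Step 3 contains a structural gap that you flag but do not resolve. The extension lemma only ever produces triples of the form $(x_0, a[x_0,\varphi^k x_0], a\varphi^k a\cdot y)$: the third entry is a $\Gamma$-translate of the input, never $\xi$ itself. Conjugating by any $g\in\Gamma$ moves all three slots simultaneously, so no conjugation yields a triple with $x_0$ in the first slot and $\xi$ in the third. Horospherical recurrence from Step~1 controls only the $\mathbb{R}$-coordinate of $g_n(\xi,t)$, not nearest-point projections onto axis translates, and your outline gives no mechanism for converting one into the other. Notice also that Step~3 never invokes the $\Gamma$-invariance of $\mu$; a pointwise argument of this shape cannot close.

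The paper abandons the pointwise route. For each $\Xi=(\xi,t)$ in a positive-measure conical slab $\mathcal{H}_{K,R}$ it selects a conical witness $g_\Xi\in\Gamma$ with $d(x_0,g_\Xi x_0)>k$ chosen \emph{minimal}, applies the extension lemma, and sets $F(\Xi)=g_\Xi a_\Xi\varphi^n a_\Xi g_\Xi^{-1}\cdot\Xi$. By construction it is $F(\Xi)$, not $\Xi$, that lands in an aligned set $B_{k;n}$ (triples $(x_0,h[x_0,\varphi^n x_0],\zeta)$ aligned with $d(x_0,hx_0)>k-\mathrm{const}$ and bounded $\mathbb{R}$-coordinate); conicality of $g_\Xi$ is what keeps the $\mathbb{R}$-coordinate of $F(\Xi)$ in a fixed window. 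The minimality of $g_\Xi$ is then used to prove $F$ is uniformly finite-to-one, so $\Gamma$-invariance of $\mu$ gives $\mu(B_{k;n})\ge \mu(\mathcal{H}_{K,R})/M>0$ independently of $k$. Passing to $B_n=\bigcap_k B_{k;n}$ and invoking ergodicity makes $\bigcap_n\Gamma\cdot B_n$ conull; only at this final stage---because points in $B_n$ admit aligning $h$ with $d(x_0,hx_0)$ arbitrarily large---can one replace the basepoint and conclude membership in $\La_{\varphi,C}(\Ga)$. The finite-to-one push-forward together with the limit in $k$ is precisely what substitutes for the conjugation you were attempting.
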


\begin{proof}
Let $\kappa(\varphi) > 0$ and $a_{1}, a_{2}, a_{3} \in \Gamma$  be as in Lemma \ref{lem:extension} for $\Gamma$. Let $ C(\varphi) > 0$ be as in Lemma \ref{lem:BGIPHeredi} for $g=\varphi$. We set $C_0 := 10(\kappa(\varphi) +C(\varphi))$. 

For each $K>0$ let 
\[
\mathcal{H}_{K} :=  \left\{ (\xi, t) \in \mathcal{H} : \begin{matrix}
\exists \text{ an infinite sequence } \{g_i\}_{i \in \N} \subset \Ga \text{ s.t.}\\
\beta_{\xi}(x_{0}, g_{i} x_{0}) \ge d(x_{0}, g_{i}x_{0}) - K \text{ for all } i \in \N
\end{matrix}  \right\}.
\]
Then $\Gamma \cdot \mathcal{H}_{K} \subset \mathcal{H}$ is  $\Gamma$-invariant. Moreover,
 \[
\Lambda_{c} (\Gamma) \times \mathbb{R} = \bigcup_{K > 0} \Gamma \cdot \mathcal{H}_{K}.
\]
Since $\Lambda_c \Gamma \times \R$ has positive $\mu$-value,
$$\Gamma \cdot \mathcal{H}_{K} \quad \text{has positive $\mu$-value for all large $K > 0$.}
$$
We fix such $K > 100 C_{0} + 2 \sum_{i = 1}^3 d(x_0, a_i x_0)$. Then it follows from the $\Ga$-invariance of $\mu$ that $\mu(\mathcal{H}_{K}) > 0$. 
For each $R > 0$, we set $\mathcal{H}_{K, R} := \{ (\xi, t) \in \mathcal{H}_{K} : -R \le t \le R\}$. Since $\mathcal{H}_{K} = \cup_{R=1}^{\infty} \mathcal{H}_{K, R}$, $$
\mu(\mathcal{H}_{K, R}) > 0 \quad \text{for all large } R > 0.
$$
We fix such $R>0$. 

Now we pick $n>100(C_{0}+K)/\tau_{\varphi}$ and $k > 0$. We define a map
$$
F = F_{n, k}  : \mathcal{H}_{K, R} \to \mathcal{H}
$$
as follows. For each $\Xi \in \mathcal{H}_{K, R}$, there exists $g \in \Gamma$ such that 
\begin{equation} \label{eqn:defofgXi}
d(x_0, gx_0) > k \quad \text{and} \quad \beta_{\Xi}(x_0, g x_0) \ge d(x_0, gx_0) - K.
\end{equation} Among many such $g$'s, take the one with minimal  $d(x_0, gx_0)$ and call it $g_{\Xi}$.\footnote{There exists a technicality when several candidates tie. An easy rescue is to first enumerate $\Gamma = \{g^{(1)}, g^{(2)}, \ldots\}$, and we choose the earliest whenever there is a tie.} Then the map $\Xi \in \mathcal{H}_{K, R} \mapsto g_{\Xi}$ is Borel measurable.
By Lemma \ref{lem:extensionHoro}, there exists $a_{\Xi} \in \{a_{1}, a_{2}, a_{3}\}$ such that \footnote{Again, when more than one of $\{a_1, a_2, a_3\}$ do the job we choose the earliest.}
\begin{equation} \label{eqn:defofFmap}
    \left(x_0, g_{\Xi} \cdot a_{\Xi} [x_0, \varphi^{n} x_{0}], g_{\Xi} \cdot a_{\Xi} \varphi^{n} a_{\Xi} \cdot g_{\Xi}^{-1} \Xi \right) \quad \text{is $C_{0}$-aligned}.
\end{equation}
 This map $\Xi \mapsto a_{\Xi}$ is also Borel measurable. 
We now set 
 \[
F (\Xi) := g_{\Xi} \cdot a_{\Xi} \varphi^{n} a_{\Xi} \cdot g_{\Xi}^{-1} \Xi.
\]
Let \[
D := 100\left(C_{0} + \tau_{\varphi} n + \sum_{i=1}^{3} d(x_0, a_i x_0) \right).
\]

\begin{claim*}
We have that 
\begin{equation} \label{eqn:finitetoone}
F \text{ is at most } 3 \cdot \# \{ g \in \Gamma : d(x_0, gx_0) \le D\}\text{-to-one}.
\end{equation}
\end{claim*}

To prove this claim, suppose that we have $\Xi, \Xi' \in \mathcal{H}_{K, R}$ with the same image $F(\Xi) = F(\Xi') =: (\xi_0, t_0) \in \partial^{h} X \times \R$. 

Let $\{u_i\}_{i \in \N} \subset X$ be a sequence converging to $\xi_0$. Up to a subsequence, 
$$
\left( x_0, g_{\Xi} a_{\Xi} [x_0, \varphi^{n} x_0], u_i \right) \quad \text{is $C_{0}$-aligned for all $i \in \N$}.
$$
Here, note that $d(x_0, \varphi^n x_0) \ge \tau_{\varphi} n \ge 100C_{0}$. It follows from Lemma \ref{lem:BGIPFellow} that for each $i \in \N$, there exists $P \in [x_0, u_i]$ such that
$d(g_{\Xi} a_{\Xi} x_0, P) \le 3C_{0}$, and similarly, there exists $Q \in [x_0, u_i]$ such that $d(g_{\Xi'} a_{\Xi'} x_0, Q) \le 3C_{0}$.
We have three cases. 
\begin{enumerate}
\item If $d(x_0, g_\Xi x_0) < d(x_0, g_{\Xi'} x_0) - 0.5D$: In this case, along $[x_0, u_i]$, $P$ is closer to $x_0$ than $Q$ is, and $d(P, Q) \ge 0.5D - 6C_{0}$. 
For convenience we write $\eta := g_{\Xi} a_{\Xi} [x_0, \varphi^{n} x_0]$. Note that by Corollary \ref{cor:BGIPFellow}(2),
$$
\begin{aligned}
d(u_i, \eta) & \ge  d(u_i, g_{\Xi} a_{\Xi} x_0) - d(g_{\Xi} a_{\Xi}x_0, g_{\Xi} a_{\Xi}\varphi^n x_0) \\
& \ge d(u_i, P) - 3C_{0} - d(x_0, \varphi^n x_0) \\
& \ge d(u_i, Q) + 0.2 D \\
& > d(u_i, g_{\Xi'} a_{\Xi'} x_0) + d(a_{\Xi'} x_0, x_0) + C_{0}\\
& \ge d(u_i, g_{\Xi'} x_0) + C_{0}.
\end{aligned}
$$
This implies that $d(\eta, [u_i, g_{\Xi'} x_0]) \ge C_{0}$. Since $\eta$ is $C_{0}$-contracting, $\diam \pi_{\eta} ([u_i, g_{\Xi'}x_0]) \le C_{0}$. Since $(x_0, \eta, u_i)$ is $C_{0}$-aligned, we also have that
\begin{equation} \label{eqn:case1}
(x_0, \eta, g_{\Xi'} x_0) \quad \text{is $2C_{0}$-aligned.}
\end{equation}

Denote by $\xi' \in \partial^{h} X$ the Busemann cocycle component of $\Xi'$ and let $\{z_i'\}_{i \in \N} \subset X$ be a sequence converging to $\xi'$. We claim that
\begin{equation} \label{eqn:case1-2}
(\eta, z_i') \quad \text{is $(K + 30C_{0})$-aligned for all large $i \in \N$.}
\end{equation}
Suppose to the contrary that, passing to a subsequence, $(\eta, z'_i)$ is never $(K+30C_{0})$-aligned. We choose \[
p \in \pi_{\eta}(x_{0}), \quad q \in \pi_{\eta}(z_{i}'), \quad \text{and} \quad r \in \pi_{\eta}(g_{\Xi'} x_{0}).
\]
By Equation \eqref{eqn:case1} and the assumption that $(\eta, z_i')$ is not $(K + 30C_{0})$-aligned, $r$ is closer to $g_{\Xi}a_{\Xi} \varphi^{n}x_0$ than $p$ and $q$ are. Moreover, we have $d(p, r), d(q, r) \ge K + 25C_{0}$. Now, it follows from Lemma \ref{lem:BGIPFellow}  that 
 \[\begin{aligned}
d(x_0, z'_i) & \le d(x_0, \eta) + d(p, q) + d(\eta, z'_i), \\
d(g_{\Xi'} x_0, z'_i) & =_{8C_{0}} d(g_{\Xi'}x_0, \eta) + d(r, q) + d(\eta, z'_i), \quad \text{and} \\
d(x_0, g_{\Xi'} x_0) & =_{8C_{0}} d(x_0, \eta) + d(p, r) + d(\eta, g_{\Xi'} x_0).
\end{aligned}
\]
Then we have
$$\begin{aligned}
d(x_0, g_{\Xi'} x_0) & + d(g_{\Xi'} x_0, z'_i) - d(x_0, z'_i) \\
& \ge 2 d(g_{\Xi'} x_0, \eta) + 2 \min ( d(p, r), d(q, r)) - 16 C_{0} \\
& \ge 2K + 34 C_{0}
\end{aligned}
$$
In particular, 
$d(x_0, z'_i) - d(g_{\Xi'} x_0, z'_i) \le d(x_0, g_{\Xi'} x_0) - K - C_{0}$.
Taking the limit $i \to + \infty$, we have
$$
\beta_{\Xi'}( x_0, g_{\Xi'} x_0) \le d(x_0, g_{\Xi'} x_0) - K - C_{0}.
$$
This contradicts  the definition of $g_{\Xi'}$, and therefore Equation \eqref{eqn:case1-2} follows.

By Lemma \ref{lem:BGIPFellow}, it follows from Equation \eqref{eqn:case1} and Equation \eqref{eqn:case1-2} that $[x_0, z_{i}']$ passes through the $4C_{0}$-neighborhood of $g_{\Xi} a_{\Xi} x_0$, for all large $i \in \N$. This implies
$$
\beta_{\Xi'}(x_0, g_{\Xi}a_{\Xi}x_0) \ge d(x_0, g_{\Xi} a_{\Xi} x_0) - 8C_{0}.
$$
Since $K > 100 C_{0} + 2 \sum_{i = 1}^3 d(x_0, a_i x_0)$, we have 
$$
\begin{aligned}
\beta_{\Xi'}(x_0, g_{\Xi} x_0) & = \beta_{\Xi'}(x_0, g_{\Xi} a_{\Xi} x_0) + \beta_{\Xi'}(g_{\Xi} a_{\Xi} x_0, g_{\Xi} x_0) \\
& \ge d(x_0, g_{\Xi} a_{\Xi} x_0) - 8C_{0} - d(a_{\Xi} x_0, x_0) \\
& \ge d(x_0, g_{\Xi} x_0) - 8 C_{0} - 2 d( a_{\Xi} x_0, x_0) \\
& > d(x_0, g_{\Xi} x_0)  - K.
\end{aligned}
$$
Meanwhile, we also have $k < d(x_0, g_{\Xi} x_0) < d(x_0, g_{\Xi'}x_0) - 0.5 D$. This contradicts to the definition of $g_{\Xi'}$ that $d(x_0, g_{\Xi'} x_0)$ is minimal among the elements of $\Ga$ satisfying Equation \eqref{eqn:defofgXi}.

\item  If $d(x_0, g_{\Xi'} x_0) < d(x_0, g_{\Xi} x_0) - 0.5D$: In this case, one can obtain a similar contradiction as in (1).

\item If $d(x_0, g_{\Xi}x_0) =_{0.5D} d(x_0, g_{\Xi'} x_0)$: Recall that for each fixed $i \in \N$, we have $P, Q \in [x_0, u_i]$ such that $d(g_{\Xi}a_{\Xi} x_0, P), d(g_{\Xi'}a_{\Xi'} x_0, Q)\le 3C_{0}$. Hence, we have
$$
d(g_{\Xi} x_0, P), d(g_{\Xi'} x_0, Q) \le 0.1 D.
$$
Since both $P$ and $Q$ belong to the geodesic $[x_0, u_i]$, this, together with $d(x_0, g_{\Xi} x_0) =_{0.5D} d(x_0, g_{\Xi'} x_0)$, implies
$$
d(g_{\Xi} x_0, g_{\Xi'} x_0) \le D.
$$

Now when $\Xi$ is given (and hence $a_{\Xi}, g_{\Xi}$ are given as well),
\[
\Xi' = g_{\Xi'} a_{\Xi'}^{-1} \varphi^{-n} a_{\Xi'}^{-1} (g_{\Xi'}^{-1} g_{\Xi}) a_{\Xi} \varphi^{n} a g_{\Xi}^{-1} \Xi
\]
is determined by $g_{\Xi'}^{-1} g_{\Xi}$ and $a_{\Xi'} \in \{a_1, a_2, a_3\}$. The number of these choices is at most $3 \cdot \# \{ g \in \Gamma : d(x_0, gx_0) \le D\}$. 
\end{enumerate}
Therefore,  Equation \eqref{eqn:finitetoone} follows.

\medskip

We simply write $M := 3 \cdot \# \{ g \in \Gamma : d(x_0, gx_0) \le D\}$. Then we have
\[\begin{aligned}
\mu(F(\mathcal{H}_{K, R})) &= \mu \left( \bigcup_{g \in \Gamma, a \in \{a_1, a_2, a_3\}} F \left( \{\Xi \in \mathcal{H}_{K, R} : g_{\Xi} = g, a_{\Xi} = a\} \right) \right) \\
&\ge \frac{1}{M}\sum_{g \in \Gamma, a \in \{a_1, a_2, a_3\}} \mu \left( F \left( \{\Xi \in \mathcal{H}_{K, R} : g_{\Xi} = g, a_{\Xi} = a\} \right) \right) \\
&=  \frac{1}{M}\sum_{g \in \Gamma, a \in \{a_1, a_2, a_3\}} \mu \left( ga\varphi^n a g^{-1} \{\Xi \in \mathcal{H}_{K, R} : g_{\Xi} = g, a_{\Xi} = a\} \right) \\
&= \frac{1}{M}\sum_{g \in \Gamma, a \in \{a_1, a_2, a_3\}} \mu \left( \{\Xi \in \mathcal{H}_{K, R} : g_{\Xi} = g, a_{\Xi} = a\} \right) \\
&= \frac{1}{M}\mu(\mathcal{H}_{K, R}).
\end{aligned}
\]

Now to see the image of $F$, let $\Xi = (\xi, t) \in \mathcal{H}_{K, R}$. For simplifity, write $g := g_{\Xi}$ and $a := a_{\Xi}$. Then
$$
F(\Xi) = ( ga \varphi^n a g^{-1} \xi, t + \beta_{\xi} ( (g a \varphi^n a g^{-1})^{-1} x_0, x_0) )
$$
For each sequence $\{u_i\}_{i \in \N} \subset X$ converging to $ga \varphi^n a g^{-1} \xi \in \partial^{h} X$, we have 
$$\begin{aligned}
\beta_{\xi} ( (g a \varphi^n a g^{-1})^{-1} x_0, x_0) & = \beta_{g a \varphi^n a g^{-1}\xi}(  x_0, g a \varphi^n a g^{-1}x_0) \\
& = \lim_{i \to + \infty} d(x_0, u_i) - d( g a \varphi^n a g^{-1} x_0, u_i).
\end{aligned}
$$
By Equation \eqref{eqn:defofFmap} and Lemma \ref{lem:BGIPFellow}, we have for all large $i \in \N$ that
$$\begin{aligned}
\beta_{\xi} ( (g a \varphi^n a g^{-1})^{-1} x_0, x_0) & =_{15C_{0}} d(x_0, g a x_0) + d(x_0, \varphi^{n} x_0) + d(g a \varphi^n x_0, u_i) \\
& \qquad \quad - d( g a \varphi^n a g^{-1} x_0, u_i) \\
& =_{C_{0}} d(x_0, g a x_0) + d(x_0, \varphi^{n} x_0) \\
& \qquad \quad + \beta_{g a \varphi^n a g^{-1}\xi} ( g a \varphi^n x_0, g a \varphi^n a g^{-1} x_0) \\
& =_{d(x_0, a x_0)} d(x_0, g x_0) +  d(x_0, \varphi^{n} x_0)   \\
& \qquad \quad + \beta_{\xi}(g a^{-1} x_0, g x_0) + \beta_{\xi}(g x_0, x_0) \\
& =_{d(x_0, a x_0)} d(x_0, g x_0) +  d(x_0, \varphi^{n} x_0) +  \beta_{\xi}(g x_0, x_0) \\
& =_K d(x_0, \varphi^{n} x_0).
\end{aligned}
$$
Therefore,
$$
t + \beta_{\xi} ( (g a \varphi^n a g^{-1})^{-1} x_0, x_0) \in [-R - D, R + D].
$$
In addition, by Equation \eqref{eqn:defofFmap}, we have $d(x_0, ga x_0) > k - \sum_{i = 1}^3 d(x_0, a_i x_0)$ and that $(x_0, ga [x_0, \varphi^n x_0], F(\Xi))$ is $C_{0}$-aligned.

This implies that $F(\mathcal{H}_{K, R})$ is contained in
 \[
B_{k;n} := \left\{ (\zeta, s) \in \mathcal{H} : 
\begin{matrix}
-R-D \le s \le R+D \text{ and } \exists h \in \Ga \text{ such that}\\
d(x_0, h x_0) > k - \sum_{i = 1}^3 d(x_0, a_i x_0) \text{ and} \\ 
(x_0, h [x_0, \varphi^n x_0], \zeta) \text{ is $C_{0}$-aligned}
\end{matrix}\right\}.
\]
Hence, we have
$$
\mu (B_{k;n}) \ge \mu(\mathcal{H}_{K, R})/M > 0.
$$
Note that the set $B_{k;n}$ is decreasing in $k$. Since $\mu$ is a Radon measure and $B_{k;n} \subset \partial^{h} X \times [-R - D, R + D]$ which is \emph{compact}, we have $\mu(B_{k;n}) < + \infty$. Therefore, setting
$$
B_n := \bigcap_{k > 0} B_{k;n},
$$
we have
$$
\mu(B_n) = \lim_{k \to + \infty} \mu(B_{k;n}) \ge \mu(\mathcal{H}_{K, R})/M > 0.
$$

Now, $\Gamma \cdot B_{n}$ is a $\Gamma$-invariant set of positive $\mu$-measure. Hence, by the $\Ga$-ergodicity of $\mu$, we have that $\Ga \cdot B_n$ is $\mu$-conull, and therefore
$$
\bigcap_{n} \Ga \cdot B_n \quad \text{is $\mu$-conull}.
$$
We then show that for each $(\zeta, s) \in \bigcap_{n} \Ga \cdot B_n $, we have $\zeta \in \La_{\varphi, 2C_{0}}(\Ga)$. This finishes the proof by Lemma \ref{lem:squeezedInv}.

Let $(\zeta, s) \in \bigcap_{n} \Ga \cdot B_n$. Then for each large enough $n \in \N$, there exists  $h_0 \in \Ga$ so that
$$
(x_0, h [x_0, \varphi^n x_0], h_0^{-1}\zeta) \quad \text{is $C_{0}$-aligned for infinitly many } h \in \Ga.
$$
In other words,
$$
( h_0 x_0, h_0 h [x_0, \varphi^n x_0], \zeta) \quad \text{is $C_{0}$-aligned for infinitely many } h \in \Ga.
$$
Among infinitely many such $h \in \Ga$, we can choose one such that
$$d(h_0 x_0, h_0 h [x_0, \varphi^n x_0]) > d(x_0, h_0 x_0) + C_{0}$$
and hence 
$$
d([h_0 x_0, x_0], h_0 h [x_0, \varphi^n x_0]) > C_{0}.
$$
Since $h_0 h [x_0, \varphi^n x_0]$ is $C_{0}$-contracting, we now have that $(x_0, h_0 h [x_0, \varphi^n x_0])$ is $2C_{0}$-aligned. Therefore,
$$
( x_0, h_0 h [x_0, \varphi^n x_0], \zeta) \quad \text{is $2C_{0}$-aligned.}
$$
Since this holds for all large $n \in \N$, we conclude $\zeta \in \La_{\varphi, 2C_{0}}(\Ga)$.
\end{proof}

Theorem \ref{thm:radonCharge} applies to each contracting isometry of $\Ga$. We thus define:

\begin{definition}\label{dfn:Myr}
    Let $\Ga < \Isom(X)$ be a non-elementary subgroup. For each contracting isometry $\varphi \in \Ga$, let $C(\varphi)> 0$ as in  Lemma \ref{lem:BGIPHeredi}. We then define \[
\Lambda_{\rm Myr} (\Gamma) := \bigcap_{\varphi \in \Ga, \text{ contracting}} \La_{\varphi, C(\varphi)}(\Ga).
\]
In other words, every point in $\La_{\rm Myr} (\Ga)$ is a $(\varphi, C(\varphi))$-guided limit point of $\Ga$ for all contracting $\varphi \in \Ga$.
\end{definition}

\begin{remark}\label{rem:Myr}
The set $\Lambda_{\rm Myr} (\Ga) $ is closely related to the \emph{Myrberg limit set}. A point  $\xi \in \partial^{h} X$ is a called a \emph{Myrberg limit point} of $\Ga$ if there exists $c>0$ such that for each $g_1, g_2 \in \Ga$, there exists $h \in \Ga$ such that
$$
h[x, \xi) \text{ is $c$-close to $hx_0$ and $hgx_0$, with $hx_0$ coming earlier.}
$$
The Myrberg limit set is the set of all Myrberg limit points of $\Ga$.

When $\Gamma < \Isom(X)$ is non-elementary, Yang proved in \cite[Corollary 4.17]{yang2024conformal} that the Myrberg limit set of $\Ga$ is equal to the intersection of $(\varphi, C(\varphi))$-guided limit sets for all contracting isometries $\varphi \in \Gamma$. 
\end{remark}

Considering Lemma \ref{lem:squeezedInv}, Theorem \ref{thm:radonCharge} implies that:
\begin{Cor}\label{cor:radonCharge}
Let $\Ga < \Isom(X)$ be a non-elementary subgroup. Let $\mu$ be a $\Ga$-invariant ergodic Radon measure on $\mathcal{H}$ supported on $\La_{c}(\Ga) \times \R$. Then the measure $\mu$ is supported on
$$
 \La_{\rm Myr}(\Ga) \times \R \subset \mathcal{H}.
$$
\end{Cor}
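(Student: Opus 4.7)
The proof should be an essentially formal deduction from Theorem \ref{thm:radonCharge}, once one observes that the intersection defining $\La_{\rm Myr}(\Ga)$ is effectively countable. I will structure the argument in three short steps.

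First, I would establish that $\Ga$ is countable. Since $X$ is a proper geodesic metric space, it is second countable (hence separable), and by hypothesis $\Ga$ acts properly on $X$. A standard argument then shows that $\Ga$ is a countable subset of $\Isom(X)$: for any fixed basepoint $x_0$ and any $R > 0$, the orbit $\Ga \cdot x_0$ meets the compact ball $\overline{B}(x_0, R)$ in only finitely many points by properness, so $\Ga = \bigcup_R \{ g \in \Ga : d(x_0, gx_0) \le R \}$ is countable. In particular, the family of contracting isometries in $\Ga$ is at most countable, and I may enumerate it as $\{\varphi_n\}_{n \in \N}$.

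Second, I would apply Theorem \ref{thm:radonCharge} to the given $\Ga$-invariant ergodic Radon measure $\mu$ for each contracting isometry $\varphi_n \in \Ga$ in turn. This yields, for each $n$,
\[
\mu\bigl(\mathcal{H} \smallsetminus (\La_{\varphi_n, C(\varphi_n)}(\Ga) \times \R)\bigr) = 0,
\]
where $C(\varphi_n) > 0$ is the constant from Lemma \ref{lem:BGIPHeredi}. Lemma \ref{lem:squeezedInv} guarantees that this conull set is unambiguously defined (independent of the particular admissible choice of $C(\varphi_n)$) and that it is $\Ga$-invariant, so there is no compatibility issue between the different $\varphi_n$. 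By countable subadditivity of $\mu$,
\[
\mu\!\left( \mathcal{H} \smallsetminus \bigcap_{n \in \N} \bigl(\La_{\varphi_n, C(\varphi_n)}(\Ga) \times \R\bigr) \right) = 0.
\]

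Third, I would invoke Definition \ref{dfn:Myr} to identify the intersection on the left-hand side. Since every contracting element of $\Ga$ appears in the enumeration $\{\varphi_n\}_{n \in \N}$, we have
\[
\bigcap_{n \in \N} \La_{\varphi_n, C(\varphi_n)}(\Ga) \;=\; \bigcap_{\varphi \in \Ga \text{ contracting}} \La_{\varphi, C(\varphi)}(\Ga) \;=\; \La_{\rm Myr}(\Ga),
\]
so $\mu$ is supported on $\La_{\rm Myr}(\Ga) \times \R$, as claimed. There is no genuine obstacle here: the only substantive content has already been absorbed into Theorem \ref{thm:radonCharge}, and the corollary is a packaging statement asserting that the concentration locus produced there is simultaneously valid for every contracting $\varphi \in \Ga$.
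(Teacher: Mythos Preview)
Your proposal is correct and matches the paper's approach: the paper states the corollary as an immediate consequence of Theorem~\ref{thm:radonCharge} (together with Lemma~\ref{lem:squeezedInv}) without writing out a proof, and you have simply filled in the details. The one point you make explicit that the paper leaves tacit is the countability of $\Ga$, which is indeed needed to pass from the per-$\varphi$ conull statements to the intersection defining $\La_{\rm Myr}(\Ga)$.
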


\subsection{Neighborhoods in squeezed limit sets}

We have seen that the squeezed limit sets is the genuine region for invariant ergodic measures.
Let us  now introduce a notion of neighborhoods of squeezed limit points. For $g, \varphi \in \Isom(X)$, $C > 0$, and $n \in \N$, we set
$$
U_C ( g; \varphi, n) := \left\{ \xi \in \partial^{h} X : \textrm{$(x_{0}, g[x_{0}, \varphi^{n} x_{0}], \xi)$ is $C$-aligned}\right\}.
$$
The interesting case is  where $\varphi$ is a squeezing isometry.

\begin{lem}\label{lem:nbdBasis}
    Let $\Ga < \Isom(X)$ be a non-elementary subgroup containing a squeezing isometry $\varphi \in \Ga$, and let $C=C(\varphi) > 0$ be as in Lemma \ref{lem:BGIPHeredi}. Then 
    $$\{U_{C}(g; \varphi, n) : g \in \Gamma, n \in \N \}$$ 
    forms a basis for the topology of $\La_{\varphi, C}(\Ga) \subset \partial^{h}X$.

In other words, for each  $\xi \in \La_{\varphi, C}(\Ga)$, for each open set $O \subset \partial^{h} X$ with $\xi \in O$ and for each $N \in \N$, there exist $g \in \Ga$, $n > N$, and an open set $V \subset \partial^{h} X$ such that
\[
\xi \in V \subset U_{C}(g; \varphi, n) \subset O.
\]
\end{lem}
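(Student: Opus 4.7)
My strategy is to separate the two containments $\xi \in V \subset U_{C}(g;\varphi,n) \subset O$: first choose $g$ and $n > N$ so that $U_{C}(g;\varphi,n) \subset O$, then find an open $V \ni \xi$ inside $U_{C}(g;\varphi,n)$. Using the compact--open topology on $\partial^{h}X$, I may shrink $O$ to a basic neighborhood $W = \{\xi' : |\xi'(y_{i}) - \xi(y_{i})| < \epsilon, \ i = 1, \ldots, k\}$ for some $y_{1}, \ldots, y_{k} \in X$ and $\epsilon > 0$. Set $D := \max_{i} d(x_{0}, y_{i})$, let $\gamma$ be the axis of $\varphi$, and let $L := L(\epsilon/20)$ be from Definition~\ref{dfn:squeezing}.

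To build strict slack at $\xi$, I exploit Lemma~\ref{lem:squeezedInv}: since $\La_{\varphi, C}(\Ga) = \La_{\varphi, 1000C}(\Ga)$, there exist $h \in \Ga$ and large $n''$ with $(x_{0}, h[x_{0}, \varphi^{n''}x_{0}], \xi)$ being $1000C$-aligned. Setting $g := h\varphi^{N}$ with $N := \lceil 1100C / \tau_{\varphi} \rceil$ and $n := n'' - 2N$ (chosen so that $n > N$ and $\tau_{\varphi} n \ge 2L + D + 100C$), the argument inside the proof of Lemma~\ref{lem:squeezedInv} shows that $(x_{0}, g[x_{0}, \varphi^{n}x_{0}], \xi)$ is $C$-aligned, and moreover that $\pi_{g\gamma}(\xi)$ lies past the far endpoint of $g[x_{0}, \varphi^{n}x_{0}]$ by at least $99C$ along $g\gamma$.

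For the outer inclusion $U_{C}(g;\varphi,n) \subset W$, let $\xi' \in U_{C}(g;\varphi,n)$ and take sequences $z_{k} \to \xi$ and $z'_{k} \to \xi'$. Combining Lemma~\ref{lem:BGIPHeredi}(3)--(4) applied to $g\gamma$ with Corollary~\ref{cor:BGIPFellow}(1) gives, for all large $k$,
\[
\pi_{g\gamma}(x_{0}), \pi_{g\gamma}(y_{i}) \subset g\gamma((-\infty, D + 6C]) \quad \text{and} \quad \pi_{g\gamma}(z_{k}), \pi_{g\gamma}(z'_{k}) \subset g\gamma([n\tau_{\varphi} - 2C, +\infty)).
\]
By the choice of $n$, these intervals are separated by more than $2L$, so Lemma~\ref{lem:squeezing} applied with $x_{1} = x_{0}, x_{2} = y_{i}, y_{1} = z_{k}, y_{2} = z'_{k}$ yields
\[
[d(y_{i}, z_{k}) - d(x_{0}, z_{k})] - [d(y_{i}, z'_{k}) - d(x_{0}, z'_{k})] =_{2\epsilon/5} 0.
\]
Sending $k \to \infty$ gives $|\xi'(y_{i}) - \xi(y_{i})| \le 2\epsilon/5 < \epsilon$, so $\xi' \in W$.

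For the inner inclusion, I take $V = \{\xi' : |\xi'(w_{j}) - \xi(w_{j})| < \epsilon'\}$ for a sufficiently fine $\epsilon''$-net $\{w_{j}\}$ of the compact geodesic $g[x_{0}, \varphi^{n}x_{0}]$ and small parameters $\epsilon', \epsilon'' > 0$. For $\xi' \in V$, the $1$-Lipschitz property of horofunctions forces $\beta_{\xi'}(\cdot, x_{0})$ to uniformly approximate $\beta_{\xi}(\cdot, x_{0})$ on $g[x_{0}, \varphi^{n}x_{0}]$, so $\arg\min_{y \in g[x_{0}, \varphi^{n}x_{0}]} \beta_{\xi'}(y, x_{0})$ stays close to its counterpart for $\xi$. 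Because $\pi_{g\gamma}(\xi)$ lies at least $99C$ past the far endpoint, the minimizer for $\xi$ is pinned at $g\varphi^{n}x_{0}$ with macroscopic slack, which absorbs the perturbation and confines the minimizer for $\xi'$ inside $\mathcal{N}_{C}(g\varphi^{n}x_{0})$. Thus $\xi' \in U_{C}(g;\varphi,n)$, so $V \subset U_{C}(g;\varphi,n)$. The main technical hurdle is exactly this last step: one must reconcile the alignment-via-sequences formalism of Definition~\ref{dfn:alignHoro} with the extended-projection viewpoint used above, checking that the universal contracting error appearing in Lemma~\ref{lem:BGIPHeredi} fits strictly inside the $99C$ slack. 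This requires careful bookkeeping in the conversion between the subsegment $g[x_{0}, \varphi^{n}x_{0}]$ and the axis $g\gamma$, using in particular the estimate $d(\varphi^{k} x_{0}, \gamma(\tau_{\varphi} k)) < C$ from Lemma~\ref{lem:BGIPHeredi}(2).
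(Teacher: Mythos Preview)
Your setup and the outer inclusion $U_{C}(g;\varphi,n)\subset W$ are correct and essentially the paper's argument (the detour through $1000C$-alignment via Lemma~\ref{lem:squeezedInv} is a harmless variant of using the definition of $\La_{\varphi,C}(\Ga)$ directly). The gap is in the inner inclusion $V\subset U_{C}(g;\varphi,n)$. Your minimizer argument can only pin $\arg\min_{y\in g[x_{0},\varphi^{n}x_{0}]}\beta_{\xi'}(y,x_{0})$ to within roughly $10C$ of the far endpoint, not within $C$: the estimate $\beta_{\xi}(y,g\varphi^{n}x_{0})=_{O(C)}d(y,g\varphi^{n}x_{0})$ carries an $8C$ error from Corollary~\ref{cor:BGIPFellow}(2) plus the segment--axis comparison error, and this error is \emph{independent} of the $99C$ axial slack. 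The slack only guarantees that $\pi_{g\gamma}(\xi)$ lies past the segment; it does not sharpen the linear approximation of $\beta_{\xi}$ on the segment. Consequently, after perturbing to $\xi'$, the minimizer (and hence limits of $\pi_{g[x_{0},\varphi^{n}x_{0}]}(z'_{i})$) is only confined to $\mathcal{N}_{O(C)}(g\varphi^{n}x_{0})$, which does not yield $C$-alignment.

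The paper avoids this by defining $V$ on a \emph{strictly longer} axis interval $h\gamma([0,\tau_{\varphi}k])$ (in your notation, $g\gamma([-N\tau_{\varphi},(n+N)\tau_{\varphi}])$) that extends well past the far endpoint of $g[x_{0},\varphi^{n}x_{0}]$. Control of $\xi'$ on this longer interval allows a direct contradiction argument at two specific axis points to force $\pi_{g\gamma}(z'_{i})$ past $g\gamma(n\tau_{\varphi})$; then Lemma~\ref{lem:BGIPHeredi}(3) transfers this to $C$-alignment on the shorter segment. In short, the $O(C)$ losses are absorbed not by the slack in the \emph{estimate} but by the gap between the segment and the larger interval on which $V$ is defined. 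Your $V$, built only on the net of $g[x_{0},\varphi^{n}x_{0}]$, lacks this room.
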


\begin{proof}
Let $\gamma : \R \rightarrow X$ be the unique axis of $\varphi$. We fix $\xi \in \La_{\varphi, C}(\Ga)$, an open set $O \subset \partial^{h} X$ containing $\xi$, and $N \in \N$.

Let us first recall that $\partial^{h}X$ is given a compact-open topology. Hence, the set of the form 
\[
O_{R, \epsilon} := \{ \zeta \in \partial^{h}X : \abs{\xi(x) - \zeta(x)} < \epsilon \textrm{ for all }  x \in \mathcal{N}_{R}(x_{0})\} \quad \text{for $R, \epsilon > 0$}
\]
forms a local basis for $\xi$. Fix $R, \epsilon > 0$ such that $O_{R, \epsilon} \subset O$. We may assume that $\epsilon < C$.

Let $L = L(0.01\epsilon) > 0$ be the constant as in Definition \ref{dfn:squeezing}  for $\ga$. 
Let $R' := \lceil \frac{R+100C}{\tau_{\varphi}} \rceil$ and $L' := \lceil \frac{2L + 100C}{\tau_{\varphi}} \rceil$, and take $k \in \N$ such that 
 \[
k > R'+ L' + \lceil100C/\tau_{\varphi} \rceil + N.
\]
Let $\{z_{i}\}_{i \in \N} \subset X$ be a sequence converging to  $\xi$. Since $\xi \in \La_{\varphi, C}(\Ga)$, there exists $h \in \Gamma$ such that $\xi \in U_{C}(h; \varphi, k)$, i.e., \[
\left(x_0, h[x_0, \varphi^k x_0], z_{i}\right) \quad \textrm{is $C$-aligned for all  large $i \in \N$}.
\]
Since $\tau_{\varphi} k > 2C$, Lemma \ref{lem:BGIPHeredi}(4) tells us that 
\begin{equation}\label{eqn:x0GanB}
    \begin{aligned}
\pi_{h \gamma}(x_{0}) &\subset h  \gamma \left( (-\infty, 2C] \right) \quad \text{and} \\
\pi_{h \gamma}(z_{i}) &\subset h\gamma([\tau_{\varphi} k - 2C, +\infty)) \quad \textrm{for all  large $i \in \N$}.
\end{aligned}
\end{equation}
By the $(1, 4C)$-Lipschitzness of $\pi_{h\gamma}(\cdot)$ in Corollary \ref{cor:BGIPFellow}(1), we have 
\begin{equation}\label{eqn:xInPlaceOf}
\pi_{h\gamma}(x) \subset h\gamma \left( (-\infty, 6C+ R]\right) \quad \text{for all } x \in \mathcal{N}_{R}(x_0).
\end{equation}
In view of Equation \eqref{eqn:x0GanB} and Lemma \ref{lem:BGIPHeredi}(3), $(x_{0}, h\varphi^{R'} [x_0, \varphi^{L'}x_{0}])$ is $C$-aligned. Similarly, $(h\varphi^{R'} [x_0, \varphi^{L'}x_{0}], z_i)$ is $C$-aligned eventually. Therefore, 
$$\xi \in U_{C}(h\varphi^{R'}; \varphi, L').$$

\medskip

Now let $\zeta \in U_{C}(h \varphi^{R'}; \varphi, L')$ and take a  sequence $\{z'_{i}\}_{i \in \N} \subset X$ converging to $\zeta$. Then by Lemma \ref{lem:BGIPHeredi}(4),
$$\pi_{h \gamma}(z_{i}') \subset h\gamma\left([(R'+L')\tau_{\varphi} - 2C, +\infty)\right) \quad \text{for all large } i \in \N.
$$
Combining this with Equation \eqref{eqn:x0GanB} and Equation  \eqref{eqn:xInPlaceOf}, we can apply Lemma \ref{lem:squeezing} and conclude that for each $x \in \mathcal{N}_R(x_0)$, 
\[
d(x, z_{i}) - d(x_{0}, z_{i}) =_{\epsilon/2} d(x, z'_{i}) - d(x_{0}, z_{i}') \quad \text{for all large } i \in \N.
\]
This implies that $\abs{\xi(x) - \zeta(x)} < \epsilon$ for all $x \in \mathcal{N}_R(x_0)$, and therefore
\[
\xi \in U_{C}(h \varphi^{R'}; \varphi, L') \subset O_{R, \epsilon} \subset O.
\]

\medskip

Now, set \[
V := \{ \zeta \in \partial^{h}X : \abs{\xi(x) - \zeta(x)} < \epsilon \text{ for all } x \in h\gamma([0, \tau_{\varphi} k]) \}
\]
which is an open neighborhood of $\xi$. We then show $V \subset U_{C}(h \varphi^{R'}; \varphi, L')$, which finishes the proof.

Let $\zeta \in V$ and $\{z'_i\}_{i \in \N} \subset X$ a sequence converging to $\zeta$. 
\begin{claim*} 
    We have \begin{equation}\label{eqn:zetaWinsP}
\pi_{h\gamma} (z'_i) \subset h\gamma \left( [ \tau_{\varphi} k - 20C, +\infty)\right)\quad \text{for all large } i \in \N.
\end{equation}
\end{claim*}
If not, then after passing to a subsequence,
$$
d\left( z'_i, h\gamma(\tau_{\varphi} k - 2C) \right) - d\left( z'_i, h\gamma(\tau_{\varphi} k - 20C) \right)  =_{8C} 18C \quad \text{for all } i \in \N
$$
by Corollary \ref{cor:BGIPFellow}(2). On the other hand, by Equation \eqref{eqn:x0GanB}, it follows from Corollary \ref{cor:BGIPFellow}(2) that after passing to a subsequence,
$$
d\left( z_i, h\gamma(\tau_{\varphi} k - 20C) \right) - d\left( z_i, h\gamma(\tau_{\varphi} k - 2C) \right) =_{8C} 18C \quad \text{for all } i \in \N.
$$
These imply that
$$\begin{aligned}
 \beta_{\zeta} ( h \ga ( \tau_{\varphi} k - 2C), h \ga ( \tau_{\varphi} k - 20C) ) & =_{8C} 18 C \quad \text{and} \\
\beta_{\xi} ( h \ga ( \tau_{\varphi} k - 20C) , h \ga ( \tau_{\varphi} k - 2C)) & =_{8C} 18 C .
\end{aligned}
$$
Equivalently,
$$\begin{aligned}
 \zeta( h \ga ( \tau_{\varphi} k - 2C)) - \zeta( h \ga ( \tau_{\varphi} k - 20C) ) & =_{8C} 18 C \quad \text{and} \\
\xi( h \ga ( \tau_{\varphi} k - 2C)) - \xi( h \ga ( \tau_{\varphi} k - 20C) ) & =_{8C} -18 C .
\end{aligned}
$$
This contradicts the fact that $\abs{\xi(x) - \zeta(x)} < \epsilon$ for all $x \in h \ga ([0, \tau_{\varphi}k])$.

\medskip

Hence, Equation \eqref{eqn:zetaWinsP} holds, and Lemma \ref{lem:BGIPHeredi}(3) tells us that 
$$
(x_0, h\varphi^{R'} [x_0, \varphi^{L'}x_0], \zeta) \quad \text{is $C$-aligned.}
$$
This shows $\zeta \in U_{C}(h \varphi^{R'}; \varphi, L')$, and therefore $V \subset U_{C}(h \varphi^{R'}; \varphi, L')$ as desired.
\end{proof}

\subsection{Quasi-invariance under translations} \label{subsec:trqi}

For $a \in \R$, consider a map $T_a : \mathcal{H} \to \mathcal{H}$ given by 
$
(\xi, t) \mapsto (\xi, t + a)
$.
For a Radon measure $\mu$ on $\mathcal{H}$, we consider its pullback  measure $T_a^*\mu$: for each Borel subset $E \subset \mathcal{H}$,
$$T_a^* \mu (E) := \mu(T_a E).$$
For a contracting  $g \in \Isom(X)$, we simply write $T_{g} := T_{\tau_g}$.
We show that invariant ergodic measures on $\mathcal{H}$ are quasi-invariant under this translation.

\begin{theorem} \label{thm:trbytrlengthqi}
    Let $\Ga < \Isom(X)$ be a non-elementary subgroup containing a squeezing isometry. Let $\mu$ be a $\Ga$-invariant ergodic Radon measure on $\mathcal{H}$ supported on $\La_c(\Ga) \times \R$. Then for a squeezing isometry $\varphi \in \Ga$, there exists $\la \ge 0$ such that
    $$
    \frac{ d T_{\varphi}^* \mu}{d \mu} = e^{\la} \quad \text{a.e.}
    $$

\end{theorem}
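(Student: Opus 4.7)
The strategy is an ergodic dichotomy followed by ruling out the singular case via squeezing.

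First, observe that the translation $T_a : (\xi, t) \mapsto (\xi, t+a)$ commutes with every element of $\Gamma$, since in the formula $g(\xi, t) = (g\xi, t + \beta_\xi(g^{-1} x_0, x_0))$ the $t$-shift depends only on $\xi$. Hence $T_{\tau_\varphi}^* \mu$ is $\Gamma$-invariant, and since $T_{\tau_\varphi}$ is a Borel automorphism of $\mathcal{H}$ intertwining the $\Gamma$-actions on $(\mathcal{H}, \mu)$ and $(\mathcal{H}, T_{\tau_\varphi}^* \mu)$, it is also $\Gamma$-ergodic. Applying the Lebesgue decomposition $T_{\tau_\varphi}^* \mu = f \mu + \mu^\perp$, uniqueness of the decomposition together with the $\Gamma$-invariance of both $\mu$ and $T_{\tau_\varphi}^* \mu$ forces both summands to be $\Gamma$-invariant. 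Then $\Gamma$-ergodicity of $\mu$ makes $f \equiv c$ a.e.\ for some $c \ge 0$, and $\Gamma$-ergodicity of $T_{\tau_\varphi}^* \mu$ makes exactly one of $c \mu$ or $\mu^\perp$ vanish. Since $T_{\tau_\varphi}^* \mu \neq 0$, we obtain the dichotomy: either $T_{\tau_\varphi}^* \mu = c \mu$ with $c > 0$ (then set $\lambda := \log c$), or $T_{\tau_\varphi}^* \mu \perp \mu$.

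The main task is to rule out the singular alternative. The key structural input is that near the attracting fixed point $\xi^+ := \xi^+(\varphi)$, the action of $\varphi$ on $\mathcal{H}$ closely mimics the pure translation $T_{\tau_\varphi}$. At $\xi^+$ itself we have $\beta_{\xi^+}(\varphi^{-1} x_0, x_0) = \tau_\varphi$ exactly, since $d(\varphi^{-1} x_0, \varphi^n x_0) - d(x_0, \varphi^n x_0) = d(x_0, \varphi^{n+1} x_0) - d(x_0, \varphi^n x_0) \to \tau_\varphi$ as $\varphi^n x_0 \to \xi^+$. By the squeezing of the axis $\gamma$ (Lemma~\ref{lem:squeezing}) combined with continuity of Busemann cocycles, this identity extends to a uniform approximation: for every $\epsilon > 0$, there exists $N$ such that $|\beta_\xi(\varphi^{-1} x_0, x_0) - \tau_\varphi| < \epsilon$ for all $\xi \in U_C(\id; \varphi, N) \cap \Lambda_{\varphi, C}(\Gamma)$, where $C = C(\varphi)$. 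Factoring $\varphi = \varphi_\partial \circ T_c$ with $\varphi_\partial(\xi, t) := (\varphi \xi, t)$ and $c(\xi) := \beta_\xi(\varphi^{-1} x_0, x_0)$, the $\varphi$-invariance of $\mu$ becomes the identity $\varphi_\partial^* \mu = T_{-c}^* \mu$, which closely approximates $T_{-\tau_\varphi}^* \mu$ on the squeezed cylinder.

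Assuming for contradiction $T_{\tau_\varphi}^* \mu \perp \mu$, let $A$ be a $\Gamma$-invariant conull Borel set with $\mu(T_{\tau_\varphi} A) = 0$. By Corollary~\ref{cor:radonCharge}, $\mu$ is supported on $\Lambda_{\varphi, C}(\Gamma) \times \R$, and since $\xi^+ \in \Lambda_{\varphi, C}(\Gamma)$ lies in the essential support of the $\partial^{h} X$-projection of $\mu$, the cylinders $V_N := U_C(\id; \varphi, N) \times [0, 1]$ (which form a neighborhood basis near $\{\xi^+\} \times [0, 1]$ by Lemma~\ref{lem:nbdBasis}) have positive $\mu$-measure. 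By $\varphi$-invariance, $\mu(\varphi(V_N \cap A)) = \mu(V_N \cap A) = \mu(V_N) > 0$, whereas $\varphi(V_N \cap A)$ is $\epsilon_N$-close to $T_{\tau_\varphi}(V_N \cap A)$, and $\mu(T_{\tau_\varphi}(V_N \cap A)) \le \mu(T_{\tau_\varphi} A) = 0$. The main obstacle is executing this comparison quantitatively: the $\epsilon_N$-slack in the $t$-coordinate, together with the simultaneous contraction of the $\partial^{h} X$-coordinate by $\varphi$, must not absorb the positive mass $\mu(V_N)$. The squeezing property, specifically Lemma~\ref{lem:squeezing}, provides the uniform quantitative control that permits passing to the $N \to \infty$ limit and obtaining the desired contradiction, thereby ruling out singularity. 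The non-negativity $\lambda \ge 0$ then follows from a separate argument exploiting that $\mu$ is a nonzero Radon measure supported on the conical limit set.
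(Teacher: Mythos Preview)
Your dichotomy setup via Lebesgue decomposition and ergodicity is correct, but the argument ruling out the singular alternative has a genuine gap which you yourself flag as ``the main obstacle'' and then do not actually overcome. You want to compare $\mu(\varphi(V_N \cap A))$ with $\mu(T_{\tau_\varphi}(V_N \cap A))$, but these are measures of \emph{different} sets. The approximation $|c(\xi) - \tau_\varphi| < \epsilon_N$ only controls the $\R$-coordinate; meanwhile $\varphi$ also moves the $\partial^h X$-coordinate (contracting towards $\xi^+$) while $T_{\tau_\varphi}$ does not. Writing $\varphi(V_N \cap A) = \varphi(V_N) \cap A$ via $\Ga$-invariance of $A$, what you would need for a contradiction is that this set lie inside $T_{\tau_\varphi} A$, i.e.\ that $(\varphi\xi,\, t + c(\xi) - \tau_\varphi) \in A$ whenever $(\varphi\xi,\, t + c(\xi)) \in A$; nothing forces this. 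Squeezing bounds the \emph{scalar} $c(\xi)$; it says nothing about how the $\mu$-null set $T_{\tau_\varphi} A$ sits relative to the $\mu$-conull set $A$ along the $\R$-fibers. Taking $N \to \infty$ does not help either, since $\mu(V_N) \to 0$ as the cylinders shrink to $\{\xi^+\} \times [0,1]$. Your assertion that $\xi^+$ lies in the essential support of the $\partial^h X$-projection of $\mu$ is also unjustified, and the ``separate argument'' for $\lambda \ge 0$ is absent.

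The paper's proof takes a different and more robust route: rather than localizing at the single fixed point $\xi^+$ and using only $\varphi$, it proves directly that $(T_\varphi^* \mu)(E) \ge \mu(E)$ for every Borel $E$, from which both $\mu \ll T_\varphi^* \mu$ and $\lambda \ge 0$ follow at once. The mechanism is to cover an arbitrary compact $K \subset \La_{\varphi, C}(\Ga)$ by cylinders $U_C(g_i; \varphi, n_i)$ coming from Lemma~\ref{lem:nbdBasis}, extract a disjoint subfamily $\{U_{i(l)}\}$, and on each piece apply the \emph{conjugate} $F_l = g_{i(l)} \varphi g_{i(l)}^{-1}$. Squeezing then guarantees $F_l(C_l \times I) \subset U_{i(l)} \times (\epsilon\text{-neighborhood of } I + \tau_\varphi)$, and disjointness of the $U_{i(l)}$ lets one sum without loss. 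The idea you are missing is that one must use the full family of conjugates $g\varphi g^{-1}$, each adapted to its own cylinder, rather than the single element $\varphi$ near its own fixed point.
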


\begin{proof}
Let $\varphi \in \Ga$ be a squeezing isometry and let $C= C(\varphi) > 0$ be as in Lemma \ref{lem:BGIPHeredi}, with the choice of axis $\ga : \R \to X$. Note that for every $n \in \N$, $\tau_{\varphi^n} = n \cdot \tau_{\varphi}$ and $C(\varphi^n) = C(\varphi)$. Since $T_{\varphi}$ commutes with the $\Ga$-action on $\mathcal{H}$, $T_{\varphi^{-n}}^*\mu$ is also $\Ga$-invariant and ergodic. Hence, if $\frac{d T_{\varphi^n}^* (T_{\varphi^{-n}}^*\mu)}{d T_{\varphi^{-n}}^*\mu} = e^{\la_1}$ and $\frac{d T_{\varphi^{n+1}}^* (T_{\varphi^{-n}}^*\mu)}{d T_{\varphi^{-n}}^*\mu} = e^{\la_2}$ for some $\la_1, \la_2 \in \R$, then $\frac{d T_{\varphi}^* \mu}{d \mu} = e^{\la_2 - \la_1}$. Therefore, it suffices to consider the case that
$$
\tau_{\varphi} > 100 C.
$$

We first aim to show that 
\begin{equation} \label{eqn:trabscont}
(T_{\varphi}^{*} \nu)(E) \ge \nu(E)
\end{equation}
for each Borel subset $E \subset \mathcal{H}$. By Corollary \ref{cor:radonCharge}, $\mu$ is supported on $\La_{\varphi, C}(\Ga) \times \R$.

\medskip
{\bf \noindent Step 1.} First consider the case that $E = K \times I$ for a compact subset $K \subset \La_{\varphi, C}(\Ga)$ and a compact interval $I \subset \R$. 

We fix some open subset $O \subset \partial^{h}X$ such that $K \subset O$ and $\epsilon > 0$. Let $L = L(0.001\epsilon)>0$ be as in Definition \ref{dfn:squeezing} for $\gamma$. By Lemma \ref{lem:nbdBasis}, for each $\xi \in K$, there exist an element $g(\xi) \in \Gamma$ and $n(\xi) > (2L+100C)/\tau_{\varphi} + 4$ such that 
  \[
\xi \in U_{C} \left(g(\xi); \varphi, n(\xi) \right) \subset O.
\]
Let $\mathcal{U} := \left\{ U_C \left(g(\xi); \varphi, n(\xi)\right) : \xi \in K \right\}$, which is a countable collection of sets. For convenience, let us enumerate $\mathcal{U}$ based on their distance from $x_0$, i.e, let \[
\mathcal{U} = \{U_{1}, U_{2}, \ldots\}
\]
where 
$
U_i := U_C (g_i; \varphi, n_i)
$
for each $i \in \N$
so that \[
d(x_0, g_1 \varphi^{n_1} x_0) \le 
d(x_0, g_2 \varphi^{n_2} x_0) \le \cdots.
\]

We will now define a subcollection 
$$\mathcal{V} := \{U_{i(1)}, U_{i(2)}, \ldots\} \subset \mathcal{U}$$ by inductively defining $i(1), i(2), \ldots$. We let $i(1) = 1$. Now, having defined $i(1), \ldots, i(N)$, define $i(N+1)$ as the smallest $j \in \N$ such that $U_{j}$ is disjoint from $U_{i(1)} \cup \cdots \cup U_{i(N)}$.

For each $l \in \N$, we set 
\begin{equation} \label{eqn:defofCl}
C_{l} := U_{i(l)} \cup \bigcup \left\{ U_{k} : k \ge i(l), U_{k} \cap U_{i(l)} \neq \emptyset \right\}.
\end{equation}
Then $\{C_{l} : l \in \N \}$ is a covering of $K$ contained in $O$. Indeed, for $k \ge 1$, if $k \neq i(l)$ for all $l \in \N$, then $U_k$ intersects $U_{i(1)} \cup \cdots \cup U_{i(l_0)}$ where $i(l_0)$ is the maximal index less than $k$.

\begin{claim*}
For each $l \in \N$, 
\begin{equation} \label{eqn:trqiclaimClinU}
    C_{l} \subset U_C \left(g_{i(l)}; \varphi, n_{i(l)} - 1\right).
\end{equation}
\end{claim*}

To see this claim, let $k \ge i(l)$ be such that $U_{k} = U_C (g_k; \varphi, n_k)$ and $U_{i(l)}$ have a common element $\xi$. Then for every $z \in X$ close enough to $\xi$ in $\overline{X}^{h}$, 
\begin{equation} \label{eqn:trqiclaim}
\left( x_0, g_{i(l)} [x_0, \varphi^{n_{i(l)}} x_0], z\right) \quad \text{is $C$-aligned.}
\end{equation}
 By Lemma \ref{lem:BGIPFellow}, there exists $q \in [x_0, z]$ that is $3C$-close to $g_{i(l)} \varphi^{n_{i(l)}}x_0$. Similarly, by the condition $\xi \in U_{k}$, $[x_0, z]$ contains a point $p$ that is $3C$-close to $g_{k} \varphi^{n_k} x_0$. Since $k \ge i(l)$,  our enumerating convention  tells us that \[
d(x_0, p) \ge d(x_0, g_k \varphi^{n_k} x_0) - 3C \ge d(x_0, g_{i(l)} \varphi^{n_{i(l)}} x_0 ) - 3C \ge d(x_0, q) - 6C.
\]
In other words, 
$$d(z, p) \le d(z, q) + 6C.$$

If $d(z, p) \le 10C$, then we have $d(z, g_{k} \varphi^{n_k} x_0) \le 13C$. By the $(1, 4C)$-Lipschitzness of $\pi_{g_{i(l)} [x_0, \varphi^n_{i(l)} x_0 ]}(\cdot)$ in Corollar \ref{cor:BGIPFellow}(1), we have\[
\diam  \pi_{g_{i(l)}[x_0, \varphi^{n_{i(l)}} x_0]} (\{z, g_{k} \varphi^{n_{k}} x_0\}) \le 17C.
 \]

 If $d(z, p) > 10C$, then we take a point $p^{\dagger} \in [z, p]$ such that $d(p, p^{\dagger}) = 10C$. Then $\diam [p^{\dagger}, z] = d(p,z) - 10C$, and hence
 $$\begin{aligned}
 d\left( g_{i(l)}[x_0, \varphi^{n_{i(l)}} x_0], [p^{\dagger}, z] \right) & \ge d\left(g_{i(l)}[x_0, \varphi^{n_{i(l)}}x_0], z\right) - \diam  [p^{\dagger}, z]  \\
 & \ge d(z, q) - 3C - d(z, p) + 10C \\
 & \ge C.
 \end{aligned}
$$
 By the $C$-contracting property of $g_{i(l)}[x_0, \varphi^{n_{i(l)}}x_0]$, we have \[
\diam \pi_{g_{i(l)}[x_0, \varphi^{n_{i(l)}} x_0]} (\{z, p^{\dagger}\})  \le C.
 \]
 Since $d(p, g_k \varphi^{n_k} x_0) \le 3C$, we have $d(p^{\dagger}, g_k \varphi^{n_k} x_0) \le 13C$, and hence
  \[
\diam \pi_{g_{i(l)}[x_0, \varphi^{n_{i(l)}} x_0]} (\{z, g_{k} \varphi^{n_{k}} x_0\}) \le 18C
 \]
 by Corollary \ref{cor:BGIPFellow}(1).

 Hence, in any case, together with Equation \eqref{eqn:trqiclaim}, we have that 
 \begin{equation} \label{eqn:secondineq19C}
(x_0, g_{i(l)}[x_0, \varphi^{n_{i(l)}} x_0], g_k \varphi^{n_k} x_0) \quad \text{is $19C$-aligned.}
 \end{equation}

Now let $\zeta \in U_k$ be arbitrary. Then for every $z'\in X$ close to $\zeta$ in $\overline{X}^{h}$,
$$
(x_0, g_k [x_0, \varphi^{n_k} x_0], z') \quad \text{is $C$-aligned}
$$
and hence 
$[x_0, z']$ passes through the $3C$-neighborhood of $g_k \varphi^{n_k} x_0$ as before. Hence, we have 
\[\begin{aligned}
\diam   \pi_{g_{i(l)}[x_0, \varphi^{n_{i(l)}} x_0]}  ([x_{0}, z'])  
& \ge d(g_{i(l)}x_0, g_{i(l)}\varphi^{n_{i(l)}} x_0 ) \\
& \quad - \diam \left( \pi_{g_{i(l)}[x_0, \varphi^{n_{i(l)}} x_0]} (x_{0}) \cup \{ g_{i(l)} x_{0}  \} \right)\\
&\quad - \left( d([x_0, z'], g_k \varphi^{n_k} x_0 ) + 4C \right)\\
& \quad - \diam \left( \pi_{g_{i(l)}[x_0, \varphi^{n_{i(l)}} x_0]} (g_k \varphi^{n_k} x_0) \cup \{ g_{i(l)} \varphi^{n_{i(l)}} x_{0} \} \right) \\
&\ge 100C - C - ( 3C+ 4C) - 19C \\
& \ge 70C
\end{aligned}
\]
where we applied Corollary \ref{cor:BGIPFellow}(1) in the first inequality, and Equation \eqref{eqn:trqiclaim}, Equation \eqref{eqn:secondineq19C}, and that $d([x_0, z'], g_k \varphi^{n_k} x_0) \le 3C$ in the second.

We then apply Lemma \ref{lem:BGIPFellow} and obtain two points $u, v \in [x_0, z']$ such that 
\begin{enumerate}
\item $[u, v]$ and $\pi_{g_{i(l)}[x_0, \varphi^{n_{i(l)}} x_0]}([x_0, z'])$ are  within Hausdorff distance $4C$,
\item  $\diam \left( \pi_{g_{i(l)}[x_0, \varphi^{n_{i(l)}} x_0]} ([x_0, u]) \cup \{u\} \right) \le 2C$, 
\item $\diam \left( \pi_{g_{i(l)}[x_0, \varphi^{n_{i(l)}} x_0]} ([v, z']) \cup \{v\} \right) \le 2C$, and
\item for each $u' \in \pi_{g_{i(l)}[x_0, \varphi^{n_{i(l)}} x_0]}(x_0)$ and $v' \in \pi_{g_{i(l)}[x_0, \varphi^{n_{i(l)}} x_0]}(z')$, $[u', v']$ and $[u, v]$ are within Hausdorff distance $10C$.
\end{enumerate}
Since $d([x_0, z'], g_k \varphi^{n_k} x_0) \le 3C$, it follows from Equation \eqref{eqn:secondineq19C} and Corollary \ref{cor:BGIPFellow}(1) that there exists $w \in [x_0, z']$ such that 
\begin{equation} \label{eqn:usingnewitem}
\diam \left( \pi_{g_{i(l)}[x_0, \varphi^{n_{i(l)}} x_0]}(w) \cup \{g_{i(l)} \varphi^{n_{i(l)}} x_0\} \right) \le 26 C.
\end{equation}
By the condition (2) above, Equation \eqref{eqn:secondineq19C}, and $d(x_0, \varphi^{n_i(l)}x_0) > 100 C$, we must have  $w \notin [x_0, u]$, and hence  $w \in [v, z']$ or $w \in [u, v]$.

\begin{itemize}
\item If $w \in [v, z']$, then it follows from (3) above that 
$$\diam \left( \pi_{g_{i(l)}[x_0, \varphi^{n_{i(l)}} x_0]}(z') \cup \{g_{i(l)} \varphi^{n_{i(l)}} x_0\} \right) \le 28 C.
$$

\item If $w \in [u, v]$, then it follows from (4) above that for each pair of two points $u' \in \pi_{g_{i(l)}[x_0, \varphi^{n_{i(l)}} x_0]}(x_0)$ and $v' \in \pi_{g_{i(l)}[x_0, \varphi^{n_{i(l)}} x_0]}(z')$, there exists $w' \in [u', v']$ such that $d(w, w') \le 10C$. We then have
$$
\diam \left( \pi_{g_{i(l)}[x_0, \varphi^{n_{i(l)}} x_0]}(w) \cup \{w'\} \right) \le 20 C.
$$
Together with Equation \eqref{eqn:usingnewitem}, $\diam \{w', g_{i(l)} \varphi^{n_{i(l)}} x_0\} \le 46 C$. Since $v'$ is inbetween $w'$ and $g_{i(l)} \varphi^{n_{i(l)}} x_0$, and $v'  \in \pi_{g_{i(l)}[x_0, \varphi^{n_{i(l)}} x_0]}(z')$ is arbitrary, we have
$$
\diam \left( \pi_{g_{i(l)}[x_0, \varphi^{n_{i(l)}} x_0]}(z') \cup \{ g_{i(l)} \varphi^{n_{i(l)}} x_0 \} \right) \le 46 C.
$$
\end{itemize}
Therefore, $(g_{i(l)}[x, \varphi^{n_{i(l)}} x_0], z')$ is $46C$-aligned.

By Lemma \ref{lem:BGIPHeredi}(4), we have 
\begin{equation} \label{eqn:Clclaimcontradiction}
    \begin{aligned}
\pi_{g_{i(l)} \gamma} (z') & \subset g_{i(l)} \gamma \left( [\tau_{\varphi} n_{i(l)} - 47C, +\infty) \right) \\
& \subset  g_{i(l)} \gamma\left( [\tau_{\varphi} (n_{i(l)}-1) + 3C, +\infty) \right) .
    \end{aligned}
\end{equation}
By Lemma \ref{lem:BGIPHeredi}(3), this implies that $(g_{i(l)}[x_0, \varphi^{n_{i(l)}-1} x_0], z')$ is $C$-aligned. This is the case for every $z'$ close to $\zeta \in U_k$, so we conclude 
$$\zeta \in U_{C} \left(g_{i(l)}; \varphi, n_{i(l)} - 1\right).$$ 
The claim is now established.

\medskip

For each $l \in \N$ we now define a map $F_{l} : C_{l} \times I \rightarrow  \mathcal{H}$. For $g= g_{i(l)}$ we let 
\begin{equation} \label{eqn:defofFl}
F_{l} : \Xi \mapsto g \varphi g^{-1} \Xi.
\end{equation}
We have $\mu \left( F_{l}(C_{l} \times I) \right) = \mu(C_l \times I)$ as $\mu$ is $\Gamma$-invariant. 

\begin{claim*}
    We have 
    \begin{equation} \label{eqn:claimforFl}
    F_{l} (C_{l}\times I) \subset  U_{i(l)} \times \text{$(\epsilon$-neighborhood of $I + \tau_{\varphi})$}.
    \end{equation}
\end{claim*}
To see this, we simply write $g = g_{i(l)}$ and $n = n_{i(l)} - 1$. We then fix $\Xi = (\xi, t) \in C_l \times I$. Note that
$$
F_l (\Xi) = (g \varphi g^{-1} \xi, t + \beta_{\xi}(g \varphi^{-1} g^{-1} x_0, x_0)).
$$
Let $\{z_i\}_{i \in \N} \subset X$ be a sequence converging to $\xi$.

We first show that $g \varphi g^{-1} \xi \in U_{i(l)}$, which follows once we show that 
$$
(x_0, g[x_0, \varphi^{n+1} x_0], g \varphi g^{-1} z_i) \quad \text{is $C$-aligned for all large $i \in \N$,}
$$
Since $U_{i(l)} \neq \emptyset$, we already have that $(x_0, g[x_0, \varphi^{n+1} x_0] )$ is $C$-aligned. Now suppose to the contrary that $(g[x_0, \varphi^{n+1} x_0], g \varphi g^{-1} z_i)$ is not $C$-aligned for large $i \in \N$. By Lemma \ref{lem:BGIPHeredi}(3), we have 
$$
\pi_{g \ga}(z_i) \subset g \ga ((-\infty, n \tau_{\varphi}]).
$$
This contradicts Equation \eqref{eqn:Clclaimcontradiction}. Therefore, $g \varphi g^{-1} \xi \in U_{i(l)}$.

For the second component, it suffices to show 
\begin{equation} \label{eqn:trqiclaim2goal2}
\abs{\beta_{\xi}(g \varphi^{-1} g^{-1} x_0, x_0) - \tau_{\varphi}} < \epsilon.
\end{equation} Note that 
$$
\beta_{\xi}(g \varphi^{-1} g^{-1} x_0, x_0) = \lim_{i \to + \infty} d(g \varphi^{-1} g^{-1} x_0, z_i) - d(x_0,  z_i).
$$
By Equation \eqref{eqn:trqiclaimClinU}, 
$$
(x_0, g [x_0, \varphi^{n} x_0 ], z_i ) \quad \text{is $C$-aligned for all large $i \in \N$.}
$$
Hence, for all large $i \in \N$, it follows from Lemma \ref{lem:BGIPHeredi}(4) that
\begin{equation} \label{eqn:trqiclaim2secondcomp}
    \pi_{g \gamma} (x_0) \subset g \gamma \left( (-\infty, 2C] \right) \quad \text{and} \quad \pi_{g \gamma} (z_{i}) \subset g \gamma \left( [n \tau_{\varphi} - 2C, +\infty) \right).
\end{equation}
Since  $n\tau_{\varphi} - 4C > 2L $ and $g \ga$ is squeezing  (Definition \ref{dfn:squeezing}), there exists $p \in [x_0, z_i]$ such that
$$
d(p, g\ga(n \tau_{\varphi}/2)) \le 0.001 \epsilon.
$$

Meanwihle, note that $\left( g\varphi^{-1} g^{-1} x_0, g [x_0, \varphi^{n} x_0] \right)$ is also $C$-aligned; otherwise, we have $\pi_{g \ga}(g \varphi^{-1} g^{-1} x_0) \subset g \ga([0, + \infty))$ by Lemma \ref{lem:BGIPHeredi}(3), and therefore $\pi_{g \ga}(x_0) \subset g \ga([\tau_{\varphi}, + \infty))$ which contradicts Equation \eqref{eqn:trqiclaim2secondcomp}. Hence, it follows from Lemma \ref{lem:BGIPHeredi}(4) that 
$$
\pi_{g\gamma} (g\varphi^{-1} g^{-1} x_0) \subset g\gamma \left( (-\infty, 2C] \right). 
$$
Together with Equation \eqref{eqn:trqiclaim2secondcomp} and $n \tau_{\varphi} - 4C > 2 L + 2\tau_{\varphi}$, the squeezing property of $g \ga$ implies that there exist  $q_{1},q_{2}\in [g\varphi g^{-1} x_0, z_i]$, with $q_1$ coming earlier than $q_2$, such that \[
d\left(q_1, g \gamma(n\tau_{\varphi}/2 - \tau_{\varphi})\right), d\left(q_2, g \gamma(n\tau_{\varphi}/2)\right) < 0.001\epsilon.
\]
 
Now we have \[
\begin{aligned}
d(g\varphi^{-1} g^{-1} x_0, z_i) - d(x_0, z_i) 
&= \left( d(g\varphi^{-1} g^{-1}x_0, q_1) + d(q_1, q_2) + d(q_2, z_i) \right) \\
& \quad - \left( d(x_0, p) + d(p, z_i) \right) \\
&=_{0.006\epsilon} d\left(g\varphi^{-1} g^{-1}x_0,  g\gamma(n\tau_{\varphi}/2 - \tau_{\varphi})\right) \\
& \qquad \quad + d\left(  g\gamma(n\tau_{\varphi}/2 - \tau_{\varphi}),   g\gamma(n\tau_{\varphi}/2)\right) \\
& \qquad \quad + d\left( g \gamma(n\tau_{\varphi}/2), z_i\right) \\
& \qquad \quad - d\left(x_{0},   g \gamma(n\tau_{\varphi}/2)\right) - d\left( g \gamma(n\tau_{\varphi}/2), z_i \right) \\
&= d\left(  g\gamma(n\tau_{\varphi}/2 - \tau_{\varphi}),   g\gamma(n\tau_{\varphi}/2)\right)  = \tau_{\varphi}.
\end{aligned}
\]
Taking the limit $i \to + \infty$, Equation \eqref{eqn:trqiclaim2goal2} follows. This completes the proof of the claim.

\medskip

Now by the above claim and disjointness of $U_{i(l)}$'s,  we have
 \[\begin{aligned}
\mu( O \times (\textrm{$\epsilon$-neighborhood of $I+\tau_{\varphi}$})) &\ge \mu\left( \bigcup_{l} F_l (C_{l} \times I) \right) \\
&= \sum_{l} \mu \left( F_l (C_l \times I)\right) \\
&= \sum_{l} \mu (C_{l} \times I) \\
& \ge \mu(K \times I).
\end{aligned}
\]
Note that $\mu( O \times (\textrm{$\epsilon$-neighborhood of $I+\tau_{\varphi}$})) < + \infty$ since $\mu$ is Radon.
Since $\epsilon > 0$ and an open set $O \supset K$ are arbitrary, we have 
$$
(T_{\varphi}^* \mu)(K \times I) = \mu(K \times (I + \tau_{\varphi})) \ge \mu(K \times I). 
$$

\medskip
{\bf \noindent Step 2.} Consider the case that $E = A \times B$ for Borel $A \subset \partial^{h}X$ and an interval $B \subset \mathbb{R}$. Since $\mu$ is supported on $\La_{\varphi, C}(\Ga) \times \R$, we may assume that $A \subset \La_{\varphi, C}(\Ga)$. By the inner regularity of $\mu$ and $T_{\varphi}^* \mu$, there exist compact subsets $E_1, E_2 \subset E$ such that
$$
\abs{\mu(E) - \mu(E_1)} < \epsilon \quad \text{and} \quad \abs{(T_{\varphi}^*\mu)(E) - (T_{\varphi}^*\mu)(E_2)  } < \epsilon.
$$
Considering projections of $E_1 \cup E_2$ to $A$ and $B$, we obtain compact subsets $K \subset A$ and $I \subset B$ so that 
$$
\abs{\mu(E) - \mu(K \times I)} < \epsilon \quad \text{and} \quad \abs{(T_{\varphi}^*\mu)(E) - (T_{\varphi}^*\mu)(K \times I)  } < \epsilon.
$$
By taking the convex hull of $I$ (recall that $B$ is an interval), we may assume that $I$ is a compact interval. Applying Step 1 to $K \times I$, we have
$$
(T_{\varphi}^*\mu)(E) \ge \mu(E) - 2 \epsilon.
$$
Since $\epsilon > 0$ is arbitrary, $(T_{\varphi}^*\mu)(E) \ge \mu(E)$ follows.

\medskip
{\bf \noindent Step 3.} When $E \subset \mathcal{H}$ is a finite union of open sets of the form $O_1 \times O_2$ for open sets $O_1 \subset \partial^h  X$ and open intervals $O_2 \subset \R$,  $E$ is a disjoint union of finitely many Borel subsets of the form $A \times B$, where $A \subset \partial^{h}X$ is Borel and $B \subset \R$ is an interval. Hence, $(T_{\varphi}^*\mu)(E) \ge \mu(E)$ follows from Step 2.

\medskip
{\bf \noindent Step 4.} When $E\subset \mathcal{H}$ is an open set, $E$ is a countable union of open sets of the form  $O_1 \times O_2$ for open sets $O_1 \subset \partial^h  X$ and open intervals $O_2 \subset \R$. Hence,  $(T_{\varphi}^*\mu)(E) \ge \mu(E)$ follows from Step 3.

\medskip
{\bf \noindent Step 5.} Finally, suppose that $E \subset \mathcal{H}$ is a Borel subset. Then it follows from Step 4 and the outer regularity of $\mu$ and $T_{\varphi}^*\mu$ that
$$
(T_{\varphi}^*\mu)(E) \ge \mu(E).
$$

\medskip
Now we have shown Equation \eqref{eqn:trabscont}. Hence, we can consider the Radon--Nikodym derivative $\frac{d \mu}{d T_{\varphi}^* \mu}$. Since both $\mu$ and $T_{\varphi}^*\mu$ are $\Ga$-invariant, $\frac{d \mu}{d T_{\varphi}^* \mu}$ is $\Ga$-invariant as well. By $\Ga$-ergodicity of $T_{\varphi}^*\mu$, $\frac{d \mu}{d T_{\varphi}^* \mu}$ is constant $T_{\varphi}^*\mu$-a.e., which must be positive. Hence, there exists $\la \in \R$ such that $\frac{d T_{\varphi}^* \mu}{d \mu} = e^{\la}$ $\mu$-a.e., and moreover, $\la \ge 0$ by Equation \eqref{eqn:trabscont}. This completes the proof.
\end{proof}

\subsection{Proof of the rigidity}

Let us now prove Theorem \ref{thm:uniqueRadon}.

\begin{proof}[Proof of Theorem \ref{thm:uniqueRadon}]
By ergodic decomposition, it suffices to consider a $\Ga$-invariant ergodic Radon measure $\mu$ on $\mathcal{H}$ supported on $\La_c(\Ga) \times \R$.

Let 
\[
A := \left\{ a \in \mathbb{R} : \exists \lambda (a) \in \R \text{ such that } \frac{d T_{a}^{*} \mu}{d \mu} = e^{\lambda(a)} \text{ a.e.} \right\}.
\]
It is straightforward that $A$ is an additive subgroup of $\mathbb{R}$ and $\lambda : A \rightarrow \R$ is an additive homomorphism. Moreover, by Theorem \ref{thm:trbytrlengthqi},
$$
\Spec(\Ga) \subset A.
$$
Hence, it follows from non-arithmeticity of $\Spec(\Ga)$ that $A \subset \R$ is dense.

\begin{claim*}
There exists $\delta \ge 0$ such that
$$
\la(a) = \delta \cdot a \quad \text{for all } a \in A.
$$
\end{claim*}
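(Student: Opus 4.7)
The plan is to derive the linearity of $\la$ from continuity at $0$, exploiting additivity together with the density of $A$ in $\R$. First, observe that $A$ is dense in $\R$: it is an additive subgroup containing $\Spec(\Ga)$ by Theorem~\ref{thm:trbytrlengthqi}, and $\Spec(\Ga)$ generates a dense subgroup of $\R$ by the non-arithmeticity hypothesis. Hence any continuous additive extension of $\la$ to $\R$ is uniquely determined and must be linear.

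The heart of the argument is to prove that $\la$ is continuous at $0 \in A$. The plan is to choose a compact ``test box'' $E := K \times [t_0, t_1] \subset \mathcal{H}$, where $K \subset \partial^{h} X$ is compact and $t_0 < t_1$ are chosen so that $\mu(E) > 0$ while $\mu(K \times \{t_i\}) = 0$ for $i = 0, 1$. The existence of such $K$ follows from inner regularity of $\mu$ (since $\mu \neq 0$), and the existence of such $t_0, t_1$ follows from the fact that $\mu\vert_{K \times \R}$ is $\sigma$-finite and hence admits only countably many ``horizontal atoms'' $\{t \in \R : \mu(K \times \{t\}) > 0\}$. The defining property of $\la$ gives
\[
\mu(T_a E) = e^{\la(a)} \mu(E) \qquad \text{for every } a \in A,
\]
and since $T_a E = K \times [t_0 + a, t_1 + a]$, applying Radon continuity from above at the two boundary slices (whose $\mu$-measure is zero) yields $\mu(T_a E) \to \mu(E)$ as $a \to 0$ in $\R$. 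Therefore $e^{\la(a)} \to 1$, i.e.\ $\la(a) \to 0$ as $a \to 0$ in $A$.

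Additivity now upgrades continuity at $0$ to uniform continuity of $\la$ on $A$, so that $\la$ extends uniquely to a continuous additive map $\bar\la : \R \to \R$, which is automatically linear. This delivers $\delta \in \R$ with $\la(a) = \delta \cdot a$ for all $a \in A$. For the sign, pick any squeezing $\varphi \in \Ga$; then $\tau_\varphi > 0$ and $\la(\tau_\varphi) \ge 0$ by Theorem~\ref{thm:trbytrlengthqi}, forcing $\delta \ge 0$. The only delicate point is securing the box $E$ with boundary slices of zero $\mu$-measure, but this is a routine consequence of the Radon property, so I do not anticipate a serious obstacle.
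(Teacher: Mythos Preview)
Your proof is correct and takes a somewhat different route from the paper's. Both arguments rest on the same underlying observation—that for a Radon measure, the mass of a compact box varies continuously under small $\R$-translations—but you deploy it more directly. You use a test box $E = K \times [t_0,t_1]$ with atomless boundary slices to show $\la$ is continuous at $0$, then invoke the standard fact that a continuous additive map on a dense subgroup of $\R$ extends to a linear map on $\R$. The paper instead fixes a reference slope $\delta = \la(a)/a$, assumes some $a'$ gives a different slope $\delta'$, and uses the incommensurability of $a,a'$ to produce $s \in A$ with $|s|$ arbitrarily small but $|\la(s)| > 1$; this forces $\mu(O) \ge e\,\mu(O)$ for a suitable bounded open set, a contradiction.

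Your approach is cleaner and more conceptual, isolating ``continuity at $0$'' as the single analytic input and then appealing to a classical linearity principle. The paper's argument is more hands-on and self-contained (it never names continuity or extension explicitly), which may suit readers who prefer an elementary contradiction over an appeal to the Cauchy functional equation. Both are short; yours has the advantage that the choice of $t_0,t_1$ avoiding horizontal atoms makes the limiting step completely transparent.
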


To see this claim, choose a nonzero $a \in A$ and set $\delta := \la(a)/a$. By Theorem \ref{thm:trbytrlengthqi}, we can choose $a \in A$ so that $\delta \ge 0$. It suffices to show that for every nonzero $a' \in A$, $\la(a') / a' = \delta$ as well. 
There are two cases.
\begin{enumerate}
\item If $a$ and $a'$ are are commensurable, i.e., $ma = n a'$ for some $m, n \in \Z$, then the conclusion follows from \[
m \lambda(a) = \lambda(ma) = \lambda(n a') = n \lambda(a').
\]
\item If $a$ and $a'$  are not commensurable, then suppose to the contrary that $\delta' := \lambda(a')/a'$ is distinct from $\delta$.

Let $R > 0$ be large enough so that $O := \partial^{h}X \times (-R, R)$ satisfies $0< \mu(O) < + \infty$. Let $K \subset O$ be an arbitrary compact subset. Then there exists $\epsilon > 0$ such that $K \subset \partial^{h}X \times (-R - \epsilon, R + \epsilon)$.

Since $a$ and $a'$ are not commensurable and $\delta \neq \delta'$, there exist $N, M \in \Z$ such that $N > \frac{\epsilon \delta + 1}{|a'||\delta - \delta'|}$ and $\abs{M a - Na'} < \epsilon$. Then setting $s := Ma - Na' \in A$, we have
$$\begin{aligned}
\abs{\la(s)} & = \abs{M\la(a) - N \la(a')} \\
& = \abs{ (Ma - Na')\delta + (\delta - \delta') N a'} \\
& \ge |\delta - \delta'||a'| N - \epsilon \delta > 1.
\end{aligned}
$$
Replacing $s$ with $-s$ if necessary, we may assume that $\la(s) > 1$. Then we have
$$
\mu(O) \ge \mu(T_s K) = (T_s^* \mu)(K) = e^{\la(s)} \mu(K) > e \mu(K).
$$
Since $K \subset O$ is arbitrary, we have $\mu(O) \ge e \mu(O)$, which is absurd.
\end{enumerate}
Therefore, the claim follows.

\medskip

Now consider an arbitrary $a \in \R$.  Since $A \subset \R$ is dense, there exists a sequence $\{a_i\}_{i \in \N} \subset A$ converging to $a$. Let $U \subset \mathcal{H}$ be an open subset and $K \subset U$ a compact subset. Then for all large $i \in \N$, $T_{a_i} K \subset T_a U$, and hence
$$
\mu(T_a U) \ge \mu(T_{a_i}K) = e^{\la(a_i)} \mu(K) = e^{\delta a_i} \mu(K).
$$
Taking the limit $i \to + \infty$, we have $\mu(T_a U) \ge e^{\delta a} \mu(K)$. Since $K \subset U$ is arbitrary, this implies $\mu(T_a U) \ge e^{\delta a} \mu(U)$. By the same argument, we also have $\mu(U) \ge e^{-\delta a} \mu(T_a U)$.
Hence, we have 
$
(T_a^* \mu)(U) = e^{\delta a}\mu(U)
$.
Since this holds for any open subset $U \subset \mathcal{H}$, we have
$$
T_a^* \mu = e^{\delta a} \cdot \mu.
$$
This implies that there exists a finite Borel measure $\nu_0$ on $\partial^{h} X$ so that $\mu$ is decomposed on $\mathcal{H} = \partial^{h} X \times \R$  as follows:
$$
d \mu(\xi, t) = e^{\delta t} \cdot d \nu_0(\xi) \, dt.
$$

By the $\Ga$-invariance of $\mu$, it is easy to see that for each $g \in \Ga$,
$$
\frac{d g_* \nu_0}{d \nu_0}(\xi) = e^{-\delta \beta_{\xi}( g x_0, x_0)} \quad \text{for $\nu_0$-a.e. $\xi \in \partial^{h}X$}.
$$
Then for $x \in X$, define the measure 
$\nu_{x}$ on $\partial^{h} X$ by setting 
$$d \nu_x(\xi) :=  \frac{e^{- \delta \beta_{\xi}(x, x_0)}}{\nu_0(\partial^{h} X)} d \nu_0 (\xi).$$
This is well-defined, and moreover the family $\{ \nu_x \}_{x \in X}$ is a $\delta$-dimensional conformal density of $\Ga$. Since $\{ \nu_x \}_{x \in X}$ is supported on $\La_c(\Ga)$,  $\delta = \delta_{\Ga}$ and $\Ga$ is of divergence type as a result of the generalized Hopf--Tsuji--Sullivan dichotomy (\cite[Corollary 4.25]{coulon2024patterson-sullivan}, \cite[Theorem 1.14]{yang2024conformal}). Therefore,
$$
\mu = \frac{1}{\nu_0(\partial^{h}X)} \cdot \mu_{\Ga},
$$
which completes the proof.
\end{proof}

\section{Existence of ergodic invariant Radon measures} \label{sec:ergodicity}

We continue the setting of Section \ref{sec:UE}. In this section, we prove the ergodicity of the invariant Radon measure defined in Definition \ref{def:candidateergodicmeasure}. This was stated as Theorem \ref{thm:main2} in the introduction.

\begin{theorem} \label{thm:ergodicdiv}
    Let $\Ga < \Isom(X)$ be a non-elementary subgroup with non-arithmetic squeezing spectrum. If $\Ga$ is of divergence type, then 
    $$
    \text{the $\Ga$-action on $(\mathcal{H}, \mu_{\Ga})$ is ergodic.}
    $$
    Moreover, $\mu_{\Ga}$ is supported on $\La_{c}(\Ga) \times \R \subset \mathcal{H}$. Furthermore, up to scalar, $\mu_{\Ga}$ is the unique $\Ga$-invariant Radon measure on $\mathcal{H}$ that is supported on $\Lambda_{c} (\Gamma) \times \R$.
\end{theorem}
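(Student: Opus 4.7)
The plan is to derive all three conclusions from the machinery already developed in this section, with Theorem \ref{thm:uniqueRadon} doing the heavy lifting and ergodicity following as a formal consequence of uniqueness.

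First I would verify that $\mu_\Ga$ is supported on $\La_c(\Ga) \times \R$. By construction, $d\mu_\Ga(\xi, t) = e^{\delta_\Ga t}\, d\nu_{x_0}(\xi)\, dt$, where $\{\nu_x\}_{x \in X}$ is the unique $\delta_\Ga$-dimensional conformal density of $\Ga$ (existence by Proposition \ref{prop:patterson}, uniqueness by Proposition \ref{prop:pattersonSqueeze2}). So it suffices to check that $\nu_{x_0}$ is concentrated on $\La_c(\Ga)$. Fix a squeezing isometry $\varphi \in \Ga$, which exists because $\Ga$ is non-elementary with non-arithmetic squeezing spectrum. By Proposition \ref{prop:pattersonSqueeze}, the squeezed limit set $\La_{\varphi, C(\varphi)}(\Ga)$ is $\nu_{x_0}$-conull, so it is enough to show $\La_{\varphi, C(\varphi)}(\Ga) \subset \La_c(\Ga)$. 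If $\xi$ lies in the former, then for each large $n$ there is $h_n \in \Ga$ with $(x_0, h_n[x_0, \varphi^n x_0], \xi)$ being $C(\varphi)$-aligned; setting $g_n := h_n \varphi^n$, Lemma \ref{lem:BGIPFellow} applied to any sequence $z_i \to \xi$ produces points on $[x_0, z_i]$ within uniform distance of $h_n x_0$ and $g_n x_0$. A standard distance comparison then yields $\beta_\xi(x_0, g_n x_0) \ge d(x_0, g_n x_0) - K$ for a constant $K$ independent of $n$, so $\xi \in \La_c(\Ga)$.

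The uniqueness assertion is then immediate: any $\Ga$-invariant Radon measure $\mu$ on $\mathcal{H}$ supported on $\La_c(\Ga) \times \R$ is, by Theorem \ref{thm:uniqueRadon}, a constant multiple of $\mu_\Ga$. The hypothesis of that theorem is met because $\Ga$ has non-arithmetic squeezing spectrum, and its convergence-type alternative does not occur under the divergence-type assumption.

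Ergodicity now follows from uniqueness by a standard restriction argument, which I would present as follows. Suppose, toward a contradiction, that $E \subset \mathcal{H}$ is a $\Ga$-invariant Borel set with $0 < \mu_\Ga(E) < \mu_\Ga(\mathcal{H})$. The restriction $\mu_\Ga|_E$ is $\Ga$-invariant, locally finite (because $\mu_\Ga$ is), and hence Radon on the Polish space $\mathcal{H}$; moreover it is supported on $\La_c(\Ga) \times \R$. By the uniqueness just proved, $\mu_\Ga|_E = c \cdot \mu_\Ga$ for some $c > 0$, which forces $\mu_\Ga(\mathcal{H} \setminus E) = 0$, a contradiction. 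The only nontrivial step is the inclusion $\La_{\varphi, C(\varphi)}(\Ga) \subset \La_c(\Ga)$ needed for the support statement; everything else is formal once Theorem \ref{thm:uniqueRadon} is in hand, so I do not anticipate serious obstacles here.
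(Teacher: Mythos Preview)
Your proof is correct, but it takes a genuinely different route to ergodicity than the paper does.

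The paper proves ergodicity \emph{first}, via the Schmidt--Roblin essential subgroup machinery: it shows (Lemma \ref{lem:PSEssential}) that $\Spec(\Ga) \subset \ess_{\nu}(\Ga)$, so non-arithmeticity forces $\ess_{\nu}(\Ga) = \R$, and then Proposition \ref{prop.essanderg} together with the boundary ergodicity from Proposition \ref{prop:pattersonSqueeze2} yields ergodicity of $\mu_{\Ga}$. Only afterwards does the paper invoke Theorem \ref{thm:uniqueRadon} for uniqueness. You instead go straight to Theorem \ref{thm:uniqueRadon} for uniqueness and then deduce ergodicity formally by restricting $\mu_{\Ga}$ to a hypothetical invariant set of intermediate measure. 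This is legitimate: $\mu_{\Ga}|_{E}$ is a nonzero $\Ga$-invariant Radon measure supported on $\La_{c}(\Ga) \times \R$, so Theorem \ref{thm:uniqueRadon} forces it to be a positive multiple of $\mu_{\Ga}$, whence the complement is null. (One wording quibble: since $\mu_{\Ga}(\mathcal{H}) = \infty$, the hypothesis should be that both $E$ and $\mathcal{H} \smallsetminus E$ have positive measure, not ``$\mu_{\Ga}(E) < \mu_{\Ga}(\mathcal{H})$''.)

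Your route is more economical for Theorem \ref{thm:ergodicdiv} itself, since it bypasses Definition \ref{def:ess}, Proposition \ref{prop.essanderg}, and Lemma \ref{lem:PSEssential} entirely. What the paper's approach buys is the more general Theorem \ref{thm:ergodicnormal}: ergodicity for a normal subgroup $\Ga_{0} \triangleleft \Ga$ that need not be of divergence type and for which $\mu_{\Ga}$ need not be expressible via a conformal density of $\Ga_{0}$. Your restriction argument does not reach that case, because Theorem \ref{thm:uniqueRadon} applied to $\Ga_{0}$ would only classify $\Ga_{0}$-invariant measures as multiples of $\mu_{\Ga_{0}}$, not $\mu_{\Ga}$.
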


Note that $\mu_{\Ga}$ being supported on $\La_{c}(\Ga) \times \R$ is due to Coulon \cite{coulon2024patterson-sullivan} and Yang \cite{yang2024conformal} (Proposition \ref{prop:pattersonSqueeze}). In addition, the unique ergodicity follows from Theorem \ref{thm:uniqueRadon}, once we show the ergodicity.
Hence, it suffices to show that $\mu_{\Ga}$ is $\Ga$-ergodic. This is a special case of the following, together with Proposition \ref{prop:pattersonSqueeze2}:

\begin{theorem} \label{thm:ergodicnormal}
    Let $\Ga < \Isom(X)$ be a non-elementary subgroup of divergence type. Let $\Ga_0 \triangleleft \Ga$ be a normal subgroup such that
    \begin{itemize}
        \item $\Spec (\Ga_0)$ is non-arithmetic and
        \item the $\Ga_0$-action on $\partial^{h} X$ is ergodic with respect to the $\delta_{\Ga}$-dimensional conformal density of $\Ga$.
    \end{itemize} 
    Then, 
    $$ 
    \text{the $\Ga_0$-action on $(\mathcal{H}, \mu_{\Ga})$ is ergodic.}
    $$
\end{theorem}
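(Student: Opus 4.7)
The plan is to realize $(\mathcal{H}, \mu_\Ga, \Ga_0)$ as a Maharam-type skew-product extension of $(\partial^h X, \nu_{x_0}, \Ga_0)$ and to force ergodicity by a closed-subgroup argument in $\R$, in the spirit of \cite{aaronson2002invariant}. Under the identification $\mathcal{H} = \partial^h X \times \R$ from Section \ref{subsection:horo}, the $\Ga$-action is the skew product $g \cdot (\xi, t) = (g\xi, t + c(g, \xi))$ for the continuous Borel cocycle $c(g, \xi) := \beta_\xi(g^{-1} x_0, x_0)$, and a direct computation gives $d\mu_\Ga(\xi, t) = e^{\delta_\Ga t} \, d\nu_{x_0}(\xi) \, dt$.

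Suppose toward contradiction that there exists a $\Ga_0$-invariant Borel $E \subset \mathcal{H}$ with both $E$ and $\mathcal{H} \setminus E$ of positive $\mu_\Ga$-mass. Writing $E_\xi := \{t \in \R : (\xi, t) \in E\}$, the $\Ga_0$-invariance of $E$ yields the fiber rule $E_{g\xi} = E_\xi + c(g, \xi)$ for every $g \in \Ga_0$ and $\nu_{x_0}$-a.e.\ $\xi$. Hence the $\R$-translation class of $E_\xi$ is a $\Ga_0$-invariant measurable function of $\xi$; by the assumed $\Ga_0$-ergodicity of $\nu_{x_0}$, it is $\nu_{x_0}$-a.e.\ constant. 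A Borel choice of representatives will produce a fixed Borel $S \subset \R$ and a Borel map $\alpha : \partial^h X \to \R/\op{Sym}(S)$, with $\op{Sym}(S) := \{a \in \R : S + a = S\}$ a closed subgroup of $\R$, such that $E_\xi = S + \alpha(\xi)$ for $\nu_{x_0}$-a.e.\ $\xi$. The fiber rule rewrites as the cocycle identity
\[
\alpha(g \xi) - \alpha(\xi) \equiv c(g, \xi) \pmod{\op{Sym}(S)}.
\]
Since $\mu_\Ga(E)$ and $\mu_\Ga(\mathcal{H} \setminus E)$ are both positive, $S \notin \{\emptyset, \R\}$, so $\op{Sym}(S) \subsetneq \R$.

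The crux will be to show that $\tau_\varphi \in \op{Sym}(S)$ for every squeezing $\varphi \in \Ga_0$. Fix such a $\varphi$ with attracting fixed point $\xi^+$ and repelling fixed point $\xi^-$. Continuity of the horofunction and a direct computation along the axis of $\varphi$ give $c(\varphi, \xi^+) = \tau_\varphi$, hence $c(\varphi, \xi) \to \tau_\varphi$ as $\xi \to \xi^+$. The non-arithmeticity of $\Spec(\Ga_0)$ forces $\Ga_0$ to be non-elementary, so $\nu_{x_0}$ has no atoms, $\{\xi^-\}$ is $\nu_{x_0}$-null, and by the north--south dynamics $\varphi^n \xi \to \xi^+$ for $\nu_{x_0}$-a.e.\ $\xi$. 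Telescoping the cocycle identity gives
\[
\alpha(\varphi^n \xi) - \alpha(\xi) \equiv \sum_{k=0}^{n-1} c(\varphi, \varphi^k \xi) = n\tau_\varphi + o(n) \pmod{\op{Sym}(S)}.
\]
Combining Lusin's theorem (restricting $\alpha$ to a compact $K \subset \partial^h X$ of $\nu_{x_0}$-measure close to $1$ on which $\alpha$ is continuous) with non-singular Poincar\'e recurrence for the $\langle \varphi \rangle$-action, one extracts a subsequence $n_k \to \infty$ along which $\varphi^{n_k} \xi \in K$ and $\alpha(\varphi^{n_k} \xi)$ converges in $\R/\op{Sym}(S)$. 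Convergence of $n_k \tau_\varphi$ modulo $\op{Sym}(S)$ then forces $\tau_\varphi \in \op{Sym}(S)$.

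Applied to every squeezing $\varphi \in \Ga_0$, the closed subgroup $\op{Sym}(S) \subset \R$ contains $\Spec(\Ga_0)$, and non-arithmeticity forces $\op{Sym}(S) = \R$, contradicting $\op{Sym}(S) \subsetneq \R$. The main obstacle will be the extraction of the convergent subsequence for $\alpha(\varphi^{n_k} \xi)$: because $\alpha$ is only measurable, one must delicately combine Lusin's approximation, the north--south dynamics of $\varphi$ on $\partial^h X$, and the fact that $\xi^+$ lies in the support of $\nu_{x_0}$ (itself a consequence of $\Ga_0$-ergodicity applied to the $\Ga_0$-invariant open set $\partial^h X \setminus \supp \nu_{x_0}$).
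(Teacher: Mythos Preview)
Your skeleton --- reduce ergodicity of the skew product to showing that a certain closed subgroup of $\R$ contains $\tau_\varphi$ for every squeezing $\varphi\in\Ga_0$, then invoke non-arithmeticity --- is exactly the paper's, which packages the reduction as Schmidt's criterion (Proposition~\ref{prop.essanderg}) with the essential subgroup $\ess_\nu(\Ga_0)$ in place of your $\op{Sym}(S)$. All of the content sits in the step you label as the crux, and there your argument has two genuine gaps. First, ``non-singular Poincar\'e recurrence for the $\langle\varphi\rangle$-action'' is simply false: by the very north--south dynamics you invoke, $\varphi^n\xi\to\xi^+$ for $\nu_{x_0}$-a.e.\ $\xi$, so the $\langle\varphi\rangle$-action on $(\partial^h X,\nu_{x_0})$ is totally dissipative and there is no recurrence to appeal to. Second, even granting a subsequence with $\alpha(\varphi^{n_k}\xi)$ converging in $\R/\op{Sym}(S)$, the inference ``convergence of $n_k\tau_\varphi$ modulo $\op{Sym}(S)$ forces $\tau_\varphi\in\op{Sym}(S)$'' is false: take $\op{Sym}(S)=\Z$ and $\tau_\varphi$ irrational, and choose $n_k$ so that $n_k\tau_\varphi\bmod 1\to 0$.

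The honest version of your Lusin idea would be to find, arbitrarily close to $\xi^+$, points $\xi$ with both $\xi$ and $\varphi\xi$ in the Lusin set $K$; continuity of $\alpha|_K$ together with $c(\varphi,\xi)\approx\tau_\varphi$ would then give $\tau_\varphi\equiv 0\pmod{\op{Sym}(S)}$. But this needs $\xi^+$ to be a density point of $K\cap\varphi^{-1}K$ with respect to $\nu_{x_0}$, i.e.\ a Lebesgue--Vitali differentiation theorem for $\nu_{x_0}$ on $\partial^h X$. That is available in $\CAT(-1)$ via shadow balls (this is Roblin's route), and the paper explicitly remarks just before Lemma~\ref{lem:PSEssential} that no such tool exists in the present generality. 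The paper's replacement is geometric rather than measure-theoretic: Lemma~\ref{lem:PSEssential} reuses the covering of $\La_{\varphi,C}(\Ga)$ by the sets $U_C(g;\varphi,n)$ and the maps $F_l=g_{i(l)}\varphi g_{i(l)}^{-1}$ from the proof of Theorem~\ref{thm:trbytrlengthqi} to show directly that for every positive-measure $E$ some conjugate $g\varphi g^{-1}\in\Ga_0$ satisfies $\nu\bigl(E\cap g\varphi g^{-1}E\cap\{|\beta_\cdot(x_0,g\varphi g^{-1}x_0)-\tau_\varphi|<\epsilon\}\bigr)>0$, which is precisely $\tau_\varphi\in\ess_\nu(\Ga_0)$.
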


To prove the ergodicity, we consider the notion of essential subgroups, which was introduced by Schmidt \cite{Schmidt1977cocycles} and studied further by Roblin \cite{Roblin2003ergodicite}.
For a conformal density $\nu = \{ \nu_x \}_{x \in X}$, all measures in the family $\nu$ are in the same measure class. Hence, in discussing positivity of a Borel subset, we simply use the notation $\nu$.

\begin{definition} \label{def:ess}
    Let $\Ga < \Isom(X)$ and let $\nu$ be a conformal density of $\Ga$. We define the subset $\ess_{\nu}(\Ga) \subset \R$ as follows: $a \in \ess_{\nu}(\Ga)$ if for each $\epsilon > 0$ and a Borel subset $E \subset \partial^{h} X$ with $\nu(E) > 0$, there exists $g \in \Ga$ such that
    $$
\nu \left( E \cap g \varphi g^{-1} E \cap \{ \xi \in \partial^{h} X: \abs{\beta_{\xi}(x_0, g \varphi g^{-1} x_0) - a} < \epsilon \}\right) > 0.
    $$
    It is easy to see that $\ess_{\nu}(\Ga)$ is a closed subgroup of $\R$. We call $\ess_{\nu}(\Ga)$ the \emph{essential subgroup} for $\Ga$ and $\nu$.
\end{definition}

The size of the essential subgroup plays a role of criterion for the ergodicity of actions on $\mathcal{H}$. The following was proved in \cite{Schmidt1977cocycles} for abstract measurable dynamical systems, and more direct proof for a particular case of $\op{CAT}(-1)$ spaces was given in \cite{Roblin2003ergodicite}. The same proof works in our setting as well.

\begin{proposition}[{\cite{Schmidt1977cocycles}, \cite[Proposition 2.1]{Roblin2003ergodicite}}] \label{prop.essanderg}
    Let $\Ga < \Isom(X)$ and let $\nu$ be a conformal density of $\Ga$. Then the $\Ga$-action on $(\Hor, \mu_{\nu})$ is ergodic if and only if the $\Ga$-action on $(\partial^{h}X, \nu)$ is ergodic and $\ess_{\nu}(\Ga) = \R$.
\end{proposition}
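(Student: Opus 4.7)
The $\Gamma$-action on $(\mathcal{H}, \mu_{\nu}) = \bigl(\partial^{h}X \times \R,\ e^{\delta_{\Gamma} t}\,d\nu_{x_{0}}(\xi)\,dt\bigr)$ is a skew-product over the base action on $(\partial^{h}X, \nu)$, with $\R$-valued cocycle $c(g, \xi) := \beta_{\xi}(g^{-1}x_{0}, x_{0})$ shifting the fiber coordinate. Unwinding Definition \ref{def:ess} shows that $\ess_{\nu}(\Gamma)$ is precisely the closed subgroup of essential values of $c$ in Schmidt's sense \cite{Schmidt1977cocycles}, so the proposition is a classical ergodicity criterion for skew-products over an ergodic base by an $\R$-valued cocycle. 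My plan is to follow the Schmidt--Roblin scheme: every step uses only the cocycle identity and measure-theoretic disintegration, so it transfers verbatim from the $\CAT(-1)$ treatment in \cite[Proposition 2.1]{Roblin2003ergodicite}.

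The forward direction is direct. If $B \subset \partial^{h}X$ is $\Gamma$-invariant Borel with $\nu(B) \notin \{0, \nu(\partial^{h}X)\}$, then $B \times \R \subset \mathcal{H}$ is $\Gamma$-invariant (the first coordinate transforms independently of the second) and has neither zero nor full $\mu_{\nu}$-measure, contradicting ergodicity on $\mathcal{H}$. Next, if $\ess_{\nu}(\Gamma) \subsetneq \R$, it is either $\{0\}$ or $\alpha \Z$ for some $\alpha > 0$; in either case $c$ is cohomologous to an $\ess_{\nu}(\Gamma)$-valued cocycle via a measurable transfer function $\phi$ (this is the content of the essential-value definition \cite{Schmidt1977cocycles}), and then the pullback of any nontrivial Borel subset of $\R/\ess_{\nu}(\Gamma)$ under $(\xi, t) \mapsto (t - \phi(\xi)) \bmod \ess_{\nu}(\Gamma)$ is $\Gamma$-invariant in $\mathcal{H}$ with neither zero nor full $\mu_{\nu}$-measure, again contradicting ergodicity.

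For the converse, assume base ergodicity and $\ess_{\nu}(\Gamma) = \R$, and let $A \subset \mathcal{H}$ be $\Gamma$-invariant Borel with $\mu_{\nu}(A) > 0$. For $\nu$-a.e.\ $\xi$, set $A_{\xi} := \{t \in \R : (\xi, t) \in A\}$; the $\Gamma$-invariance of $A$ is equivalent to the fiber identity $A_{g \xi} = A_{\xi} + c(g, \xi)$ for every $g \in \Gamma$ and $\nu$-a.e.\ $\xi$. Fubini plus base ergodicity force $\Leb(A_{\xi}) > 0$ for $\nu$-a.e.\ $\xi$. I will then show, for every $a \in \R$, that $A_{\xi}$ is translation-invariant by $a$ up to Lebesgue-null sets. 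Suppose for contradiction that, after localizing to a bounded fiber window $\partial^{h}X \times [-R, R]$ on which $\mu_{\nu}$ is finite, the Borel set $E_{a, \eta} := \{\xi : \Leb(A_{\xi} \bigtriangleup (A_{\xi} - a)) > \eta\}$ has positive $\nu$-measure for some $\eta > 0$. Since $a \in \ess_{\nu}(\Gamma)$, Definition~\ref{def:ess} produces $g \in \Gamma$ carrying a positive-$\nu$-measure piece of $E_{a, \eta}$ into itself with $c(g, \cdot) \approx a$ there; the cocycle identity then gives $A_{g \xi} = A_{\xi} + c(g, \xi) \approx A_{\xi} + a$, contradicting the defining property of $E_{a, \eta}$ by Lebesgue differentiation on the $\R$-fiber. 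Applying this to a countable dense set of $a$'s forces $A_{\xi} \in \{\emptyset, \R\}$ modulo null for $\nu$-a.e.\ $\xi$; positivity of $\mu_{\nu}(A)$ rules out the empty alternative on a $\Gamma$-invariant subset of positive $\nu$-measure, which is $\nu$-conull by base ergodicity. Therefore $A$ is $\mu_{\nu}$-conull.

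The main obstacle is the density-point step in the converse, where one must quantitatively translate ``$a$ is an essential value of $c$'' into ``$A_{\xi}$ is almost $a$-translation-invariant on a positive-$\nu$-measure set of $\xi$.'' The right setup is to truncate to a bounded slab of finite $\mu_{\nu}$-measure and run Lebesgue differentiation fiberwise while exploiting the cocycle identity. This step is classical and carried out in detail in \cite[Proposition 2.1]{Roblin2003ergodicite} and \cite{Schmidt1977cocycles}; since it uses only measurability of $c$ and the cocycle equation, no geometric input beyond the definition of the Busemann cocycle is needed, and the argument applies in our setting without change.
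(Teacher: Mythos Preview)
Your proposal is correct and takes essentially the same approach as the paper: the paper provides no proof of its own for this proposition, simply citing \cite{Schmidt1977cocycles} and \cite[Proposition~2.1]{Roblin2003ergodicite} and remarking that ``the same proof works in our setting as well.'' Your sketch unpacks exactly that Schmidt--Roblin argument---identifying $\ess_{\nu}(\Gamma)$ with the essential range of the Busemann cocycle for the skew-product and running the standard fiberwise density-point argument---so there is nothing to compare beyond noting that you have supplied more detail than the paper chose to.
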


In this perspective, the following is the main step in the proof of Theorem~\ref{thm:ergodicnormal}, which was proved by Roblin \cite{Roblin2003ergodicite} when $X$ is $\CAT(-1)$. The $\CAT(-1)$ property was crucially used in \cite{Roblin2003ergodicite} to have a nice visual metric on the boundary that guarantees Vitali covering relation of a specific form. In our setting, the lack of Gromov hyperbolicity is an obstruction to consider such a visual metric, and hence we present another proof that does not require metrizing the boundary.

\begin{lem}\label{lem:PSEssential}
    Let $\Ga < \Isom(X)$ be a non-elementary subgroup of divergence type  and $\nu$ a $\delta_{\Ga}$-dimensional conformal density of $\Ga$. Let $\varphi \in \Ga$ be a squeezing isometry. Then for each $\epsilon > 0$ and a Borel subset $E \subset \partial^{h} X$ with $\nu(E) > 0$, there exists $g \in \Ga$ such that $$
    \nu \left( E \cap g \varphi g^{-1} E \cap \{ \xi \in \partial^{h} X: \abs{\beta_{\xi}(x_0, g \varphi g^{-1} x_0) - \tau_{\varphi}} < \epsilon \}\right) > 0.
    $$
    In particular, if $\Ga_0 \triangleleft \Ga$ is a normal subgroup, then
    $$
    \Spec (\Ga_0) \subset \ess_{\nu}(\Ga_0).
    $$
\end{lem}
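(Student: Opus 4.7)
The strategy, adapted from Roblin's approach in the $\CAT(-1)$ setting, is to locate a Lebesgue density point $\xi_0 \in E$ and then apply a suitable conjugate $h = g\varphi g^{-1}$ of $\varphi$ that maps a small neighborhood of $\xi_0$ strictly into itself with nearly constant Jacobian governed by conformality. By Proposition \ref{prop:pattersonSqueeze}, $\nu$ is concentrated on $\La_{\varphi, C(\varphi)}(\Ga)$, so we may assume $E \subset \La_{\varphi, C(\varphi)}(\Ga)$. By Lemma \ref{lem:nbdBasis}, the family $\{U_C(g; \varphi, n) : g \in \Ga, n \in \N\}$ is a neighborhood basis for the topology of this squeezed limit set. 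Combined with Sullivan-type shadow estimates $\nu(U_C(g; \varphi, n)) \asymp e^{-\delta_\Ga d(x_0, g\varphi^n x_0)}$ from the generalized Patterson--Sullivan theory of Coulon and Yang, a Vitali-type covering argument yields, for any $\delta > 0$ and $N_0 \in \N$, some $\xi_0 \in E$, $g \in \Ga$, and $n \geq N_0$ with $\xi_0 \in U := U_C(g; \varphi, n)$ and
\begin{equation*}
\nu(E \cap U) \geq (1-\delta)\nu(U).
\end{equation*}

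Set $h := g\varphi g^{-1} \in \Ga$. Revisiting the proof of Theorem \ref{thm:trbytrlengthqi} around Equations \eqref{eqn:claimforFl} and \eqref{eqn:trqiclaim2goal2}, and taking $n$ large in terms of $\epsilon$, two things hold: (i) $h(U) \subset U$ (after a mild adjustment of the contracting constant), and (ii) $|\beta_\eta(h^{-1}x_0, x_0) - \tau_\varphi| < \epsilon/2$ for every $\eta \in U$. Property (i) says that $h$ contracts $U$ toward its attracting fixed point sitting inside $U$, while (ii) says that the Busemann cocycle displacement of $h^{-1}x_0$ from $x_0$, as measured along any horofunction in $U$, is nearly $\tau_\varphi$.

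By equivariance and conformality, one has $\nu(hA) = \int_A e^{-\delta_\Ga \beta_\eta(h^{-1}x_0, x_0)}\, d\nu(\eta)$ for every Borel $A$. Applied to $A = E \cap U$ together with (ii),
\begin{equation*}
\nu\bigl(h(E \cap U)\bigr) \geq e^{-\delta_\Ga(\tau_\varphi + \epsilon/2)}(1-\delta)\nu(U).
\end{equation*}
Since $h(E \cap U) \subset U$ by (i) and $\nu(U \setminus E) \leq \delta \nu(U)$,
\begin{equation*}
\nu\bigl(E \cap h(E \cap U)\bigr) \geq \bigl(e^{-\delta_\Ga(\tau_\varphi + \epsilon/2)}(1-\delta) - \delta\bigr)\nu(U),
\end{equation*}
which is strictly positive once $\delta$ is chosen small in terms of $\tau_\varphi$ and $\epsilon$. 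For any $\xi = h\eta$ in this intersection, the cocycle identity gives $\beta_\xi(x_0, hx_0) = \beta_\eta(h^{-1}x_0, x_0)$, which by (ii) is within $\epsilon/2$ of $\tau_\varphi$, verifying the cocycle condition in the statement. This establishes the main claim.

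The ``in particular'' assertion is then immediate: for a squeezing $\varphi \in \Ga_0$, the element $h = g\varphi g^{-1}$ produced above lies in $\Ga_0$ by normality, so it serves as the witness $g \in \Ga_0$ realizing $\tau_\varphi \in \ess_\nu(\Ga_0)$. The main technical obstacle is the Vitali-type covering step underlying the density point: Roblin's argument in $\CAT(-1)$ spaces exploits the visual metric on the boundary, whereas our setting demands working purely with the squeezing geometry. The needed doubling-type comparison among the shadow-like sets $U_C(g; \varphi, n)$, which has been set up in \cite{coulon2024patterson-sullivan} and \cite{yang2024conformal}, is what carries this step through.
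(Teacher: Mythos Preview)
Your approach follows Roblin's density-point strategy and can be made to work, but the paper deliberately takes a different route precisely to avoid the step you yourself flag as ``the main technical obstacle''. Instead of locating a single density set, the paper argues by contradiction and exhaustion: assuming $\nu\bigl(K \cap g\varphi g^{-1} K \cap \{|\beta_\xi(x_0, g\varphi g^{-1}x_0) - \tau_\varphi| < \epsilon\}\bigr) = 0$ for every $g \in \Ga$ and a fixed compact $K$ of positive measure, it shows that for any open $O \supset K$ the set $E(O) := \bigcup_l F_l(C_l \cap K) \subset O$ satisfies $\nu(K \cap E(O)) = 0$ and $\nu(E(O)) \ge e^{-2\delta_\Ga \tau_\varphi}\nu(K)$, where $\{C_l\}$ and $F_l = g_{i(l)}\varphi g_{i(l)}^{-1}$ are exactly the cover and maps from the proof of Theorem \ref{thm:trbytrlengthqi}. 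The first bound is the hypothesis combined with \eqref{eqn:FlClisinBuse}; the second uses only conformality, the Busemann estimate \eqref{eqn:trqiclaim2goal2}, and the disjointness of the $F_l(C_l)$'s from \eqref{eqn:claimforFl}. Iterating over a shrinking sequence of neighborhoods $O$ then manufactures infinitely many pairwise disjoint compacta in $\partial^h X$, each of measure at least $\tfrac12 e^{-2\delta_\Ga \tau_\varphi}\nu(K)$, contradicting finiteness of $\nu$.

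The contrast is that the paper never invokes the shadow estimate $\nu(U_C(g;\varphi,n)) \asymp e^{-\delta_\Ga d(x_0, g\varphi^n x_0)}$ you rely on. Your route needs both this estimate and the engulfing property $C_l \subset U_C(g_{i(l)}; \varphi, n_{i(l)}-1)$ to upgrade the Vitali selection to a density statement; the paper uses only the engulfing (already proved as \eqref{eqn:trqiclaimClinU}) and sidesteps any shadow-type upper bound, which is consistent with its stated aim of not metrizing $\partial^h X$. What your approach buys is a direct, constructive witness $g$ once the Coulon--Yang shadow machinery is granted; what the paper's buys is a proof that is fully self-contained on top of the machinery already developed for Theorem \ref{thm:trbytrlengthqi}, with no additional Patterson--Sullivan input beyond conformality.
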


\begin{proof}
    Let $C = C(\varphi) > 0$ be as in Lemma \ref{lem:BGIPHeredi}. By Proposition \ref{prop:pattersonSqueeze}, $\nu$ is supported on $\La_{\varphi, C}(\Ga)$. Together with the inner regularity of $\nu$, it suffices to consider compact subsets of $\La_{\varphi, C}(\Ga)$.

    Let $K \subset \La_{\varphi, C}(\Ga)$ be a compact subset and let $\epsilon$ be a positive number smaller than $\tau_{\varphi}$. Suppose that for each $g \in \Ga$,
    $$
 \nu \left( K \cap g \varphi g^{-1} K \cap \{ \xi \in \partial^{h} X: \abs{\beta_{\xi}(x_0, g \varphi g^{-1} x_0) - \tau_{\varphi}} < \epsilon \}\right)  = 0.
    $$
    Our goal is to show $\nu(K) = 0$.

    To do this, let $O \subset \partial^{h} X$ be an open subset containing $K$. We will then construct a Borel subset $E(O) \subset O$ such that 
    \begin{equation} \label{eqn:fullessentialproof}
    \nu(K \cap E(O)) = 0 \quad \text{and} \quad \nu(E(O)) \ge  e^{-2 \delta_{\Ga} \tau_{\varphi}} \cdot \nu(K).
    \end{equation}
Before we proceed, let us see how this leads to our goal. Suppose that we have constructed $E(O)$. By $\nu( E(O) \smallsetminus K ) \ge e^{-2 \delta_{\Ga} \tau_{\varphi}} \cdot \nu(K)$ and the inner regularity, there exists a compact subset $K(O) \subset E(O) \smallsetminus K$ such that 
$$
\nu(K(O)) \ge 0.5 e^{-2\delta_{\Gamma} \tau_{\varphi}} \cdot \nu(K).
$$
In particular, $K(O)$ is disjoint from both $K$ and $\partial^{h}X \smallsetminus O$. Now we inductively define
$$
K_1 := K \left(\partial^{h} X \right) \quad \text{and} \quad K_i := K \left( \partial^{h} X \smallsetminus ( K_1 \cup \cdots \cup K_{i-1}) \right) \text{ for } i \in \N.
$$
Then $K_i$'s are pairwise disjoint subsets with $\nu(K_i) \ge e^{-2 \delta_{\Ga} \tau_{\varphi}} \cdot \nu(K)$ for all $i \in \N$. Since $\nu$ is finite, we must have $\nu(K) = 0$.

Hence, it remains to find a set $E(O) \subset O$ satisfying Equation \eqref{eqn:fullessentialproof}. We revisit the proof of Theorem \ref{thm:trbytrlengthqi}, considering the cover $\mathcal{U}$ and its subcollection  $\mathcal{V}$ for $K$ and $O$. Especially, for $l \in \N$, we consider $C_l \subset O$ in Equation \eqref{eqn:defofCl} and the restriction $F_l = g_{i(l)} \varphi g_{i(l)}^{-1} : C_l \to \partial^{h} X$ of the map in Equation \eqref{eqn:defofFl}, where $g_{i(l)} \in \Ga$ is given there. 

We now see that
$$
E(O) := \bigcup_{l \in \N} F_l(C_l \cap K)
$$
satisfies Equation \eqref{eqn:fullessentialproof}. First, note that $\bigcup_{l \in \N} F_l(C_l \cap K) \subset \bigcup_{l \in \N} C_l \subset O$ by Equation \eqref{eqn:claimforFl}.

It follows from  \eqref{eqn:trqiclaim2goal2} that for each $l \in \N$,
\begin{equation} \label{eqn:FlClisinBuse}
F_l(C_l) \subset \left\{ \xi \in \partial^{h}X : \abs{\beta_{\xi}(x_0, g_{i(l)} \varphi g_{i(l)}^{-1} x_0) - \tau_{\varphi}} < \epsilon \right\}.
\end{equation}
We then have
$$\begin{aligned}
 K & \cap F_l(C_l \cap K)  \\
& \subset K \cap g_{i(l)} \varphi g_{i(l)}^{-1} K \cap \left\{ \xi \in \partial^{h}X : \abs{\beta_{\xi}(x_0, g_{i(l)} \varphi g_{i(l)}^{-1} x_0) - \tau_{\varphi}} < \epsilon \right\}
\end{aligned}
$$
and hence our hypothesis on $K$ implies
$
\nu( K \cap F_l (C_l \cap K) ) = 0.
$
Therefore, 
$$
\nu \left(  K \cap \bigcup_{l \in \N} F_l(C_l \cap K)  \right)= 0,
$$
showing the first claim in Equation \eqref{eqn:fullessentialproof}.

Now it remains to estimate $\nu \left( \bigcup_{l \in \N} F_l(C_l \cap K) \right)$. By Equation \eqref{eqn:FlClisinBuse} and $\epsilon < \tau_{\varphi}$, we have for each $l \in \N$ that
$$
\nu(F_l (C_l \cap K)) = \int_{C_l \cap K} e^{-\delta_{\Ga} \beta_{\xi}( g_{i(l)} \varphi^{-1} g_{i(l)}^{-1} x_0, x_0)} d \nu(\xi) \ge  e^{-2 \delta_{\Ga} \tau_{\varphi}} \nu(C_l \cap K).
$$
By Equation \eqref{eqn:claimforFl}, we also have that $F_l(C_l \cap K)$'s are pairwise disjoint. Therefore,
$$\begin{aligned}
\nu \left( \bigcup_{l \in \N} F_l(C_l \cap K) \right) & = \sum_{l \in \N} \nu (F_l(C_l \cap K))  \\
& \ge e^{-2 \delta_{\Ga} \tau_{\varphi}} \sum_{l \in \N} \nu(C_l \cap K) \ge e^{-2 \delta_{\Ga} \tau_{\varphi}} \cdot \nu \left( \bigcup_{l \in \N} (C_l \cap K) \right) 
\end{aligned}
$$
Since $K \subset \bigcup_{l \in \N} C_l$ as in Equation \eqref{eqn:defofCl}, this implies
$$
\nu \left( \bigcup_{l \in \N} F_l(C_l \cap K) \right) \ge  e^{-2 \delta_{\Ga} \tau_{\varphi}} \cdot \nu(K).
$$
Therefore, the second claim in Equation \eqref{eqn:fullessentialproof} follows. \end{proof}

\begin{proof}[Proof of Theorem \ref{thm:ergodicnormal}]
    Let $\nu$ be the $\delta_{\Ga}$-dimensional conformal density of $\Ga$.
    By Lemma \ref{lem:PSEssential}, $\Spec (\Ga_0) \subset \ess_{\nu}(\Ga_0)$. Since $\Spec (\Ga_0)$ is non-arithmetic and $\ess_{\nu}(\Ga_0)$ is a closed subgroup of $\R$, we have $\ess_{\nu}(\Ga_0) = \R$. By the assumption that the $\Ga_0$-action on $(\mathcal{H}, \nu)$ is ergodic, it follows from Proposition \ref{prop.essanderg} that the $\Ga_0$-action on $(\mathcal{H}, \mu_{\nu})$ is ergodic. By definition, $\mu_{\nu} = \mu_{\Ga}$, and hence this completes the proof.
\end{proof}

\section{Subgroups of mapping classs groups and measure~classification} \label{sec:mcgsubgroup}

In the rest of this paper, let $S$ be a connected orientable surface of genus $g$ and with $p$ punctures with $3g - 3 + p \ge 1$. We apply results in previous sections to the case that $X$ is the Teichm\"uller space $\T = \T(S)$. In this section, we deduce Theorem \ref{thm:main1}, Theorem \ref{thm:main1Conv}, and Theorem \ref{thm:mainCC}.

\subsection{Non-elementary subgroups of the mapping class group} \label{subsection:subgpMod} 

For $\Ga < \Mod(S)$, $\Ga$ is non-elementary if and only if $\Ga$ contains two pseudo-Anosov mapping classes with disjoint sets of fixed points in the Thurston boundary $\PML$. Since the axis of a pseudo-Anosov mapping class is squeezing  as in Proposition \ref{prop.pAsqueezing}, a non-elementary subgroup $\Ga < \Mod(S)$ is a non-elementary subgroup $\Ga < \Isom(\T)$ with a squeezing isometry in the sense of Definition \ref{def:noneltsubgp}.

In fact, it follows from the Nielsen--Thurston classification that the class of pseudo-Anosov mapping calsses are precisely the class of squeezing isometries in $\Mod(S)$. Hence, the following gives the \emph{non-arithmeticity} of squeezing spectra:
\begin{theorem}[Non-arithmeticity, {\cite[Theorem 4.1]{gekhtman2023dynamics}}]\label{thm:nonarith} 

Let $\Gamma < \Mod(S)$ be a non-elementary subgroup. Then \[
\operatorname{Spec}_{pA}(\Gamma) := \{ \textrm{$d_{\T}$-translation length of $\varphi$}: \varphi \in \Gamma, \varphi \text{ is pseudo-Anosov}\}
\]
generates a dense additive subgroup of $\mathbb{R}$.
\end{theorem}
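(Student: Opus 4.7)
The plan is to reduce non-arithmeticity to the existence of two pseudo-Anosovs in $\Gamma$ whose translation lengths have irrational ratio, then produce such a pair via a ping-pong construction combined with the squeezing property of pseudo-Anosov axes (Proposition \ref{prop.pAsqueezing}). If $\tau_\varphi / \tau_\psi \notin \mathbb{Q}$ for some pseudo-Anosovs $\varphi, \psi \in \Gamma$, then $\mathbb{Z}\tau_\varphi + \mathbb{Z}\tau_\psi$ is already dense in $\mathbb{R}$.

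By Lemma \ref{lem:indep} and non-elementarity, I would first fix two independent pseudo-Anosovs $\varphi_1, \varphi_2 \in \Gamma$. The extension lemma (Lemma \ref{lem:extension}), combined with a standard ping-pong argument in the Thurston compactification, implies that for all sufficiently large $n, m \in \mathbb{N}$ the element $g_{n,m} := \varphi_1^n \varphi_2^m$ is pseudo-Anosov; its axis is forced by alignment (Lemma \ref{lem:BGIPHeredi}) to fellow-travel a segment of the axis of $\varphi_1$ for approximately $n\tau_{\varphi_1}$ units, then a segment of the axis of $\varphi_2$ for approximately $m\tau_{\varphi_2}$ units.

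Using Corollary \ref{cor:BGIPFellow} and the squeezing property, I would establish the expansion $\tau(g_{n,m}) = n\tau_{\varphi_1} + m\tau_{\varphi_2} + c(n,m)$, with the correction term $c(n,m)$ converging to some limit $c_\infty \in \mathbb{R}$ as $n, m \to \infty$. The convergence---rather than mere boundedness, which would follow from the contracting property alone---is the analytic crux of the argument: the squeezing property guarantees that two geodesics with aligned projections onto a long axis segment are close in the ambient metric, forcing $c(n,m)$ to converge rather than oscillate. Suppose then, for contradiction, that $\operatorname{Spec}_{pA}(\Gamma) \subset \alpha\mathbb{Z}$ for some $\alpha > 0$. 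Then $\tau_{\varphi_1}, \tau_{\varphi_2} \in \alpha\mathbb{Z}$ and each $\tau(g_{n,m}) \in \alpha\mathbb{Z}$, so $c(n,m) \in \alpha\mathbb{Z}$ for all large $n, m$; discreteness of $\alpha\mathbb{Z}$ together with $c(n,m) \to c_\infty$ then forces $c(n,m) = c_\infty$ eventually. I would derive the final contradiction by introducing a third pseudo-Anosov $\varphi_3 \in \Gamma$ independent from $\varphi_1$ and $\varphi_2$ (via Lemma \ref{lem:indep}) and analyzing composite words $\varphi_1^{n_1} \varphi_2^{n_2} \varphi_3^{n_3}$: grouping the factors in different ways and applying the asymptotic expansion to each grouping produces relations among the eventually-constant correction terms that cannot all hold modulo $\alpha\mathbb{Z}$.

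The main obstacle lies in the convergence assertion: promoting the bounded $O(1)$ error available from the contracting property alone to a genuinely convergent correction requires a careful geometric analysis using the squeezing property, paralleling analogous estimates for $\CAT(-1)$ spaces. A secondary obstacle is making the final combinatorial contradiction clean; should the direct approach prove intractable, alternatives include using Bestvina--Fujiwara quasimorphisms associated to pseudo-Anosov axes---whose defects encode arithmeticity in a manageable way---or appealing to multiplicative-independence properties of stretch factors of generic pseudo-Anosovs.
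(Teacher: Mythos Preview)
The paper does not prove this statement; it is quoted without proof from Gekhtman--Ma \cite[Theorem 4.1]{gekhtman2023dynamics} and used as a black box. There is therefore no in-paper argument to compare your proposal against.

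On the merits of your proposal: the asymptotic expansion $\tau(\varphi_1^n\varphi_2^m)=n\tau_{\varphi_1}+m\tau_{\varphi_2}+c(n,m)$ with $c(n,m)\to c_\infty$ is the right engine, and the squeezing property (Proposition~\ref{prop.pAsqueezing}) is exactly what upgrades the bounded error from contraction to a genuine limit---this mirrors the classical rank-one argument of Dal'bo. The gap is in your final step. Introducing a third independent pseudo-Anosov and comparing different parenthesizations of $\varphi_1^{n_1}\varphi_2^{n_2}\varphi_3^{n_3}$ will only yield additive relations among the various limiting constants $c_\infty(\varphi_i,\varphi_j)$; these relations are perfectly consistent with all of them lying in $\alpha\mathbb{Z}$, so no contradiction emerges from regrouping alone. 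What actually closes the argument in the classical setting is that $c_\infty(\varphi_1,\varphi_2)$ is (up to an additive constant) a cross-ratio of the four fixed points $\varphi_1^\pm,\varphi_2^\pm$, and by conjugating $\varphi_2$ by elements of $\Gamma$ one makes these fixed points vary over a perfect set, forcing the cross-ratio to take infinitely many values accumulating somewhere---which is incompatible with landing in the discrete set $\alpha\mathbb{Z}$. Your sketch does not contain this variation mechanism, and the fallbacks you mention (quasimorphism defects, multiplicative independence of dilatations) are each theorems of comparable depth that would need their own proofs.
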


The notion of divergence-type is defined similarly.
Fixing a basepoint $x_0 \in \T$, the \emph{Poincar{\'e} series} of a non-elementary subgroup $\Gamma < \Mod(S)$ is
\[
\mathcal{P}_{\Gamma}(s) := \sum_{g \in \Gamma} e^{-s d_{\T}(x_0, gx_0)}.
\]
Since  $\Mod(S)$ has exponentially bounded growth \cite[Theorem 1.3.2]{kaimanovich1996poisson} and contains a free subgroup \cite[Theorem B]{mccarthy1985a-tits-alternative}, there exists $0 < \delta_{\Gamma} < + \infty$ such that $\mathcal{P}_{\Gamma}(s)$ diverges for $s < \delta_{\Gamma}$ and converges for $s > \delta_{\Gamma}$. 

We call $\delta_{\Gamma}$ the \emph{critical exponent} of $\Gamma$. If $\mathcal{P}_{\Ga}(\delta_{\Ga}) = + \infty$, we say that $\Ga$ is of \emph{divergence type}. Otherwise, $\Ga$ is of \emph{convergence type}.

\subsection{Ergodicity and Unique ergodicity} \label{subsec:theoremformcg}

We first deduce Theorem \ref{thm:main1} and Theorem \ref{thm:main1Conv} from our theory in Section \ref{section:prelim}, Section \ref{section:ps}, Section \ref{sec:UE}, and Section \ref{sec:ergodicity}, by  setting $(X, d) = (\T, d_{\T})$. We keep fixing a basepoint $x_{0} \in \T$.

Given a non-elementary subgroup $\Ga < \Mod(S)$, recall the notion of \emph{recurrence locus} for $\Ga$ from the introduction:
$$
\mathcal{R}_{\Ga} := \left\{ \xi \in \ML :
\begin{matrix}
    \text{Teichm\"uller geodesic ray given by } q_{\xi} \in \mathcal{Q}(S, x_0) \\
    \text{recurs to a compact subset in } \Ga \ba \T
\end{matrix} \right\}
$$
where $q_\xi \in \mathcal{Q}(S, x_0)$ is the holomorphic quadratic differential corresponding to $\xi \in \ML$, given by the Hubbard--Masur theorem.

In terms of the Hubbard--Masur coordinates we introduced in Section \ref{subsec:HM}, it follows from the Masur criterion \cite[Theorem 1.1]{masur1992hausdorff} that
$$
\HM(\mathcal{R}_{\Ga}) \subset \UE \times \R.
$$

Recall that $\UE$ sits in both $\PML$ and $\partial^{GM} \T$, with the same topology \cite[Theorem 2]{miyachi2013teichmuller}. Hence, $\UE \times \R$ sits in both $\PML \times \R \simeq \ML$ via the Hubbard--Masur coordinates and $\partial^{GM} \T \times \mathbb{R} \simeq \partial^{h} \T \times \mathbb{R}$ via the coordinates in Section \ref{subsection:liuSuWalsh}. That means, the identification \[
\iota : \UE \times \R \subset \ML \rightarrow \UE \times \R \subset \partial^{h} \T \times \mathbb{R} = \mathcal{H}
\]
is a homeomorphism. Hence, a Borel measure $\mu$ on $\ML$ supported on $\UE \times \R$ can be viewed as Borel measures on $\partial^{h} \T \times \mathbb{R}$ supported on $\UE \times \R$. Note that $\iota$ preserves the $\R$-coordinate. Combining altogether, we have that
\begin{equation} \label{eqn:recurrencelocusembedding}
(\iota \circ \HM) (\mathcal{R}_{\Ga}) = \La_{c}(\Ga) \times \R \subset \mathcal{H},
\end{equation}

Furthermore, if $\mu$ is locally finite on $\ML$, then $\iota_* \mu$ is also locally finite on $\partial^{h} \T \times \mathbb{R}$. Indeed, for a compact subset $K \subset \partial^{h} \T \times \mathbb{R}$, the set $\{t : (\xi, t) \in K \cap (\UE \times \mathbb{R})\}$ is bounded. This implies that the 
$\iota^{-1}(K \cap (\UE \times \mathbb{R}))$ is precompact, as it sits in $\PML \times [-R, R]$ for some large $R > 0$. If $\mu(\iota^{-1}(K\cap (\UE \times \mathbb{R})))$ is assumed to be finite, then $\iota_* \mu (K)$ is finite as well. In conclusion, locally finite measures on $\ML$ supported on $\UE \times \R$ are also locally finite when viewed on $\partial^{h} \T \times \mathbb{R}$. Since $\partial^{h} \T \times \mathbb{R}$ is Polish, such measures are Radon.

Hence, if $\Ga$ is of divergence type, then we can pullback the measure $\mu_{\Ga}$ on $\La_{c}(\Ga) \times \R$ defined in Definition \ref{def:candidateergodicmeasure}, via the embedding $\iota \circ \HM$. This gives the measure on $\mathcal{R}_{\Ga}$ which is the same as the one constructed in
Section \ref{subsec:constructmeasureintro}, also denoted by $\mu_{\Ga}$ abusing the notation.

Therefore, together with the non-arithmeticity in Theorem \ref{thm:nonarith}, the ergodicity (Theorem \ref{thm:main1}) and the unique ergodicity (Theorem \ref{thm:main1Conv}) follow from Theorem  \ref{thm:ergodicdiv} and
Theorem \ref{thm:uniqueRadon} respectively. Note that as in Theorem \ref{thm:ergodicnormal}, an analogous ergodicity theorem for normal subgroups can also be deduced.

\subsection{Convex cocompact subgroups of the mapping class group}

In \cite{farb2002convex}, B. Farb and L. Mosher introduced the following notion:

\begin{definition}
    A finitely generated subgroup $\Ga < \Mod(S)$ is called \emph{convex cocompact} if $\Ga$ has a quasi-convex orbit in $\T$.
\end{definition}

Some important features of convex cocompact subgroups are as follows:

\begin{theorem}[{\cite[Theorem 1.1]{farb2002convex}}] \label{thm:FMCC}
Let $\Ga < \Mod(S)$ be a convex cocompact subgroup. Then 
\begin{enumerate}
    \item $\Ga$ is a hyperbolic group,
    \item there exists a $\Ga$-equivariant embedding
    $$\partial \Ga \hookrightarrow \UE \subset \PML$$
    where $\partial \Ga$ denotes the Gromov boundary of $\Ga$, and
    \item denoting by $\La(\Ga) \subset \UE$ the image of the embedding in (2), $\Ga$ acts cocompactly on its weak-hull $\op{WH}(\Ga) \subset \T$, the union of all bi-infinite Teichm\"uller geodesics with endpoints in $\La(\Ga)$.
\end{enumerate}
\end{theorem}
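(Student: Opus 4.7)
First, I would fix a basepoint $x_{0} \in \T$ and write $Y := \Ga \cdot x_{0}$. Since $\Ga$ is finitely generated and acts properly discontinuously on $\T$, a Schwarz--Milnor argument gives a $\Ga$-equivariant quasi-isometric embedding of a Cayley graph of $\Ga$ onto $Y$. The hypothesis of $Q$-quasi-convexity of $Y$ means that every geodesic $[gx_{0}, hx_{0}] \subset \T$ stays within the $Q$-neighborhood of $Y$. Because $Y/\Ga$ is compact, such geodesics project into a compact subset of $\mathcal{M} = \Mod(S) \backslash \T$. Hence Minsky's contraction theorem \cite{minsky1996quasi-projections} applies uniformly: each $[gx_{0}, hx_{0}]$ is $D$-contracting for some $D = D(\Ga)$. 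Thin-triangle arguments in the style of Lemma \ref{lem:BGIPFellow} then transfer from $Y$ to the Cayley graph, establishing that $\Ga$ is Gromov hyperbolic.

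Second, I would construct the equivariant embedding $\partial \Ga \hookrightarrow \UE$. For each $\zeta \in \partial \Ga$, pick a geodesic ray $(g_{n})_{n \ge 0}$ in the Cayley graph converging to $\zeta$; its image $(g_{n} x_{0})$ is a quasi-geodesic in $\T$, and by quasi-convexity combined with uniform contraction it fellow-travels some Teichm\"uller ray $\gamma_{\zeta}$ emanating near $x_{0}$. Since the $Q$-neighborhood of $Y$ projects into a compact subset of $\mathcal{M}$, the Masur criterion \cite{masur1992hausdorff} produces a unique limit $[\xi_{\zeta}] \in \UE$. Independence of the choice of ray and continuity of $\zeta \mapsto [\xi_{\zeta}]$ follow from the stability of quasi-geodesics under uniform contraction, promoted to the compactification via the homeomorphism \eqref{eqn:UEextension}. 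Equivariance is immediate from the construction. For injectivity, two distinct points of $\partial \Ga$ are represented by rays that diverge at linear rate in $Y$; by the contraction property, the corresponding Teichm\"uller rays from $x_{0}$ have bounded nearest-point projections onto each other, so they limit to distinct uniquely ergodic laminations (uniqueness of the Teichm\"uller ray from $x_{0}$ converging to a given uniquely ergodic limit).

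Third, for cocompactness of the $\Ga$-action on $\op{WH}(\Ga)$, consider any bi-infinite Teichm\"uller geodesic $\eta \subset \T$ with endpoints in $\La(\Ga)$. Writing each endpoint as the limit of $(g_{n} x_{0})$, one approximates $\eta$ by segments $[g_{n} x_{0}, h_{m} x_{0}]$, each of which lies in $\mathcal{N}_{Q}(Y)$ by quasi-convexity. Passing to an Arzel\`a--Ascoli limit we find $\eta \subset \mathcal{N}_{Q}(Y)$, hence $\op{WH}(\Ga) \subset \mathcal{N}_{Q}(Y)$. Since the $Q$-neighborhood of a $\Ga$-cocompact subset of the proper space $\T$ is again $\Ga$-cocompact, the quotient $\Ga \backslash \op{WH}(\Ga)$ is compact.

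The main obstacle is the first step. Unlike the $\CAT(-1)$ or Gromov hyperbolic ambient setting, one cannot invoke the principle that quasi-convex subsets of hyperbolic spaces are hyperbolic, because $\T$ itself is not Gromov hyperbolic. One must leverage the special feature that on a $\Ga$-cocompact subset of $\T$, all connecting geodesics remain in a compact part of moduli space, where Minsky's contraction phenomenon prevails uniformly. A closely related subtlety is ruling out infinite-order reducible elements in $\Ga$: the axis (or quasi-axis) of such an element necessarily traverses thin parts repeatedly, which would violate the uniform contracting behavior inherited from quasi-convexity of $Y$. Once this rigidity is in place, the rest of the proof reduces to routine fellow-traveling and limit arguments within the framework already developed in Section \ref{section:prelim}.
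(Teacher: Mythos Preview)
The paper does not prove this theorem; it is quoted verbatim from Farb--Mosher \cite[Theorem 1.1]{farb2002convex} and used as a black box throughout Section \ref{sec:mcgsubgroup} and Section \ref{sec:orbitclosure}. There is therefore no proof in the paper to compare your proposal against.

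That said, your outline is essentially the Farb--Mosher strategy and is sound: quasi-convexity forces orbit geodesics into a fixed thick part of moduli space, Minsky's contraction theorem then gives uniform contraction, and this yields hyperbolicity of $\Ga$ via thin-triangle estimates; the Masur criterion identifies the Gromov boundary with a subset of $\UE$; and the weak hull lies in a bounded neighborhood of the orbit by a limiting argument. One small point worth tightening in your third step: to pass from $[g_n x_0, h_m x_0] \subset \mathcal{N}_Q(Y)$ to $\eta \subset \mathcal{N}_Q(Y)$ you need that these segments actually subconverge to $\eta$, which is not automatic in $\T$ but follows here because the endpoints are uniquely ergodic and the segments lie in a fixed thick part (cf.\ the argument in Proposition \ref{prop.pAsqueezing} via \cite[Lemma 1.4.2]{kaimanovich1996poisson}).
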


We now discuss the divergence-type of non-elementary convex cocompact subgroups. 
Theorem \ref{thm:FMCC} asserts that every non-elementary  convex cocompact subgroup $\Gamma < \Mod(S)$ is Gromov hyperbolic when endowed with either a word metric or the Teichm{\"u}ller metric on its orbit. By \cite[Th{\'e}or{\`e}me 7.2]{coornaert1993mesures}, $\Gamma$ has \emph{purely exponential growth}, i.e., there exists $C > 1$ (depending on the choice of the basepoint $x_{0}$) such that \[
\frac{1}{C} e^{\delta_{\Gamma} r} \le \# \{ g \in \Gamma : d_{\T}(x_0, gx_0 ) < r \} \le C e^{\delta_{\Gamma} r} \quad \text{for all } r > C.
\]
This implies that $\Gamma$ is of divergence type. In fact, Gekhtman studied in \cite{gekhtman2013dynamics} the finiteness and the mixing property of the Bowen--Margulis--Sullivan measure associated with $\Gamma$. As a result, he obtained that the quantity $e^{-\delta_{\Gamma} }\# \{ g \in \Gamma : d_{\T}(x_0, gx_0 ) < r \}$ converges to a finite limit as $r$ tends to infinity.

In addition,  it follows from the work of McCarthy--Papadopoulos \cite{mccarthy1989dynamics} that the set $\La(\Ga) \subset \PML$ is the unique minimal subset of $\Ga$, and moreover the $\Ga$-action on the complement $\PML \smallsetminus \La(\Ga)$ is properly discontinuous. In this regard, the set $\La(\Ga) \subset \PML$ can be viewed as the \emph{limit set} of $\Ga$. 

Furthermore, the cocompactness in Theorem \ref{thm:FMCC}(3) and the embedding in Equation \eqref{eqn:recurrencelocusembedding} give a characterization of the recurrence locus for $\Ga$:
\begin{equation} \label{eqn:recurrenceCC}
\mathcal{R}_{\Ga} = \{ \xi \in \ML : [\xi] \in \La(\Ga) \}.
\end{equation}
Hence, the proper discontinuity of the $\Ga$-aciton on $\PML \smallsetminus \La(\Ga)$ implies that the $\Ga$-action on $\ML \smallsetminus \mathcal{R}_{\Ga}$ is properly discontinuous as well. Thus any $\Ga$-invariant ergodic measure on $\ML \smallsetminus \mathcal{R}_{\Ga}$ is a counting measure on a single $\Ga$-orbit, up to a constant multiple.
Since $\Gamma$ is of divergence type, Theorem \ref{thm:mainCC} follows from Theorem \ref{thm:main1Conv}.

\section{Classification of orbit closures} \label{sec:orbitclosure}

In this section, we prove the following classification of orbit closures:

\begin{theorem} \label{thm:orbitclosure}
Let $\Ga < \Mod(S)$ be a non-elementary convex cocompact subgroup. Then for each $\xi \in \ML$, either
$$ 
\Ga \cdot \xi \text{ is discrete} \quad \text{or} \quad \ov{\Ga \cdot \xi} = \mathcal{R}_{\Ga}.
$$
More precisely,
\begin{itemize}
    \item if $\xi \notin \mathcal{R}_{\Ga}$, then $\Ga \cdot \xi$ is discrete.
    \item if $\xi \in \mathcal{R}_{\Ga}$, then $\ov{\Ga \cdot \xi} = \mathcal{R}_{\Ga}$.
\end{itemize}

\end{theorem}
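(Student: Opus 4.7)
The plan leverages Equation~\eqref{eqn:recurrenceCC}, which identifies $\mathcal{R}_{\Ga}$ with $\{\xi \in \ML : [\xi] \in \La(\Ga)\}$ and makes $\mathcal{R}_{\Ga}$ a closed $\Ga$-invariant subset of $\ML$. The inclusion $\overline{\Ga \cdot \xi} \subseteq \mathcal{R}_{\Ga}$ when $\xi \in \mathcal{R}_{\Ga}$ is therefore automatic, and the theorem splits into proving discreteness for the first bullet and the reverse inclusion $\mathcal{R}_{\Ga} \subseteq \overline{\Ga \cdot \xi}$ for the second.

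For the first bullet, Section~\ref{sec:mcgsubgroup} already records, via McCarthy--Papadopoulos, the proper discontinuity of $\Ga$ on $\PML \setminus \La(\Ga)$, which rules out accumulation of $\Ga \cdot \xi$ inside $\ML \setminus \mathcal{R}_{\Ga}$. It remains to rule out accumulation at $\mathcal{R}_{\Ga}$. Suppose toward contradiction that $g_n \xi \to \eta \in \mathcal{R}_{\Ga}$ for pairwise distinct $g_n$; then $d_{\T}(x_0, g_n^{-1} x_0) \to +\infty$, and by quasi-convexity of the $\Ga$-orbit (Theorem~\ref{thm:FMCC}) a subsequence of $g_n^{-1} x_0$ converges in the Gardiner--Masur compactification to some $[\zeta] \in \La(\Ga) \subset \UE$. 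The Miyachi/Walsh extremal-length asymptotics then give $e^{-2 d_{\T}(x_0, g_n^{-1} x_0)} \Ext_{g_n^{-1} x_0}(\xi) \to C \cdot i(\xi, \zeta)^{2}$ for some $C > 0$; since $\zeta$ is a filling uniquely ergodic lamination and $[\xi] \neq [\zeta]$, we have $i(\xi, \zeta) > 0$ and hence $\Ext_{g_n^{-1} x_0}(\xi) \to +\infty$. This contradicts $\Ext_{g_n^{-1} x_0}(\xi) = \Ext_{x_0}(g_n \xi) \to \Ext_{x_0}(\eta) < +\infty$, so the orbit is discrete.

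For the second bullet, I would produce, for each $\eta \in \mathcal{R}_{\Ga}$ with Hubbard--Masur coordinates $\HM(\eta) = ([\eta], t)$, a sequence $g_n \in \Ga$ with $g_n \xi \to \eta$. The projective direction is handled by minimality of $\Ga \curvearrowright \La(\Ga) = \partial \Ga$, a standard fact for non-elementary hyperbolic groups. For the $\R$-direction, I would use Theorem~\ref{thm:nonarith} to fix pseudo-Anosovs $\varphi_1, \varphi_2 \in \Ga$ whose translation lengths generate a dense subgroup of $\R$, and then use minimality to conjugate them inside $\Ga$ to $\psi_1, \psi_2 \in \Ga$ with the same translation lengths $\tau_{\psi_i} = \tau_{\varphi_i}$ but attracting fixed points in any preassigned neighborhood of $[\eta]$, arranged in a ping-pong configuration. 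Starting from $h \in \Ga$ with $h[\xi]$ close to $[\eta]$, the sequence $\psi_1^{m_k} \psi_2^{n_k} h \xi$ should, by iterated north--south dynamics, keep the projective class close to $[\eta]$ while shifting the $\R$-coordinate by approximately $m_k \tau_{\varphi_1} + n_k \tau_{\varphi_2}$; non-arithmeticity lets us choose $(m_k, n_k)$ so that this approximates the required offset, producing $\psi_1^{m_k} \psi_2^{n_k} h \xi \to \eta$.

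The main technical obstacle will be quantifying the error in the scale shift: the exact shift equals the cocycle $\tfrac{1}{2}\log \bigl(\Ext_{(\psi_1^{m_k}\psi_2^{n_k}h)^{-1}x_0}(\xi)/\Ext_{x_0}(\xi)\bigr)$, and one must show it stays within a bounded distance of $m_k \tau_{\varphi_1} + n_k \tau_{\varphi_2}$ uniformly as $m_k, n_k \to +\infty$. The key input is the squeezing property of pseudo-Anosov axes (Proposition~\ref{prop.pAsqueezing}): when the projective class of the input lies in a fixed neighborhood of the attracting fixed point of $\psi$ and stays bounded away from its repelling fixed point, the action of $\psi^{k}$ shifts the Busemann cocycle by $k \tau_{\psi}$ up to a uniformly bounded error, as follows from the alignment estimates of Section~\ref{section:prelim} (in particular Lemma~\ref{lem:squeezing}) combined with the extremal-length formula $\beta_{\xi}(x,y) = \tfrac{1}{2}\log(\Ext_x(\xi)/\Ext_y(\xi))$ on $\UE$ (Equation~\eqref{eqn:buseext}). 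The ping-pong configuration for the fixed points of $\psi_1, \psi_2$ keeps intermediate projective classes away from the respective repelling fixed points, so the errors accumulate in a controlled way under composition, and non-arithmeticity then closes the density argument.
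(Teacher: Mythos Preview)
For the first bullet you do more than is needed: ``discrete'' here means every orbit point is isolated, and that follows immediately from proper discontinuity of the $\Ga$-action on the open set $\ML \smallsetminus \mathcal{R}_{\Ga}$; the paper does not (and need not) rule out accumulation at $\mathcal{R}_{\Ga}$.

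For the second bullet there is a genuine gap. Your north--south scheme applies \emph{positive} powers $\psi_1^{m_k}\psi_2^{n_k}$ (this is exactly what ``keep the projective class close to $[\eta]$'' forces, since negative powers push toward the repelling fixed points, which you have placed away from $[\eta]$). Hence the Busemann shift you produce is approximately $m_k\tau_{\varphi_1}+n_k\tau_{\varphi_2}\ge 0$, and from the fixed starting point $h\xi$ you can only reach $\R$-coordinates bounded below by (roughly) that of $h\xi$. Non-arithmeticity gives density of the additive group $\Z\tau_{\varphi_1}+\Z\tau_{\varphi_2}$ in $\R$, but only of the \emph{semigroup} $\{m\tau_{\varphi_1}+n\tau_{\varphi_2}:m,n\ge 0\}$ in $[0,\infty)$; you never explain how to hit targets with smaller $\R$-coordinate, nor why $h$ can be chosen with $h[\xi]$ near $[\eta]$ and the $\R$-coordinate of $h\xi$ arbitrarily negative.

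The paper supplies precisely this missing negative reservoir via Lemma~\ref{lem:occontainspA}: since $[\xi]\in\La(\Ga)$ is conical, one takes $g_n x_0\to\xi$ along a quasi-geodesic, so the $\R$-coordinate of $g_n^{-1}\cdot(\xi,t)$ tends to $-\infty$; one then applies positive powers of a fixed pseudo-Anosov $g$ to climb back to a bounded value, with the shadow estimate (Equation~\eqref{eqn:shadow}, coming from \cite{BCZZ_PS} and \cite{KimZimmer_Rigidity}) giving the uniform control you tried to extract from squeezing. Only after landing on some $(g^{+},s)$ in the closure does the paper invoke non-arithmeticity, and then through the clean observation that $g_i$ fixes $g_i^{+}$, so $\overline{\Ga\cdot(\xi,t)}$ is $a_{\tau_{g_i}}$-invariant once it contains a point over $g_i^{+}$. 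Your ping-pong construction is a reasonable alternative to this last density step, but it cannot replace the conical descent that creates the negative $\R$-coordinate in the first place.
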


As mentioned earlier, the $\Ga$-action on $\PML \smallsetminus \La(\Ga)$ is properly discontinuous \cite{mccarthy1989dynamics}. Hence, in the viewpoint of the characterization of $\mathcal{R}_{\Ga}$ in Equation \eqref{eqn:recurrenceCC}, it suffices to show that the $\Ga$-action on $\mathcal{R}_{\Ga}$ is minimal. 

We do this by adjusting a standard argument of classifying horospherical orbit closures for Kleinian groups, which is based on nice shapes of horospheres in hyperbolic spaces. As we do not have such a well-shaped horosphere in the Teichm\"uller space, we use a recent theory of exapnding coarse cocycles for convergence group actions by Blayac--Canary--Zhu--Zimmer \cite{BCZZ_PS} and an interpretation of the action of convex cocompact subgroups as expanding coarse cocycles given by the second author and Zimmer \cite{KimZimmer_Rigidity}.
 
From the viewpoint of the characterization in Equation \eqref{eqn:recurrenceCC} and the discussion in Section \ref{subsection:horo}, the minimality of the $\Ga$-action on $\mathcal{R}_{\Ga}$ follows once we show that the $\Ga$-action on $\La(\Ga) \times \R$ given by
$$
g \cdot (\xi, t) = (g \xi, t + \beta_{\xi}(g^{-1} x_0,x_0))
$$
is minimal, recalling that $x_0 \in \T$ is a fixed basepoint.

Given a pseudo-Anosov $g \in \Mod(S)$, we denote by $g^+, g^- \in \PML$ its attracting and repelling fixed points, respectively.

\begin{lemma} \label{lem:occontainspA}
    For each $(\xi, t) \in \La(\Ga) \times \R$ and a pseudo-Anosov $g \in \Ga$, there exists $s \in \R$ such that $(g^+, s) \in \ov{\Ga \cdot (\xi, t)}$ or $(g^-, s) \in \ov{\Ga \cdot (\xi, t)}$.
\end{lemma}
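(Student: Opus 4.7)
The plan combines two maneuvers: first I use convex cocompactness of $\Ga$ to pull the orbit along the Teichm\"uller geodesic ray to $\xi$ back near a compact fundamental domain, producing a sequence $\{\ga_{n}\} \subset \Ga$ along which the $\R$-coordinate of $\ga_{n}^{-1}(\xi, t)$ drifts linearly to $-\infty$; then I apply a carefully chosen power of $g$ to push the first coordinate onto $g^{+}$ or $g^{-}$ while cancelling the drift. Throughout I assume $\xi \notin \{g^{+}, g^{-}\}$, as otherwise $(\xi, t)$ itself already lies in the required fiber.

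For the first maneuver, $\xi \in \La(\Ga) \subset \UE$ is uniquely ergodic, so Masur's theorem supplies a unique Teichm\"uller ray $\rho : [0, +\infty) \to \T$ with $\rho(0) = x_{0}$ and $\rho(t) \to \xi$ in the Thurston (equivalently, horofunction) compactification. By Theorem \ref{thm:FMCC}(3), the tail of $\rho$ remains in a bounded neighborhood of the weak hull $\op{WH}(\Ga)$, so cocompactness produces $t_{n} \to +\infty$ and $\ga_{n} \in \Ga$ with $d(\ga_{n} x_{0}, \rho(t_{n}))$ uniformly bounded. Combining $\beta_{\xi}(\rho(t_{n}), x_{0}) = -t_{n}$ with the $1$-Lipschitz bound $\abs{\beta_{\xi}(\ga_{n} x_{0}, \rho(t_{n}))} \le d(\ga_{n} x_{0}, \rho(t_{n}))$ gives $\beta_{\xi}(\ga_{n} x_{0}, x_{0}) = -t_{n} + O(1)$, so $\ga_{n}^{-1} \cdot (\xi, t) = (\ga_{n}^{-1}\xi, t - t_{n} + O(1))$. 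Passing to a subsequence, $\ga_{n}^{-1}\xi \to \zeta$ for some $\zeta \in \La(\Ga)$.

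For the second maneuver, set $k_{n} := \lceil t_{n}/\tau_{g} \rceil$, and take $\alpha_{n} := g^{k_{n}}\ga_{n}^{-1}$ if $\zeta \ne g^{-}$, or $\alpha_{n} := g^{-k_{n}}\ga_{n}^{-1}$ if $\zeta = g^{-}$. The north--south dynamics of $g$ (resp.\ $g^{-1}$) on $\PML$ yield $\alpha_{n}\xi \to g^{+}$ (resp.\ $\alpha_{n}\xi \to g^{-}$). The additional contribution to the $\R$-coordinate equals $\beta_{\ga_{n}^{-1}\xi}(g^{\mp k_{n}} x_{0}, x_{0})$, which by \eqref{eqn:buseext} equals $\tfrac{1}{2}\log\bigl(\Ext_{x_{0}}(g^{\pm k_{n}}\ga_{n}^{-1}\xi) / \Ext_{x_{0}}(\ga_{n}^{-1}\xi)\bigr)$, and the dominance of the attracting lamination of $g^{\pm 1}$ under iteration gives $\beta_{\ga_{n}^{-1}\xi}(g^{\mp k_{n}} x_{0}, x_{0}) = k_{n}\tau_{g} + O(1)$. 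Combining with the first maneuver, the $\R$-coordinate of $\alpha_{n}(\xi, t)$ is bounded, and a further subsequence converges to some $s \in \R$, so $\alpha_{n}(\xi, t) \to (g^{\pm}, s) \in \ov{\Ga \cdot (\xi, t)}$.

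The delicate step will be the uniformity of the estimate $\beta_{\ga_{n}^{-1}\xi}(g^{\mp k_{n}} x_{0}, x_{0}) = k_{n}\tau_{g} + O(1)$: both the boundary point $\ga_{n}^{-1}\xi \to \zeta$ and the interior point $g^{\mp k_{n}} x_{0} \to g^{\mp}$ vary simultaneously, while continuity of the Busemann cocycle at boundary points is only pointwise. I plan to settle this via the extremal-length identity above together with the continuity of $(x, \eta) \mapsto \Ext_{x}(\eta)$ on $\T \times \ML$, using that for $\eta \in \ML$ not proportional to $g^{\mp}$ the ratio $\Ext_{x_{0}}(g^{\pm k}\eta)/\lambda^{2k}$, with $\lambda = e^{\tau_{g}}$, converges as $k \to +\infty$ to a continuous positive function of $\eta$ on the complement of the $g^{\mp}$-ray, yielding a locally uniform error in $\zeta$. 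Alternatively, the expanding coarse cocycle framework of \cite{BCZZ_PS, KimZimmer_Rigidity} for convex cocompact convergence group actions packages precisely this uniform behavior and could serve as a black-box substitute for the hands-on computation.
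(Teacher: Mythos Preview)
Your two-maneuver strategy---pull back along a conical sequence so that the $\R$-coordinate drifts to $-\infty$, then apply powers of $g$ with the case split on whether the limit $\zeta$ of $\ga_n^{-1}\xi$ equals $g^-$---is exactly the paper's approach. For the delicate uniform estimate the paper invokes your stated alternative, the expanding coarse cocycle framework of \cite{BCZZ_PS} and \cite{KimZimmer_Rigidity}, to obtain directly the shadow-type bound $\sup_{k,n}\lvert\beta_{\ga_n^{-1}\xi}(g^{-k}x_0,x_0)-d(g^{-k}x_0,x_0)\rvert<\infty$ and then chooses $k_n$ afterwards, rather than fixing $k_n=\lceil t_n/\tau_g\rceil$ and carrying out the extremal-length computation.
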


\begin{proof}
    By Theorem \ref{thm:FMCC}(3), there exists a sequence $\{g_n\}_{n \in \N}\subset \Ga$ such that $g_n x_0 \to \xi$ within a bounded neighborhood of a Teichm\"uller geodesic ray towards $\xi \in \UE$. This implies 
    \begin{equation} \label{eqn:conicalconvergence}
        \beta_{\xi}(g_n x_0, x_0) \to -\infty \quad \text{as } n \to + \infty.
    \end{equation}

    This also implies the conical convergence in the Cayley graph of  a hyperbolic group $\Ga$, identifying $\partial \Ga = \La(\Ga)$ by Theorem \ref{thm:FMCC}(2). After passing to a subsequence, we may assume that $g_n^{-1} \xi$ converges to some $\zeta \in \La(\Ga)$. Suppose first that $g^- \neq \zeta$.

Fix an open neighborhood $U \subset \La(\Ga)$ of $\zeta$ such that $g^- \notin \ov{U}$. Then there exists a compact subset $K$ in the Cayley graph of $\Ga$ such that every bi-infinite geodesic with endpoints $g^- \in \partial \Ga$ and a point in $\ov{U}$ intersects $K$. Passing to a subsequence, we may assume that $g_n^{-1} \xi \in U$ for all $n \in \N$.  This implies
\begin{equation} \label{eqn:shadow}
    \sup_{k, n \in \N} \abs{\beta_{g_n^{-1} \xi}(g^{-k}x_0, x_0) - d(g^{-k} x_0, x_0)} < + \infty
\end{equation}
(see \cite[Proposition 12.6]{KimZimmer_Rigidity} and \cite[Section 5]{BCZZ_PS} for details).

Observe that for $k, n \in \N$,
$$
g^k g_n^{-1} (\xi, t) = (g^k g_n^{-1} \xi, t + \beta_{\xi}(g_n x_0, x_0) + \beta_{g_n^{-1} \xi}(g^{-k}x_0, x_0)).
$$
Then by Equation \eqref{eqn:shadow}, for each $n \in \N$, we can choose $k_n \in \N$ so that the sequence
$$
t + \beta_{\xi}(g_n x_0, x_0) + \beta_{g_n^{-1} \xi}(g^{-k_n}x_0, x_0) \quad \text{is bounded.}
$$
By Equation \eqref{eqn:conicalconvergence}, we also have $k_n \to + \infty$ as $n \to + \infty$.

After passing to a subsequence, we can set 
$$
s := \lim_{n \to + \infty}  \big(t + \beta_{\xi}(g_n x_0, x_0) + \beta_{g_n^{-1} \xi}(g^{-k_n}x_0, x_0) \big).
$$
 Since $g_n^{-1} \xi \in U$ and $g^- \notin \ov{U}$, we have
$$
g^{k_n} g_n^{-1} \xi \to g^+ \quad \text{as }n \to + \infty.
$$
This finishes the proof in this case.

If $g^- = \zeta$, then we have $g^+ \neq \zeta$. Hence, we can apply the same argument replacing $g$ with $g^{-1}$ and this completes the proof.
\end{proof}

\begin{proof}[Proof of Theorem \ref{thm:orbitclosure}]
   For each $s \in \R$, consider the map 
   $$\begin{aligned}
   a_s : \La(\Ga) \times \R &  \to \La(\Ga) \times \R \\
    (\xi, t) & \mapsto (\xi, t + s)
   \end{aligned}
   $$
   which gives the $\R$-action on $\La(\Ga) \times \R$, commuting with the $\Ga$-action. Hence, we can consider it as a right $\R$-action. Note that this was denoted by $T_s$ in Section \ref{subsec:trqi}; we use the new notation to consider it as an action on the right.

   To prove the desired minimality, we fix $(\xi, t) \in \La(\Ga) \times \R$ and $\epsilon > 0$. By the non-arithmeticity in Theorem \ref{thm:nonarith}, there exist pseudo-Anosovs $g_1, \dots, g_k \in \Ga$ such that the additive subgroup $\langle \tau_{g_1}, \dots,  \tau_{g_k} \rangle \subset \R$ generated by their $d_{\T}$-translation lengths is $\epsilon$-dense.

   By Lemma \ref{lem:occontainspA}, after replacing $g_1$ with $g_1^{-1}$ if necessary, we have 
   $$
    (g_1^+, s_1) \in \ov{\Ga \cdot (\xi, t)} \quad \text{for some } s_1 \in \R.
   $$
   Then for every $j \in \Z$,
   $$
    g_1^j (g_1^+, s_1) = (g_1^+, s_1 + \beta_{g_1^+}(g_1^{-j} x_0, x_0)) = (g_1^+, s_1 + j \tau_{g_1}) = (g_1^+, s_1)a_{j \tau_{g_1}}
   $$
   belongs to $\Ga \cdot (g_1^+, s_1) \subset \ov{\Ga \cdot (\xi, t)}$. This implies
   $$
    \ov{\Ga \cdot (g_1^+, s_1)} a_{\langle \tau_{g_1} \rangle} \subset \ov{\Ga \cdot (\xi, t)}.
   $$
   Applying the same argument to $\ov{\Ga \cdot (g_1^+, s_1)}$, we inductively obtain that for some $s_k \in \R$,
   $$
    \ov{\Ga \cdot (g_k^+, s_k)} a_{\langle \tau_{g_1}, \dots, \tau_{g_k} \rangle} \subset \ov{\Ga \cdot (\xi, t)}.
   $$

   Now let $(\zeta, s) \in \La(\Ga) \times \R$ be arbitrary. Since the $\Ga$-action on $\La(\Ga)$ is minimal, there exists a sequence $\{h_n\}_{n \in \N} \subset \Ga$ such that $h_n g_k^+ \to \zeta$ as $n \to + \infty$. Since $\langle \tau_{g_1}, \dots, \tau_{g_k} \rangle$ is $\epsilon$-dense, for each $n \in \N$, there exists $t_n \in \langle \tau_{g_1}, \dots, \tau_{g_k} \rangle$ such that
   $$
    \abs{s - ( s_k + \beta_{g_k^+}(h_n^{-1} x_0, x_0) + t_n)} < \epsilon.
   $$
   Hence, after passing to a subsequence, the sequence $h_n ( g_k^+, s_k) a_{t_n}$ converges to $(\zeta, s') \in \ov{\Ga \cdot (\xi, t)}$ for some $s' \in \R$ with $\abs{s - s'} \le \epsilon$. Since this holds for each $\epsilon > 0$, we have $(\zeta, s) \in \ov{\Ga \cdot (\xi, t)}$. This finishes the proof.
\end{proof}

\bibliographystyle{alpha} 
\bibliography{ML}

\end{document}